\numberwithin{equation}{section}
\newtheorem{theorem}{Theorem}[section]
\newtheorem{proposition}[theorem]{Proposition}
\newtheorem{corollary}[theorem]{Corollary}
\newtheorem{lemma}[theorem]{Lemma}
\newtheorem{conjecture}[theorem]{Conjecture}
\newtheorem{problem}[theorem]{Problem}
\newtheorem{example}[theorem]{Example}
\newtheorem{remark}[theorem]{Remark}
\newtheorem{definition}[theorem]{Definition}
\theoremstyle{definition}
\newcommand{\Hilb}{{\mathrm{Hilb}}}
\newcommand{\gl}{\mathrm{GL}}
\newcommand{\Frob}{{\mathrm{Frob}}}
\newcommand{\symm}{{\mathfrak{S}}}
\newcommand{\II}{{\mathbf{I}}}
\newcommand{\gr}{{\mathrm {gr}}}
\newcommand{\lp}[1]{\mathcal{LP}(#1)}
\newcommand{\wid}[1]{\mathrm{wid}(#1)}
\newcommand{\hori}[1]{\mathrm{HS}(#1)}
\newcommand{\horipositive}[1]{\mathrm{PHS}(#1)}
\newcommand{\horiwid}[1]{\mathrm{WHS}(#1)}
\newcommand{\leftshadow}[1]{\mathcal{LS}_{#1}}
\newcommand{\defideal}[3]{{I_{#1,#2}^{(#3)}}}
\newcommand{\RRR}{{\mathcal{R}}}
\newcommand{\grFrob}{{\mathrm{grFrob}}}
\newcommand{\ZZZ}{{\mathcal{Z}}}
\newcommand{\UZ}{{\mathcal{UZ}}}
\newcommand{\MMM}{{\mathcal{M}}}
\newcommand{\CC}{{\mathbb{C}}}
\newcommand{\ZZ}{{\mathbb{Z}}}
\newcommand{\Mat}{{\mathrm{Mat}}}
\newcommand{\xxx}{{\mathbf{x}}}
\newcommand{\mmm}{{\mathfrak{m}}}
\newcommand{\ind}{{\mathrm{Ind}}}
\newcommand{\precdot}{\mathrel{\ooalign{$\prec$\cr
  \hidewidth\raise0.001ex\hbox{$\cdot\mkern0.6mu$}\cr}}}
\title{Rook placements and orbit harmonics}
\author{Hai Zhu}
\address{Department of Mathematics, UC San Diego, La Jolla, CA, 92093, USA}
\email{haz138@ucsd.edu}
\date{\today}
\begin{document}

\begin{abstract}
    For fixed positive integers $n,m$, let $\mathrm{Mat}_{n\times m}(\mathbb{C})$ be the affine space consisting of all $n\times m$ complex matrices, and let $\mathbb{C}[\mathbf{x}_{n\times m}]$ be its coordinate ring. For $0\le r\le\min\{m,n\}$, we apply the orbit harmonics method to the finite matrix loci $\mathcal{Z}_{n,m,r}$ of rook placements with exactly $r$ rooks, yielding a graded $\mathfrak{S}_n\times\mathfrak{S}_m$-module $R(\mathcal{Z}_{n,m,r})$. We find one signed and two sign-free graded character formulae for $R(\mathcal{Z}_{n,m,r})$. We also exhibit some applications of these formulae, such as proving a concise presentation of $R(\mathcal{Z}_{n,m,r})$, and proving some module injections and isomorphisms. Some of our techniques are still valid for involution matrix loci.
\end{abstract}

\maketitle

\section{Introduction}\label{sec:intro}
Let $\xxx_N = (x_1,\dots,x_N)$ be a sequence of variables and let $\CC[\xxx_N]\coloneqq\CC[x_1,\dots,x_N]$ be the polynomial ring over these variables. For a finite locus $\ZZZ\subseteq\CC^N$, its vanishing ideal $\II(\ZZZ)\subseteq\CC[\xxx_N]$ is given by
\[\II(\ZZZ)\coloneqq\{f\in\CC[\xxx_N]\,:\,\text{$f(z)=0$ for all $z\in\ZZZ$}\}.\]
The orbit harmonics method assigns a homogeneous ideal $\gr\II(\ZZZ)$ to $\II(\ZZZ)$, yielding a graded ring $R(\ZZZ)\coloneqq\CC[\xxx_N]/\gr\II(\ZZZ)$. We have a chain of vector space isomorphisms
\begin{equation}\label{eq:intro-chain-isom-orbit-harmonics}
    \CC[\ZZZ]\cong\CC[\xxx_N]/\II(\ZZZ)\cong R(\ZZZ).
\end{equation}
If further $\ZZZ$ is closed under the action of a matrix group $G\subseteq\gl_N(\CC)$, the chain~\eqref{eq:intro-chain-isom-orbit-harmonics} is also a chain of $G$-module isomorphisms. As $R(\ZZZ)$ is a graded $G$-module, it provides a graded refinement of $\CC[\ZZZ]$, which usually has interesting and profound combinatorial interpretation relevant to $\ZZZ$.

Geometrically, the orbit harmonics method is a linear deformation from the reduced locus $\ZZZ$ to a scheme of multiplicity $\lvert\ZZZ\rvert$ supported at the origin. For example, let $\ZZZ$ be the orbit of a point (which is not the origin) under the action of the group ($\cong\symm_3$) generated by reflections in the lines below. The orbit harmonics deformation looks like:
\begin{center}
 \begin{tikzpicture}[scale = 0.3]
\draw (-4,0) -- (4,0);
\draw (-2,-3.46) -- (2,3.46);
\draw (-2,3.46) -- (2,-3.46);

 \fontsize{5pt}{5pt} \selectfont
\node at (0,2) {$\bullet$};
\node at (0,-2) {$\bullet$};

\node at (-1.73,1) {$\bullet$};
\node at (-1.73,-1) {$\bullet$};
\node at (1.73,-1) {$\bullet$};
\node at (1.73,1) {$\bullet$};

\node[font=\fontsize{10}{6}\selectfont] at (0,-4.5) {$\ZZZ$};

\draw[thick, ->] (6,0) -- (8,0);

\draw (10,0) -- (18,0);
\draw (12,-3.46) -- (16,3.46);
\draw (12,3.46) -- (16,-3.46);

\draw (14,0) circle (15pt);
\draw(14,0) circle (25pt);
\node at (14,0) {$\bullet$};

\node[font=\fontsize{10}{6}\selectfont] at (14,-4.5) {$\mathrm{Spec}(R(\ZZZ))$};

 \end{tikzpicture}
\end{center}

Orbit harmonics interacts with diverse topics, such as presentations of cohomology rings \cite{MR1168926}, Macdonald theory \cite{MR4223028,MR3783430}, cyclic sieving \cite{MR4359538}, Donaldson–Thomas theory \cite{MR4675069}, and Ehrhart theory \cite{reiner2024harmonicsgradedehrharttheory}.

We are interested in the application of orbit harmonics to finite matrix loci. Throughout this paper, we fix $n,m\in\mathbb{N}\coloneqq\ZZ_{>0}$ unless specifically emphasized. Let $\xxx_{n\times m}=(x_{i,j})_{1\le i\le n,1\le j\le m}$ be an $n\times m$ matrix of variables, and consider the affine space $\Mat_{n\times m}(\CC)$ of $n\times m$ matrices with coordinate ring $\CC[\xxx_{n\times m}]$. Let $\symm_n\times\symm_m\subseteq\gl(\Mat_{n\times m}(\CC))$ act on $\Mat_{n\times m}(\CC)$ by respectively permuting rows and columns of matrices. We desire finite matrix loci $\ZZZ\subseteq\Mat_{n\times m}(\CC)$ stable under the action of $\symm_n\times\symm_m$. In this case, $\symm_n\times\symm_m$ acts on $\CC[\xxx_{n\times m}]$ by permuting rows and columns of the variable matrix $\xxx_{n\times m}$, which descends to the $\symm_n\times\symm_m$-action on $R(\ZZZ)$.

For the case $n=m$, Rhoades \cite{MR4821538} initiated the application of orbit harmonics to finite matrix loci $\ZZZ\subseteq\Mat_{n\times n}(\CC)$. He regarded the permutation matrix locus $\ZZZ=\symm_n\subseteq\Mat_{n\times n}(\CC)$, finding that algebraic properties of $R(\symm_n)$ are governed by longest increasing subsequences of permutations in $\symm_n$ and the Schensted correspondence. Additionally, he generalized Chen's log-concavity conjecture \cite{chen2008logconcavityqlogconvexityconjectureslongest} concerning the length of longest increasing subsequences to the equivariant log-concavity conjecture for $R(\symm_n)$. Details about these conjectures are given in Section~\ref{sec:conclusion}. Liu extended \cite{MR4936857} Rhoades's work to the $r$-colored permutation matrix locus $\ZZZ=(\ZZ/r\ZZ) \wr \symm_n$.

In this paper, we do not require $n=m$. Liu and Zhu extended \cite{liu2025extensionviennotsshadowrook} Rhoades's work as well as the equivariant log-concavity conjecture to the upper rook placement locus $\UZ_{n,m,r} \subseteq \Mat_{n\times m}(\CC)$ in Definition~\ref{def:rook}. We further consider each $(\symm_n\times\symm_m)$-orbit in $\UZ_{n,m,r}$, namely the rook placement locus $\ZZZ_{n,m,r} \subseteq \Mat_{n\times m}(\CC)$ given by Definition~\ref{def:rook}.
\begin{definition}\label{def:rook}
    A \emph{rook placement} $\RRR$ on the $n\times m$ board is a subset of the grid $[n]\times[m]$ such that each row and column has at most one element of $\RRR$. We call each element $(i,j)\in\RRR$ a \emph{rook} of $\RRR$. Define the \emph{size} $|\RRR|$ of $\RRR$ by
\[|\RRR|\coloneqq \text{the number of rooks of $\RRR$}.\]
Identify each rook placement $\RRR$ on the $n\times m$ board with the $n\times m$ matrix $(a_{i,j})_{1\le i\le n,1\le j\le m}$ given by
\[a_{i,j}=\begin{cases}
    1, &\text{if $(i,j)\in\RRR$} \\
    0, &\text{otherwise.}
\end{cases}\]
Write $\ZZZ_{n,m,r}\subseteq\Mat_{n\times m}(\CC)$ for the set of all rook placements on the $n\times m$ board of size $r$, and let $\UZ_{n,m,r}\coloneqq\bigsqcup_{r^\prime=r}^{\min\{m,n\}}\ZZZ_{n,m,r^\prime}$.
\end{definition}

Our results about $R(\ZZZ_{n,m,r})$ in this paper may help solve the log-concavity conjectures in Rhoades \cite{MR4821538} and Chen \cite{chen2008logconcavityqlogconvexityconjectureslongest} mentioned above. The reasons are as follows:
\begin{itemize}
    \item Note that $\ZZZ_{n,n,n} = \symm_n \subseteq \Mat_{n\times n}(\CC)$. Consequently, the rook placement locus $\ZZZ_{n,m,r}$ is a generalization of the permutation matrix locus $\symm_n$ studied by Rhoades. Interestingly, the equivariant log-concavity of $R(\symm_n)$ conjectured by Rhoades \cite{MR4821538} seems to hold for our $R(\ZZZ_{n,m,r})$ (verified by coding, see Conjecture~\ref{conj:log-concavity} for details). Therefore, with the flexibility of $\ZZZ_{n,m,r}$ arising from more parameters $n,m,r$ than the unique parameter $n$ for $\symm_n$, we may obtain more potential induction strategies to show Rhoades's equivariant log-concavity conjecture for $R(\symm_n)$. The chains of surjections in Figure~\ref{fig:graded-surj} may also provide tools for this potential induction.
    \item Moreover, we have $\ZZZ_{n,n,n}=\UZ_{n,n,n} =\symm_n\subseteq \Mat_{n\times n}(\CC)$, so they are distinct generalizations of the permutation matrix locus $\symm_n$. Both $R(\ZZZ_{n,m,r})$ and $R(\UZ_{n,m,r})$ are conjectured to be equivariantly log-concave (Conjecture~\ref{conj:log-concavity} and \cite[Conjecture 6.1]{liu2025extensionviennotsshadowrook}). Surprisingly, Proposition~\ref{prop:UZ-surj} reveals a tight connection between $R(\ZZZ_{n,m,r})$ and $R(\UZ_{n,m,r})$. This connection may thereby generate more potential tools to prove the equivariant log-concavity of $R(\symm_n)$ conjectured by Rhoades \cite{MR4821538} and hence the log-concavity conjecture of Chen \cite{chen2008logconcavityqlogconvexityconjectureslongest} mentioned above.
\end{itemize}

Our main results on the graded $\symm_n\times\symm_m$-modules $R(\ZZZ_{n,m,r})$ for $0\le r\le\min\{m,n\}$ are as follows.
\begin{itemize}
    \item We give a signed graded character formula for $R(\ZZZ_{n,m,r})$ in Theorem~\ref{thm:grad-str}, i.e.,
    \begin{align}\label{eq:intro-grad-str}
        \grFrob(R(\ZZZ_{n,m,r});q) = \sum_{d=0}^r q^d \cdot \{ SF_d - SF_{d-1} \}_{\lambda_1 \le n+m-d-r}
    \end{align}
    with the convention that $SF_{-1} = 0$. See Definition~\ref{def:specific-sym-func} for the definition of $SF_d$.
    \item As one application of Equation~\eqref{eq:intro-grad-str}, we find a concise presentation of the ring $R(\ZZZ_{n,m,r})$ in Proposition~\ref{prop:ideal-equal}. The same technique works for the involution matrix loci and yields Proposition~\ref{prop:involution-ideal}.
    \item As the other application of Equation~\eqref{eq:intro-grad-str}, we show a series of module injections with respect to the modules $R(\UZ_{n,m,r})_d$ as well as their connections with $R(\ZZZ_{n,m,r})$ in Proposition~\ref{prop:UZ-surj}.
    \item Canceling the minus signs in Equation~\eqref{eq:intro-grad-str}, we figure out a sign-free character formula for $R(\ZZZ_{n,m,r})$ in Theorem~\ref{thm:good-ver-positive-formula}, i.e.,
    \begin{align}\label{eq:intro-grad-positive}
        \grFrob(R(\ZZZ_{n,m,r});q) = \sum_{\substack{\lambda^{(1)}\vdash n \\ \lambda^{(2)}\vdash m}} \sum_{\mu\in\hori{r,\lambda^{(1)},\lambda^{(2)}}} q^{n+m-r-\wid{\mu,\lambda^{(1)},\lambda^{(2)}}}\cdot s_{\lambda^{(1)}}\otimes s_{\lambda^{(2)}}.
    \end{align}
    See details in Section~\ref{sec:sign-cancel}.
    \item As an application of Equation~\eqref{eq:intro-grad-positive}, we find a series of isomorphisms with respect to the modules $R(\ZZZ_{n,m,r})_d$ in Proposition~\ref{prop:surj-to-isom}, resulting in a family of injections among the modules $\CC[\ZZZ_{n,m,r}]$ in Corollary~\ref{cor:C[Z]-injection}.
\end{itemize}

The outline of the rest of this paper is as follows. Section~\ref{sec:background} gives some prerequisite knowledge about Gröbner theory, orbit harmonics, partitions, and the representation theory of symmetric groups. In Section~\ref{sec:module-str}, we study the graded module structure of $R(\ZZZ_{n,m,r})$. Specifically, Subsections~\ref{subsec:module-str-sym-func}, \ref{subsec:defining-dieal}, and \ref{subsec:surj} present technical results, which may be better to skip on a first read, accepting all the technical results directly. The signed graded character formula is proved in Subsection~\ref{subsec:graded-char}, and its applications are presented in Subsection~\ref{subsec:application-of-module-str}. In Section~\ref{sec:sign-cancel}, we provide sign-free graded character formulae for $R(\ZZZ_{n,m,r})$. Specifically, we introduce some notations in Subsection~\ref{subsec:notation}, then give two positive combinatorial formulae respectively in Subsections~\ref{subsec:bad-ver} and \ref{subsec:good-ver}, and finally exhibit an application of them in Subsection~\ref{subsec:application-of-sign-free-formula}. In Section~\ref{sec:conclusion}, we raise some open problems for future study.
In Appendix~\ref{sec:appendix}, we apply some of our techniques to involution matrix loci $\MMM_{n,a}\subseteq\Mat_{n\times n}(\CC)$ studied by Liu, Ma, Rhoades, Zhu \cite{MR4887799}.

%Selling point: R(\ZZZ_{n,m,r}) and R(\UZ_{n,m,r}) may help solve the log-concavity problem of R(\symm_n), as it is more free (such as n\neq m), more easy to do induction?

\section{Background}\label{sec:background}

\subsection{Gröbner theory}\label{subsec:grobner}
Write $\xxx_N =(x_1,\dots,x_N)$ for a sequence of variables of length $N$. Let $\CC[\xxx_N]$ be the polynomial ring over these variables. A \emph{monomial order} $<$ of $\CC[\xxx_N]$ is a total order on the set of monomials in $\CC[\xxx_N]$ such that
\begin{itemize}
    \item $1\le m$ for all monomials $m\in\CC[\xxx_N]$, and
    \item for monomials $m,m_1,m_2\in\CC[\xxx_N]$ with $m_1<m_2$, we have $m\cdot m_1<m\cdot m_2$.
\end{itemize}

Given a monomial order $<$ of $\CC[\xxx_N]$, the \emph{initial monomial} $\mathrm{in}_< f$ of any nonzero polynomial $f\in\CC[\xxx_N]$ is the maximal monomial in $f$ according to $<$. For any ideal $I\subseteq\CC[\xxx_N]$, we define its \emph{initial ideal} $\mathrm{in}_< I\subseteq\CC[\xxx_N]$ by
\[\mathrm{in}_< I \coloneqq \langle \mathrm{in}_< f \,:\, f\in I, f\neq 0 \rangle.\]
A monomial $m\in\CC[\xxx_N]$ is called a \emph{standard monomial} of $I$ if $m\notin\mathrm{in}_< I$. A famous result states that the set of all the standard monomials of $I$ descends to a $\CC$-linear basis of the $\CC$-vector space $\CC[\xxx_N]/I$, namely the \emph{standard monomial basis}. See \cite{MR4952933} for details.

\subsection{Orbit harmonics}\label{subsec:orbit-harmonoics}
Let $\ZZZ\subseteq\CC^N$ be a finite locus. The \emph{vanishing ideal} $\II(\ZZZ)\subseteq\CC[\xxx_N]$ of $\ZZZ$ is defined by
\[\II(\ZZZ)\coloneqq\{f\in\CC[\xxx_N]\,:\,\text{$f(z)=0$ for all $z\in\ZZZ$}\}.\]

For any ideal $I\subseteq\CC[\xxx_N]$, its \emph{associated graded ideal} $\gr I$ is given by
\[\gr I\coloneqq \langle\tau(f)\,:\,f\in I, f\neq 0\rangle\]
where $\tau(f)$ is the top-degree homogeneous component of $f$. The \emph{orbit harmonics ring} $R(\ZZZ)$ of $\ZZZ$ is given by
\[R(\ZZZ)\coloneqq\CC[\xxx_N]/\gr\II(\ZZZ).\]
\begin{remark}
    Given a set of generators $I = \langle f_1, f_2, \dots, f_r \rangle$, we have the containment of ideals $\langle \tau(f_1),\tau(f_2), \dots, \tau(f_r) \rangle \subseteq \gr \, I$ that is strict in general. Nonetheless, if the set of generators $\{ f_1, f_2, \dots, f_r \}$ forms a Gröbner basis of $I$ with respect to some graded monomial order, then the equality $\langle \tau(f_1), \tau(f_2), \dots, \tau(f_r) \rangle = \gr \, I$ holds. In general, finding and proving a concise generating set of $\gr\II(\ZZZ)$ is difficult.
\end{remark}

Let $\CC[\ZZZ]$ be the vector space consisting of all the functions from $\ZZZ$ to $\CC$. A standard result about orbit harmonics is the chain of vector space isomorphisms
\begin{align}\label{eq:orbit-harmonics-chain-isom}\CC[\ZZZ]\cong\CC[\xxx_N]/\II(\ZZZ)\cong R(\ZZZ)\end{align}
where the first isomorphism arises from Lagrange interpolation, and the second isomorphism arises from a series of vector space isomorphisms
\begin{align}\label{eq:orbit-harmonics-graded-isom}R(\ZZZ)_d \cong \frac{\CC[\xxx_N]_{\le d}/(\II(\ZZZ)\cap\CC[\xxx_N]_{\le d})}{\CC[\xxx_N]_{\le d-1}/(\II(\ZZZ)\cap\CC[\xxx_N]_{\le d-1})}\end{align}
for all $d\ge 0$, which is induced by
\begin{align*}
    \CC[\xxx_N]_d &\longrightarrow \CC[\xxx_N]_{\le d}/(\II(\ZZZ)\cap\CC[\xxx_N]_{\le d}) \\
    f &\longmapsto f + \II(\ZZZ)\cap\CC[\xxx_N]_{\le d}.
\end{align*}
One can show that the map \eqref{eq:orbit-harmonics-graded-isom} is indeed a linear isomorphism using the standard result \cite[Lemma 3.15]{MR4821538}.

We further suppose that $\ZZZ\subseteq\CC^N$ carries an action of a subgroup $G\le \gl_N(\CC)$, which is compatible with the natural action of $\gl_N(\CC)$ on $\CC^N$. Then, the vector space isomorphisms~\eqref{eq:orbit-harmonics-chain-isom} and \eqref{eq:orbit-harmonics-graded-isom} are also $G$-module isomorphisms.

\subsection{Combinatorics}\label{subsec:combinatorics}
For any nonnegative integer $n$, a \emph{partition} of $n$ is a weakly decreasing sequence $\lambda$ of nonnegative integers
\[\lambda=(\lambda_1\ge\lambda_2\ge\dots\ge\lambda_m)\]
with $\sum_{i=1}^m =n$. Here, $n$ is called the \emph{size} of $\lambda$, and we write $\lambda\vdash n$. We allow appending several zeros to $\lambda$. That is, we have the partition identification
\[(\lambda_1,\dots,\lambda_m) = (\lambda_1,\dots,\lambda_m,0,\dots,0).\]
We always identify a partition $\lambda$ with its \emph{Young diagram} that is a diagram with $\lambda_i$ cells in the $i$-th row for all $i\ge 1$. For instance, we identify the partition $\lambda=(4,3,3,1)\vdash 11$ with its Young diagram below.
\begin{center}
    \begin{ytableau}
    \: & \: & \: & \: \cr
    \: & \: & \: \cr
    \: & \: & \: \cr
    \:
\end{ytableau}
\end{center}

A \emph{skew partition} $\lambda/\mu$ is a pair of partitions $\lambda$ and $\mu$ such that $\mu\subseteq\lambda$. We call $\mu$ (resp. $\lambda$) the \emph{inner partition} (resp. \emph{outer partition}) of $\lambda/\mu$. We also identify $\lambda/\mu$ with its \emph{Young diagram} given by removing the cells of $\mu$ from $\lambda$. The \emph{size} $\lvert\lambda/\mu\rvert$ of $\lambda/\mu$ is given by $\lvert\lambda/\mu\rvert = \lvert\lambda\rvert-\lvert\mu\rvert$. In particular, $\lambda/\mu$ is called a \emph{horizontal strip} if it possesses at most one cell in each column. For example, we consider the inner partition $\mu=(3,3,1,1)$ and the outer partition $\lambda=(4,3,3,1)$, then the Young diagram $\lambda/\mu$ is marked with bullets $\bullet$ shown below.
\[\ydiagram[*(white) \bullet]
    {3+1,3+0,1+2,1+0}
    *[*(white)]{3,3,1,1}\]
As $\lambda/\mu$ has at most one cell in each column, we deduce that $\lambda/\mu$ is a horizontal strip.

\subsection{Representation theory}\label{subsec:background-rep}

%mention the term ``doubly symmetric function"
Let $\Lambda=\bigoplus_{n\ge 0}\Lambda_n$ be the graded ring of symmetric functions in infinitely many variables $x_1,x_2,\dots$ over the rational function field $\CC(q)$. We also define the \emph{doubly symmetric function algebra} to be $\Lambda\otimes_{\CC(q)}\Lambda$, which is naturally bigraded.

The space $\Lambda_n$ of symmetric function of degree $n$ has some well-known $\CC(q)$-vector space bases, including the \emph{monomial basis} $\{m_\lambda\}_{\lambda\vdash n}$, the \emph{complete homogeneous basis} $\{h_\lambda\}_{\lambda\vdash n}$, the \emph{elementary basis} $\{e_\lambda\}_{\lambda\vdash n}$, and the \emph{Schur basis} $\{s_\lambda\}_{\lambda\vdash n}$. See \cite{MR3443860} for their definitions. We will primarily use the complete homogeneous basis and the Schur basis, and write $h_d=h_{(d)}$ for all one-row partitions $(d)\vdash d$. Note that $s_{(d)} = h_d$.

A symmetric function $F\in\Lambda$ is \emph{Schur-positive} if 
$F = \sum_{\lambda}c_\lambda(q)\cdot s_\lambda$ where $c_\lambda(q)\in\mathbb{R}_{\ge 0}[q]$. Similarly, a doubly symmetric function $F^\prime\in\Lambda\otimes_{\CC(q)}\Lambda$ is \emph{Schur-positive} if $F^\prime = \sum_{\lambda,\mu}c_{\lambda,\mu}(q)\cdot s_\lambda\otimes s_\mu$ where $c_{\lambda,\mu}\in\mathbb{R}_{\ge 0}[q]$.
For $F,G\in\Lambda$ (resp. $F,G\in\Lambda\otimes_{\CC(q)}\Lambda$), we write $F\le G$ if and only if $G-F$ is Schur-positive.

We define the \emph{truncation operator} $\{-\}_P$ where $P$ is a condition as follows. For $F=\sum_{\lambda}c_\lambda(q)\cdot s_\lambda\in\Lambda$, we define $\{F\}_P\in\Lambda$ by
\[\{F\}_P\coloneqq\sum_{\text{$\lambda$ satisfies $P$}}c_\lambda(q)\cdot s_\lambda.\]
For $F^\prime=\sum_{\lambda,\mu}c_{\lambda,\mu}(q)\cdot s_\lambda\otimes s_\mu\in\Lambda\otimes_{\CC(q)}\Lambda$, we define $\{F^\prime\}_P\in\Lambda\otimes_{\CC(q)}\Lambda$ by
\[\{F^\prime\}_P\coloneqq\sum_{\text{both $\lambda$ and $\mu$ satisfy $P$}}c_{\lambda,\mu}(q)\cdot s_\lambda\otimes s_\mu.\]
For instance, we have
\[\{F^\prime\}_{\lambda_1\le L} = \sum_{\substack{\lambda_1^{(1)}\le L \\ \lambda_1^{(2)}\le L}}c_{\lambda^{(1)},\lambda^{(2)}}(q)\cdot s_{\lambda^{(1)}}\otimes s_{\lambda^{(2)}}\]
summing over all the pairs of partitions $\lambda^{(1)},\lambda^{(2)}$ such that $\lambda_1^{(1)}\le L$ and $\lambda_1^{(2)}\le L$.

Symmetric functions, especially Schur functions $s_\lambda$, are tightly related to the representation theory of symmetric groups. Let $\symm_n$ be the symmetric group consisting of permutations on $[n]$, and consider the representation theory of $\symm_n$ over the ground field $\CC$. It is well-known that all $\symm_n$-irreducibles $\{V^\lambda\}_{\lambda\vdash n}$ are exactly indexed by $\{\lambda\vdash n\}$, and they are called \emph{Specht modules}. For an $\symm_n$-module $V=\bigoplus_{\lambda\vdash n}c_\lambda V^\lambda$ where $c_\lambda\in\ZZ_{\ge 0}$, its \emph{Frobenius image} $\Frob(V)\in\Lambda_n$ is a Schur-positive symmetric function given by
\[\Frob(V)\coloneqq\sum_{\lambda\vdash n}c_\lambda\cdot s_\lambda.\]
Furthermore, for a graded $\symm_n$-module \[V=\bigoplus_{d=0}^m V_d,\] we define its \emph{graded Frobenius image} $\grFrob(V;q)\in\Lambda$ by
\[\grFrob(V;q)\coloneqq\sum_{d=0}^mq^d\cdot\Frob(V_d).\]

Now, we consider the representation theory of $\symm_n\times\symm_m$. All $\symm_n\times\symm_m$-irreducibles are \[\{V^\lambda\otimes V^\mu\}_{\lambda\vdash n,\mu\vdash m}\]
where the $\symm_n\times\symm_m$-action is diagonal, i.e. $(g,h)\cdot (v_1\otimes v_2) =(g\cdot v_1)\otimes( h\cdot v_2)$ for $g\in\symm_n$ and $h\in\symm_m$. Analogously, we define the \emph{Frobenius image} of an $\symm_n\times\symm_m$-module $V=\bigoplus_{\lambda\vdash n,\mu\vdash m}c_{\lambda,\mu}V^\lambda\otimes V^\mu$ by
\[\Frob(V)\coloneqq\sum_{\lambda\vdash n,\mu\vdash m}c_{\lambda,\mu}\cdot s_\lambda\otimes s_\mu \in \Lambda\otimes_{\CC(q)}\Lambda,\]
and define the \emph{graded Frobenius image} of a graded $\symm_n\times\symm_m$-module similarly to the graded $\symm_n$-module case above.

A standard result about Frobenius images is that: For an $\symm_a$-module $V$ and an $\symm_b$-module $W$, we have that
\[\Frob(\ind_{\symm_a\times\symm_b}^{\symm_{a+b}}(V\otimes W)) = \Frob(V)\cdot\Frob(W)\]
where we embed $\symm_a\times\symm_b$ into $\symm_{a+b}$ by letting $\symm_a$ permute $\{1,\dots,a\}$ and $\symm_b$ permute $\{b+1,\dots,a+b\}$.

We will frequently use the following specific doubly symmetric function.
\begin{definition}\label{def:specific-sym-func}
    For $0\le d\le\min\{m,n\}$, we define $SF_d\in\Lambda_n\otimes\Lambda_m$ by
    \[SF_d\coloneqq\sum_{\mu\vdash d}(s_\mu\cdot h_{n-d})\otimes(s_\mu\cdot h_{m-d}).\]
\end{definition}

The following result also appears in the proof of \cite[Proposition 3.5]{liu2025extensionviennotsshadowrook}.

\begin{lemma}\label{lem:ungraded-frob}
    For $0\le r\le\min\{m,n\}$, we have
    \[\Frob(R(\ZZZ_{n,m,r}))=\Frob(\CC[\ZZZ_{n,m,r}]) = SF_r.\]
\end{lemma}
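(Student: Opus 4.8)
The plan is to first dispose of the equality $\Frob(R(\ZZZ_{n,m,r})) = \Frob(\CC[\ZZZ_{n,m,r}])$, which is immediate from the orbit harmonics setup: since $\ZZZ_{n,m,r}$ is a finite locus stable under the action of $\symm_n\times\symm_m$, the isomorphisms in \eqref{eq:orbit-harmonics-chain-isom} are isomorphisms of $\symm_n\times\symm_m$-modules, and the Frobenius image depends only on the module structure. So the real content is to show $\Frob(\CC[\ZZZ_{n,m,r}]) = SF_r$.

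The key point is that $\CC[\ZZZ_{n,m,r}]$ is a transitive permutation module for $\symm_n\times\symm_m$: a size-$r$ rook placement is the data of $r$ rows, $r$ columns, and a bijection between them, and any two such are related by a row permutation and a column permutation. So first I would fix the base point $\RRR_0 = \{(1,1),(2,2),\dots,(r,r)\}$ and compute its stabilizer $K\le\symm_n\times\symm_m$. A pair $(\sigma,\tau)$ fixes $\RRR_0$ exactly when $\sigma$ and $\tau$ both preserve $\{1,\dots,r\}$ setwise, agree on it, and permute $\{r+1,\dots,n\}$ and $\{r+1,\dots,m\}$ respectively in an arbitrary fashion. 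Writing $\symm_r\times\symm_{n-r}\le\symm_n$ and $\symm_r\times\symm_{m-r}\le\symm_m$ in the evident way, $K$ is therefore the subgroup of $\symm_r\times\symm_r\times\symm_{n-r}\times\symm_{m-r}$ consisting of quadruples $(\pi,\pi,\rho,\upsilon)$ — that is, $\Delta\symm_r\times\symm_{n-r}\times\symm_{m-r}$ — and $\CC[\ZZZ_{n,m,r}]\cong\ind_K^{\symm_n\times\symm_m}\one$.

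The Frobenius image is then obtained by induction in stages. Inducing first up to $\symm_r\times\symm_r\times\symm_{n-r}\times\symm_{m-r}$ gives $(\ind_{\Delta\symm_r}^{\symm_r\times\symm_r}\one)\otimes\one_{\symm_{n-r}}\otimes\one_{\symm_{m-r}}$, and $\ind_{\Delta\symm_r}^{\symm_r\times\symm_r}\one$ is the two-sided regular representation $\CC[\symm_r]\cong\bigoplus_{\mu\vdash r}V^\mu\otimes V^\mu$. Inducing the rest of the way up factors as induction $\symm_r\times\symm_{n-r}\uparrow\symm_n$ on the first coordinate and $\symm_r\times\symm_{m-r}\uparrow\symm_m$ on the second, independently; applying the product rule $\Frob(\ind(V\otimes W))=\Frob(V)\cdot\Frob(W)$ together with $\Frob(\one_{\symm_{n-r}})=h_{n-r}$ and $\Frob(\one_{\symm_{m-r}})=h_{m-r}$ yields
\[
\Frob(\CC[\ZZZ_{n,m,r}]) = \sum_{\mu\vdash r}(s_\mu\cdot h_{n-r})\otimes(s_\mu\cdot h_{m-r}) = SF_r,
\]
which is the computation appearing in the proof of \cite[Proposition 3.5]{liu2025extensionviennotsshadowrook}.

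I do not expect a serious obstacle here. The one delicate point is pinning down the stabilizer $K$ correctly — in particular spotting the diagonal embedding of $\symm_r$ into the two $\symm_r$-factors, which is precisely what produces the paired term $s_\mu\otimes s_\mu$ and hence the shape of $SF_r$. Everything else is routine bookkeeping with induction in stages and the Frobenius product formula recalled in Subsection~\ref{subsec:background-rep}.
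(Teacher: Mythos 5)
Your proposal is correct and follows essentially the same route as the paper: both identify $\CC[\ZZZ_{n,m,r}]$ with the induction of the two-sided regular representation $\CC[\symm_r]\cong\bigoplus_{\mu\vdash r}V^\mu\otimes V^\mu$ from $(\symm_r\times\symm_{n-r})\times(\symm_r\times\symm_{m-r})$ up to $\symm_n\times\symm_m$ and then apply the Frobenius product formula. The only difference is that you make explicit the stabilizer computation (the diagonal $\Delta\symm_r$) that the paper leaves implicit, which is a harmless elaboration of the same argument.
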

\begin{proof}
    The first equal sign arises from $R(\ZZZ_{n,m,r})\cong\CC[\ZZZ_{n,m,r}]$. It remains to show the second equal sign. Consider $\symm_{r}\times\symm_{n-r}$ (resp. $\symm_r\times\symm_{m-r}$) as a subgroup of $\symm_n$ (resp. $\symm_m$) by letting $\symm_r$ and $\symm_{n-r}$ (resp. $\symm_{m-r}$) separately act on $\{1,\dots,r\}$ and $\{r+1,\dots,n\}$ (resp. $\{r+1,\dots,m\}$). Thus, we embed $(\symm_r\times\symm_{n-r})\times(\symm_{r}\times\symm_{m-r})$ into $\symm_n\times\symm_m$ as a subgroup. Note that $(\symm_r\times\symm_{n-r})\times(\symm_{r}\times\symm_{m-r})$ acts on $\CC[\symm_r]$ by
    \[((g_1,h_1),(g_2,h_2))\cdot g = g_1 g g_2^{-1}\]
    which is restricted to the $\symm_r\times\symm_r$-action. Artin-Wedderburn Theorem gives this $\symm_r\times\symm_r$-module structure by
    \begin{align}\label{eq:artin-wed}
    \CC[\symm_r]\cong\bigoplus_{\mu\vdash r}V^\mu\otimes V^\mu
    \end{align}
    where $\symm_r\times\symm_r$ acts on each $V^\mu\otimes V^\mu$ by
    $(g_1,g_2)\cdot (v_1\otimes v_2) = (g_1\cdot v_1)\otimes(g_2\cdot v_2)$. Then we have that, as $\symm_n\times\symm_m$-modules, 
    \begin{align*}
        \CC[\ZZZ_{n,m,r}] &\cong \ind_{(\symm_r\times\symm_{n-r})\times(\symm_{r}\times\symm_{m-r})}^{\symm_n\times\symm_m}\CC[\symm_r] \\ 
        &\cong \ind_{(\symm_r\times\symm_{n-r})\times(\symm_{r}\times\symm_{m-r})}^{\symm_n\times\symm_m}\bigg(\bigoplus_{\mu\vdash r}(V^\mu\otimes V^{(n-r)})\otimes( V^\mu \otimes V^{(m-r)})\bigg) \\
        &\cong \bigoplus_{\mu\vdash r}\ind_{(\symm_r\times\symm_{n-r})\times(\symm_{r}\times\symm_{m-r})}^{\symm_n\times\symm_m}\big((V^\mu\otimes V^{(n-r)})\otimes( V^\mu \otimes V^{(m-r)})\big) \\
        &\cong \bigoplus_{\mu\vdash r}\Big(\ind_{\symm_r\times\symm_{n-r}}^{\symm_n}(V^\mu\otimes V^{(n-r)})\Big)\otimes\Big(\ind_{\symm_r\times\symm_{m-r}}^{\symm_m}(V^\mu\otimes V^{(m-r)})\Big)
    \end{align*}
    where the second isomorphism arises from Equation~\eqref{eq:artin-wed}.
    Therefore, we have
    \[\Frob(\CC[\ZZZ_{n,m,r}]) = \sum_{\mu\vdash r}(s_\mu\cdot h_{n-r})\otimes(s_\mu\cdot h_{m-r}) = SF_r.\]
\end{proof}

The following result will help us assign upper bounds to both $\lambda_1$ and $\mu_1$ for all the $\symm_n\times\symm_m$-irreducibles $V^\lambda\otimes V^\mu$ appearing in some $\symm_n\times\symm_m$-module.
\begin{lemma}\label{lem:ann-length}
    Let $j\le n$ be positive integers and $\lambda\vdash n$ be a partition. Then $\eta_j\cdot V^\lambda = \{0\}$ if and only if $j>\lambda_1$.
\end{lemma}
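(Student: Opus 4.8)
The plan is to recognize $\eta_j$ as (a nonzero scalar multiple of) the idempotent projecting any $\symm_n$-module onto its space of $\symm_j$-invariants, where $\symm_j\le\symm_n$ denotes the subgroup permuting a fixed block of $j$ letters; after conjugating $\eta_j$ by a permutation it is harmless to take this block to be $\{1,\dots,j\}$. Recall $\eta_j=\sum_{\sigma\in\symm_j}\sigma$ satisfies $\eta_j\cdot v=j!\,v$ for every $\symm_j$-invariant $v$ and $\eta_j\cdot M\subseteq M^{\symm_j}$ for every $\symm_n$-module $M$, so $\eta_j\cdot M=M^{\symm_j}$ and hence $\eta_j\cdot V^\lambda=\{0\}$ if and only if $(V^\lambda)^{\symm_j}=\{0\}$, i.e. the trivial representation of $\symm_j$ does not occur in $\Res^{\symm_n}_{\symm_j}V^\lambda$.

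I would then evaluate this restriction in two stages. Restricting first to the Young subgroup $\symm_j\times\symm_{n-j}$, the branching (Littlewood--Richardson) rule gives $\Res^{\symm_n}_{\symm_j\times\symm_{n-j}}V^\lambda\cong\bigoplus_{\mu\vdash j,\ \nu\vdash n-j}c^{\lambda}_{\mu,\nu}\,V^\mu\otimes V^\nu$; forgetting the $\symm_{n-j}$-action and extracting the trivial $\symm_j$-isotypic part shows $(V^\lambda)^{\symm_j}\neq\{0\}$ exactly when $c^{\lambda}_{(j),\nu}\neq0$ for some $\nu\vdash n-j$, which by the Pieri rule happens exactly when $\lambda/\nu$ is a horizontal strip of size $j$ for some partition $\nu$. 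The last step is the elementary observation that such a $\nu$ exists if and only if $j\le\lambda_1$: the interleaving inequalities $\lambda_1\ge\nu_1\ge\lambda_2\ge\nu_2\ge\cdots$ characterizing horizontal strips force $|\lambda/\nu|=\sum_i(\lambda_i-\nu_i)\le\sum_i(\lambda_i-\lambda_{i+1})=\lambda_1$, while conversely every value $0\le j\le\lambda_1$ is attained by peeling boundary cells off the right edge of $\lambda$ one at a time. Chaining the equivalences, $\eta_j\cdot V^\lambda=\{0\}$ iff no such $\nu$ exists iff $j>\lambda_1$.

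A self-contained alternative avoids branching. Write $V^\lambda=\mathrm{span}\{e_T\}$ over polytabloids $e_T=\kappa_T\{T\}$ with column antisymmetrizer $\kappa_T=\sum_{\pi\in C_T}\mathrm{sgn}(\pi)\pi$. If $j>\lambda_1$, then since a $\lambda$-tableau has only $\lambda_1$ columns, every $T$ places two of $1,\dots,j$, say $a$ and $b$, in a common column, so $(a\,b)\in C_T$; coset-decomposing $C_T$ by $\{1,(a\,b)\}$ yields $\kappa_T=(1-(a\,b))\,c$ for some $c\in\CC[\symm_n]$, and since $\eta_j\cdot(1-(a\,b))=0$ we get $\eta_j\cdot e_T=0$ for all $T$, hence $\eta_j\cdot V^\lambda=\{0\}$. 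If instead $j\le\lambda_1$, take the tableau $T_0$ obtained by entering $1,2,\dots$ along successive rows, so $1,\dots,j$ lie in distinct columns of the first row; a direct computation shows the coefficient of the tabloid $\{T_0\}$ in $\eta_j\cdot e_{T_0}\in V^\lambda$ equals $j!\neq0$, so $\eta_j\cdot V^\lambda\neq\{0\}$. I do not expect a genuine obstacle: the only content is, for the first route, the size-range statement for removable horizontal strips, and for the second, the tabloid-coefficient computation; the lone subtleties are the harmless reduction to $\eta_j$ acting on a fixed block of $j$ indices and, in the $j\le\lambda_1$ direction, confirming the exhibited vector genuinely lies in $V^\lambda$ and is nonzero.
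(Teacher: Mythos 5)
Your first argument is correct and is essentially the paper's proof: the paper likewise identifies $\eta_j\cdot V^\lambda$ with $(V^\lambda)^{\symm_j}$ and then invokes the branching rule to decide when the trivial $\symm_j$-representation occurs in $\Res^{\symm_n}_{\symm_j}V^\lambda$; you simply spell out the step the paper leaves to the citation, via the Pieri coefficient $c^{\lambda}_{(j),\nu}$ and the telescoping bound $\lvert\lambda/\nu\rvert\le\lambda_1$ for horizontal strips. Your second, polytabloid argument is also correct and is a genuinely different, self-contained route: the pigeonhole on columns plus $\eta_j\cdot(1-(a\,b))=0$ handles $j>\lambda_1$, and the coefficient of $\{T_0\}$ in $\eta_j\cdot e_{T_0}$ is indeed $j!$ because $\sigma\in R_{T_0}$ for every $\sigma\in\symm_j$ and $R_{T_0}\cap C_{T_0}=\{e\}$; this buys independence from the Littlewood--Richardson machinery at the cost of working inside the permutation module $M^\lambda$.
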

\begin{proof}
    Note that $\eta_j\cdot V^\lambda = (V^\lambda)^{\symm_j} = \{a\in V^\lambda\,:\text{$w\cdot a =a$ for all $w\in\symm_j$}\}$. Consequently, we have that $\eta_j\cdot V^\lambda =\{0\}$ if and only if the $\symm_j$-irreducible $V^{(j)}$ does not appears in $\mathrm{Res}^{\symm_n}_{\symm_j}V^\lambda$, which is equivalent to $\lambda_1<j$ according to the branching rule (\cite{sagan2013symmetric}).
\end{proof}

\section{Module structure}\label{sec:module-str}
Our main goal is to figure out the graded $\symm_n\times\symm_m$-module structure of $R(\ZZZ_{n,m,r})$. The primary difficulty is that we cannot find an explicit basis of $R(\ZZZ_{n,m,r})$, so we must analyze the graded module structure without previously knowing the dimension of each graded component $R(\ZZZ_{n,m,r})_d$. Subsections~\ref{subsec:module-str-sym-func}, \ref{subsec:defining-dieal}, and \ref{subsec:surj} provide some technical results necessary to our graded character formula in Subsection~\ref{subsec:graded-char}. On a first read, we recommend directly accepting these technical results for their application in Subsection~\ref{subsec:graded-char} while skipping the technical proofs in Subsections~\ref{subsec:module-str-sym-func}, \ref{subsec:defining-dieal}, and \ref{subsec:surj}. Surprisingly, the strategy in \cite{MR4887799} still works in this case, which constructs a triangular diagram like Figure~\ref{fig:graded-surj} and then ``forces inequalities to be equalities''. However, constructing the symmetric function properties in Subsection~\ref{subsec:module-str-sym-func} is much more difficult than its counterpart in \cite[Subsection 5.1]{MR4887799}, as we need to deal with $\Lambda\otimes_{\CC(q)}\Lambda$ instead of $\Lambda$. Additionally, we prove a concise generating set of the defining ideal $\gr\II(\ZZZ_{n,m,r})$ in Proposition~\ref{prop:ideal-equal}, the counterpart of which was not touched by \cite{MR4887799}. Interestingly, this method also works for the defining ideal $\gr\II(\MMM_{n,a})$ in \cite{MR4887799}. 

\subsection{Symmetric function results}\label{subsec:module-str-sym-func}
We first need some symmetric function equalities. For convenience, we use $\langle s_{\lambda^{(1)}}\otimes s_{\lambda^{(2)}}\rangle F$ to denote the coefficient of $s_{\lambda^{(1)}}\otimes s_{\lambda^{(2)}}$ in the Schur expansion of $F\in \Lambda\otimes \Lambda$. We also write $\langle q^i\rangle f$ for the coefficient of $q^i$ in any generating function $f\in\CC[[q]]$. Lemmas~\ref{lem:coef} and \ref{lem:schur-interchange} are also mentioned by \cite[Lemmas 5.1 and 5.2]{liu2025extensionviennotsshadowrook}.

\begin{lemma}\label{lem:coef}
    Let $d,a,b,p,q\in\ZZ_{\ge 0}$ be nonnegative integers and write
    \[F = \sum_{\mu\vdash d}\{s_\mu\cdot h_a\}_{\lambda_1 =p}\otimes\{s_\mu\cdot h_b\}_{\lambda_1 = q}\in\Lambda\otimes_{\CC(q)}\Lambda.\]
    Let $\lambda^{(1)}\vdash a+d$ and $\lambda^{(2)}\vdash b+d$ be partitions.
    
    \noindent If we have that:
    \begin{itemize}
        \item $\lambda_1^{(1)} = p$,
        \item $\lambda_1^{(2)} = q$, and
        \item $\min\{\lambda_i^{(1)},\lambda_i^{(2)}\}\ge\max\{\lambda_{i+1}^{(1)},\lambda_{i+1}^{(2)}\}$ for all $i\ge 1$.
    \end{itemize}
    Then we have
    \[\langle s_{\lambda^{(1)}}\otimes s_{\lambda^{(2)}} \rangle F = 
        \Big\langle q^{d-\sum_{i=1}^\infty \max\{\lambda_{i+1}^{(1)},\lambda_{i+1}^{(2)}\}}\Big\rangle\Bigg(\prod_{i=1}^\infty\Bigg(\sum_{j=0}^{\min\{\lambda_i^{(1)},\lambda_i^{(2)}\} - \max\{\lambda_{i+1}^{(1)},\lambda_{i+1}^{(2)}\}}q^j\Bigg)\Bigg).
    \]
    Otherwise, $\langle s_{\lambda^{(1)}}\otimes s_{\lambda^{(2)}} \rangle F = 
        0$.
\end{lemma}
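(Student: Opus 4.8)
The plan is to expand everything via Pieri's rule and reduce the coefficient extraction to counting lattice points in a box. First I would recall that by the Pieri rule $s_\mu \cdot h_a = \sum_\nu s_\nu$, summed over partitions $\nu \supseteq \mu$ with $\nu/\mu$ a horizontal strip of size $a$; equivalently, the expansion is multiplicity-free and $\langle s_{\lambda^{(1)}} \rangle (s_\mu \cdot h_a) = 1$ precisely when $\mu \subseteq \lambda^{(1)}$ and the interlacing inequalities $\lambda_{i+1}^{(1)} \le \mu_i \le \lambda_i^{(1)}$ hold for all $i \ge 1$ (the size condition $|\lambda^{(1)}/\mu| = a$ being automatic from $|\lambda^{(1)}| = a+d$ and $\mu \vdash d$), and is $0$ otherwise. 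Since $\{s_\mu \cdot h_a\}_{\lambda_1 = p}$ only retains Schur functions indexed by partitions with first part exactly $p$, the coefficient $\langle s_{\lambda^{(1)}} \rangle \{s_\mu \cdot h_a\}_{\lambda_1 = p}$ vanishes unless $\lambda_1^{(1)} = p$, and likewise for the second tensor factor with $b, q$ in place of $a, p$. Because $\langle s_{\lambda^{(1)}} \otimes s_{\lambda^{(2)}} \rangle (f \otimes g) = \langle s_{\lambda^{(1)}} \rangle f \cdot \langle s_{\lambda^{(2)}} \rangle g$, this already disposes of the ``otherwise'' conclusion whenever the first or second bullet fails.

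Assuming now $\lambda_1^{(1)} = p$ and $\lambda_1^{(2)} = q$, I would intersect the interlacing conditions coming from $\lambda^{(1)}$ and from $\lambda^{(2)}$ to obtain
\[ \langle s_{\lambda^{(1)}} \otimes s_{\lambda^{(2)}} \rangle F = \#\Big\{ \mu \vdash d \;:\; \max\{\lambda_{i+1}^{(1)},\lambda_{i+1}^{(2)}\} \le \mu_i \le \min\{\lambda_i^{(1)},\lambda_i^{(2)}\} \text{ for all } i \ge 1 \Big\}. \]
If the third bullet fails, some interval $[\max\{\lambda_{i+1}^{(1)},\lambda_{i+1}^{(2)}\}, \min\{\lambda_i^{(1)},\lambda_i^{(2)}\}]$ is empty, so the count is $0$, which finishes the ``otherwise'' case. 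If the third bullet holds, I would check that any integer sequence $(\mu_i)$ satisfying these box constraints is automatically a partition of the right shape: it is weakly decreasing since $\mu_{i+1} \le \min\{\lambda_{i+1}^{(1)},\lambda_{i+1}^{(2)}\} \le \max\{\lambda_{i+1}^{(1)},\lambda_{i+1}^{(2)}\} \le \mu_i$, and it is eventually zero because cofinitely many of the bounding parts vanish; hence the count is genuinely over partitions $\mu \vdash d$, and conversely every such $\mu$ satisfies the two horizontal-strip conditions.

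Finally, I would perform the substitution $\mu_i = \max\{\lambda_{i+1}^{(1)},\lambda_{i+1}^{(2)}\} + j_i$ with $0 \le j_i \le \min\{\lambda_i^{(1)},\lambda_i^{(2)}\} - \max\{\lambda_{i+1}^{(1)},\lambda_{i+1}^{(2)}\}$; the constraint $\sum_i \mu_i = d$ becomes $\sum_i j_i = d - \sum_{i=1}^\infty \max\{\lambda_{i+1}^{(1)},\lambda_{i+1}^{(2)}\}$, and the number of such bounded compositions is exactly the coefficient of $q^{\,d - \sum_i \max\{\lambda_{i+1}^{(1)},\lambda_{i+1}^{(2)}\}}$ in the product $\prod_{i=1}^\infty \big( \sum_{j=0}^{\min\{\lambda_i^{(1)},\lambda_i^{(2)}\} - \max\{\lambda_{i+1}^{(1)},\lambda_{i+1}^{(2)}\}} q^j \big)$, which is a genuine polynomial since cofinitely many factors equal $1$. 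This matches the stated formula. There is no deep obstacle here, as the whole argument is a careful unwinding of Pieri's rule; the step most deserving of care is the passage from the two horizontal-strip conditions to the single box constraint together with the verification that the resulting lattice-point count is over honest partitions (weakly decreasing, finitely supported), which is precisely what makes the clean product formula valid.
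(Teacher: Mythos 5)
Your proposal is correct and follows essentially the same route as the paper's proof: expand via Pieri's rule, reduce $\langle s_{\lambda^{(1)}}\otimes s_{\lambda^{(2)}}\rangle F$ to counting partitions $\mu\vdash d$ in the box $\max\{\lambda_{i+1}^{(1)},\lambda_{i+1}^{(2)}\}\le\mu_i\le\min\{\lambda_i^{(1)},\lambda_i^{(2)}\}$, and encode that count as a coefficient of the product of truncated geometric series after shifting each $\mu_i$ by its lower bound. Your write-up is somewhat more careful than the paper's about the vanishing cases and about verifying that the box-constrained sequences are honest partitions, but the argument is the same.
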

\begin{proof}
    If $s_{\lambda_1^{(1)}}\otimes s_{s_{\lambda_1^{(2)}}}$ appears in the Schur expansion of $F$, the expression of $F$ implies both $\lambda_1^{(1)}=p$ and $\lambda_1^{(2)}=q$. Additionally, we must have $\min\{\lambda_i^{(1)},\lambda_i^{(2)}\}\ge\max\{\lambda_{i+1}^{(1)},\lambda_{i+1}^{(2)}\}$ for all $i\ge 1$, since both $\lambda^{(1)}$ and $\lambda^{(2)}$ arise from a common partition $\mu$ added a horizontal strip. Calculating $\langle s_{\lambda^{(1)}}\otimes s_{\lambda^{(2)}} \rangle F$ is equivalent to counting all the partitions $\mu\vdash d$ such that: $\max\{\lambda_{i+1}^{(1)},\lambda_{i+1}^{(2)}\}\le\mu_i\le\min\{\lambda_i^{(1)},\lambda_i^{(2)}\}$ for all $i\ge 1$. Therefore, we have that
    \begin{align*}
        \langle s_{\lambda^{(1)}}\otimes s_{\lambda^{(2)}}\rangle F 
        &= \langle q^{d}\rangle\Bigg(\prod_{i=1}^\infty\Bigg(\sum_{j=\max\{\lambda_{i+1}^{(1)},\lambda_{i+1}^{(2)}\}}^{\min\{\lambda_i^{(1)},\lambda_i^{(2)}\}}q^j\Bigg)\Bigg) \\ 
        &=\langle q^{d-\sum_{i=1}^\infty \max\{\lambda_{i+1}^{(1)},\lambda_{i+1}^{(2)}\}}\rangle\Bigg(\prod_{i=1}^\infty\Bigg(\sum_{j=\max\{\lambda_{i+1}^{(1)},\lambda_{i+1}^{(2)}\}}^{\min\{\lambda_i^{(1)},\lambda_i^{(2)}\}}q^{j-\sum_{i=1}^\infty \max\{\lambda_{i+1}^{(1)},\lambda_{i+1}^{(2)}\}}\Bigg)\Bigg) \\ 
        &= \bigg\langle q^{d-\sum_{i=1}^\infty \max\{\lambda_{i+1}^{(1)},\lambda_{i+1}^{(2)}\}}\bigg\rangle\Bigg(\prod_{i=1}^\infty\Bigg(\sum_{j=0}^{\min\{\lambda_i^{(1)},\lambda_i^{(2)}\} - \max\{\lambda_{i+1}^{(1)},\lambda_{i+1}^{(2)}\}}q^j\Bigg)\Bigg).
    \end{align*}
\end{proof}

\begin{lemma}\label{lem:schur-interchange}
    For $d,a,b,p,q\in\ZZ_{\ge 0}$, we have
    \[\sum_{\mu\vdash d}\{s_\mu\cdot h_a\}_{\lambda_1 = p}\otimes\{s_\mu\cdot h_b\}_{\lambda_1 = q} = \sum_{\mu\vdash \big(d+a+b-\max\{p,q\}\big)}\{s_\mu\cdot h_{\max\{p,q\}-b}\}_{\lambda_1 = p}\otimes\{s_\mu\cdot h_{\max\{p,q\}-a}\}_{\lambda_1 = q}.\]
\end{lemma}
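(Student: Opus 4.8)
The plan is to show that $L$ (the left-hand side) and $R$ (the right-hand side) have the same Schur expansion in $\Lambda\otimes\Lambda$ by reading off all coefficients from Lemma~\ref{lem:coef}. Both $L$ and $R$ have the shape $\sum_{\mu\vdash d^\ast}\{s_\mu\cdot h_{a^\ast}\}_{\lambda_1=p}\otimes\{s_\mu\cdot h_{b^\ast}\}_{\lambda_1=q}$ to which Lemma~\ref{lem:coef} applies: for $L$ one takes $(d^\ast,a^\ast,b^\ast)=(d,a,b)$, and for $R$ one takes $(d^\ast,a^\ast,b^\ast)=(d+a+b-\max\{p,q\},\,\max\{p,q\}-b,\,\max\{p,q\}-a)$. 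In both cases $a^\ast+d^\ast=a+d$ and $b^\ast+d^\ast=b+d$, so $L$ and $R$ are supported in the same bidegree. First I would dispose of the degenerate range in which one of the three parameters for $R$ is negative: there $R=0$ by the convention $h_k=0$ for $k<0$ (and because there is no partition of a negative integer), and $L=0$ as well, since any term contributing to $L$ comes from a common $\mu\vdash d$ obtained by removing a horizontal strip from each of $\lambda^{(1)},\lambda^{(2)}$, which forces $\lambda^{(1)}_1\ge a$, $\lambda^{(2)}_1\ge b$ and $\max\{p,q\}\le d+a+b$, i.e. all three transformed parameters are already nonnegative whenever $L\ne 0$. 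Hence it suffices to treat the case $d^\ast,a^\ast,b^\ast\ge 0$, where Lemma~\ref{lem:coef} applies verbatim to both sides.

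Fix partitions $\lambda^{(1)}\vdash a+d$ and $\lambda^{(2)}\vdash b+d$. The conditions in Lemma~\ref{lem:coef} under which a coefficient is nonzero --- $\lambda^{(1)}_1=p$, $\lambda^{(2)}_1=q$, and $\min\{\lambda^{(1)}_i,\lambda^{(2)}_i\}\ge\max\{\lambda^{(1)}_{i+1},\lambda^{(2)}_{i+1}\}$ for all $i$ --- do not involve $d^\ast,a^\ast,b^\ast$, so they hold for $L$ exactly when they hold for $R$; if they fail, both coefficients are $0$. When they hold, Lemma~\ref{lem:coef} writes $\langle s_{\lambda^{(1)}}\otimes s_{\lambda^{(2)}}\rangle L=\langle q^{t}\rangle P(q)$ and $\langle s_{\lambda^{(1)}}\otimes s_{\lambda^{(2)}}\rangle R=\langle q^{t'}\rangle P(q)$ for the \emph{same} polynomial
\[P(q)=\prod_{i\ge 1}\bigl(1+q+\cdots+q^{\,\min\{\lambda^{(1)}_i,\lambda^{(2)}_i\}-\max\{\lambda^{(1)}_{i+1},\lambda^{(2)}_{i+1}\}}\bigr),\]
where $t=d-\sum_{i\ge 1}\max\{\lambda^{(1)}_{i+1},\lambda^{(2)}_{i+1}\}$ and $t'=t+a+b-\max\{p,q\}$. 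The key point is that $P(q)$, being a product of palindromic polynomials $1+q+\cdots+q^{k}$, is itself palindromic, of degree $D=\sum_{i\ge 1}\bigl(\min\{\lambda^{(1)}_i,\lambda^{(2)}_i\}-\max\{\lambda^{(1)}_{i+1},\lambda^{(2)}_{i+1}\}\bigr)$, so $\langle q^{t}\rangle P(q)=\langle q^{D-t}\rangle P(q)$. It therefore remains only to verify the numerical identity $t'=D-t$. Substituting the formulas for $t,t',D$ and cancelling the common sum of $\max\{\lambda^{(1)}_{i+1},\lambda^{(2)}_{i+1}\}$, this reduces to
\[2d+a+b-\max\{p,q\}=\sum_{i\ge 1}\min\{\lambda^{(1)}_i,\lambda^{(2)}_i\}+\sum_{i\ge 2}\max\{\lambda^{(1)}_i,\lambda^{(2)}_i\},\]
whose right-hand side equals $\min\{p,q\}+\sum_{i\ge 2}(\lambda^{(1)}_i+\lambda^{(2)}_i)$ by the termwise identity $\min\{x,y\}+\max\{x,y\}=x+y$ together with $\lambda^{(1)}_1=p$, $\lambda^{(2)}_1=q$; and this is $\min\{p,q\}+(a+d-p)+(b+d-q)=2d+a+b-\max\{p,q\}$ since $|\lambda^{(1)}|=a+d$, $|\lambda^{(2)}|=b+d$, and $\min\{p,q\}-p-q=-\max\{p,q\}$.

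I do not expect a real obstacle here once one commits to routing everything through Lemma~\ref{lem:coef}: the entire argument then collapses to the palindromy of $\prod_i(1+q+\cdots+q^{k_i})$ and the elementary identity $t+t'=\deg P$. The one place demanding a bit of extra care is the bookkeeping around the degenerate parameter range, where instead of applying Lemma~\ref{lem:coef} one argues directly that both sides vanish; this is the step I flagged first above.
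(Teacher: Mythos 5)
Your proposal is correct and follows essentially the same route as the paper's proof: apply Lemma~\ref{lem:coef} to both sides, observe that the nonvanishing conditions agree, and conclude via the palindromy of the product polynomial together with the numerical identity $2d+a+b=\max\{p,q\}+\sum_i\min\{\lambda^{(1)}_i,\lambda^{(2)}_i\}+\sum_i\max\{\lambda^{(1)}_{i+1},\lambda^{(2)}_{i+1}\}$, which is exactly the paper's Equation~\eqref{eq:schur-interchange}. Your explicit handling of the degenerate parameter range (where $\max\{p,q\}-a$ or $\max\{p,q\}-b$ could be negative) is a small point of extra care that the paper leaves implicit, but it does not change the argument.
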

\begin{proof}
    We write $F$ (resp. $G$) for the left-hand side (resp. right-hand side). It suffices to show that \[\langle s_{\lambda^{(1)}}\otimes s_{\lambda^{(2)}}\rangle F = \langle s_{\lambda^{(1)}}\otimes s_{\lambda^{(2)}}\rangle G\] for any pair of partitions $\lambda^{(1)}\vdash a+d$ and $\lambda^{(2)}\vdash b+d$.

    Consider three conditions:
    \begin{itemize}
        \item $\lambda_1^{(1)} = p$,
        \item $\lambda_1^{(2)} = q$, and
        \item for all $i\ge 1$ we have $\min\{\lambda_i^{(1)},\lambda_i^{(2)}\}\ge\max\{\lambda_{i+1}^{(1)},\lambda_{i+1}^{(2)}\}$. 
    \end{itemize}
    If one of them is not satisfied, Lemma~\ref{lem:coef} indicates that
    $\langle s_{\lambda^{(1)}}\otimes s_{\lambda^{(2)}}\rangle F = 0 = \langle s_{\lambda^{(1)}}\otimes s_{\lambda^{(2)}}\rangle G$. Now, suppose that all these three conditions hold simultaneously. Then Lemma~\ref{lem:coef} yields equalities
    \[\langle s_{\lambda^{(1)}}\otimes s_{\lambda^{(2)}} \rangle F = 
        \Big\langle q^{d-\sum_{i=1}^\infty \max\{\lambda_{i+1}^{(1)},\lambda_{i+1}^{(2)}\}}\Big\rangle f\]
        and
    \[\langle s_{\lambda^{(1)}}\otimes s_{\lambda^{(2)}} \rangle G = 
        \Big\langle q^{d+a+b-\max\{p,q\}-\sum_{i=1}^\infty \max\{\lambda_{i+1}^{(1)},\lambda_{i+1}^{(2)}\}}\Big\rangle f\]  
    where $f=\prod_{i=1}^\infty\Bigg(\sum_{j=0}^{\min\{\lambda_i^{(1)},\lambda_i^{(2)}\} - \max\{\lambda_{i+1}^{(1)},\lambda_{i+1}^{(2)}\}}q^j\Bigg)$. Therefore, it remains to show that
    \[\Big\langle q^{d-\sum_{i=1}^\infty \max\{\lambda_{i+1}^{(1)},\lambda_{i+1}^{(2)}\}}\Big\rangle f = \Big\langle q^{d+a+b-\max\{p,q\}-\sum_{i=1}^\infty \max\{\lambda_{i+1}^{(1)},\lambda_{i+1}^{(2)}\}}\Big\rangle f.\]
    As $f$ is actually a product of finitely many palindromic polynomials in $q$, $f$ itself is also a palindromic polynomial in $q$. Thus, it suffices to show that
    \[\Big(d-\sum_{i=1}^\infty \max\{\lambda_{i+1}^{(1)},\lambda_{i+1}^{(2)}\}\Big)+\Big(d+a+b-\max\{p,q\}-\sum_{i=1}^\infty \max\{\lambda_{i+1}^{(1)},\lambda_{i+1}^{(2)}\}\Big) = \deg(f),\]
    which is equivalent to
    \begin{align}\label{eq:schur-interchange}2d+a+b = \max\{p,q\} + \sum_{i=1}^\infty\min\{\lambda_i^{(1)},\lambda_i^{(2)}\} + \sum_{i=1}^\infty\max\{\lambda_{i+1}^{(1)},\lambda_{i+1}^{(2)}\}.\end{align}
    Since the right-hand side of Equation~\eqref{eq:schur-interchange} equals
    \begin{align*}
        &\max\{\lambda_1^{(1)},\lambda_1^{(2)}\} + \sum_{i=1}^\infty\min\{\lambda_i^{(1)},\lambda_i^{(2)}\} + \sum_{i=1}^\infty\max\{\lambda_{i+1}^{(1)},\lambda_{i+1}^{(2)}\} \\
        =&\sum_{i=1}^\infty\min\{\lambda_i^{(1)},\lambda_i^{(2)}\} + \sum_{i=1}^\infty\max\{\lambda_{i}^{(1)},\lambda_{i}^{(2)}\} \\
        =&\sum_{i=1}^\infty \Big(\min\{\lambda_i^{(1)},\lambda_i^{(2)}\} + \max\{\lambda_{i}^{(1)},\lambda_{i}^{(2)}\}\Big) \\
        =&\sum_{i=1}^\infty\Big(\lambda_i^{(1)} + \lambda_i^{(2)}\Big) = \lvert\lambda^{(1)}\rvert +\lvert\lambda^{(2)}\rvert = (a+d) +(b+d) =2d+a+b,
    \end{align*}
    which equals the left-hand side of Equation~\eqref{eq:schur-interchange}. Therefore, Equation~\eqref{eq:schur-interchange} holds, concluding our proof.
\end{proof}

The following result gives a refinement of $\Frob(R(\ZZZ_{n,m,r}))$. We will see that this refinement exactly gives $\grFrob(R(\ZZZ_{n,m,r});q)$ in Theorem~\ref{thm:grad-str}.
\begin{corollary}\label{cor:shur-refinement}
For $0\le r\le\min\{m,n\}$, we have that
\begin{align}\label{eq:schur-refinement}
SF_r
=\sum_{d=0}^r\{SF_d - SF_{d-1}\}_{\lambda_1\le n+m-d-r}
\end{align}
with the convention that $SF_{-1} = 0$.
\end{corollary}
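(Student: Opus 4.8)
The plan is to prove the identity by expanding both sides in the Schur basis of $\Lambda\otimes_{\CC(q)}\Lambda$ and matching coefficients. Fix a pair of partitions $\lambda^{(1)}\vdash n$ and $\lambda^{(2)}\vdash m$; I want to show that $\langle s_{\lambda^{(1)}}\otimes s_{\lambda^{(2)}}\rangle$ applied to the left-hand side equals the same applied to the right-hand side. Writing $SF_d=\sum_{\mu\vdash d}(s_\mu\cdot h_{n-d})\otimes(s_\mu\cdot h_{m-d})$ and splitting each factor $s_\mu\cdot h_{n-d}$ and $s_\mu\cdot h_{m-d}$ by the value of the first part of the indexing partition, one sees that $SF_d=\sum_{p,q}\sum_{\mu\vdash d}\{s_\mu\cdot h_{n-d}\}_{\lambda_1=p}\otimes\{s_\mu\cdot h_{m-d}\}_{\lambda_1=q}$, which puts us exactly in the setting of Lemmas~\ref{lem:coef} and \ref{lem:schur-interchange}. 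In particular, by Lemma~\ref{lem:coef} the coefficient $\langle s_{\lambda^{(1)}}\otimes s_{\lambda^{(2)}}\rangle SF_d$ is nonzero only when the ``interlacing'' condition $\min\{\lambda_i^{(1)},\lambda_i^{(2)}\}\ge\max\{\lambda_{i+1}^{(1)},\lambda_{i+1}^{(2)}\}$ holds for all $i\ge 1$, and in that case it equals the coefficient of a specific power of $q$ in the palindromic polynomial $f=\prod_{i\ge 1}\bigl(\sum_{j=0}^{\min\{\lambda_i^{(1)},\lambda_i^{(2)}\}-\max\{\lambda_{i+1}^{(1)},\lambda_{i+1}^{(2)}\}}q^j\bigr)$, with the power determined by $d$. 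So the whole computation reduces to a one-variable generating-function bookkeeping.

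First I would dispose of the degenerate case: if the interlacing condition fails for $(\lambda^{(1)},\lambda^{(2)})$, then Lemma~\ref{lem:coef} (together with the analogous statement for $SF_d-SF_{d-1}$, which is just a difference of two $SF$-type terms) gives $0=0$ and there is nothing to prove. So assume the interlacing condition holds. Set $P=\sum_{i\ge 1}\max\{\lambda_{i+1}^{(1)},\lambda_{i+1}^{(2)}\}$; then Lemma~\ref{lem:coef} gives $\langle s_{\lambda^{(1)}}\otimes s_{\lambda^{(2)}}\rangle SF_d=\langle q^{d-P}\rangle f$, interpreted as $0$ when $d<P$ or $d-P>\deg f$. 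Consequently $\langle s_{\lambda^{(1)}}\otimes s_{\lambda^{(2)}}\rangle\{SF_d-SF_{d-1}\}_{\lambda_1\le L}$, where $L=n+m-d-r$, equals $\bigl(\langle q^{d-P}\rangle f-\langle q^{d-1-P}\rangle f\bigr)$ when both $\lambda_1^{(1)}\le L$ and $\lambda_1^{(2)}\le L$, and $0$ otherwise. Summing over $d$ from $0$ to $r$, the terms telescope: if there were no truncation, $\sum_{d=0}^r\bigl(\langle q^{d-P}\rangle f-\langle q^{d-1-P}\rangle f\bigr)=\langle q^{r-P}\rangle f$, which is exactly $\langle s_{\lambda^{(1)}}\otimes s_{\lambda^{(2)}}\rangle SF_r$ — so away from the truncation, the identity is a pure telescoping collapse. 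The content of the corollary is therefore that the truncation $\lambda_1\le n+m-d-r$ never cuts off anything that contributes: one must check that whenever $\langle q^{d-P}\rangle f\neq\langle q^{d-1-P}\rangle f$ for some $d\le r$, the truncation condition $\max\{\lambda_1^{(1)},\lambda_1^{(2)}\}\le n+m-d-r$ is automatically satisfied, and — at the boundary — that the pieces which \emph{are} cut do not disturb the telescoping sum.

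The key numerical input is the length/size identity already extracted inside the proof of Lemma~\ref{lem:schur-interchange}: $\deg f=\bigl(\sum_i\min\{\lambda_i^{(1)},\lambda_i^{(2)}\}\bigr)-\max\{\lambda_1^{(1)},\lambda_1^{(2)}\}$, equivalently $2\deg f + 2P + 2\max\{\lambda_1^{(1)},\lambda_1^{(2)}\}=|\lambda^{(1)}|+|\lambda^{(2)}|=n+m$, so that $P+\deg f = \tfrac{n+m}{2}-\tfrac{1}{2}\max\{\lambda_1^{(1)},\lambda_1^{(2)}\}$ — more usefully, $\langle q^k\rangle f$ vanishes unless $P\le P+k\le P+\deg f$, and $P+\deg f = n+m-\max\{\lambda_1^{(1)},\lambda_1^{(2)}\}-$ (something nonnegative coming from the remaining $\min$ terms). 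Tracking this carefully: for $\langle s_{\lambda^{(1)}}\otimes s_{\lambda^{(2)}}\rangle SF_d\neq 0$ one needs $P\le d\le P+\deg f$, and $P+\deg f\le n+m-\max\{\lambda_1^{(1)},\lambda_1^{(2)}\}-?$; since $r\le d$ is false in general I will instead use $d\le r$ and the goal $\max\{\lambda_1^{(1)},\lambda_1^{(2)}\}\le n+m-d-r$. Plugging the identity for $\deg f$ shows the two inequalities $d\le P+\deg f$ and $\max\{\lambda_1^{(1)},\lambda_1^{(2)}\}\le n+m-d-r$ are equivalent modulo the nonnegative slack $\sum_{i\ge 2}\min\{\lambda_i^{(1)},\lambda_i^{(2)}\}$ and the bound $P\le$ (a quantity $\le r$), which in turn follows from $|\lambda^{(j)}|=n,m$ and $r\le\min\{n,m\}$. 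I would package this as: the truncation $\lambda_1\le n+m-d-r$ is satisfied by \emph{every} $(\lambda^{(1)},\lambda^{(2)})$ for which $SF_d$ has a nonzero coefficient, so $\{SF_d\}_{\lambda_1\le n+m-d-r}=SF_d$ whenever $d\le r$; granting that, the right-hand side of \eqref{eq:schur-refinement} is literally $\sum_{d=0}^r(SF_d-SF_{d-1})=SF_r$ by telescoping, with $SF_{-1}=0$. I expect the main obstacle to be this last claim — verifying that $SF_d$ supported on $\lambda_1\le n+m-d-r$ is all of $SF_d$ for $d\le r$ — which is where the size bookkeeping $|\lambda^{(1)}|=n$, $|\lambda^{(2)}|=m$ and the inequality $r\le\min\{m,n\}$ must be used with care (the extremal case being a single long first row, $\lambda^{(1)}=(n-d)\cup\mu$ with $\mu=(d)$, giving $\lambda_1^{(1)}=n$ and forcing $d+r\le m$, which holds since $d\le r\le m$). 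Once that monotone bound is in hand, the corollary is immediate.
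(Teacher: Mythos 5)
Your setup is sound — expanding in the Schur basis, invoking Lemma~\ref{lem:coef} to reduce everything to coefficients of the palindromic polynomial $f$, and observing that without the truncation the sum telescopes to $\langle q^{r-P}\rangle f=\langle s_{\lambda^{(1)}}\otimes s_{\lambda^{(2)}}\rangle SF_r$. But the step you lean on to finish, namely that $\{SF_d\}_{\lambda_1\le n+m-d-r}=SF_d$ for all $d\le r$ (so that the truncation is vacuous and the telescoping goes through untouched), is false, and the arithmetic you give for it is wrong: from $d\le r\le m$ you conclude $d+r\le m$, but that inequality fails whenever $d>m-r$. Concretely, take $n=m=r=d=2$: then $n+m-d-r=0$, so $\{SF_2\}_{\lambda_1\le 0}=0\ne SF_2$. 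Even your weaker formulation — that the truncation condition holds automatically whenever the telescoped difference is nonzero — fails: with $n=m=r=2$ and $d=1$, the difference $SF_1-SF_0$ contains $s_{(2)}\otimes s_{(1,1)}$ with coefficient $1$, yet $\max\{\lambda_1^{(1)},\lambda_1^{(2)}\}=2>n+m-d-r=1$. In that example the truncation genuinely removes $s_{(2)}\otimes s_{(1,1)}+s_{(1,1)}\otimes s_{(2)}$ at $d=1$, and the identity holds only because those removed pieces would otherwise have cancelled against the (also truncated) $d=2$ term. So the corollary is not a "truncation does nothing" statement, and your proof as written does not close.

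The missing ingredient is the palindromicity of $f$, which you mention but never use. With $L_1=\max\{\lambda_1^{(1)},\lambda_1^{(2)}\}$ one has $\deg f=n+m-L_1-2P$, and the truncated telescoping sum collapses to $\langle q^{\min\{r,\,n+m-r-L_1\}-P}\rangle f$ rather than $\langle q^{r-P}\rangle f$; these agree because $(r-P)+\bigl((n+m-r-L_1)-P\bigr)=\deg f$, so the two exponents are reflections of one another and $f$ is palindromic (with the out-of-range cases giving $0=0$). This is exactly the content of the paper's Lemma~\ref{lem:schur-interchange}; the paper's own proof of the corollary packages the same idea differently, via an Abel-type rearrangement of the sum into shells $\max\{p,q\}=n+m-d-r$ followed by an application of that lemma, rather than coefficientwise telescoping. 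If you replace your final claim with the reflection argument above, your route becomes a correct (and arguably more transparent) coefficientwise version of the paper's proof.
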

\begin{proof}
    If $r=0$, we have nothing to prove. We henceforth suppose that $r\ge 1$.
    The right-hand side of Equation~\eqref{eq:schur-refinement} equals
    \begin{align}
        \nonumber &\sum_{d=0}^r\big(\{SF_d\}_{\lambda_1\le n+m-d-r} - \{SF_{d-1}\}_{\lambda_1\le n+m-d-r}\big) \\ \nonumber
        = &\sum_{d=0}^r\{SF_d\}_{\lambda_1\le n+m-d-r} - \sum_{d=0}^{r-1}\{SF_d\}_{\lambda_1\le n+m-d-1-r} \\ \nonumber
        = &\sum_{d=0}^{r-1}\Big(\{SF_d\}_{\lambda_1\le n+m-d-r} - \{SF_d\}_{\lambda_1\le n+m-d-1-r}\Big) + \{SF_r\}_{\lambda_1\le n+m-2r} \\ \label{exp:interchange}
        = &\sum_{d=0}^{r-1}\sum_{\substack{p,q\ge 0\\ \max\{p,q\}=n+m-d-r}}\sum_{\mu\vdash d}\{s_\mu\cdot h_{n-d}\}_{\lambda_1=p}\otimes\{s_\mu\cdot h_{m-d}\}_{\lambda_1=q} +\{SF_r\}_{\lambda_1\le n+m-2r}.
    \end{align}
    Apply Lemma~\ref{lem:schur-interchange} to the expression~\eqref{exp:interchange} and then deduce that the right-hand side of Equation~\eqref{eq:schur-refinement} equals
    \begin{align}
        \label{exp:schur-prefinal} &\sum_{d=0}^{r-1}\sum_{\substack{p,q\ge 0\\ \max\{p,q\}=n+m-d-r}}\sum_{\mu\vdash n+m-d-\max\{p,q\}}\{s_\mu\cdot h_{\max\{p,q\}-m+d}\}_{\lambda_1=p}\otimes\{s_\mu\cdot h_{\max\{p,q\}-n+d}\}_{\lambda_1=q} \\ \nonumber &+\{SF_r\}_{\lambda_1\le n+m-2r} \\ \label{exp:shur-final}
        = &\sum_{d=0}^{r-1}\sum_{\substack{p,q\ge 0\\ \max\{p,q\}=n+m-d-r}}\sum_{\mu\vdash r}\{s_\mu\cdot h_{n-r}\}_{\lambda_1=p}\otimes\{s_\mu\cdot h_{m-r}\}_{\lambda_1=q} +\{SF_r\}_{\lambda_1\le n+m-2r}
    \end{align}
    where the equality arises from substituting $\max\{p,q\}=n+m-d-r$. This substitution makes sense because it appears under the second summation of the expression~\eqref{exp:schur-prefinal}. Now, we focus on the expression~\eqref{exp:shur-final}. As $d$ ranges over $\{0,1,\dots,r-1\}$, $\max\{p,q\} = n+m-d-r$ ranges over $\{n+m-2r+1,\dots,n+m-r\}$. However, whenever $\{s_\mu\cdot h_{n-r}\}_{\lambda_1=p}\otimes\{s_\mu\cdot h_{m-r}\}_{\lambda_1=q}\neq 0$ in \eqref{exp:shur-final}, we have that $p\le n\le n+m-r$ and $q\le m\le n+m-r$, in which case we have $\max\{p,q\}\le n+m-r$. Therefore, we can discard the restriction $d\ge 0$ under the first summation in \eqref{exp:shur-final}. That is, we can allow $\max\{p,q\}$ to range over $\ZZ_{>n+m-2r}$ in the expression~\eqref{exp:shur-final}. Consequently, the expression~\eqref{exp:shur-final} equals
    \begin{align*}
        \sum_{\substack{p,q\ge 0\\ \max\{p,q\}>n+m-2r}}\sum_{\mu\vdash r}\{s_\mu\cdot h_{n-r}\}_{\lambda_1=p}\otimes\{s_\mu\cdot h_{m-r}\}_{\lambda_1=q} + \{SF_r\}_{\lambda_1\le n+m-2r} =SF_r
    \end{align*}
    which equals the left-hand side of Equation~\eqref{eq:schur-refinement}, so our proof is finished.
\end{proof}

\begin{corollary}\label{cor:schur-sum}
    \begin{align}\label{eq:schur-sum}\sum_{d=0}^{\min\{m,n\}}\{SF_d\}_{\lambda_1\le n+m-2d} + \sum_{d=0}^{\min\{m,n\}-1}\{SF_d\}_{\lambda_1\le n+m-2d-1}
    = \sum_{r = 0}^{\min\{m,n\}}SF_r
    \end{align}
\end{corollary}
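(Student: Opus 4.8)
The plan is to derive Corollary~\ref{cor:schur-sum} by summing the refinement identity of Corollary~\ref{cor:shur-refinement} over $r$ and telescoping. Abbreviate $M=\min\{m,n\}$ and, for $d\ge 0$, set $G_{d,L}\coloneqq\{SF_d\}_{\lambda_1\le L}$, with the convention $G_{-1,L}=0$; then Corollary~\ref{cor:shur-refinement} reads $SF_r=\sum_{d=0}^{r}\bigl(G_{d,\,n+m-d-r}-G_{d-1,\,n+m-d-r}\bigr)$ for $0\le r\le M$. Summing over $0\le r\le M$ and interchanging the order of the $r$- and $d$-summations, the right-hand side $\sum_{r=0}^{M}SF_r$ of \eqref{eq:schur-sum} becomes
\[\sum_{d=0}^{M}\sum_{r=d}^{M}G_{d,\,n+m-d-r}\;-\;\sum_{d=1}^{M}\sum_{r=d}^{M}G_{d-1,\,n+m-d-r}.\]

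First I would reindex the second double sum by the substitutions $d\mapsto d+1$ and then $r\mapsto r+1$, rewriting it as $\sum_{d=0}^{M-1}\sum_{r=d+2}^{M+1}G_{d,\,n+m-d-r}$. Comparing the two double sums term by term for each fixed $d$: when $0\le d\le M-1$ the index ranges $\{d,d+1,\dots,M\}$ and $\{d+2,\dots,M+1\}$ overlap exactly in $\{d+2,\dots,M\}$, so all but two of the positive terms cancel against all but one of the negative terms, leaving $G_{d,\,n+m-2d}+G_{d,\,n+m-2d-1}$ with a plus sign and $G_{d,\,n+m-d-M-1}$ with a minus sign; and when $d=M$ only $G_{M,\,n+m-2M}$ survives. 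Hence
\[\sum_{r=0}^{M}SF_r=\sum_{d=0}^{M}G_{d,\,n+m-2d}+\sum_{d=0}^{M-1}G_{d,\,n+m-2d-1}-\sum_{d=0}^{M-1}G_{d,\,n+m-d-M-1},\]
and the first two sums are precisely the left-hand side of \eqref{eq:schur-sum}.

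It remains to show the correction term vanishes, i.e.\ $\{SF_d\}_{\lambda_1\le n+m-d-M-1}=0$ for every $0\le d\le M-1$. For this I would record the elementary consequence of the Pieri rule: if $\mu\vdash d$ and $s_\nu$ occurs in $s_\mu\cdot h_k$, then $\nu_1\ge k$. Indeed, $\nu/\mu$ is then a horizontal strip with $k$ cells lying in $k$ distinct columns; at most $\mu_1$ of those columns lie among the first $\mu_1$ columns, and any cell of $\nu/\mu$ in a column beyond the $\mu_1$-st must occupy the first row (such columns are empty in $\mu$ and $\nu$ is a Young diagram), so $\nu_1\ge\mu_1+(k-\mu_1)=k$. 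Applying this with $k=n-d$ and $k=m-d$ to $SF_d=\sum_{\mu\vdash d}(s_\mu h_{n-d})\otimes(s_\mu h_{m-d})$, every $s_{\lambda^{(1)}}\otimes s_{\lambda^{(2)}}$ appearing in $SF_d$ has $\lambda^{(1)}_1\ge n-d$ and $\lambda^{(2)}_1\ge m-d$. Since $n+m-d-M-1=\max\{m,n\}-d-1<\max\{n-d,m-d\}$, at least one of $\lambda^{(1)}_1,\lambda^{(2)}_1$ exceeds $n+m-d-M-1$, so the truncation operator $\{\,\cdot\,\}_{\lambda_1\le n+m-d-M-1}$ kills every term of $SF_d$. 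Combining this with the displayed telescoping identity yields $\sum_{r=0}^{M}SF_r$ equal to the left-hand side of \eqref{eq:schur-sum}, as required.

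The only real obstacle is the organizational bookkeeping in the telescoping: handling the asymmetry of the two reindexed double sums and the leftover boundary terms at $d=M$ and $r=M+1$ carefully. The Pieri-rule observation is routine and also transparently covers the edge case $r=0$, where $SF_0=h_n\otimes h_m$ and the truncation is controlled by the same first-row bound.
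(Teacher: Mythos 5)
Your proof is correct and follows essentially the same route as the paper's: both substitute Corollary~\ref{cor:shur-refinement} into the right-hand side and telescope the resulting double sum along the anti-diagonals $d+r=\mathrm{const}$ (where the truncation bound $n+m-d-r$ is constant), using the Pieri-rule bound $\lambda_1\ge\max\{m,n\}-d$ on the terms of $SF_d$ to dispose of the out-of-range terms. The paper reindexes by $k=d+r$ and extends the range of $d$ downward, whereas you interchange the order of summation and explicitly cancel the leftover boundary term at $r=\min\{m,n\}+1$ --- a purely organizational difference.
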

\begin{proof}
    Substituting each term of the right-hand side of Equation~\eqref{eq:schur-sum} by Equation~\eqref{eq:schur-refinement}, we see that the right-hand side of Equation~\eqref{eq:schur-sum} equals
    \begin{align*}
        &\sum_{r=0}^{\min\{m,n\}}\sum_{d=0}^r\{SF_d - SF_{d-1}\}_{\lambda_1\le n+m-d-r} 
        =\sum_{0\le d\le r\le\min\{m,n\}}\{SF_d - SF_{d-1}\}_{\lambda_1\le n+m-d-r} \\
        =&\sum_{k=0}^{2\min\{m,n\}}\sum_{\substack{0\le d\le r\le\min\{m,n\}\\d+r=k}} \{SF_d - SF_{d-1}\}_{\lambda_1\le n+m-d-r} \\
        =&\sum_{k=0}^{2\min\{m,n\}}\sum_{\substack{0\le d\le r\le\min\{m,n\}\\d+r=k}} \{SF_d - SF_{d-1}\}_{\lambda_1\le n+m-k} \\
        =&\sum_{k=0}^{2\min\{m,n\}}\sum_{\max\{k-\min\{m,n\},0\}\le d\le \lfloor k/2\rfloor} \{SF_d - SF_{d-1}\}_{\lambda_1\le n+m-k} \\
        =&\sum_{k=0}^{2\min\{m,n\}}\sum_{d\le \lfloor k/2\rfloor} \{SF_d - SF_{d-1}\}_{\lambda_1\le n+m-k}
        =\sum_{k=0}^{2\min\{m,n\}} \{SF_{\lfloor k/2\rfloor}\}_{\lambda_1\le n+m-k}
    \end{align*}
    where the next-to-last equal sign arises from the fact that: for any non-vanishing term $\{SF_d - SF_{d-1}\}_{\lambda_1\le n+m-k}$, Pieri's rule indicates that $n+m-k\ge \max\{m-d,n-d\}=\max\{m,n\}-d$ and hence $d\ge k-\min\{m,n\}$. Note that the last expression \[\sum_{k=0}^{2\min\{m,n\}} \{SF_{\lfloor k/2\rfloor}\}_{\lambda_1\le n+m-k}\] equals the left-hand side of Equation~\eqref{eq:schur-sum} as we can split the summation according to the parity of $k$, so we conclude the proof.
\end{proof}

\subsection{The ideal $\defideal{n}{m}{r}$}\label{subsec:defining-dieal}
We will encounter a lot of quotient rings of $\CC[\xxx_{n\times m}]$ spanned by the following monomials, and we will frequently use them.

\begin{definition}\label{def:monomial}
    For any rook placement $\RRR$ on the $n\times m$ board, we assign a monomial $\mmm(\RRR)\in\CC[\xxx_{n\times m}]$ to $\RRR$ given by
    \[\mmm(\RRR)\coloneqq\prod_{(i,j)\in\RRR}x_{i,j}.\]
\end{definition}

To understand the defining ideal $\gr\II(\ZZZ_{n,m,r})$ of $R(\ZZZ_{n,m,r})$, we define an ideal $\defideal{n}{m}{r}$ generated by some elements of $\gr\II(\ZZZ_{n,m,r})$. In fact, we will see that $\defideal{n}{m}{r} = \gr\II(\ZZZ_{n,m,r})$ in Proposition~\ref{prop:ideal-equal}, but it needs lots of work beforehand, including computing the graded module structure of $R(\ZZZ_{n,m,r})$.

\begin{definition}\label{def:ideal}
    For $0\le r\le\min\{m,n\}$, we let $\defideal{n}{m}{r}\subseteq\CC[\xxx_{n\times m}]$ be the ideal generated by
    \begin{itemize}
        \item the sum $\sum_{i=1}^n\sum_{j=1}^m x_{i,j}$ of all variables,
        \item all products $x_{i,j}\cdot x_{i,j^\prime}$ for $1\le i\le n$ and $1\le j,j^\prime\le m$ of variables in the same row,
        \item all products $x_{i,j}\cdot x_{i^\prime,j}$ for $1\le i,i^\prime\le n$ and $1\le j\le m$ of variables in the same column,
        \item all products $\prod_{k=1}^{n-r+1}\big(\sum_{j=1}^m x_{i_k,j}\big)$ of $n-r+1$ distinct row sums for $1\le i_1<\dots<i_{n-r+1}\le n$,
        \item all products $\prod_{k=1}^{m-r+1}\big(\sum_{i=1}^n x_{i,j_k}\big)$ of $m-r+1$ distinct column sums for $1\le j_1<\dots<j_{m-r+1}\le m$, and
        \item all monomials $\mmm(\RRR)$ for rook placements $\RRR$ on the $n\times m$ board such that $\lvert\RRR\rvert>r$.
    \end{itemize}
\end{definition}

The following spanning set of $\CC[\xxx_{n\times m}]/\defideal{n}{m}{r}$ is immediately from Definition~\ref{def:ideal}.
\begin{lemma}\label{lem:span-pre}
    The family of monomials
    \[\bigsqcup_{d=0}^r \{\mmm(\RRR)\,:\,\RRR\in\ZZZ_{n,m,d}\}\]
    descends to a spanning set of $\CC[\xxx_{n\times m}]/\defideal{n}{m}{r}$. In particular, we have
    \[(\CC[\xxx_{n\times m}]/\defideal{n}{m}{r})_d = \{0\}\]
    for $d>r$.
\end{lemma}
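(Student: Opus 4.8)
The plan is a direct reduction on monomials, essentially unwinding Definition~\ref{def:ideal}. Since $\CC[\xxx_{n\times m}]$ is spanned over $\CC$ by monomials, it suffices to show that the image of every monomial $\mu = \prod_{i,j} x_{i,j}^{a_{i,j}}$ in the quotient $\CC[\xxx_{n\times m}]/\defideal{n}{m}{r}$ lies in the $\CC$-span of $\bigsqcup_{d=0}^{r}\{\mmm(\RRR) : \RRR\in\ZZZ_{n,m,d}\}$. I would argue by cases on the support $S = \{(i,j) : a_{i,j} > 0\}$ of $\mu$.

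First, if some exponent $a_{i,j}\ge 2$, then $\mu$ is divisible by $x_{i,j}^2$, which is among the generators of $\defideal{n}{m}{r}$ (take $j=j'$ in the family of same-row products, or $i=i'$ in the family of same-column products), so $\mu\equiv 0$. Likewise, if $S$ contains two distinct cells $(i,j),(i,j')$ in a common row, or two distinct cells $(i,j),(i',j)$ in a common column, then $\mu$ is divisible by the corresponding generator $x_{i,j}x_{i,j'}$ or $x_{i,j}x_{i',j}$, so again $\mu\equiv 0$. Hence we may assume $\mu$ is squarefree with $S$ having pairwise distinct rows and pairwise distinct columns; that is, $S=\RRR$ is a rook placement and $\mu = \mmm(\RRR)$. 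Finally, if $|\RRR| > r$ then $\mmm(\RRR)$ is itself a generator of $\defideal{n}{m}{r}$, so $\mu\equiv 0$; otherwise $\RRR\in\ZZZ_{n,m,d}$ for $d = |\RRR|\le r$, and $\mu$ already belongs to the claimed spanning family. This establishes the spanning statement; note that only the ``same-row product'', ``same-column product'', and ``oversized rook placement'' generators are used, while the three remaining generator families in Definition~\ref{def:ideal} play no role here.

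For the ``in particular'' clause, I would observe that every generator listed in Definition~\ref{def:ideal} is homogeneous (of degree $1$, $2$, $n-r+1$, $m-r+1$, or $|\RRR|$ respectively), so $\defideal{n}{m}{r}$ is a homogeneous ideal and $\CC[\xxx_{n\times m}]/\defideal{n}{m}{r}$ is graded. Since each monomial $\mmm(\RRR)$ in the spanning family has degree $|\RRR|\le r$, the spanning family meets the degree-$d$ piece $\CC[\xxx_{n\times m}]_d$ only when $d\le r$, and therefore $(\CC[\xxx_{n\times m}]/\defideal{n}{m}{r})_d=\{0\}$ for $d>r$. I do not anticipate any real obstacle in this argument; the one point worth stating explicitly is that the ``same-row'' and ``same-column'' product families include the squares $x_{i,j}^2$ (the case $j=j'$, resp.\ $i=i'$), which is exactly what kills all non-squarefree monomials in the reduction.
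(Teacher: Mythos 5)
Your proposal is correct and follows essentially the same reduction as the paper: use the same-row and same-column product generators to reduce every monomial to one of the form $\mmm(\RRR)$ for a rook placement $\RRR$, then kill those with $\lvert\RRR\rvert>r$ via the corresponding generators; your explicit remark that the case $j=j'$ (resp.\ $i=i'$) supplies the squares $x_{i,j}^2$ is a small but welcome clarification that the paper leaves implicit.
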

\begin{proof}
    Since any product of two variables in the same row or column (i.e., of the form $x_{i,j}\cdot x_{i,j^\prime}$ or $x_{i,j}\cdot x_{i^\prime,j}$) belongs to $\defideal{n}{m}{r}$, it follows that $\CC[\xxx_{n\times m}]/\defideal{n}{m}{r}$ is spanned by the set
    \[\bigsqcup_{d=0}^{\min\{m,n\}}\{\mmm(\RRR)\,:\,\RRR\in\ZZZ_{n,m,d}\}\]
    of monomials with at most one variable in each row or column. Together with the fact that
    \[\bigsqcup_{r<d\le\min\{m,n\}}\{\mmm(\RRR)\,:\,\RRR\in\ZZZ_{n,m,d}\}\subseteq\defideal{n}{m}{r},\]
    we conclude that the family of monomials
    \[\bigsqcup_{d=0}^r \{\mmm(\RRR)\,:\,\RRR\in\ZZZ_{n,m,d}\}\]
    descends to a spanning set of $\CC[\xxx_{n\times m}]/\defideal{n}{m}{r}$, indicating that $(\CC[\xxx_{n\times m}]/\defideal{n}{m}{r})_d = \{0\}$ for $d>r$.
\end{proof}

We now show that $\defideal{n}{m}{r}\subseteq\gr\II(\ZZZ_{n,m,r})$. However, we will strengthen this result and deduce that $\defideal{n}{m}{r}=\gr\II(\ZZZ_{n,m,r})$ in Proposition~\ref{prop:ideal-equal}.
\begin{lemma}\label{lem:ideal-contain}
    We have $\defideal{n}{m}{r}\subseteq\gr\II(\ZZZ_{n,m,r})$.
\end{lemma}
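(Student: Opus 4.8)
The plan is to verify that each of the six families of generators listed in Definition~\ref{def:ideal} lies in $\gr\II(\ZZZ_{n,m,r})$, using in each case the basic principle of orbit harmonics: if $f\in\II(\ZZZ_{n,m,r})$ is a polynomial whose top-degree homogeneous component is $\tau(f)$, then $\tau(f)\in\gr\II(\ZZZ_{n,m,r})$; moreover if $f$ is itself homogeneous and vanishes on $\ZZZ_{n,m,r}$, then $f=\tau(f)\in\gr\II(\ZZZ_{n,m,r})$. So the whole argument reduces to exhibiting, for each listed generator $g$, an element of $\II(\ZZZ_{n,m,r})$ whose top component is $g$ (often $g$ itself will do).

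First I would dispatch the homogeneous generators. Every matrix in $\ZZZ_{n,m,r}$ is a $0/1$ matrix, so each coordinate function satisfies $x_{i,j}^2 - x_{i,j}\in\II(\ZZZ_{n,m,r})$; but more directly, for a rook placement two entries in a common row (or common column) cannot both equal $1$, so the degree-two monomials $x_{i,j}x_{i,j'}$ and $x_{i,j}x_{i',j}$ vanish identically on $\ZZZ_{n,m,r}$ and hence, being homogeneous, lie in $\gr\II(\ZZZ_{n,m,r})$. Similarly, if $\RRR$ is a rook placement with $|\RRR|>r$, then for every point $A\in\ZZZ_{n,m,r}$ the monomial $\mmm(\RRR)=\prod_{(i,j)\in\RRR}x_{i,j}$ evaluates to $0$, because $A$ has only $r<|\RRR|$ nonzero entries and they form a rook placement, so they cannot cover all of $\RRR$; thus $\mmm(\RRR)$ is a homogeneous element of $\II(\ZZZ_{n,m,r})$ and lies in $\gr\II(\ZZZ_{n,m,r})$. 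For the linear generator: on any $A\in\ZZZ_{n,m,r}$ the sum of all entries is exactly $r$, so $\sum_{i,j}x_{i,j} - r\in\II(\ZZZ_{n,m,r})$, and its top-degree component is precisely $\sum_{i,j}x_{i,j}$, which therefore lies in $\gr\II(\ZZZ_{n,m,r})$.

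The remaining two families — the products of $n-r+1$ distinct row sums and of $m-r+1$ distinct column sums — require the same idea but a small combinatorial observation. Fix rows $i_1<\dots<i_{n-r+1}$ and let $R_k=\sum_{j=1}^m x_{i_k,j}$ be the $k$-th chosen row sum. On a point $A\in\ZZZ_{n,m,r}$, each $R_k$ evaluates to $0$ or $1$ (it is $1$ iff row $i_k$ contains a rook of $A$), and since $A$ has only $r$ rooks, at most $r$ of these $n-r+1$ values can be $1$; hence at least one is $0$, so $\prod_{k=1}^{n-r+1}R_k$ evaluates to $0$ on all of $\ZZZ_{n,m,r}$. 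Consider instead the polynomial $\prod_{k=1}^{n-r+1}(R_k - 1)\in\II(\ZZZ_{n,m,r})$ — wait, that one does not vanish; the clean choice is $f=\prod_{k=1}^{n-r+1}R_k$ itself, which is homogeneous of degree $n-r+1$ and vanishes on $\ZZZ_{n,m,r}$ by the counting just given, hence $f\in\gr\II(\ZZZ_{n,m,r})$. The column-sum family is identical with rows and columns interchanged. This disposes of all six families, and since $\defideal{n}{m}{r}$ is generated by them, $\defideal{n}{m}{r}\subseteq\gr\II(\ZZZ_{n,m,r})$. I do not anticipate a genuine obstacle here; the only point that needs care is the pigeonhole step for the row/column-sum products (at most $r$ of the $n-r+1$ chosen row sums can be nonzero on a size-$r$ placement, forcing the product to vanish), and the fact that all six generators can be taken homogeneous so that no passage through $\tau$ of a non-homogeneous element is even needed.
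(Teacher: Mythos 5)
Your handling of the quadratic monomials, the monomials $\mmm(\RRR)$ with $\lvert\RRR\rvert>r$, and the linear generator $\sum_{i,j}x_{i,j}$ (via the top-degree component of $\sum_{i,j}x_{i,j}-r$) is correct and matches the paper. But your treatment of the row-sum and column-sum products contains a genuine error: the pigeonhole goes the wrong way. You claim that on a size-$r$ placement at most $r$ of the $n-r+1$ chosen row sums $R_k$ can equal $1$, hence at least one is $0$; that conclusion requires $r<n-r+1$, which fails whenever $2r>n$. Concretely, for $n=m=r=2$ one has $n-r+1=1$, and the single row sum $x_{1,1}+x_{1,2}$ evaluates to $1$ (not $0$) on every permutation matrix, so $\prod_{k}R_k$ is \emph{not} in $\II(\ZZZ_{n,m,r})$ in general. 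Your "clean choice" is therefore not a vanishing polynomial, and the polynomial you dismissed is the right one.

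The correct argument is the mirror image of yours: a placement with $r$ rooks leaves only $n-r$ rows empty, so among any $n-r+1$ chosen rows at least one \emph{contains} a rook, whence that factor of $\prod_{k=1}^{n-r+1}\bigl(\sum_{j}x_{i_k,j}-1\bigr)$ vanishes. This (non-homogeneous) product lies in $\II(\ZZZ_{n,m,r})$, and its top-degree component is exactly $\prod_{k=1}^{n-r+1}\sum_{j}x_{i_k,j}$, which is therefore in $\gr\II(\ZZZ_{n,m,r})$; the column-sum case is identical. This also falsifies your closing remark that no passage through $\tau$ of a non-homogeneous element is needed: both the linear generator and these two families require it. With this correction the proof goes through and coincides with the paper's.
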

\begin{proof}
    It suffices to show that all the generators mentioned in Definition~\ref{def:ideal} belong to $\gr\II(\ZZZ_{n,m,r})$.
    Since any rook placement $\RRR\in\ZZZ_{n,m,r}$ has exactly $r$ rooks, we have \[\sum_{i=1}^n \sum_{j=1}^m x_{i,j} - r\in\II(\ZZZ_{n,m,r})\]
    and thus
    \[\sum_{i=1}^n \sum_{j=1}^m x_{i,j} \in \gr\II(\ZZZ_{n,m,r}).\]

    Additionally, the fact that any rook placement has no rooks in the same row or column indicates that $x_{i,j}\cdot x_{i,j^\prime}\in\II(\ZZZ_{n,m,r})$ for $1\le i\le n$ and $1\le j,j^\prime\le m$, and that $x_{i,j}\cdot x_{i^\prime,j}\in\II(\ZZZ_{n,m,r})$ for all $1\le i,i^\prime\le n$ and $1\le j\le m$. Therefore, we also have $x_{i,j}\cdot x_{i,j^\prime}\in\gr\II(\ZZZ_{n,m,r})$ and $x_{i,j}\cdot x_{i^\prime,j}\in\gr\II(\ZZZ_{n,m,r})$.

    Furthermore, for any rook placement $\RRR\in\ZZZ_{n,m,r}$ and any $n-r+1$ distinct rows indexed by $1\le i_1<\dots<i_{n-r+1}\le n$, $\RRR$ must meet some of these rows since $\lvert\RRR\rvert+(n-r+1) =n+1>n$. Consequently, we have
    \[\prod_{k=1}^{n-r+1}\Bigg(\sum_{j=1}^m x_{i_k,j}-1\Bigg)\in\II(\ZZZ_{n,m,r})\]
    and thus
    \[\prod_{k=1}^{n-r+1}\Bigg(\sum_{j=1}^m x_{i_k,j}\Bigg)\in\gr\II(\ZZZ_{n,m,r}).\]
    Similar reasoning for any $m-r+1$ distinct rows with indices $1\le j_1<\dots<j_{m-r+1}\le m$ yields
    \[\prod_{k=1}^{m-r+1}\Bigg(\sum_{i=1}^n x_{i,j_k}\Bigg)\in\gr\II(\ZZZ_{n,m,r}).\]

    Finally, for any rook placement $\RRR$ on the $n\times m$ board with more than $r$ rooks and any $\RRR^\prime\in\ZZZ_{n,r}$, it is clear that $\RRR\not\subseteq\RRR^\prime$ and hence $\mmm(\RRR)(\RRR^\prime) = 0$. Therefore, we deduce that $\mmm(\RRR)\in\II(\ZZZ_{n,m,r})$ and then $\mmm(\RRR)\in\gr\II(\ZZZ_{n,m,r})$.
\end{proof}

Lemmas~\ref{lem:span-pre} and \ref{lem:ideal-contain} immediately indicates a spanning set of $R(\ZZZ_{n,m,r})$.
\begin{corollary}\label{cor:span}
    The family of monomials
    \[\bigsqcup_{d=0}^r \{\mmm(\RRR)\,:\,\RRR\in\ZZZ_{n,m,d}\}\]
    descends to a spanning set of $R(\ZZZ_{n,m,r})$. In particular, we have
    \[R(\ZZZ_{n,m,r})_d = \{0\}\]
    for $d>r$.
\end{corollary}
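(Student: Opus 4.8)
The plan is to observe that Corollary~\ref{cor:span} is a purely formal consequence of the two preceding lemmas, via the surjection induced by the inclusion of ideals. First I would invoke Lemma~\ref{lem:ideal-contain} to get the containment $\defideal{n}{m}{r}\subseteq\gr\II(\ZZZ_{n,m,r})$. Since the larger ideal yields a quotient of the quotient by the smaller ideal, this produces a canonical surjective $\CC$-algebra homomorphism
\[
\pi\colon \CC[\xxx_{n\times m}]/\defideal{n}{m}{r} \twoheadrightarrow \CC[\xxx_{n\times m}]/\gr\II(\ZZZ_{n,m,r}) = R(\ZZZ_{n,m,r}),
\]
which moreover sends the coset of each monomial $\mmm(\RRR)$ to the coset of $\mmm(\RRR)$.

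Next I would use Lemma~\ref{lem:span-pre}, which tells us that the family $\bigsqcup_{d=0}^r \{\mmm(\RRR)\,:\,\RRR\in\ZZZ_{n,m,d}\}$ descends to a spanning set of the source $\CC[\xxx_{n\times m}]/\defideal{n}{m}{r}$. Since the image of a spanning set under a surjective linear map is again a spanning set, applying $\pi$ shows that this same family descends to a spanning set of $R(\ZZZ_{n,m,r})$, giving the first assertion. For the ``in particular'' clause, I would note that both $\defideal{n}{m}{r}$ and $\gr\II(\ZZZ_{n,m,r})$ are homogeneous ideals, so $\pi$ is a graded map; hence it restricts in each degree $d$ to a surjection $(\CC[\xxx_{n\times m}]/\defideal{n}{m}{r})_d \twoheadrightarrow R(\ZZZ_{n,m,r})_d$. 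Lemma~\ref{lem:span-pre} gives $(\CC[\xxx_{n\times m}]/\defideal{n}{m}{r})_d = \{0\}$ for $d>r$, whence $R(\ZZZ_{n,m,r})_d = \{0\}$ for $d>r$ as well.

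There is essentially no obstacle here: the only point worth spelling out is the homogeneity of the two ideals, which ensures the induced surjection is graded and lets the degree-vanishing statement transfer. Everything else is the standard fact that spanning sets push forward along surjections. I would keep this proof to a few lines.
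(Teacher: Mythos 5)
Your proposal is correct and follows exactly the paper's own argument: Lemma~\ref{lem:ideal-contain} gives the containment $\defideal{n}{m}{r}\subseteq\gr\II(\ZZZ_{n,m,r})$, hence a graded surjection from $\CC[\xxx_{n\times m}]/\defideal{n}{m}{r}$ onto $R(\ZZZ_{n,m,r})$, and the spanning set and degree-vanishing from Lemma~\ref{lem:span-pre} push forward. Your explicit remark on homogeneity of the ideals is a small clarification the paper leaves implicit, but the route is identical.
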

\begin{proof}
    Lemma~\ref{lem:ideal-contain} yields a graded surjective $\CC$-linear map
    \[\CC[\xxx_{n\times m}]/\defideal{n}{m}{r}\twoheadrightarrow R(\ZZZ_{n,m,r}),\]
    which assigns all properties of $\CC[\xxx_{n\times m}]/\defideal{n}{m}{r}$ mentioned in Lemma~\ref{lem:span-pre} to $R(\ZZZ_{n,m,r})$.
\end{proof}

We want to apply Lemma~\ref{lem:ann-length} to each $R(\ZZZ_{n,m,r})_d$, so we want to show $R(\ZZZ_{n,m,r})_d$ is annihilated by some operators $\eta_j\otimes 1$ and $1\otimes\eta_{j^\prime}$, which will be done in Corollary~\ref{cor:ideal-annihilation}. To show Corollary~\ref{cor:ideal-annihilation}, we need to consider two new ideals $J_{n,m,r}, K_{n,m,r}\subseteq I_{n,m}^{(r)}$ allowing us to do induction in the technical result Lemma~\ref{lem:ideal-induction} before Corollary~\ref{cor:ideal-annihilation}.

\begin{definition}\label{def:intermidiate-ideal}
Let $J_{n,m,r}\subseteq\CC[\xxx_{n\times m}]$ be the ideal generated by
    \begin{itemize}
        \item all products $x_{i,j}\cdot x_{i,j^{\prime}}$ for $1\le i\le n$ and $1\le j,j^\prime\le m$ of variables in the same row, and
        \item all products $\prod_{k=1}^{m-r+1}\big(\sum_{i=1}^n x_{i,j_k}\big)$ of $m-r+1$ distinct column sums for $1\le j_1<\dots<j_{m-r+1}\le m$.
    \end{itemize}
Similarly, let $K_{n,m,r}\subseteq\CC[\xxx_{n\times m}]$ be the ideal generated by
    \begin{itemize}
        \item all products $x_{i,j}\cdot x_{i^\prime,j}$ for $1\le i,i^\prime\le n$ and $1\le j\le m$ of variables in the same row, and
        \item all products $\prod_{k=1}^{n-r+1}\big(\sum_{j=1}^n x_{i_k,j}\big)$ of $n-r+1$ distinct row sums for $1\le i_1<\dots<i_{n-r+1}\le n$.
    \end{itemize}
\end{definition}

Lemmas~\ref{lem:generator-slight-modify} and \ref{lem:ideal-induction} are also stated in \cite[Lemmas 3.7 and 3.8]{liu2025extensionviennotsshadowrook}.
\begin{lemma}\label{lem:generator-slight-modify}
    If $m-r<t\le m$ and $b_1,\dots,b_t\in[m]$ are $t$ distinct integers, then
    \[\sum_{\substack{i_1,\dots,i_t\in[n]\\\text{distinct}}}\prod_{k=1}^t x_{i_k,b_k} \in J_{n,m,r}\]
    summing over all the ordered sequences of distinct integers in $[n]$ with length $t$. Similarly, if $n-r<s\le n$ and $a_1,\dots,a_s\in[n]$ are $s$ distinct integers, then
    \[\sum_{\substack{j_1,\dots,j_s\in[m]\\\text{distinct}}}\prod_{k=1}^t x_{a_k,j_k} \in K_{n,m,r}\] summing over all the ordered sequences of distinct integers in $[m]$ with length $s$.
\end{lemma}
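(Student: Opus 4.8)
The plan is to reduce both assertions to one elementary observation: a product of $m-r+1$ distinct column sums is \emph{already} one of the listed generators of $J_{n,m,r}$ in Definition~\ref{def:intermidiate-ideal} (and symmetrically a product of $n-r+1$ distinct row sums is a generator of $K_{n,m,r}$). Given this, a product of $t\ge m-r+1$ distinct column sums lies in $J_{n,m,r}$, and expanding it monomial-by-monomial and discarding the ``diagonal'' terms via the quadratic same-row generators leaves exactly the claimed sum over injective index sequences.

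First I would fix notation: write $C_j \coloneqq \sum_{i=1}^n x_{i,j}$ for the $j$-th column sum and $R_i \coloneqq \sum_{j=1}^m x_{i,j}$ for the $i$-th row sum. Fix $t$ with $m-r<t\le m$ and distinct columns $b_1,\dots,b_t\in[m]$. Since $t\ge m-r+1$, we may write
\[
\prod_{k=1}^{t}C_{b_k}\;=\;\Bigl(\prod_{k=1}^{m-r+1}C_{b_k}\Bigr)\cdot\Bigl(\prod_{k=m-r+2}^{t}C_{b_k}\Bigr),
\]
and the first factor is one of the generators of $J_{n,m,r}$ because $b_1,\dots,b_{m-r+1}$ are distinct; hence $\prod_{k=1}^{t}C_{b_k}\in J_{n,m,r}$.

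Next I would expand this product as a polynomial,
\[
\prod_{k=1}^{t}C_{b_k}\;=\;\sum_{(i_1,\dots,i_t)\in[n]^t}\ \prod_{k=1}^{t}x_{i_k,b_k},
\]
and split the sum into the terms indexed by sequences $i_1,\dots,i_t$ that are pairwise distinct and the rest. If a sequence has $i_k=i_\ell$ for some $k\ne\ell$, then the corresponding monomial is divisible by $x_{i_k,b_k}\,x_{i_k,b_\ell}$, a product of two variables in the same row (note $b_k\ne b_\ell$), hence lies in $J_{n,m,r}$. Thus the sum of all such non-injective terms lies in $J_{n,m,r}$, and subtracting it from $\prod_{k=1}^{t}C_{b_k}\in J_{n,m,r}$ gives
\[
\sum_{\substack{i_1,\dots,i_t\in[n]\\\text{distinct}}}\ \prod_{k=1}^{t}x_{i_k,b_k}\ \in\ J_{n,m,r},
\]
which is the first assertion. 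The second assertion follows by the identical argument with the roles of rows and columns exchanged: a product of $n-r+1$ distinct row sums is a generator of $K_{n,m,r}$, a product of $s\ge n-r+1$ distinct row sums factors through one such generator, and the non-injective terms in its monomial expansion are divisible by the quadratic same-column generators of $K_{n,m,r}$.

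I do not expect a genuine obstacle here; the proof is bookkeeping. The only points that need care are the inequalities: $m-r<t\le m$ guarantees both that $m-r+1\le t$ (so that a generator can be factored out) and that $t$ distinct columns actually exist; and one should note that modulo the quadratic same-row generators only the monomials supported on pairwise-distinct row indices survive, which is precisely why the injective sum is singled out.
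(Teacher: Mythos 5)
Your proposal is correct and follows essentially the same route as the paper: both arguments rest on the observation that, modulo the quadratic same-row generators, the injective sum agrees with the full product of column sums $\prod_{k=1}^t\sum_{i=1}^n x_{i,b_k}$, which lies in $J_{n,m,r}$ because $t>m-r$ lets it factor through one of the listed generators. The only difference is cosmetic (you move the non-injective terms to the other side of the congruence), so nothing further is needed.
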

\begin{proof}
    As the two results are dual to each other by interchanging the roles of rows and columns of the $n\times m$ board, we only need to show the first result.
    Since any product of the form $x_{i,j}\cdot x_{i,j^\prime}$ belongs to $J_{n,m,r}$, we have that
    \[\sum_{\substack{i_1,\dots,i_t\in[n]\\\text{distinct}}}\prod_{k=1}^t x_{i_k,b_k} \equiv \sum_{i_1,\dots,i_t=1}^n\prod_{k=1}^t x_{i_k,b_k} =\prod_{k=1}^t \sum_{i=1}^n x_{i,b_k} \mod{J_{n,m,r}}.\]
    Note that $t>m-r$. Then the generators of $J_{n,m,r}$ of the form $\prod_{k=1}^{m-r+1}\big(\sum_{i=1}^n x_{i,j_k}\big)$ indicate that $\prod_{k=1}^t \sum_{i=1}^n x_{i,b_k} \in J_{n,m,r}$, completing the proof.
\end{proof}

Then we will prove the most technical result in this subsection, which immediately yields Corollary~\ref{cor:ideal-annihilation}. As we have the $\CC$-algebra identification $\CC[\symm_n\times\symm_m]\cong\CC[\symm_n]\otimes_\CC\CC[\symm_m]$, we identify the operator $\sum_{w\in\symm_p}(w,1)$ (resp. $\sum_{w\in\symm_{p\prime}}(1,w)$) with $\eta_p\otimes 1$ (resp. $1\otimes\eta_{p^\prime}$).

\begin{lemma}\label{lem:ideal-induction}
    Let $p$ and $0\le d\le\min\{m,n\}$ be two integers such that $n+m-d-r<p\le n$. Then, for any rook placement $\RRR\in\ZZZ_{n,m,d}$, we have that
    \begin{align}\label{eq:ideal-induction-1}(\eta_p \otimes 1)\cdot\mmm(\RRR)\in J_{n,m,r}.\end{align}
    Similarly, for $n+m-d-r<p^\prime\le m$, we have that
    \begin{align}\label{eq:ideal-induction-2}(1\otimes\eta_{p^\prime})\cdot\mmm(\RRR)\in K_{n,m,r}\end{align}
\end{lemma}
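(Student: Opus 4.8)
The plan is to reduce $(\eta_p\otimes 1)\cdot\mmm(\RRR)$, modulo $J_{n,m,r}$, to a product of \emph{full} column sums, and then close with a pigeonhole count that is forced precisely by the hypothesis $n+m-d-r<p$.

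First I would cut $\RRR$ along the line between rows $p$ and $p+1$: write $\RRR=\RRR'\sqcup\RRR''$, where $\RRR'$ consists of the rooks of $\RRR$ in rows $1,\dots,p$ and $\RRR''$ of those in rows $p+1,\dots,n$, and set $s=\lvert\RRR'\rvert$, so $\lvert\RRR''\rvert=d-s$. Because $\mmm(\RRR'')$ is $\symm_p$-invariant and $(\eta_p\otimes 1)=\sum_{w\in\symm_p}(w,1)$ acts by permuting the first $p$ rows, one has $(\eta_p\otimes 1)\cdot\mmm(\RRR)=\mmm(\RRR'')\cdot\sum_{w\in\symm_p}\mmm(w\cdot\RRR')$. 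Writing $\RRR'=\{(a_1,b_1),\dots,(a_s,b_s)\}$, where $b_1,\dots,b_s$ are the distinct columns occupied by $\RRR'$, a coset count turns the sum into $(p-s)!\sum\prod_{k=1}^s x_{i_k,b_k}$ over ordered tuples of distinct $i_1,\dots,i_s\in[p]$. I would also note that $s$ is automatically large: $d-s=\lvert\RRR''\rvert\le n-p$ gives $s\ge d-n+p>d-n+(n+m-d-r)=m-r$, so $s\ge m-r+1$.

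Next, working modulo $J_{n,m,r}$, which contains all row products $x_{i,j}x_{i,j'}$, I would replace the distinct-tuple sum by the genuine product $\prod_{k=1}^s\bigl(\sum_{i=1}^p x_{i,b_k}\bigr)$, valid since the $b_k$ are pairwise distinct, so every non-injective term of the expansion is a row product. Then I would substitute $\sum_{i=1}^p x_{i,b_k}=C_k-B_k$, with $C_k=\sum_{i=1}^n x_{i,b_k}$ the full column sum and $B_k=\sum_{i=p+1}^n x_{i,b_k}$, and expand $\prod_{k=1}^s(C_k-B_k)=\sum_{T\subseteq[s]}(-1)^{\lvert T\rvert}\bigl(\prod_{k\in T}B_k\bigr)\bigl(\prod_{k\notin T}C_k\bigr)$. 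It then suffices to show $\mmm(\RRR'')\cdot\bigl(\prod_{k\in T}B_k\bigr)\bigl(\prod_{k\notin T}C_k\bigr)\in J_{n,m,r}$ for every $T\subseteq[s]$. If $s-\lvert T\rvert\ge m-r+1$, then $\prod_{k\notin T}C_k$ is a multiple of a product of $m-r+1$ distinct full column sums and so lies in $J_{n,m,r}$ (equivalently, apply Lemma~\ref{lem:generator-slight-modify}). If instead $s-\lvert T\rvert\le m-r$, i.e.\ $\lvert T\rvert\ge s-(m-r)$, the hypothesis enters: $n+m-d-r<p$ rearranges exactly to $s-(m-r)>(n-p)-(d-s)$, and $(n-p)-(d-s)$ counts the rows in $\{p+1,\dots,n\}$ that carry no rook of $\RRR''$. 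Thus $\lvert T\rvert$ strictly exceeds the number of these ``free'' rows, so in the expansion of $\mmm(\RRR'')\prod_{k\in T}B_k$ every monomial is divisible either by $x_{i,b_k}x_{i,b_{k'}}$ (two of the chosen rows coincide) or by $x_{i,c}x_{i,b_k}$ with $(i,c)\in\RRR''$ (a chosen row meets a rook of $\RRR''$, and $c\ne b_k$ since $\RRR'$ and $\RRR''$ occupy disjoint columns) --- a row product in either case. Hence that $T$-term lies in $J_{n,m,r}$ too. Assembling the two cases gives $(\eta_p\otimes 1)\cdot\mmm(\RRR)\in J_{n,m,r}$, and the $1\otimes\eta_{p'}$ statement with $K_{n,m,r}$ is the transpose of this under interchanging the roles of rows and columns.

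The one real obstacle is the arithmetic that makes the two cases exhaust all $T\subseteq[s]$: one must see that $s-(m-r)>(n-p)-(d-s)$ is, after cancelling $s$, literally the hypothesis $p>n+m-d-r$, and that this is exactly the inequality guaranteeing that whenever the ``product of $\ge m-r+1$ column sums lies in the ideal'' argument fails for $T$, the ``too many $B_k$'s to embed into the free rows'' pigeonhole succeeds instead. The coset count and the two reductions modulo row products are then routine.
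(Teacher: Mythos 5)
Your proof is correct, but it takes a genuinely different route from the paper's. The paper proves \eqref{eq:ideal-induction-1} by induction on $n$: after isolating the rooks of $\RRR$ in rows $[p]$ and showing (exactly your count $s>m-r$) that their number exceeds $m-r$, it applies Lemma~\ref{lem:generator-slight-modify} to flip the sum over tuples inside $[p]$ into minus the sum over tuples escaping $[p]$, fixes the escaping indices, and recognizes each resulting sub-sum as $(\eta_p\otimes 1)\cdot\mmm(\RRR')$ on a smaller board $([n]\setminus\{c_k\})\times([m]\setminus\{b_k\})$ with parameters $(n-s,m-s,r-s)$, closing by the induction hypothesis. You instead argue directly: the same coset count turns $(\eta_p\otimes 1)\cdot\mmm(\RRR)$ into $(p-s)!\,\mmm(\RRR'')\prod_{k}\bigl(\sum_{i\in[p]}x_{i,b_k}\bigr)$ modulo row products, and the inclusion–exclusion $\prod_k(C_k-B_k)$ splits every term into one killed either by a product of $\ge m-r+1$ distinct full column sums (when few $B_k$'s occur) or by a row-product generator via the pigeonhole on the $(n-p)-(d-s)$ rook-free rows below row $p$ (when many $B_k$'s occur); the dichotomy is exhaustive precisely because $s-(m-r)>(n-p)-(d-s)$ is a rearrangement of $p>n+m-d-r$. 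All steps check out: the identity $(\eta_p\otimes 1)\cdot\mmm(\RRR)=\mmm(\RRR'')\sum_{w\in\symm_p}\mmm(w\cdot\RRR')$ is valid since $\symm_p$ fixes the rows of $\RRR''$, the replacement of the distinct-tuple sum by the product is justified because the $b_k$ are distinct so every collision produces a row product, and both uses of the hypothesis are the correct rearrangements. Your argument is shorter and self-contained (no induction, no restriction of the ideal to a sub-board), at the cost of an explicit inclusion–exclusion; the paper's induction buys a reusable reduction-to-smaller-board template but requires the somewhat delicate verification that $J^\prime_{n',m',r'}\cdot\prod x_{c_k,b_k}\subseteq J_{n,m,r}$.
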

\begin{proof}
    As the two results \eqref{eq:ideal-induction-1} and \eqref{eq:ideal-induction-2} are essentially the same after interchanging the roles of rows and columns of the $n\times m$ board, it suffices to prove \eqref{eq:ideal-induction-1}.
    
    We prove \eqref{eq:ideal-induction-1} by induction on $n$. For $n=1$, we have nothing to prove. Now suppose that \eqref{eq:ideal-induction-1} holds for any positive integer $n^\prime<n$, and our goal is to show the same result for $n$.

    Write $\mmm(\RRR) = \prod_{k=1}^d x_{a_k,b_k}$ where $b_1,\dots,b_d\in[m]$ are distinct and $1\le a_1<\dots< a_d\le n$. Since $p+d>n+m-d-r+d=n+m-r\ge n$, it follows that $[p]\cap\{a_1,\dots,a_d\}\neq\varnothing$. Therefore, we can choose the maximal $t\in[d]$ such that $a_t\in[p]$.

    \textbf{Claim:} $t>m-r$.

    In fact, the maximality of $t$ indicates that $[p]\cap\{a_{t+1},\dots,a_d\} = \varnothing$, so we have that $[p]\sqcup\{a_{t+1},\dots,a_d\}\subseteq[n]$ and hence $p+d-t\le n$. Consequently, we deduce $t \ge p+d-n > n+m-d-r+d-n=m-r$, finishing the proof of the claim above.

    We go back to prove \eqref{eq:ideal-induction-1}. Note that
    \begin{align*}
        &(\eta_p\otimes 1)\cdot\mmm(\RRR) = (\eta_p\otimes 1)\cdot\prod_{k=1}^d x_{a_k,b_k} = (p-t)!\cdot\Bigg(\prod_{\substack{i_1,\dots,i_t\in[p]\\\text{distinct}}}x_{i_1,b_1}\dots x_{i_t,b_t}\Bigg)\cdot\prod_{k=t+1}^d x_{a_k,b_k} \\
        \equiv & (p-t)!\cdot \Bigg(-\sum_{\substack{\text{distinct $i_1,\dots,i_t\in[n]$,}\\ \{i_1,\dots,i_t\}\not\subseteq[p]}}x_{i_1,b_1}\dots x_{i_t,b_t}\Bigg)\cdot\prod_{k=t+1}^d x_{a_k,b_k} \\
        \equiv & (p-t)!\cdot \Bigg(-\sum_{\substack{\text{distinct $i_1,\dots,i_t\in[n]\setminus\{a_{t+1},\dots,a_d\}$,}\\ \{i_1,\dots,i_t\}\not\subseteq[p]}}x_{i_1,b_1}\dots x_{i_t,b_t}\Bigg)\cdot\prod_{k=t+1}^d x_{a_k,b_k} \mod{J_{n,m,r}}
    \end{align*}
    where the first $\equiv$ uses the claim above, and the second $\equiv$ uses $x_{i,j}\cdot x_{i,j^\prime}\in J_{n,m,r}$. Now, it remains to show that
    \begin{align}\label{eq:ideal-induction}
        \Bigg(\sum_{\substack{\text{distinct $i_1,\dots,i_t\in[n]\setminus\{a_{t+1},\dots,a_d\}$,}\\ \{i_1,\dots,i_t\}\not\subseteq[p]}}x_{i_1,b_1}\dots x_{i_t,b_t}\Bigg)\cdot\prod_{k=t+1}^d x_{a_k,b_k} \in J_{n,m,r}.
    \end{align}
    If the summation in \eqref{eq:ideal-induction} is empty, we have nothing to prove. Otherwise, we decompose the summation into sub-summations with smaller index sets as follows. Note that the summation in \eqref{eq:ideal-induction} requires that $\{i_1,\dots,i_t\}\not\subseteq[p]$, so we assume, without loss of generality, that $\{i_1,\dots,i_t\}\setminus[p]=\{i_{t-s+1},\dots,i_t\}$ where $0<s<t$. (Here, we have $s<t$ because $\{i_1,\dots,i_t\}\cap[p]\neq\varnothing$. Otherwise, $\{i_1,\dots,i_t\}\sqcup[p]\sqcup\{a_{t+1},\dots,a_d\}\subseteq[n]$ indicates that $t+p+d-t\le n$, i.e. $p\le n-d$. However, $p>n+m-d-r\ge n-d$, a contradiction.) Summing over such $i_1,\dots,i_t$ for fixed $i_{t-s+1}=c_{t-s+1},\dots,i_t=c_t$, we obtain a sub-summation of the summation in \eqref{eq:ideal-induction}. It suffices to show \eqref{eq:ideal-induction} after replacing its summation with these sub-summations, since adding them up yields \eqref{eq:ideal-induction} itself. Thus, it remains to show that
    \[\Bigg(\sum_{\substack{i_1,\dots,i_{t-s}\in[p]\setminus\{a_{t+1},\dots,a_d\}\\ \text{distinct}}}x_{i_1,b_1}\dots x_{i_{t-s},b_{t-s}}\Bigg)\cdot\prod_{k=t-s+1}^t x_{c_k,b_k}\cdot\prod_{k=t+1}^d x_{a_k,b_k} \in J_{n,m,r}.\] Recall that the choice of $t$ indicates that $[p]\cap\{a_{t+1},\dots,a_d\}=\varnothing$, so it remains to show that
    \begin{equation}\label{eq:ideal-induction-final}
        \Bigg(\sum_{\substack{i_1,\dots,i_{t-s}\in[p]\\ \text{distinct}}}x_{i_1,b_1}\dots x_{i_{t-s},b_{t-s}}\Bigg)\cdot\prod_{k=t-s+1}^t x_{c_k,b_k}\cdot\prod_{k=t+1}^d x_{a_k,b_k} \in J_{n,m,r}.
    \end{equation}
    Note that this expression possesses the following form:
    \begin{equation}\label{eq:rewrite}c\cdot((\eta_p\otimes 1)\cdot\mmm(\RRR^\prime))\cdot\prod_{k=t-s+1}^t x_{c_k,b_k}\end{equation} where $c\in\CC$, and $\RRR^\prime$ is a rook placement of size $d^\prime = d-s$ on the sub-board $([n]\setminus\{c_{t-s+1},\dots,c_t\})\times([m]\setminus\{b_{t-s+1},\dots,b_t\})$ of the original board $[n]\times[m]$. For the sake of induction, consider restricting Definition~\ref{def:intermidiate-ideal} to a smaller variable set:
    \begin{itemize}
        \item $n^\prime=n-s$, $m^\prime=m-s$, and $r^{\prime}=r-s$;
        \item the restricted variable matrix given by \[\xxx_{n^\prime,m^\prime}^\prime\coloneqq(x_{i,j})_{i\in[n]\setminus\{c_{t-s+1},\dots,c_t\},j\in[m]\setminus\{b_{t-s+1},\dots,b_t\}};\]
        \item the restricted counterpart $J^\prime_{n^\prime,m^\prime,r^\prime}\subseteq\CC[\xxx]$ of $J_{n,m,r}$ generated by
        \begin{itemize}
            \item any product $x_{i,j}\cdot x_{i,j^{\prime}}$ for $i\in[n]\setminus\{c_{t-s+1},\dots,c_t\}$ and $j,j^\prime\in[m]\setminus\{b_{t-s+1},\dots,x_t\}$, and
            \item any product $\prod_{k=1}^{m^\prime-r^\prime+1}(\sum_{i\in[n]\setminus\{c_{t-s+1},\dots,c_t\}}x_{i,j_k})$ for distinct $j_1,\dots,j_{m^\prime-r^\prime+1}\in[m]\setminus\{b_{t-s+1},\dots,b_t\}$.
        \end{itemize}
    \end{itemize}
    Note that $p>n+m-d-r=(n-s)+(m-s)-(d-s)-(r-s)=n^\prime+m^\prime-d^\prime-r^\prime$, and hence the induction assumption reveals that $(\eta_p\otimes 1)\cdot\mmm(\RRR^\prime)\in J_{n^\prime,m^\prime,r^\prime}^\prime$. Therefore, in order to show \eqref{eq:ideal-induction-final} using Expression~\eqref{eq:rewrite}, it suffices to show that
    \[J_{n^\prime,m^\prime,r^\prime}^\prime\cdot\prod_{k=t-s+1}^t x_{c_k,b_k} \subseteq J_{n,m,r}.\] 
    We verify this containment using the generators of $J_{n^\prime,m^\prime,r^\prime}^\prime$ above. For any product $x_{i,j}\cdot x_{i,j^{\prime}}$ for $i\in[n]\setminus\{c_{t-s+1},\dots,c_t\}$ and $j,j^\prime\in[m]\setminus\{b_{t-s+1},\dots,x_t\}$, it is clear that $x_{i,j}\cdot x_{i,j^{\prime}}\cdot\prod_{k=t-s+1}^t x_{c_k,b_k}\in J_{n,m,r}$. For any product $\prod_{k=1}^{m^\prime-r^\prime+1}(\sum_{i\in[n]\setminus\{c_{t-s+1},\dots,c_t\}}x_{i,j_k})$ for distinct $j_1,\dots,j_{m^\prime-r^\prime+1}\in[m]\setminus\{b_{t-s+1},\dots,b_t\}$, $m^\prime-r^\prime+1=(m-s)-(r-s)+1=m-r+1$ indicates that
    \begin{align*}
        &\prod_{k=1}^{m^\prime-r^\prime+1}\Bigg(\sum_{i\in[n]\setminus\{c_{t-s+1},\dots,c_t\}}x_{i,j_k}\Bigg)\cdot\prod_{k=t-s+1}^t x_{c_k,b_k} \\ = &\prod_{k=1}^{m-r+1}\Bigg(\sum_{i\in[n]\setminus\{c_{t-s+1},\dots,c_t\}}x_{i,j_k}\Bigg)\cdot\prod_{k=t-s+1}^t x_{c_k,b_k} \\
        \equiv & \prod_{k=1}^{m-r+1}\Bigg(\sum_{i\in[n]}x_{i,j_k}\Bigg)\cdot\prod_{k=t-s+1}^t x_{c_k,b_k} \equiv 0 \mod{J_{n,m,r}}
    \end{align*}
    which completes the proof.
\end{proof}

\begin{corollary}\label{cor:ideal-annihilation}
    Let $d,r,p,p^\prime$ be integers such that $0\le d\le r\le\min\{m,n\}$, $n+m-d-r<p\le n$, and $n+m-d-r<p^\prime\le m$. For $\RRR\in\ZZZ_{n,m,d}$, we have
    \[(\eta_p\otimes 1)\cdot\mmm(\RRR) \in \defideal{n}{m}{r}\]
    and
    \[(1\otimes\eta_{p^\prime})\cdot\mmm(\RRR)\in \defideal{n}{m}{r}.\]
\end{corollary}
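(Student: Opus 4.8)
The plan is to deduce this corollary directly from Lemma~\ref{lem:ideal-induction}, the only additional ingredient being the elementary inclusions $J_{n,m,r}\subseteq\defideal{n}{m}{r}$ and $K_{n,m,r}\subseteq\defideal{n}{m}{r}$.

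First I would verify that the hypotheses of Lemma~\ref{lem:ideal-induction} hold. Since $0\le d\le r\le\min\{m,n\}$, in particular $d\le\min\{m,n\}$, so the pair $(p,d)$ meets the requirement $n+m-d-r<p\le n$ imposed in that lemma; symmetrically $(p^\prime,d)$ satisfies $n+m-d-r<p^\prime\le m$. Hence Lemma~\ref{lem:ideal-induction} applies verbatim and gives, for any $\RRR\in\ZZZ_{n,m,d}$, the memberships $(\eta_p\otimes 1)\cdot\mmm(\RRR)\in J_{n,m,r}$ and $(1\otimes\eta_{p^\prime})\cdot\mmm(\RRR)\in K_{n,m,r}$.

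Next I would observe that every generator of $J_{n,m,r}$ listed in Definition~\ref{def:intermidiate-ideal} --- the products $x_{i,j}\cdot x_{i,j^\prime}$ of two variables in a common row, and the products $\prod_{k=1}^{m-r+1}\big(\sum_{i=1}^n x_{i,j_k}\big)$ of $m-r+1$ distinct column sums --- is among the generators of $\defideal{n}{m}{r}$ in Definition~\ref{def:ideal}, so $J_{n,m,r}\subseteq\defideal{n}{m}{r}$. An identical comparison with the roles of rows and columns exchanged shows that the generators of $K_{n,m,r}$ (the same-column products $x_{i,j}\cdot x_{i^\prime,j}$ and the $(n-r+1)$-fold row-sum products) are likewise generators of $\defideal{n}{m}{r}$, whence $K_{n,m,r}\subseteq\defideal{n}{m}{r}$. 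Combining these two inclusions with the memberships from the previous paragraph yields the two claimed containments.

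There is no genuine obstacle here: all of the combinatorial content has already been handled by the induction in the proof of Lemma~\ref{lem:ideal-induction}, and this corollary merely repackages that statement using the trivial observation that the intermediate ideals $J_{n,m,r}$ and $K_{n,m,r}$ sit inside $\defideal{n}{m}{r}$ by inspection of generators. The only minor care needed is matching the indexing conventions between Definitions~\ref{def:ideal} and \ref{def:intermidiate-ideal}.
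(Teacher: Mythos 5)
Your proposal is correct and matches the paper's proof exactly: the paper likewise deduces both containments immediately from Lemma~\ref{lem:ideal-induction} together with the inclusions $J_{n,m,r},K_{n,m,r}\subseteq\defideal{n}{m}{r}$, which follow by inspecting generators. No issues.
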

\begin{proof}
    Both results are immediate from Lemma~\ref{lem:ideal-induction} because of the containments $J_{n,m,r},K_{n,m,r}\subseteq\defideal{n}{m}{r}$.
\end{proof}

As a result of Corollary~\ref{cor:ideal-annihilation}, we obtain an upper-bound of $\lambda_1$ and $\mu_1$ for each $\symm_n\times\symm_m$-irreducible $V^\lambda\otimes V^\mu$ appearing in $(\CC[\xxx_{n\times m}]/\defideal{n}{m}{r})_d$ or $R(\ZZZ_{n,m,r})_d$.

\begin{corollary}\label{cor:length-pre}
    Let $d,r$ be two integers such that $0\le d\le r\le\min\{m,n\}$. If an $\symm_n\times\symm_m$-irreducible $V^\lambda\otimes V^\mu$ appears in $(\CC[\xxx_{n\times m}]/\defideal{n}{m}{r})_d$, we have $\lambda_1\le n+m-d-r$ and $\mu_1\le n+m-d-r$.
\end{corollary}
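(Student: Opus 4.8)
The plan is to deduce Corollary~\ref{cor:length-pre} from Corollary~\ref{cor:ideal-annihilation} and Lemma~\ref{lem:ann-length}, using the fact that the operators $\eta_p\otimes 1$ and $1\otimes\eta_{p^\prime}$ act on $(\CC[\xxx_{n\times m}]/\defideal{n}{m}{r})_d$ through the quotient, and that this degree-$d$ component is spanned (by Lemma~\ref{lem:span-pre}) by the classes of the monomials $\mmm(\RRR)$ for $\RRR\in\ZZZ_{n,m,d}$.

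First I would fix $p = n+m-d-r+1$, which by hypothesis ($d\le r\le\min\{m,n\}$) satisfies $n+m-d-r<p\le n$: indeed $p\le n$ is equivalent to $m-r\le d$ together with... more simply, $p-n = m-r-d+1 \le 1$ since $d\ge 0$ and $m\ge r$... actually $p\le n \iff m - r - d + 1 \le 0 \iff d \ge m-r+1$, so let me instead just take $p$ to be the \emph{largest} integer with $p\le n$ and $n+m-d-r<p$; such a $p$ exists precisely when $n > n+m-d-r$, i.e. $d+r>m$, and when $d+r\le m$ there is nothing to prove because then $n+m-d-r\ge n\ge\lambda_1$ automatically. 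So the real content is the case $d+r>m$, where we set $p=n$ (note $n>n+m-d-r$ here) — and symmetrically $p^\prime=m$ when $d+r>n$. Then Corollary~\ref{cor:ideal-annihilation} gives $(\eta_p\otimes 1)\cdot\mmm(\RRR)\in\defideal{n}{m}{r}$ for every $\RRR\in\ZZZ_{n,m,d}$, hence by Lemma~\ref{lem:span-pre} the operator $\eta_p\otimes 1$ annihilates all of $(\CC[\xxx_{n\times m}]/\defideal{n}{m}{r})_d$.

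Next, by Lemma~\ref{lem:ann-length} applied to the $\symm_n$-factor, any Specht module $V^\lambda$ with $\lambda_1\ge p = n+m-d-r+1$ is killed by $\eta_p$ only if $p>\lambda_1$ — contrapositively, $\eta_p\cdot V^\lambda\ne\{0\}$ whenever $\lambda_1\ge p$. Therefore if $V^\lambda\otimes V^\mu$ appeared in $(\CC[\xxx_{n\times m}]/\defideal{n}{m}{r})_d$ with $\lambda_1\ge p$, the operator $\eta_p\otimes 1$ would act nontrivially on the isotypic component $V^\lambda\otimes V^\mu$ (since $(\eta_p\otimes 1)(v\otimes w) = (\eta_p v)\otimes w$ and we may pick $v$ with $\eta_p v\ne 0$), contradicting that $\eta_p\otimes 1$ annihilates the whole degree-$d$ piece. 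Hence $\lambda_1\le p-1 = n+m-d-r$. The same argument with $1\otimes\eta_{p^\prime}$, $p^\prime=m$, gives $\mu_1\le n+m-d-r$.

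I do not expect a genuine obstacle here; the statement is essentially a formal consequence of the three cited results once the degenerate boundary case $d+r\le m$ (or $\le n$) is dispatched. The one point requiring a line of care is verifying that the chosen $p$ lies in the admissible range $(n+m-d-r,\,n]$ so that Corollary~\ref{cor:ideal-annihilation} genuinely applies — this is exactly where one splits into the trivial case $n+m-d-r\ge n$ and the substantive case $n+m-d-r<n$. A second small point is the standard observation that $\eta_p\otimes 1$ acting on a tensor product $V^\lambda\otimes V^\mu$ is $(\eta_p\,\cdot)\otimes\mathrm{id}$, so its kernel is $(\ker\eta_p|_{V^\lambda})\otimes V^\mu$, which is proper as soon as $\eta_p|_{V^\lambda}\ne 0$; this is what lets us pass from "annihilates the whole module" to a constraint on each irreducible constituent.
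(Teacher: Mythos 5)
Your argument is exactly the paper's (the paper's proof is a one-line citation of Lemma~\ref{lem:ann-length} and Corollary~\ref{cor:ideal-annihilation}), and the mathematics is correct, including the observation that the case $d+r\le m$ is vacuous since then $n+m-d-r\ge n\ge\lambda_1$ automatically. One slip in the write-up: you declare ``we set $p=n$,'' but with $p=n$ Lemma~\ref{lem:ann-length} only yields $\lambda_1\le n-1$; your concluding line $\lambda_1\le p-1=n+m-d-r$ silently reverts to $p=n+m-d-r+1$, which is the choice you actually need --- and which you already verified lies in the admissible range $(n+m-d-r,\,n]$ precisely when $d+r>m$, so the proof stands once that inconsistency is cleaned up.
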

\begin{proof}
    This is immediate from Lemma~\ref{lem:ann-length} and Corollary~\ref{cor:ideal-annihilation}.
\end{proof}

\begin{corollary}\label{cor:length}
    Let $d,r$ be two integers such that $0\le d\le r\le\min\{m,n\}$. If an $\symm_n\times\symm_m$-irreducible $V^\lambda\otimes V^\mu$ appears in $R(\ZZZ_{n,m,r})_d$, we have $\lambda_1\le n+m-d-r$ and $\mu_1\le n+m-d-r$.
\end{corollary}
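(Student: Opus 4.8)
The plan is to deduce this immediately from Corollary~\ref{cor:length-pre} by transporting information along the graded surjection
\[
\CC[\xxx_{n\times m}]/\defideal{n}{m}{r}\twoheadrightarrow R(\ZZZ_{n,m,r})
\]
produced in the proof of Corollary~\ref{cor:span} (which exists because $\defideal{n}{m}{r}\subseteq\gr\II(\ZZZ_{n,m,r})$ by Lemma~\ref{lem:ideal-contain}). So the only real work is to check that this surjection respects the $\symm_n\times\symm_m$-structure, and then to invoke the fact that a quotient of representations cannot create new irreducible constituents.

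First I would verify that both ideals in question are stable under the $\symm_n\times\symm_m$-action. For $\gr\II(\ZZZ_{n,m,r})$ this is standard: the locus $\ZZZ_{n,m,r}$ is $\symm_n\times\symm_m$-stable (permuting rows and columns of a rook placement yields a rook placement of the same size), hence $\II(\ZZZ_{n,m,r})$ is a $G$-stable ideal, and since the group acts by linear substitutions, taking top-degree homogeneous components commutes with the action, so $\gr\II(\ZZZ_{n,m,r})$ is $G$-stable as well. For $\defideal{n}{m}{r}$ one checks directly from Definition~\ref{def:ideal} that $\symm_n\times\symm_m$ permutes the listed generating set: the total sum of all variables is fixed, a product of two variables in a common row (resp.\ column) is sent to another such product, a product of $n-r+1$ distinct row sums is sent to a product of $n-r+1$ distinct row sums (likewise for column sums), and $\mmm(\RRR)\mapsto\mmm(w\cdot\RRR)$ with $\lvert w\cdot\RRR\rvert=\lvert\RRR\rvert$. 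Consequently the quotient map above is a morphism of graded $\symm_n\times\symm_m$-modules.

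Then I would argue as follows. Since the map is a surjection of $\symm_n\times\symm_m$-modules in each degree, every irreducible $V^\lambda\otimes V^\mu$ occurring in $R(\ZZZ_{n,m,r})_d$ already occurs in $(\CC[\xxx_{n\times m}]/\defideal{n}{m}{r})_d$. Applying Corollary~\ref{cor:length-pre} to the latter yields $\lambda_1\le n+m-d-r$ and $\mu_1\le n+m-d-r$, which is exactly the assertion of Corollary~\ref{cor:length}.

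I do not expect a genuine obstacle here: the statement is a formal consequence of Corollaries~\ref{cor:span} and~\ref{cor:length-pre}. The single point deserving a line of justification is the $\symm_n\times\symm_m$-equivariance of the surjection, i.e.\ the $G$-stability of $\defideal{n}{m}{r}$, and that reduces to the routine observation that $G$ permutes the explicit generators of Definition~\ref{def:ideal}.
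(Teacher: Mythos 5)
Your proposal is correct and follows the paper's own argument: the paper likewise deduces Corollary~\ref{cor:length} from Corollary~\ref{cor:length-pre} via the graded $\symm_n\times\symm_m$-module surjection $\CC[\xxx_{n\times m}]/\defideal{n}{m}{r}\twoheadrightarrow R(\ZZZ_{n,m,r})$ furnished by Lemma~\ref{lem:ideal-contain}. Your extra verification that $\defideal{n}{m}{r}$ is $\symm_n\times\symm_m$-stable is a detail the paper leaves implicit, but it is the same proof.
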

\begin{proof}
    Lemma~\ref{lem:ideal-contain} yields a graded $\symm_n\times\symm_m$-module surjection
    \[\CC[\xxx_{n\times m}]/\defideal{n}{m}{r}\twoheadrightarrow R(\ZZZ_{n,m,r}),\]
    so the result in Corollary~\ref{cor:length-pre} also holds for $R(\ZZZ_{n,m,r})$.
\end{proof}

\subsection{Surjection of $\symm_n\times\symm_m$-modules}\label{subsec:surj}
In this section, our goal is to construct chains of $\symm_n\times\symm_m$-equivariant surjections $R(\ZZZ_{n,m,r})_d\twoheadrightarrow R(\ZZZ_{n,m,r+1})_d$. We want to initially understand $R(\ZZZ_{n,m,r})$ through $\CC[\xxx_{n\times m}]/\II(\ZZZ_{n,m,r})$, because they are connected by Equations~\eqref{eq:orbit-harmonics-chain-isom} and \eqref{eq:orbit-harmonics-graded-isom}.

First, we state some linear relations (Lemma~\ref{lem:linear-relation-ungraded}) on $\CC[\xxx_{n\times m}]/\II(\ZZZ_{n,m,r})$ as a means to obtain a spanning set (Lemma~\ref{lem:spanning-ungraded}) of $\CC[\xxx_{n\times m}]_{\le d}/(\II(\ZZZ_{n,m,r})\cap\CC[\xxx_{n\times m}]_{\le d})$. We will also provide a linear basis of $\CC[\xxx_{n\times m}]/\II(\ZZZ_{n,m,r})$ in Lemma~\ref{lem:basis-ungraded}.

We assign a partial order $\preceq$ to $\bigsqcup_{d=0}^{\min\{m,n\}}\ZZZ_{n,m,d}$. Write $\RRR\preceq\RRR^\prime$ if and only if $\RRR\subseteq\RRR^\prime$. It is clear that $\preceq$ is generated by the covering relation $\precdot$ where $\RRR\precdot\RRR^\prime$ if and only if $\RRR\subseteq\RRR^\prime$ and $\lvert\RRR^\prime\rvert = \lvert\RRR\rvert + 1$.

\begin{lemma}\label{lem:linear-relation-ungraded}
    Given $0\le d<r\le\min\{m,n\}$ and $\RRR\in\ZZZ_{n,m,d}$, we have
    \[\mmm(\RRR) \equiv \frac{1}{r-d}\sum_{\RRR\precdot\RRR^\prime}\mmm(\RRR^\prime) \mod{\II(\ZZZ_{n,m,r})}.\]
\end{lemma}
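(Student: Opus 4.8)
The plan is to verify this linear relation directly by evaluating both sides as functions on the finite locus $\ZZZ_{n,m,r}$, exploiting the vector space isomorphism $\CC[\xxx_{n\times m}]/\II(\ZZZ_{n,m,r})\cong\CC[\ZZZ_{n,m,r}]$. Concretely, it suffices to show that for every rook placement $\RRR^{\prime\prime}\in\ZZZ_{n,m,r}$, we have
\[
\mmm(\RRR)(\RRR^{\prime\prime}) \;=\; \frac{1}{r-d}\sum_{\RRR\precdot\RRR^\prime}\mmm(\RRR^\prime)(\RRR^{\prime\prime}).
\]
Recall that $\mmm(\RRR)(\RRR^{\prime\prime}) = \prod_{(i,j)\in\RRR}a_{i,j}$, which equals $1$ if $\RRR\subseteq\RRR^{\prime\prime}$ and $0$ otherwise. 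So the identity becomes purely combinatorial: if $\RRR\not\subseteq\RRR^{\prime\prime}$, both sides vanish (the left side clearly, and on the right each term $\mmm(\RRR^\prime)(\RRR^{\prime\prime})$ requires $\RRR\subseteq\RRR^\prime\subseteq\RRR^{\prime\prime}$, impossible); if $\RRR\subseteq\RRR^{\prime\prime}$, the left side is $1$, and the right side counts $\frac{1}{r-d}$ times the number of rook placements $\RRR^\prime$ with $\RRR\precdot\RRR^\prime$ and $\RRR^\prime\subseteq\RRR^{\prime\prime}$.

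The key step is therefore the enumeration: I claim there are exactly $r-d$ rook placements $\RRR^\prime$ satisfying $\RRR\subseteq\RRR^\prime\subseteq\RRR^{\prime\prime}$ with $\lvert\RRR^\prime\rvert=d+1$. Indeed, such an $\RRR^\prime$ is obtained from $\RRR$ by adjoining exactly one rook $(i,j)\in\RRR^{\prime\prime}\setminus\RRR$; since $\RRR^{\prime\prime}$ is itself a rook placement and $\RRR\subseteq\RRR^{\prime\prime}$, any such $(i,j)$ automatically lies in a row and column disjoint from those used by $\RRR$, so $\RRR\cup\{(i,j)\}$ is again a valid rook placement and is contained in $\RRR^{\prime\prime}$. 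Hence the count is exactly $\lvert\RRR^{\prime\prime}\setminus\RRR\rvert = r - d$, as desired. This gives the identity of functions, and then the isomorphism $\CC[\xxx_{n\times m}]/\II(\ZZZ_{n,m,r})\cong\CC[\ZZZ_{n,m,r}]$ promotes it to the claimed congruence modulo $\II(\ZZZ_{n,m,r})$.

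I do not expect a serious obstacle here; the only thing to be slightly careful about is making sure the bookkeeping of which added rooks yield valid placements contained in $\RRR^{\prime\prime}$ is airtight — in particular that adding a rook of $\RRR^{\prime\prime}$ never conflicts with an existing rook of $\RRR$, which is immediate from $\RRR\subseteq\RRR^{\prime\prime}$ and the definition of a rook placement. An alternative, essentially equivalent route would be to observe that $\sum_{(i,j)}x_{i,j}\equiv r$ modulo $\II(\ZZZ_{n,m,r})$ and multiply through by $\mmm(\RRR)$, using that $x_{i,j}\cdot\mmm(\RRR)\in\II(\ZZZ_{n,m,r})$ whenever $(i,j)$ shares a row or column with a rook of $\RRR$, and that $x_{i,j}^2\equiv x_{i,j}$; this reduces $\mmm(\RRR)\cdot\sum_{(i,j)}x_{i,j}$ to $d\cdot\mmm(\RRR) + \sum_{\RRR\precdot\RRR^\prime}\mmm(\RRR^\prime)$, and rearranging gives the stated relation. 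I would present the function-evaluation argument as the primary proof since it is the most transparent.
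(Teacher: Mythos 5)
Your proposal is correct and is essentially the same argument as the paper's: the paper also works through the identification $\CC[\xxx_{n\times m}]/\II(\ZZZ_{n,m,r})\cong\CC[\ZZZ_{n,m,r}]$, under which $\mmm(\RRR)$ becomes the indicator function of $\{\RRR''\in\ZZZ_{n,m,r}\,:\,\RRR\subseteq\RRR''\}$, and then uses the same count that exactly $r-d$ placements $\RRR'$ satisfy $\RRR\precdot\RRR'\preceq\RRR''$. The only cosmetic difference is that the paper organizes this as a double-counting identity among the functions $\mathbf{1}_{\RRR}$ rather than a pointwise evaluation, but the content is identical.
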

\begin{proof}
    Identifying $\CC[\xxx_{n\times m}]/\II(\ZZZ_{n,m,r})$ with $\CC[\ZZZ_{n,m,r}]$ as a $\CC$-vector space, it follows that $\mmm(\tilde{\RRR}) + \II(\ZZZ_{n,m,r})\in\CC[\xxx_{n\times m}]/\II(\ZZZ_{n,m,r})$ is identified with $\mathbf{1}_{\tilde{\RRR}}\in\CC[\ZZZ_{n,m,r}]$ given by
    \[\mathbf{1}_{\tilde{\RRR}}(\tilde{\RRR}^\prime)=\begin{cases}
        1, &\text{if $\tilde{\RRR}\preceq\tilde{\RRR}^\prime$} \\
        0, &\text{otherwise.}
    \end{cases}\]
    for all $\tilde{\RRR}^\prime\in\ZZZ_{n,m,r}$. Therefore, we have
    \begin{align}\label{eq:linear-relation-identify}
        \mmm(\tilde{\RRR})\equiv\sum_{\substack{\tilde{\RRR}\preceq\tilde{\RRR}^\prime \\ \tilde{\RRR}^\prime\in\ZZZ_{n,m,r}}}\mmm(\tilde{\RRR}^\prime) \mod{\II(\ZZZ_{n,m,r})}.
    \end{align}
    Equation~\eqref{eq:linear-relation-identify} indicates that for a fixed $\RRR\in\ZZZ_{n,m,d}$ we have
    \begin{align*}
        &\sum_{\RRR\precdot\RRR^\prime} \mmm(\RRR^\prime) \equiv \sum_{\RRR\precdot\RRR^\prime} \sum_{\substack{\RRR^\prime\preceq\RRR^{\prime\prime} \\ \RRR^{\prime\prime}\in\ZZZ_{n,m,r}}}\mmm(\RRR^{\prime\prime}) \equiv \sum_{\substack{\RRR\preceq\RRR^{\prime\prime} \\ \RRR^{\prime\prime}\in\ZZZ_{n,m,r}}}\Bigg(\sum_{\substack{\RRR\precdot\RRR^{\prime}\preceq\RRR^{\prime\prime}}}1\Bigg)\cdot\mmm(\RRR^{\prime\prime}) \\ = & (r-d)\cdot\sum_{\substack{\RRR\preceq\RRR^{\prime\prime} \\ \RRR^{\prime\prime}\in\ZZZ_{n,m,r}}} \mmm(\RRR^{\prime\prime})
        \equiv (r-d)\cdot\mmm(\RRR) \mod{\II(\ZZZ_{n,m,r})}
    \end{align*}
    where the first and the last signs $\equiv$ arise from Equation~\eqref{eq:linear-relation-identify}.
\end{proof}

\begin{lemma}\label{lem:spanning-ungraded}
    For $0\le d\le r\le\min\{m,n\}$, the family of monomials $\{\mmm(\RRR)\,:\,\RRR\in\ZZZ_{n,m,d}\}$ descends to a spanning set of $\CC[\xxx_{n\times m}]_{\le d}/(\II(\ZZZ_{n,m,r})\cap\CC[\xxx_{n\times m}]_{\le d})$.
\end{lemma}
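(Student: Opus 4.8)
The plan is to prove the spanning statement by downward induction on $d$, starting from $d = r$ where the claim is essentially tautological and working down to $d = 0$. The key engine is the linear relation established in Lemma~\ref{lem:linear-relation-ungraded}, which expresses $\mmm(\RRR)$ for $\RRR \in \ZZZ_{n,m,d}$ (with $d < r$) as a scalar multiple of a sum of monomials $\mmm(\RRR')$ with $\RRR' \in \ZZZ_{n,m,d+1}$, all modulo $\II(\ZZZ_{n,m,r})$.

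First I would note that any element of $\CC[\xxx_{n\times m}]_{\le d}$ is, modulo the squarefree relations $x_{i,j}x_{i,j'}$ and $x_{i,j}x_{i',j}$ which lie in $\II(\ZZZ_{n,m,r})$ (as observed in the proof of Lemma~\ref{lem:ideal-contain}), congruent to a linear combination of monomials $\mmm(\RRR)$ with $\RRR \in \ZZZ_{n,m,e}$ for various $e \le d$. So $\CC[\xxx_{n\times m}]_{\le d}/(\II(\ZZZ_{n,m,r}) \cap \CC[\xxx_{n\times m}]_{\le d})$ is spanned by the images of $\bigsqcup_{e=0}^{d}\{\mmm(\RRR) : \RRR \in \ZZZ_{n,m,e}\}$. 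The task is then to show that all the lower-degree monomials (those with $e < d$) are redundant, i.e. lie in the span of $\{\mmm(\RRR) : \RRR \in \ZZZ_{n,m,d}\}$ modulo the ideal.

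The core step: for $e < d \le r$ and $\RRR \in \ZZZ_{n,m,e}$, I would apply Lemma~\ref{lem:linear-relation-ungraded} to write $\mmm(\RRR) \equiv \frac{1}{r-e}\sum_{\RRR \precdot \RRR'}\mmm(\RRR') \pmod{\II(\ZZZ_{n,m,r})}$, where each $\RRR' \in \ZZZ_{n,m,e+1}$. Crucially, each $\mmm(\RRR')$ lies in $\CC[\xxx_{n\times m}]_{\le e+1} \subseteq \CC[\xxx_{n\times m}]_{\le d}$ since $e + 1 \le d$. Iterating this $d - e$ times (each step valid because the intermediate sizes $e+1, e+2, \dots, d$ are all still $< r$ or equal to $r$, and $r - (\text{current size}) \ne 0$ throughout since current size $< r$ at each application), we express $\mmm(\RRR)$ modulo $\II(\ZZZ_{n,m,r})$ as a linear combination of monomials $\mmm(\RRR'')$ with $\RRR'' \in \ZZZ_{n,m,d}$. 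Combining this with the spanning observation from the first paragraph gives the result.

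The main obstacle — really the only subtle point — is bookkeeping the iteration cleanly: one must check that at each stage of the chain $e \to e+1 \to \cdots \to d$ the relation in Lemma~\ref{lem:linear-relation-ungraded} still applies, which requires the current size to be strictly less than $r$ (so that the denominator $r - (\text{size})$ is nonzero and the lemma's hypothesis $d < r$ in its statement holds), and this is exactly guaranteed since we stop upon reaching size $d \le r$. A slick way to avoid the explicit iteration is downward induction on $d$: assume the statement for $d+1$ (valid whenever $d+1 \le r$); then $\CC[\xxx_{n\times m}]_{\le d}$ modulo squarefree relations is spanned by $\{\mmm(\RRR): \RRR \in \ZZZ_{n,m,e}, e \le d\}$, the monomials with $e = d$ are already in our desired set, and for each $\RRR$ with $e = d - 1 < r$, Lemma~\ref{lem:linear-relation-ungraded} rewrites $\mmm(\RRR)$ in terms of size-$d$ monomials (one step suffices), while for $e < d - 1$ we simply invoke the induction hypothesis applied at degree $d-1$ — but one must be slightly careful that the induction hypothesis is about degree $\le d-1$, so a direct linear-algebra chain argument as above is arguably cleaner than nested induction. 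I would present the direct iterated-substitution argument.
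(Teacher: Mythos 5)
Your proposal is correct and follows essentially the same route as the paper: first use the quadratic relations $x_{i,j}x_{i,j'}, x_{i,j}x_{i',j}\in\II(\ZZZ_{n,m,r})$ to reduce to the rook monomials $\mmm(\RRR)$ with $\lvert\RRR\rvert\le d$, then iterate Lemma~\ref{lem:linear-relation-ungraded} to rewrite every smaller monomial in terms of the size-$d$ ones. Your extra bookkeeping (checking that each intermediate size stays strictly below $r$ and that the replacements remain in $\CC[\xxx_{n\times m}]_{\le d}$) is exactly the detail the paper leaves implicit.
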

\begin{proof}
    Since $\II(\ZZZ_{n,m,r})$ contains all the products of the form $x_{i,j}\cdot x_{i,j^\prime}$ or $x_{i,j}\cdot x_{i^\prime,j}$ killing monomials with variables in the same row or column, it follows that the family of monomials
    \[\bigsqcup_{d^\prime =0}^d\{\mmm(\RRR)\,:\,\RRR\in\ZZZ_{n,m,d^\prime}\}\]
    descends to a spanning set of $\CC[\xxx_{n\times m}]_{\le d}/(\II(\ZZZ_{n,m,r})\cap\CC[\xxx_{n\times m}]_{\le d})$. However, iterating Lemma~\ref{lem:linear-relation-ungraded} indicates that for all $\RRR\in\ZZZ_{n,m,d^\prime}$ with $d^\prime<d$ we can expand $\mmm(\RRR) + \II(\ZZZ_{n,m,r})$ as a linear combination of $\{\mmm(\RRR^\prime)+\II(\ZZZ_{n,m,r})\,:\,\RRR^\prime\in\ZZZ_{n,m,d}\}$. Therefore, the set $\{\mmm(\RRR) + \II(\ZZZ_{n,m,r}) \,:\,\RRR\in\ZZZ_{n,m,d}\}$ is enough to span $\CC[\xxx_{n\times m}]_{\le d}/(\II(\ZZZ_{n,m,r})\cap\CC[\xxx_{n\times m}]_{\le d})$.
\end{proof}

\begin{lemma}\label{lem:basis-ungraded}
    For $0\le r\le\min\{m,n\}$, the family of monomials $\{\mmm(\RRR)\,:\,\RRR\in\ZZZ_{n,m,r}\}$ descends to a basis of $\CC[\xxx_{n\times m}]/\II(\ZZZ_{n,m,r})$.
\end{lemma}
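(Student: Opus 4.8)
The plan is to combine the spanning statement that is already available with a dimension count coming from orbit harmonics; linear independence will then be automatic.

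First I would record the ungraded picture. By the Lagrange interpolation isomorphism recalled in Subsection~\ref{subsec:orbit-harmonoics}, $\CC[\xxx_{n\times m}]/\II(\ZZZ_{n,m,r})\cong\CC[\ZZZ_{n,m,r}]$ as $\CC$-vector spaces, so $\dim_\CC\CC[\xxx_{n\times m}]/\II(\ZZZ_{n,m,r})=\lvert\ZZZ_{n,m,r}\rvert$, the number of size-$r$ rook placements on the $n\times m$ board. Since distinct rook placements give distinct monomials, the candidate family $\{\mmm(\RRR):\RRR\in\ZZZ_{n,m,r}\}$ has exactly $\lvert\ZZZ_{n,m,r}\rvert$ elements. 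Hence it suffices to prove that this family spans $\CC[\xxx_{n\times m}]/\II(\ZZZ_{n,m,r})$, because a spanning set whose cardinality equals the dimension is automatically a basis.

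For the spanning claim I would argue as follows. As $\II(\ZZZ_{n,m,r})$ contains every product $x_{i,j}x_{i,j'}$ and $x_{i,j}x_{i',j}$, the quotient is spanned by the squarefree monomials $\mmm(\RRR)$ over all rook placements $\RRR$ (of any size at most $\min\{m,n\}$). For $\lvert\RRR\rvert>r$ one has $\mmm(\RRR)\in\II(\ZZZ_{n,m,r})$, since $\mmm(\RRR)$ evaluates to $0$ on every $\RRR'\in\ZZZ_{n,m,r}$ (because $\RRR\not\subseteq\RRR'$ when $\lvert\RRR\rvert>\lvert\RRR'\rvert$). Therefore $\bigsqcup_{d=0}^{r}\{\mmm(\RRR):\RRR\in\ZZZ_{n,m,d}\}$ already spans the quotient. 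Finally, iterating the linear relation of Lemma~\ref{lem:linear-relation-ungraded} rewrites, modulo $\II(\ZZZ_{n,m,r})$, each $\mmm(\RRR)$ with $\lvert\RRR\rvert<r$ as a linear combination of $\{\mmm(\RRR'):\RRR'\in\ZZZ_{n,m,r}\}$; so $\{\mmm(\RRR):\RRR\in\ZZZ_{n,m,r}\}$ alone spans. (This last reduction is precisely Lemma~\ref{lem:spanning-ungraded} with $d=r$, combined with the observation above that no representatives of degree exceeding $r$ are needed.)

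I do not expect a genuine obstacle here; the only point deserving care is to keep the filtered/ungraded picture straight — the isomorphism $\CC[\ZZZ_{n,m,r}]\cong\CC[\xxx_{n\times m}]/\II(\ZZZ_{n,m,r})$ is not graded — so the spanning argument should be phrased for the whole quotient ring rather than for a single graded or filtered component. Once the cardinality of the spanning set and the dimension of the quotient are seen to agree, linear independence of $\{\mmm(\RRR):\RRR\in\ZZZ_{n,m,r}\}$ follows at once, which completes the proof.
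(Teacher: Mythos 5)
Your argument is correct, but it is routed differently from the paper's. The paper's proof is a one-liner: under the Lagrange-interpolation identification $\CC[\xxx_{n\times m}]/\II(\ZZZ_{n,m,r})\cong\CC[\ZZZ_{n,m,r}]$, the class of $\mmm(\RRR)$ for $\RRR\in\ZZZ_{n,m,r}$ is sent to the function $\mathbf{1}_{\RRR}$, which for a placement of size exactly $r$ is just the indicator of the single point $\RRR$; these indicators are the standard basis of $\CC[\ZZZ_{n,m,r}]$, so both spanning and independence come for free and no separate counting is needed. You instead use the identification only to read off the dimension $\lvert\ZZZ_{n,m,r}\rvert$, prove spanning by squarefree reduction plus Lemma~\ref{lem:linear-relation-ungraded} (i.e.\ Lemma~\ref{lem:spanning-ungraded} at $d=r$), and conclude by cardinality. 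That is valid — a spanning set of size at most the dimension is a basis — but it invokes the linear-relations machinery that the paper only needs later, whereas the direct identification of images as delta functions makes the lemma immediate. One small point of hygiene: when you reduce to squarefree monomials you are implicitly using $x_{i,j}^2\equiv x_{i,j}\bmod\II(\ZZZ_{n,m,r})$ (note $x_{i,j}^2$ itself is \emph{not} in the ungraded vanishing ideal); it is worth saying this explicitly, though the paper's own Lemma~\ref{lem:spanning-ungraded} elides the same step.
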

\begin{proof}
    This is immediate from the vector space identification
    \[\CC[\xxx_{n\times m}]/\II(\ZZZ_{n,m,r})\cong\CC[\ZZZ_{n,m,r}].\]
\end{proof}

Now we use the basis in Lemma~\ref{lem:basis-ungraded} to construct $\symm_n\times\symm_m$-module surjections
\[\CC[\xxx_{n\times m}]_{\le d}/(\II(\ZZZ_{n,m,r})\cap\CC[\xxx_{n\times m}]_{\le d})\twoheadrightarrow\CC[\xxx_{n\times m}]_{\le d}/(\II(\ZZZ_{n,m,r+1})\cap\CC[\xxx_{n\times m}]_{\le d})\]
and then take quotients to obtain
\[R(\ZZZ_{n,m,r})_d\twoheadrightarrow R(\ZZZ_{n,m,r+1})_d.\]

Let $0\le r<\min\{m,n\}$ be an integer. Consider the linear map \begin{align}\label{eq:ungraded-map}
\varphi_{n,m,r}\,:\,\CC[\xxx_{n\times m}]/\II(\ZZZ_{n,m,r})&\longrightarrow \CC[\xxx_{n\times m}]_{\le r}/(\II(\ZZZ_{n,m,r+1})\cap\CC[\xxx_{n\times m}]_{\le r}) \\
\nonumber \mmm(\RRR)+\II(\ZZZ_{n,m,r})&\longmapsto\mmm(\RRR)+\II(\ZZZ_{n,m,r+1})
\end{align}
for all $\RRR\in\ZZZ_{n,m,r}$. This map is well-defined since \eqref{eq:ungraded-map} assigns an image to each element of the basis of $\CC[\xxx_{n\times m}]/\II(\ZZZ_{n,m,r})$ given by Lemma~\ref{lem:basis-ungraded}. This map is $\symm_n\times\symm_m$-equivariant.

Note the embedding $\CC[\xxx_{n\times m}]_{\le d}/(\II(\ZZZ_{n,m,r})\cap\CC[\xxx_{n\times m}]_{\le d})\subseteq\CC[\xxx_{n\times m}]/\II(\ZZZ_{n,m,r})$. We can consider restricting $\varphi_{n,m,r}$ to $\CC[\xxx_{n\times m}]_{\le d}/(\II(\ZZZ_{n,m,r})\cap\CC[\xxx_{n\times m}]_{\le d})$ and figure out its image.

\begin{lemma}\label{lem:ungraded-surj}
    For $0\le d\le r$,
    $\varphi_{n,m,r}$ is restricted to an $\symm_n\times\symm_m$-module surjection
    \begin{align*}\CC[\xxx_{n\times m}]_{\le d}/(\II(\ZZZ_{n,m,r})\cap\CC[\xxx_{n\times m}]_{\le d}) &\twoheadrightarrow \CC[\xxx_{n\times m}]_{\le d}/(\II(\ZZZ_{n,m,r+1})\cap\CC[\xxx_{n\times m}]_{\le d}) \\
    \mmm(\RRR)+\II(\ZZZ_{n,m,r})&\mapsto (r-d+1)\cdot\mmm(\RRR) +\II(\ZZZ_{n,m,r+1})
    \end{align*}
    for all $\RRR\in\ZZZ_{n,m,r}$.
\end{lemma}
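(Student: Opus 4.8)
The plan is to compute $\varphi_{n,m,r}$ explicitly on a monomial spanning set of the source and then read off both the displayed formula and surjectivity from that computation. First I would invoke Lemma~\ref{lem:spanning-ungraded}, so that the subspace $\CC[\xxx_{n\times m}]_{\le d}/(\II(\ZZZ_{n,m,r})\cap\CC[\xxx_{n\times m}]_{\le d})$ (sitting inside $\CC[\xxx_{n\times m}]/\II(\ZZZ_{n,m,r})$ via the filtration embedding noted just above the lemma) is spanned by the classes of $\mmm(\RRR)$ with $\RRR\in\ZZZ_{n,m,d}$; hence it suffices to track $\varphi_{n,m,r}(\mmm(\RRR)+\II(\ZZZ_{n,m,r}))$ for such $\RRR$. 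The main tool is the evaluation identification $\CC[\xxx_{n\times m}]/\II(\ZZZ)\cong\CC[\ZZZ]$, under which $\mmm(\RRR)$ becomes the indicator function $\one_\RRR$ of $\{\RRR'\in\ZZZ:\RRR\subseteq\RRR'\}$ (the computation already used in the proof of Lemma~\ref{lem:linear-relation-ungraded}).

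Fix $\RRR\in\ZZZ_{n,m,d}$. Inside $\CC[\xxx_{n\times m}]/\II(\ZZZ_{n,m,r})$, Equation~\eqref{eq:linear-relation-identify} expands $\mmm(\RRR)$ in the basis of Lemma~\ref{lem:basis-ungraded} on which $\varphi_{n,m,r}$ is defined, namely $\mmm(\RRR)\equiv\sum_{\RRR\preceq\RRR',\,\RRR'\in\ZZZ_{n,m,r}}\mmm(\RRR')$, so applying \eqref{eq:ungraded-map} gives
\[\varphi_{n,m,r}\big(\mmm(\RRR)+\II(\ZZZ_{n,m,r})\big)=\sum_{\substack{\RRR\preceq\RRR'\\ \RRR'\in\ZZZ_{n,m,r}}}\mmm(\RRR')+\II(\ZZZ_{n,m,r+1}).\]
Next I would evaluate the right-hand side in $\CC[\xxx_{n\times m}]/\II(\ZZZ_{n,m,r+1})\cong\CC[\ZZZ_{n,m,r+1}]$: its value at $\RRR''\in\ZZZ_{n,m,r+1}$ is $\#\{\RRR'\in\ZZZ_{n,m,r}:\RRR\subseteq\RRR'\subseteq\RRR''\}$, which vanishes unless $\RRR\subseteq\RRR''$, and when $\RRR\subseteq\RRR''$ such an $\RRR'$ is precisely $\RRR''$ with one of its $|\RRR''|-|\RRR|=r+1-d$ rooks outside $\RRR$ deleted, giving exactly $r-d+1$ of them. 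Thus the right-hand side equals $(r-d+1)\,\one_\RRR$, i.e. $\varphi_{n,m,r}(\mmm(\RRR)+\II(\ZZZ_{n,m,r}))=(r-d+1)\,\mmm(\RRR)+\II(\ZZZ_{n,m,r+1})$, which is the asserted formula.

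Finally I would deduce the structural claims. Every $(r-d+1)\,\mmm(\RRR)$ has degree $\le d$, so the image of the restriction lies in $\CC[\xxx_{n\times m}]_{\le d}/(\II(\ZZZ_{n,m,r+1})\cap\CC[\xxx_{n\times m}]_{\le d})$, viewed as a subspace of the codomain of $\varphi_{n,m,r}$ via the filtration injection valid since $d\le r$; and since $r-d+1\neq 0$, the image contains every $\mmm(\RRR)$ with $\RRR\in\ZZZ_{n,m,d}$, and these span that target by Lemma~\ref{lem:spanning-ungraded} (applicable as $d\le r<r+1\le\min\{m,n\}$), so the restricted map is onto. The $\symm_n\times\symm_m$-equivariance is inherited from that of $\varphi_{n,m,r}$, since the degree filtration and each locus $\ZZZ_{n,m,e}$ are stable under permuting rows and columns. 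I do not expect a genuine obstacle here; the only point needing care is keeping straight on which board the indicators $\one_\RRR$ are taken when passing from $\ZZZ_{n,m,r}$ to $\ZZZ_{n,m,r+1}$, as it is exactly the size gap $|\RRR''|-|\RRR|$ arising in that step that produces the constant $r-d+1$.
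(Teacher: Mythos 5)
Your proposal is correct, and its overall skeleton matches the paper's: reduce everything to the single identity $\varphi_{n,m,r}(\mmm(\RRR)+\II(\ZZZ_{n,m,r}))=(r-d+1)\,\mmm(\RRR)+\II(\ZZZ_{n,m,r+1})$ for $\RRR\in\ZZZ_{n,m,d}$, and then let Lemma~\ref{lem:spanning-ungraded} (applied to the locus $\ZZZ_{n,m,r+1}$, which is legitimate since $r+1\le\min\{m,n\}$ in the standing setup) deliver surjectivity. Where you differ is in how that identity is established. The paper proves it by reverse induction on $d$, stepping down one degree at a time via the covering-relation identity of Lemma~\ref{lem:linear-relation-ungraded}. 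You instead expand $\mmm(\RRR)$ all the way into the basis $\{\mmm(\RRR')\}_{\RRR'\in\ZZZ_{n,m,r}}$ using Equation~\eqref{eq:linear-relation-identify}, push the sum through $\varphi_{n,m,r}$, and evaluate the result as a function on $\ZZZ_{n,m,r+1}$: the value at $\RRR''$ counts the chains $\RRR\subseteq\RRR'\subseteq\RRR''$ with $|\RRR'|=r$, which is $r+1-d$ when $\RRR\subseteq\RRR''$ and $0$ otherwise. This one-step counting argument is a clean substitute for the induction and makes the origin of the constant $r-d+1$ transparent; the paper's inductive route has the mild advantage of reusing only the already-packaged covering relation rather than the full expansion. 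Your remaining points (the image lands in degree $\le d$, the scalar $r-d+1$ is nonzero, and equivariance is inherited) are all handled correctly.
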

\begin{proof}
    Once we could show
    \begin{align}\label{eq:ungraded-image}
        \varphi_{n,m,r}(\mmm(\RRR) + \II(\ZZZ_{n,m,r})) = (r-d+1)\cdot\mmm(\RRR) +\II(\ZZZ_{n,m,r+1})
    \end{align}
    for all $\RRR\in\ZZZ_{n,m,r}$, then Lemma~\ref{lem:spanning-ungraded} would indicate
    \[\varphi_{n,m,r}\bigg(\CC[\xxx_{n\times m}]_{\le d}/(\II(\ZZZ_{n,m,r})\cap\CC[\xxx_{n\times m}]_{\le d})\bigg) = \CC[\xxx_{n\times m}]_{\le d}/(\II(\ZZZ_{n,m,r+1})\cap\CC[\xxx_{n\times m}]_{\le d}).\]
    Therefore, it suffices to show Equation~\eqref{eq:ungraded-image}.

    We prove Equation~\eqref{eq:ungraded-image} by inverse induction on $d$. For $d=r$, Equation~\eqref{eq:ungraded-image} holds by the definition of $\varphi_{n,m,r}$. Suppose for the sake of induction that Equation~\eqref{eq:ungraded-image} holds for $d=k$ where $0<k\le r$. It remains to show Equation~\eqref{eq:ungraded-image} for $d=k-1$. In fact, for $\RRR\in\ZZZ_{n,m,k-1}$ we have
    \begin{align*}
        &\varphi_{n,m,r}(\mmm(\RRR)+\II(\ZZZ_{n,m,r})) = \varphi_{n,m,r}\Bigg(\frac{1}{r-k+1}\cdot\sum_{\RRR\precdot\RRR^\prime}\mmm(\RRR^\prime)+\II(\ZZZ_{n,m,r})\Bigg) \\
        =&\frac{1}{r-k+1}\cdot\sum_{\RRR\precdot\RRR^\prime}\varphi_{n,m,r}(\mmm(\RRR^\prime)+\II(\ZZZ_{n,m,r})) \\
        =& \frac{1}{r-k+1}\cdot\sum_{\RRR\precdot\RRR^\prime}(r-k+1)\cdot\mmm(\RRR^\prime)+\II(\ZZZ_{n,m,r+1}) \\
        =&\sum_{\RRR\precdot\RRR^\prime}\mmm(\RRR^\prime)+\II(\ZZZ_{n,m,r+1}) = (r-k+2)\cdot\mmm(\RRR) + \II(\ZZZ_{n,m,r+1})
    \end{align*}
    where the first and last equal signs arise from Lemma~\ref{lem:linear-relation-ungraded}, while the third equal sign arises from the induction assumption for $d=k$. We henceforth deduce Equation~\eqref{eq:ungraded-image} for $d=k-1$, finishing our proof.
\end{proof}

We finally obtain $R(\ZZZ_{n,m,r})_d\twoheadrightarrow R(\ZZZ_{n,m,r+1})_d$ as follows.
\begin{corollary}\label{cor:graded-surj}
    For $0\le d\le r<\min\{m,n\}$, we have an $\symm_n\times\symm_m$-module surjection
    \[R(\ZZZ_{n,m,r})_d\twoheadrightarrow R(\ZZZ_{n,m,r+1})_d.\]
\end{corollary}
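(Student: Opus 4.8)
The plan is to promote the ungraded surjections of Lemma~\ref{lem:ungraded-surj} to the individual graded pieces by passing to successive quotients, and then to recognise the result through the orbit harmonics identification~\eqref{eq:orbit-harmonics-graded-isom}. For $0\le e\le s\le\min\{m,n\}$ abbreviate
\[Q_s^{(e)}\coloneqq\CC[\xxx_{n\times m}]_{\le e}/\big(\II(\ZZZ_{n,m,s})\cap\CC[\xxx_{n\times m}]_{\le e}\big),\]
with the convention $Q_s^{(-1)}=\{0\}$, so that~\eqref{eq:orbit-harmonics-graded-isom} supplies $\symm_n\times\symm_m$-module isomorphisms $R(\ZZZ_{n,m,s})_e\cong Q_s^{(e)}/Q_s^{(e-1)}$, the quotient being taken along the canonical inclusion $Q_s^{(e-1)}\hookrightarrow Q_s^{(e)}$.

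Next I would apply Lemma~\ref{lem:ungraded-surj} at the two degrees $d$ and $d-1$, obtaining $\symm_n\times\symm_m$-equivariant surjections $\psi_d\colon Q_r^{(d)}\twoheadrightarrow Q_{r+1}^{(d)}$ and $\psi_{d-1}\colon Q_r^{(d-1)}\twoheadrightarrow Q_{r+1}^{(d-1)}$, each of which is a restriction of the one linear map $\varphi_{n,m,r}$ on $\CC[\xxx_{n\times m}]/\II(\ZZZ_{n,m,r})$. The point is that these fit into a commutative square whose rows are the canonical inclusions $Q_r^{(d-1)}\hookrightarrow Q_r^{(d)}$ and $Q_{r+1}^{(d-1)}\hookrightarrow Q_{r+1}^{(d)}$ and whose columns are $\psi_{d-1},\psi_d$: indeed, for $\xi\in Q_r^{(d-1)}$ the element $\varphi_{n,m,r}(\xi)$ lies in $Q_{r+1}^{(d-1)}$ by Lemma~\ref{lem:ungraded-surj} applied at degree $d-1$, and it is at the same time the value of $\psi_d$ on the image of $\xi$ in $Q_r^{(d)}$. (Concretely, the proof of Lemma~\ref{lem:ungraded-surj} shows $\psi_e$ scales each rook monomial $\mmm(\RRR)$ with $|\RRR|\le e$ by $r-|\RRR|+1$, so it manifestly preserves degree.)

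Finally I would pass to cokernels of the two horizontal inclusions. The composite $Q_r^{(d)}\xrightarrow{\ \psi_d\ }Q_{r+1}^{(d)}\twoheadrightarrow Q_{r+1}^{(d)}/Q_{r+1}^{(d-1)}$ is surjective and annihilates $Q_r^{(d-1)}$, because $\psi_d(Q_r^{(d-1)})=\psi_{d-1}(Q_r^{(d-1)})=Q_{r+1}^{(d-1)}$ by surjectivity of $\psi_{d-1}$. Hence it descends to an $\symm_n\times\symm_m$-equivariant surjection $Q_r^{(d)}/Q_r^{(d-1)}\twoheadrightarrow Q_{r+1}^{(d)}/Q_{r+1}^{(d-1)}$, which under the isomorphisms of~\eqref{eq:orbit-harmonics-graded-isom} is precisely the desired surjection $R(\ZZZ_{n,m,r})_d\twoheadrightarrow R(\ZZZ_{n,m,r+1})_d$. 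I expect no real obstacle here — it is a routine diagram chase — with the one spot deserving care being the commutativity of the square, i.e. the fact that $\varphi_{n,m,r}$ maps the degree-$\le(d-1)$ part into the degree-$\le(d-1)$ part; this is exactly what Lemma~\ref{lem:ungraded-surj} (applied at degree $d-1$) records.
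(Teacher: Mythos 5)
Your proposal is correct and follows essentially the same route as the paper: apply Lemma~\ref{lem:ungraded-surj} at filtration levels $d$ and $d-1$ (both restrictions of the single map $\varphi_{n,m,r}$, so the square commutes automatically), then pass to the quotients $R(\ZZZ_{n,m,s})_{\le d}/R(\ZZZ_{n,m,s})_{\le d-1}\cong R(\ZZZ_{n,m,s})_d$ furnished by the orbit harmonics identification. Your write-up is merely more explicit about the commutative square and the induced map on cokernels.
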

\begin{proof}
    Recall that $R(\ZZZ_{n,m,r})_{\le d}\cong\CC[\xxx_{n\times m}]_{\le d}/(\II(\ZZZ_{n,m,r})\cap\CC[\xxx_{n\times m}]_{\le d})$ as an $\symm_n\times\symm_m$-module. Therefore, Lemma~\ref{lem:ungraded-surj} induces an $\symm_n\times\symm_m$-module surjection
    \[R(\ZZZ_{n,m,r})_{\le d}\twoheadrightarrow R(\ZZZ_{n,m,r+1})_{\le d}\]
    restricted to
    \[R(\ZZZ_{n,m,r})_{\le d-1}\twoheadrightarrow R(\ZZZ_{n,m,r+1})_{\le d-1}.\]
    Then we take quotients of both sides and obtain
    \[R(\ZZZ_{n,m,r})_{\le d}/R(\ZZZ_{n,m,r})_{\le d-1}\twoheadrightarrow R(\ZZZ_{n,m,r+1})_{\le d}/R(\ZZZ_{n,m,r+1})_{\le d-1}\]
    which is equivalent to
    \[R(\ZZZ_{n,m,r})_d\twoheadrightarrow R(\ZZZ_{n,m,r+1})_d.\]
\end{proof}
\begin{remark}\label{rmk:graded-surj}
Write $q\coloneqq\min\{m,n\}$. Then Corollary~\ref{cor:graded-surj} gives the diagram shown in Figure~\ref{fig:graded-surj}.
\begin{figure}[H]
    \centering
    \begin{scriptsize}
      \begin{displaymath}\displaystyle \scalebox{0.93}{
        \xymatrix{
        &&&&&R_n(\ZZZ_{n,m,q})_{q}\\
        &&&&R(\ZZZ_{n,m,q-1})_{q-1}\ar@{->>}[r] &R(\ZZZ_{n,m,q})_{q-1}\\
        &&& R(\ZZZ_{n,m,q-2})_{q-2}
        \ar@{->>}[r] &R(\ZZZ_{n,m,q-1})_{q-2}\ar@{->>}[r] &R(\ZZZ_{n,m,q})_{q-2}\\
        &&\begin{sideways}$\ddots$\end{sideways}\quad   &\vdots &\vdots &\vdots\\
        &R(\ZZZ_{n,m,1})_{1}\ar@{->>}[r] &\dots\ar@{->>}[r] &R(\ZZZ_{n,m,q-2})_{1}\ar@{->>}[r] &R(\ZZZ_{n,m,q-1})_{1}\ar@{->>}[r] &R(\ZZZ_{n,m,q})_{1}\\
        R(\ZZZ_{n,m,0})_{0}\ar@{->>}[r] &R(\ZZZ_{n,m,1})_{0}\ar@{->>}[r] &\dots\ar@{->>}[r] &R(\ZZZ_{n,m,q-2})_{0}\ar@{->>}[r] &R(\ZZZ_{n,m,q-1})_{0}\ar@{->>}[r] &R(\ZZZ_{n,m,q})_{0}
        }}
      \end{displaymath}
    \end{scriptsize}
    \caption{Chains of surjections}
    \label{fig:graded-surj}
\end{figure}

\end{remark}

\subsection{The graded character of $R(\ZZZ_{n,m,r})$}\label{subsec:graded-char}

We want to determine $\grFrob(R(\ZZZ_{n,m,r});q)$. Roughly speaking, we will use Figure~\ref{fig:graded-surj} and Corollary~\ref{cor:length} to deduce a family of inequalities of symmetric functions, and then use Corollary~\ref{cor:schur-sum} to force all the inequalities to be equalities. These equalities are sufficient to determine $\grFrob(R(\ZZZ_{n,m,r});q)$.

First of all, we consider the direct sum of the modules in each column $R(\ZZZ_{n,m,r})_*$ in Figure~\ref{fig:graded-surj}.
\begin{lemma}\label{lem:col-sum}
    For $0\le r\le\min\{m,n\}$, we have
    \[\sum_{d=0}^r\Frob(R(\ZZZ_{n,m,r})_d) = SF_r.\]
\end{lemma}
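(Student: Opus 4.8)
The plan is to simply combine the ungraded Frobenius computation of Lemma~\ref{lem:ungraded-frob} with the vanishing of high-degree pieces from Corollary~\ref{cor:span}. Since $\ZZZ_{n,m,r}$ is stable under the $\symm_n\times\symm_m$-action, the orbit harmonics chain~\eqref{eq:orbit-harmonics-chain-isom} together with~\eqref{eq:orbit-harmonics-graded-isom} realizes $R(\ZZZ_{n,m,r})=\bigoplus_{d\ge 0}R(\ZZZ_{n,m,r})_d$ as a \emph{graded} $\symm_n\times\symm_m$-module, so each $R(\ZZZ_{n,m,r})_d$ is itself an $\symm_n\times\symm_m$-module and the (ungraded) Frobenius image is additive over this direct sum decomposition.

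First I would invoke Corollary~\ref{cor:span}, which tells us that $R(\ZZZ_{n,m,r})_d=\{0\}$ for every $d>r$; hence the direct sum is in fact finite, namely $R(\ZZZ_{n,m,r})=\bigoplus_{d=0}^{r}R(\ZZZ_{n,m,r})_d$. Applying $\Frob(-)$ and using additivity over direct sums, I would obtain
\[
\Frob(R(\ZZZ_{n,m,r}))=\sum_{d=0}^{r}\Frob(R(\ZZZ_{n,m,r})_d).
\]
Finally I would substitute the value $\Frob(R(\ZZZ_{n,m,r}))=SF_r$ provided by Lemma~\ref{lem:ungraded-frob}, which yields exactly the claimed identity $\sum_{d=0}^{r}\Frob(R(\ZZZ_{n,m,r})_d)=SF_r$.

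There is no real obstacle here: the statement is a bookkeeping consequence of results already in hand. The only point that warrants a sentence of care is the justification that the grading on $R(\ZZZ_{n,m,r})$ is compatible with the $\symm_n\times\symm_m$-action (so that each graded component is a module and Frobenius images add), which is exactly the equivariant refinement of the orbit harmonics isomorphisms recalled in Subsection~\ref{subsec:orbit-harmonoics}; and that the decomposition is finite, which is Corollary~\ref{cor:span}.
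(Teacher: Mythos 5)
Your proposal is correct and follows exactly the paper's argument: invoke Corollary~\ref{cor:span} to truncate the direct sum at degree $r$, use additivity of $\Frob$ over the graded decomposition, and conclude with Lemma~\ref{lem:ungraded-frob}. No differences worth noting.
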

\begin{proof}
    Corollary~\ref{cor:span} indicates that $R(\ZZZ_{n,m,r})_d=\{0\}$ for $d>r$. It follows that
    \[\sum_{d=0}^r\Frob(R(\ZZZ_{n,m,r})_d) = \Frob\Bigg(\bigoplus_{d=0}^r R(\ZZZ_{n,m,r})_d\Bigg) = \Frob(R(\ZZZ_{n,m,r})) = \Frob(\CC[\ZZZ_{n,m,r}]),\]
    which, together with Lemma~\ref{lem:ungraded-frob}, finishes the proof.
\end{proof}

Then we construct inequalities using Figure~\ref{fig:graded-surj}.
\begin{lemma}\label{lem:ineq-according-to-parity}
    Suppose $0\le d\le r\le\min\{m,n\}$.
    \begin{itemize}
        \item[(1)] If $d\equiv r \mod{2}$, we have
        \[\Frob(R(\ZZZ_{n,m,r})_d) \le \{\Frob(R(\ZZZ_{n,m,\frac{d+r}{2}})_d)\}_{\lambda_1\le n+m-d-r}.\]
        \item[(2)] If $d\not\equiv r \mod{2}$, we have
        \[\Frob(R(\ZZZ_{n,m,r})_d) \le \{\Frob(R(\ZZZ_{n,m,\frac{d+r-1}{2}})_d)\}_{\lambda_1\le n+m-d-r}.\]
    \end{itemize}
\end{lemma}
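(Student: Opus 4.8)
The plan is to deduce both inequalities from the chains of surjections recorded in Figure~\ref{fig:graded-surj} together with the length bound of Corollary~\ref{cor:length}. Concretely, for a given pair $d\le r$ I would locate, in the row of Figure~\ref{fig:graded-surj} indexed by the fixed graded degree $d$, the module $R(\ZZZ_{n,m,s_0})_d$ whose parameter $s_0$ is (essentially) the midpoint of $d$ and $r$, iterate Corollary~\ref{cor:graded-surj} to obtain a single $\symm_n\times\symm_m$-equivariant surjection $R(\ZZZ_{n,m,s_0})_d\twoheadrightarrow R(\ZZZ_{n,m,r})_d$, pass to Frobenius images, and then use that the Frobenius image of the target is left unchanged by the truncation $\{-\}_{\lambda_1\le n+m-d-r}$.

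For part~(1), when $d\equiv r\pmod 2$, I would set $s_0\coloneqq\tfrac{d+r}{2}\in\ZZ$, which the hypotheses force to satisfy $d\le s_0\le r\le\min\{m,n\}$. Composing the surjections of Corollary~\ref{cor:graded-surj} for the successive parameters $s=s_0,s_0+1,\dots,r-1$ — each one valid since $d\le s_0\le s$ and $s\le r-1\le\min\{m,n\}-1<\min\{m,n\}$, with the empty composition interpreted as the identity when $s_0=r$ — produces an $\symm_n\times\symm_m$-equivariant surjection $R(\ZZZ_{n,m,s_0})_d\twoheadrightarrow R(\ZZZ_{n,m,r})_d$. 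Since $\CC[\symm_n\times\symm_m]$ is semisimple, the target is isomorphic to a direct summand of the source, so $\Frob(R(\ZZZ_{n,m,r})_d)\le\Frob(R(\ZZZ_{n,m,s_0})_d)$. By Corollary~\ref{cor:length} every irreducible $V^\lambda\otimes V^\mu$ occurring in $R(\ZZZ_{n,m,r})_d$ has $\lambda_1\le n+m-d-r$ and $\mu_1\le n+m-d-r$, so $\Frob(R(\ZZZ_{n,m,r})_d)=\{\Frob(R(\ZZZ_{n,m,r})_d)\}_{\lambda_1\le n+m-d-r}$. Applying $\{-\}_{\lambda_1\le n+m-d-r}$ to the previous inequality — which preserves $\le$, since truncation is $\CC(q)$-linear and only discards Schur-positive summands — gives
\[\Frob(R(\ZZZ_{n,m,r})_d)\le\{\Frob(R(\ZZZ_{n,m,s_0})_d)\}_{\lambda_1\le n+m-d-r},\]
which is exactly part~(1).

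Part~(2) would run identically with $s_0\coloneqq\tfrac{d+r-1}{2}$: this is an integer precisely because $d\not\equiv r\pmod 2$, and the parity mismatch together with $d\le r$ forces $d<r$, hence $d\le s_0\le r\le\min\{m,n\}$, so the same chain of surjections and the same truncation step apply verbatim. I do not expect a genuine obstacle here: the substantive content is already packaged in Corollary~\ref{cor:graded-surj} (the triangular diagram of surjections) and Corollary~\ref{cor:length} (the bounds on $\lambda_1,\mu_1$), and the only points that need care are the arithmetic bookkeeping — verifying that $s_0$ is an integer lying in $[d,r]$ and that every intermediate parameter stays strictly below $\min\{m,n\}$ — and the elementary observation that the truncation operator $\{-\}_P$ is monotone with respect to $\le$.
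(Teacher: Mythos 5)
Your proposal is correct and follows essentially the same route as the paper: iterate Corollary~\ref{cor:graded-surj} to get a surjection from the "midpoint" module $R(\ZZZ_{n,m,s_0})_d$ onto $R(\ZZZ_{n,m,r})_d$, pass to Frobenius images, and then use Corollary~\ref{cor:length} to upgrade the resulting inequality to its truncated form. The only difference is cosmetic — you spell out the semisimplicity argument, the monotonicity of the truncation operator, and the parity check that $d<r$ in part~(2), all of which the paper leaves implicit.
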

We provide a pictorial interpretation for Lemma~\ref{lem:ineq-according-to-parity}. For convenience, we replace modules in Figure~\ref{fig:graded-surj} with dots and specifically denote the module $R(\ZZZ_{n,m,r})_d$ in Lemma~\ref{lem:ineq-according-to-parity} by the symbol $\bullet$. Consider the straight line $L$ passing $\bullet$ with slope $-1$ and mark the northwest-most module in $L$ by a $\blacktriangle$. Then we decorate the module in the same column with $\blacktriangle$ and the same row with $\bullet$ with a $\star$.
\begin{itemize}
    \item[(1)] If $d\equiv r\mod{2}$, $\blacktriangle$ is in the northwestern boundary of the big triangle in Figure~\ref{fig:graded-surj}, which is shown as follows. Note that $\star$ exactly represents the module $R(\ZZZ_{n,m,\frac{d+r}{2}})_d$ in Lemma~\ref{lem:ineq-according-to-parity}.
    \begin{scriptsize}
        \begin{center}
            \begin{tikzpicture}[scale = 0.2]
        
            \node (n,0) at (0,0) {$\circ$};
        
            \node (n-2,0) at (2,0) {$\circ$};
            \node (n-2,1) at (2,2) {$\circ$};

            \node (dots) at (6,0) {$\dots$};
            \node (dots) at (6,6) {\begin{sideways} $\ddots$ \end{sideways}};
        
            \node at (10,0) {$\circ$};
            \node at (10,2) {$\circ$};
            \node at (10,4) {$\circ$};
            \node at (10,6) {$\circ$};
            \node at (10,8.5) {$\vdots$};
            \node at (10,10){$\circ$};

            \node at (12,0) {$\circ$};
            \node at (12,2) {$\circ$};
            \node at (12,4) {$\star$};
            \node at (12,6) {$\circ$};
            \node at (12,8.5) {$\vdots$};
            \node at (12,10){$\circ$};
            \node at (12,12){$\blacktriangle$};
        
            \node at (15,0) {$\dots$};
            \node at (18,0) {$\circ$};
            \node at (18,2) {$\circ$};
            \node at (18,4) {$\circ$};
            \node at (18,6) {$\circ$};
            \node at (18,8.5) {$\vdots$};
            \node at (18,10){$\circ$};
            \node at (18,12){$\circ$};
        
            \node at (20,0) {$\circ$};
            \node at (20,2) {$\circ$};
            \node at (20,4) {$\bullet$};
            \node at (20,6) {$\circ$};
            \node at (20,8.5) {$\vdots$};
            \node at (20,10){$\circ$};
            \node at (20,12){$\circ$};

            \node at (22,0) {$\circ$};
            \node at (22,2) {$\circ$};
            \node at (22,4) {$\circ$};
            \node at (22,6) {$\circ$};
            \node at (22,8.5) {$\vdots$};
            \node at (22,10){$\circ$};
            \node at (22,12){$\circ$};

            \draw [-] (20,4) -- (12,12);
                
            \end{tikzpicture}
        \end{center}
    \end{scriptsize}
    
    \item[(2)] If $d\not\equiv r \mod{2}$, $\blacktriangle$ is not in but adjacent to the northwestern boundary of the big triangle in Figure~\ref{fig:graded-surj}, which is shown as follows. Note that $\star$ exactly represents the module $R(\ZZZ_{n,m,\frac{d+r-1}{2}})_d$ in Lemma~\ref{lem:ineq-according-to-parity}.
    
    \begin{scriptsize}
        \begin{center}
            \begin{tikzpicture}[scale = 0.2]
        
            \node (n,0) at (0,0) {$\circ$};
        
            \node (n-2,0) at (2,0) {$\circ$};
            \node (n-2,1) at (2,2) {$\circ$};

            \node (dots) at (6,0) {$\dots$};
            \node (dots) at (6,6) {\begin{sideways} $\ddots$ \end{sideways}};
        
            \node at (10,0) {$\circ$};
            \node at (10,2) {$\star$};
            \node at (10,4) {$\circ$};
            \node at (10,6) {$\circ$};
            \node at (10,8.5) {$\vdots$};
            \node at (10,10){$\circ$};
            
            \node at (12,0) {$\circ$};
            \node at (12,2) {$\circ$};
            \node at (12,4) {$\circ$};
            \node at (12,6) {$\circ$};
            \node at (12,8.5) {$\vdots$};
            \node at (12,10){$\blacktriangle$};
            \node at (12,12){$\circ$};
        
            \node at (15,0) {$\dots$};
            \node at (18,0) {$\circ$};
            \node at (18,2) {$\circ$};
            \node at (18,4) {$\circ$};
            \node at (18,6) {$\circ$};
            \node at (18,8.5) {$\vdots$};
            \node at (18,10){$\circ$};
            \node at (18,12){$\circ$};
        
            \node at (20,0) {$\circ$};
            \node at (20,2) {$\bullet$};
            \node at (20,4) {$\circ$};
            \node at (20,6) {$\circ$};
            \node at (20,8.5) {$\vdots$};
            \node at (20,10){$\circ$};
            \node at (20,12){$\circ$};

            \node at (22,0) {$\circ$};
            \node at (22,2) {$\circ$};
            \node at (22,4) {$\circ$};
            \node at (22,6) {$\circ$};
            \node at (22,8.5) {$\vdots$};
            \node at (22,10){$\circ$};
            \node at (22,12){$\circ$};
        
            \draw [-] (20,2) -- (12,10);
                
            \end{tikzpicture}
        \end{center}
    \end{scriptsize}
\end{itemize}
In both cases, Lemma~\ref{lem:ineq-according-to-parity} states that
\[\Frob(\bullet)\le \{\Frob(\star)\}_{\lambda_1\le n+m-d-r}.\]
This inequality is immediate from Corollaries~\ref{cor:length} and \ref{cor:graded-surj} as follows.
\begin{proof}{\em (of Lemma~\ref{lem:ineq-according-to-parity})}
    \begin{itemize}
        \item[(1)] If $d\equiv r \mod{2}$, we have $\frac{d+r}{2}\in\ZZ$. The fact $d\le r$ indicates $\frac{d+r}{2}\le r$, so we can iteratively use Corollary~\ref{cor:graded-surj} to obtain an $\symm_n\times\symm_m$-module surjection
        \[R(\ZZZ_{n,m,\frac{d+r}{2}})_d\twoheadrightarrow R(\ZZZ_{n,m,r})_d\]
        and hence
        \[\Frob(R(\ZZZ_{n,m,r})_d) \le \Frob(R(\ZZZ_{n,m,\frac{d+r}{2}})_d).\]
        Then Corollary~\ref{cor:length} strengthens this inequality and yields
        \[\Frob(R(\ZZZ_{n,m,r})_d) \le \{\Frob(R(\ZZZ_{n,m,\frac{d+r}{2}})_d)\}_{\lambda_1\le n+m-d-r}.\]
        \item[(2)] If $d\not\equiv r \mod{2}$, we have $\frac{d+r-1}{2}\in\ZZ$. The fact $d\le r$ indicates $\frac{d+r-1}{2}\le r-\frac{1}{2}$ and thus $\frac{d+r-1}{2} < r$, so iterating Corollary~\ref{cor:graded-surj} yields an $\symm_n\times\symm_m$-module surjection
        \[R(\ZZZ_{n,m,\frac{d+r-1}{2}})_d\twoheadrightarrow R(\ZZZ_{n,m,r})_d\]
        and then
        \[\Frob(R(\ZZZ_{n,m,r})_d)\le \Frob(R(\ZZZ_{n,m,\frac{d+r-1}{2}})_d).\]
        Therefore, Corollary~\ref{cor:length} strengthens this inequality to
        \[\Frob(R(\ZZZ_{n,m,r})_d)\le \{\Frob(R(\ZZZ_{n,m,\frac{d+r-1}{2}})_d)\}_{\lambda_1\le n+m-d-r}.\]
    \end{itemize}
\end{proof}

Surprisingly, the inequalities in Lemma~\ref{lem:ineq-according-to-parity} turn out to be equalities, which will be proved in Lemma~\ref{lem:eq-according-to-parity}. Specifically, we will add all the inequalities in Lemma~\ref{lem:ineq-according-to-parity} for $0\le d\le r\le\min\{m,n\}$, obtaining a new inequality. We will demonstrate that this new inequality is actually an equality, thereby forcing all the inequalities to be equalities.
\begin{lemma}\label{lem:eq-according-to-parity}
    Suppose $0\le d\le r\le\min\{m,n\}$.
    \begin{itemize}
        \item[(1)] If $d\equiv r \mod{2}$, we have
        \[\Frob(R(\ZZZ_{n,m,r})_d) = \{\Frob(R(\ZZZ_{n,m,\frac{d+r}{2}})_d)\}_{\lambda_1\le n+m-d-r}.\]
        \item[(2)] If $d\not\equiv r \mod{2}$, we have
        \[\Frob(R(\ZZZ_{n,m,r})_d) = \{\Frob(R(\ZZZ_{n,m,\frac{d+r-1}{2}})_d)\}_{\lambda_1\le n+m-d-r}.\]
    \end{itemize}
\end{lemma}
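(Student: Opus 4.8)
The plan is to sum all the inequalities of Lemma~\ref{lem:ineq-according-to-parity}, one for each pair $(d,r)$ with $0\le d\le r\le\min\{m,n\}$, into a single inequality and then to verify that this combined inequality is in fact an equality. For every $(d,r)$ the difference between the right-hand side of Lemma~\ref{lem:ineq-according-to-parity} and $\Frob(R(\ZZZ_{n,m,r})_d)$ is Schur-positive, and a finite sum of Schur-positive elements vanishes only if every summand does; so once the combined inequality is known to be an equality, every individual inequality must be an equality, which is exactly the assertion. Write $N\coloneqq\min\{m,n\}$ throughout. By Lemma~\ref{lem:col-sum} the left-hand sides add up to $\sum_{0\le d\le r\le N}\Frob(R(\ZZZ_{n,m,r})_d)=\sum_{r=0}^{N}SF_r$, so the entire task reduces to showing that the right-hand sides also add up to $\sum_{r=0}^{N}SF_r$.

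To evaluate the sum of the right-hand sides I would reindex by the ``middle index'' $s\coloneqq\lfloor(d+r)/2\rfloor$, which is exactly the index of the module appearing on the right of Lemma~\ref{lem:ineq-according-to-parity}. The assignment $(d,r)\mapsto(d,s)$, together with the parity of $d+r$, is a bijection from $\{(d,r):0\le d\le r\le N\}$ onto the disjoint union of the pairs $(d,s)$ with $0\le d\le s\le N$ and $d\ge 2s-N$ (the case $d\equiv r$, where $r=2s-d$ and $n+m-d-r=n+m-2s$), together with the pairs $(d,s)$ with $0\le d\le s\le N-1$ and $d\ge 2s+1-N$ (the case $d\not\equiv r$, where $r=2s+1-d$ and $n+m-d-r=n+m-2s-1$). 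Hence the sum of the right-hand sides equals
\[\sum_{s=0}^{N}\ \sum_{\substack{0\le d\le s\\ d\ge 2s-N}}\{\Frob(R(\ZZZ_{n,m,s})_d)\}_{\lambda_1\le n+m-2s}\ +\ \sum_{s=0}^{N-1}\ \sum_{\substack{0\le d\le s\\ d\ge 2s+1-N}}\{\Frob(R(\ZZZ_{n,m,s})_d)\}_{\lambda_1\le n+m-2s-1}.\]
If the terms with $0\le d<2s-N$ (resp.\ $0\le d<2s+1-N$) vanish, I may replace each inner range by $0\le d\le s$; then linearity of the truncation operator together with Lemma~\ref{lem:col-sum} collapses the double sums to $\sum_{s=0}^{N}\{SF_s\}_{\lambda_1\le n+m-2s}+\sum_{s=0}^{N-1}\{SF_s\}_{\lambda_1\le n+m-2s-1}$, which is precisely $\sum_{r=0}^{N}SF_r$ by Corollary~\ref{cor:schur-sum}. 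Matching this with the left-hand side finishes the argument.

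It therefore remains to prove the vanishing of the out-of-range truncated terms, which I expect to be the only genuinely non-formal step. The idea is that $R(\ZZZ_{n,m,s})_d$ is, by Corollary~\ref{cor:span}, an $\symm_n\times\symm_m$-equivariant quotient of the permutation module $\spa\{\mmm(\RRR):\RRR\in\ZZZ_{n,m,d}\}\cong\CC[\ZZZ_{n,m,d}]$, whose Frobenius image is $SF_d$ by Lemma~\ref{lem:ungraded-frob}; and, by Pieri's rule and the interlacing inequalities for horizontal strips, every irreducible constituent $V^\lambda\otimes V^\mu$ of $SF_d=\sum_{\nu\vdash d}(s_\nu\cdot h_{n-d})\otimes(s_\nu\cdot h_{m-d})$ satisfies $\lambda_1\ge n-d$ and $\mu_1\ge m-d$. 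Since $N\le n$ and $N\le m$, the hypothesis $d<2s-N$ yields either $m-d>n+m-2s$ (when $N=n$) or $n-d>n+m-2s$ (when $N=m$), so no constituent of $\Frob(R(\ZZZ_{n,m,s})_d)$ can survive the truncation $\{\,\cdot\,\}_{\lambda_1\le n+m-2s}$; the case $d<2s+1-N$ is identical, with $n+m-2s-1$ replacing $n+m-2s$. The only delicate part of the whole proof is keeping the reindexing and these numerical inequalities straight; everything else is bookkeeping.
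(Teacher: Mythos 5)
Your proposal is correct and follows essentially the same strategy as the paper: sum the inequalities of Lemma~\ref{lem:ineq-according-to-parity} over all pairs $(d,r)$, identify the total of the left-hand sides with $\sum_r SF_r$ via Lemma~\ref{lem:col-sum}, match the total of the right-hand sides with the same quantity via Corollary~\ref{cor:schur-sum}, and conclude by Schur-positivity of each individual difference. The only cosmetic difference is that the paper organizes the right-hand sides into the diagonal modules $P_r,Q_r$ and absorbs the out-of-range low-degree terms implicitly (adding them only strengthens the upper bound, since they are Schur-positive), whereas you reindex directly and prove their vanishing explicitly with a correct Pieri-rule argument.
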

\begin{proof}
    For $0\le r\le\min\{m,n\}$, we define two graded $\symm_n\times\symm_m$-modules $P_r,Q_r$ by
    \begin{align*}
        P_r&\coloneqq R(\ZZZ_{n,m,r})_r\oplus R(\ZZZ_{n,m,r+1})_{r-1}\oplus R(\ZZZ_{n,m,r+2})_{r-2}\oplus\dots \\
        Q_r&\coloneqq R(\ZZZ_{n,m,r+1})_r\oplus R(\ZZZ_{n,m,r+2})_{r-1}\oplus R(\ZZZ_{n,m,r+3})_{r-2}\oplus\dots.
    \end{align*}
    where the index of each graded component is give by \[(P_r)_d = R(\ZZZ_{n,m,2r-d})_d \quad \text{and} \quad (Q_r)_d=R(\ZZZ_{n,m,2r+1-d})_d.\]
    Intuitively, both $P_r,Q_r$ are the direct sums of all the modules in the same northwest-to-southeast diagonal in Figure~\ref{fig:graded-surj}. For instance, we let $n=5$ and $m=8$, then the modules $P_r$ (resp. $Q_r$) are the direct sums of all the modules in the same red (resp. blue) rectangles in the following diagram.
    \begin{center}
    \begin{scriptsize}
        \begin{tikzpicture}[scale = 0.5]
                \draw [fill=red!50] (0,0.5) -- (0.5,0) -- (0,-0.5) -- (-0.5,0) -- (0,0.5);
                \draw [fill=blue!50] (2,0.5) -- (2.5,0) -- (2,-0.5) -- (1.5,0) -- (2,0.5);
                \draw [fill=red!50] (1.5,2) -- (2,2.5) -- (4.5,0) -- (4,-0.5) -- (1.5,2);
                \draw [fill=blue!50] (3.5,2) -- (4,2.5) -- (6.5,0) -- (6,-0.5) -- (3.5,2);
                \draw [fill=red!50] (3.5,4) -- (4,4.5) -- (8.5,0) -- (8,-0.5) -- (3.5,4);
                \draw [fill=blue!50] (5.5,4) -- (6,4.5) -- (10.5,0) -- (10,-0.5) -- (5.5,4);
                \draw [fill=red!50] (5.5,6) -- (6,6.5) -- (10.5,2) -- (10,1.5) -- (5.5,6);
                \draw [fill=blue!50] (7.5,6) -- (8,6.5) -- (10.5,4) -- (10,3.5) -- (7.5,6);
                \draw [fill=red!50] (7.5,8) -- (8,8.5) -- (10.5,6) -- (10,5.5) -- (7.5,8);
                \draw [fill=blue!50] (9.5,8) -- (10,8.5) -- (10.5,8) -- (10,7.5) -- (9.5,8);
                \draw [fill=red!50] (9.5,10) -- (10,10.5) -- (10.5,10) -- (10,9.5) -- (9.5,10);
                
                \node at (0,0) {$\circ$};

                \node at (2,0) {$\circ$};
                \node at (2,2) {$\circ$};

                \node at (4,0) {$\circ$};
                \node at (4,2) {$\circ$};
                \node at (4,4) {$\circ$};

                \node at (6,0) {$\circ$};
                \node at (6,2) {$\circ$};
                \node at (6,4) {$\circ$};
                \node at (6,6) {$\circ$};

                \node at (8,0) {$\circ$};
                \node at (8,2) {$\circ$};
                \node at (8,4) {$\circ$};
                \node at (8,6) {$\circ$};
                \node at (8,8) {$\circ$};

                \node at (10,0) {$\circ$};
                \node at (10,2) {$\circ$};
                \node at (10,4) {$\circ$};
                \node at (10,6) {$\circ$};
                \node at (10,8) {$\circ$};
                \node at (10,10) {$\circ$};

                \node at (0,-1.5) {{\color{red} $P_0$}};
                \node at (2,-1.5) {{\color{blue} $Q_{0}$}}; 
                \node at (4,-1.5) {{\color{red} $P_1$}};
                \node at (6,-1.5) {{\color{blue} $Q_1$}};
                \node at (8,-1.5) {{\color{red} $P_2$}};
                \node at (11,-1) {{\color{blue} $Q_2$}};
                \node at (11,1.5) {{\color{red} $P_3$}};
                \node at (11,3.5) {{\color{blue} $Q_3$}};
                \node at (11,5.5) {{\color{red} $P_4$}};
                \node at (11,7.5) {{\color{blue} $Q_4$}};
                \node at (11,9.5) {{\color{red} $P_5$}};
        \end{tikzpicture}
    \end{scriptsize} 
    \end{center}
    In particular, we have $Q_{\min\{m,n\}} = \{0\}$. Additionally, we have $(P_r)_d = (Q_r)_d = \{0\}$ for $d>r$ and for $d<0$. Note that $P_r,Q_r$ for $0\le r\le\min\{m,n\}$ together form a partition of the big triangle, which has another partition separating it into columns. Since Corollary~\ref{cor:span} indicates $R(\ZZZ_{n,m,r})_d =\{0\}$ for $d>r$, the direct sum of modules in the $r$-th column is the whole $R(\ZZZ_{n,m,r})$. Therefore, two different partitions above of the big triangle reveal
    \begin{align}\label{eq:two-partitions}
        &\sum_{r=0}^{\min\{m,n\}}\grFrob(P_r;1) + \sum_{r=0}^{\min\{m,n\}-1}\grFrob(Q_r;1) = \sum_{r=0}^{\min\{m,n\}}\Frob(R(\ZZZ_{n,m,r})) \\ \nonumber
        =&\sum_{r=0}^{\min\{m,n\}}\Frob(\CC[\ZZZ_{n,m,r}]) = \sum_{r=0}^{\min\{m,n\}}SF_r
    \end{align}
    where the last equal sign arises from Lemma~\ref{lem:ungraded-frob}.

    Lemma~\ref{lem:ineq-according-to-parity} gives graded upper bounds on $P_r$ and $Q_r$:
    \begin{align}
        \label{eq:upper-bound-P} \grFrob(P_r;q)&\le\{\grFrob(R(\ZZZ_{n,m,r});q)\}_{\lambda_1\le n+m-2r} \\
        \label{eq:upper-bound-Q} \grFrob(Q_r;q)&\le\{\grFrob(R(\ZZZ_{n,m,r});q)\}_{\lambda_1\le n+m-2r-1}
    \end{align}
    which can be understood using the two diagrams below. The inequality~\eqref{eq:upper-bound-P} compares the red rectangle with the green rectangle shown in the left diagram, while the inequality~\eqref{eq:upper-bound-Q} compares the blue rectangle with the green rectangle shown in the right diagram.
    \begin{center}
    \begin{scriptsize}
        \begin{tikzpicture} [scale = 0.4]
                \draw [fill=blue!50] (27.5,6) -- (28,6.5) -- (30.5,4) -- (30,3.5) -- (27.5,6);
                \draw [fill=green!50] (25.65,6.35) -- (26.35,6.35) -- (26.35,-0.35) -- (25.65,-0.35) -- (25.65,6.35);
                
                \node at (20,0) {$\circ$};

                \node at (22,0) {$\circ$};
                \node at (22,2) {$\circ$};

                \node at (24,0) {$\circ$};
                \node at (24,2) {$\circ$};
                \node at (24,4) {$\circ$};

                \node at (26,0) {$\circ$};
                \node at (26,2) {$\circ$};
                \node at (26,4) {$\circ$};
                \node at (26,6) {$\circ$};

                \node at (28,0) {$\circ$};
                \node at (28,2) {$\circ$};
                \node at (28,4) {$\circ$};
                \node at (28,6) {$\circ$};
                \node at (28,8) {$\circ$};

                \node at (30,0) {$\circ$};
                \node at (30,2) {$\circ$};
                \node at (30,4) {$\circ$};
                \node at (30,6) {$\circ$};
                \node at (30,8) {$\circ$};
                \node at (30,10) {$\circ$};

                \draw [fill=green!50] (5.65,6.35) -- (6.35,6.35) -- (6.35,-0.35) -- (5.65,-0.35) -- (5.65,6.35);
                \draw [fill=red!50] (5.5,6) -- (6,6.5) -- (10.5,2) -- (10,1.5) -- (5.5,6);

                \node at (0,0) {$\circ$};

                \node at (2,0) {$\circ$};
                \node at (2,2) {$\circ$};

                \node at (4,0) {$\circ$};
                \node at (4,2) {$\circ$};
                \node at (4,4) {$\circ$};

                \node at (6,0) {$\circ$};
                \node at (6,2) {$\circ$};
                \node at (6,4) {$\circ$};
                \node at (6,6) {$\circ$};

                \node at (8,0) {$\circ$};
                \node at (8,2) {$\circ$};
                \node at (8,4) {$\circ$};
                \node at (8,6) {$\circ$};
                \node at (8,8) {$\circ$};

                \node at (10,0) {$\circ$};
                \node at (10,2) {$\circ$};
                \node at (10,4) {$\circ$};
                \node at (10,6) {$\circ$};
                \node at (10,8) {$\circ$};
                \node at (10,10) {$\circ$};
        \end{tikzpicture}
    \end{scriptsize} 
    \end{center}
    Substituting $q=1$ respectively in \eqref{eq:upper-bound-P} and \eqref{eq:upper-bound-Q}, we obtain
    \begin{align}
        \label{ineq:ungraded-upper-bound-P} \grFrob(P_r;1)&\le\{\grFrob(R(\ZZZ_{n,m,r});1)\}_{\lambda_1\le n+m-2r} = \{SF_r\}_{\lambda_1 \le n+m-2r} \\
        \label{ineq:ungraded-upper-bound-Q} \grFrob(Q_r;1)&\le\{\grFrob(R(\ZZZ_{n,m,r});1)\}_{\lambda_1\le n+m-2r-1} = \{SF_r\}_{\lambda_1 \le n+m-2r-1}
    \end{align}
    where the last equal signs of both inequalities use Lemma~\ref{lem:ungraded-frob}.

    Summing both sides of inequalities~\eqref{ineq:ungraded-upper-bound-P} for $0\le r\le\min\{m,n\}$ and \eqref{ineq:ungraded-upper-bound-Q} for $0\le r\le \min\{m,n\}-1$, we deduce
    \begin{align}
        \label{ineq:sum-ungraded}\sum_{r=0}^{\min\{m,n\}}\grFrob(P_r;1) + \sum_{r=0}^{\min\{m,n\}-1}\grFrob(Q_r;1) \le \\ \nonumber \sum_{r=0}^{\min\{m,n\}}\{SF_r\}_{\lambda_1\le n+m-2r} + \sum_{r=0}^{\min\{m,n\}-1}\{SF_r\}_{n+m-2r-1}.
    \end{align}
    However, Equation~\eqref{eq:two-partitions} and Corollary~\ref{cor:schur-sum} imply
    \begin{align*}
        &\sum_{r=0}^{\min\{m,n\}}\grFrob(P_r;1) + \sum_{r=0}^{\min\{m,n\}-1}\grFrob(Q_r;1)
        = \sum_{r=0}^{\min\{m,n\}}SF_r \\ = & \sum_{d=0}^{\min\{m,n\}}\{SF_d\}_{\lambda_1\le n+m-2d} + \sum_{d=0}^{\min\{m,n\}-1}\{SF_d\}_{\lambda_1\le n+m-2d-1},
    \end{align*}
    indicating that \eqref{ineq:sum-ungraded} is actually an equality. Therefore, all the inequalities above, including \eqref{eq:upper-bound-P} and \eqref{eq:upper-bound-Q}, are forced to be equalities, concluding the proof.
\end{proof}

Iterating Lemma~\ref{lem:eq-according-to-parity} gives the following result, which will play a crucial role in the induction of the proof of the graded module structure of $R(\ZZZ_{n,m,r})$ in Theorem~\ref{thm:grad-str}.

\begin{lemma}\label{lem:induction-tool-of-module-str}
    For $0\le d\le r\le\min\{m,n\}$, we have
    \[\Frob(R(\ZZZ_{n,m,r})_d) = \{\Frob(R(\ZZZ_{n,m,d})_d)\}_{\lambda_1\le n+m-d-r}.\]
\end{lemma}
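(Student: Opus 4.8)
The plan is to obtain the statement by iterating Lemma~\ref{lem:eq-according-to-parity}, using induction on the quantity $r-d$ with $d$ fixed. Observe first that in both clauses of Lemma~\ref{lem:eq-according-to-parity} the intermediate index is $r' \coloneqq \lfloor (d+r)/2 \rfloor$ (it equals $\tfrac{d+r}{2}$ when $d\equiv r\bmod 2$ and $\tfrac{d+r-1}{2}$ otherwise), so the two cases can be treated uniformly: Lemma~\ref{lem:eq-according-to-parity} says
\[\Frob(R(\ZZZ_{n,m,r})_d) = \{\Frob(R(\ZZZ_{n,m,r'})_d)\}_{\lambda_1\le n+m-d-r}.\]

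For the base case $r=d$ there is nothing to iterate: we must check $\{\Frob(R(\ZZZ_{n,m,d})_d)\}_{\lambda_1\le n+m-2d} = \Frob(R(\ZZZ_{n,m,d})_d)$, which is immediate from Corollary~\ref{cor:length} applied with $r=d$, since every $\symm_n\times\symm_m$-irreducible $V^\lambda\otimes V^\mu$ occurring in $R(\ZZZ_{n,m,d})_d$ already satisfies $\lambda_1\le n+m-2d$ and $\mu_1\le n+m-2d$, so the truncation operator $\{-\}_{\lambda_1\le n+m-2d}$ acts as the identity on $\Frob(R(\ZZZ_{n,m,d})_d)$.

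For the inductive step assume $d<r$ and that the statement holds for all pairs $(d,r'')$ with $d\le r''<r$. Since $d<r$ we have $2d<d+r<2r$, hence $d\le r'<r$ (and also $r'\le r\le\min\{m,n\}$), so the inductive hypothesis applies to $r'$ and gives $\Frob(R(\ZZZ_{n,m,r'})_d) = \{\Frob(R(\ZZZ_{n,m,d})_d)\}_{\lambda_1\le n+m-d-r'}$. Substituting this into the displayed consequence of Lemma~\ref{lem:eq-according-to-parity} and using that nested truncations compose by $\{\{F\}_{\lambda_1\le a}\}_{\lambda_1\le b} = \{F\}_{\lambda_1\le\min\{a,b\}}$, we get
\[\Frob(R(\ZZZ_{n,m,r})_d) = \big\{\Frob(R(\ZZZ_{n,m,d})_d)\big\}_{\lambda_1\le\min\{n+m-d-r',\,n+m-d-r\}} = \big\{\Frob(R(\ZZZ_{n,m,d})_d)\big\}_{\lambda_1\le n+m-d-r},\]
where the last equality uses $r'\le r$. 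This closes the induction.

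The argument is essentially bookkeeping, so I do not expect a genuine obstacle; the only points requiring care are (i) checking the strict inequality $r'<r$ together with $d\le r'$, which is what makes the induction legitimate and terminating, and (ii) verifying that the composition of truncation operators collapses to the single condition $\lambda_1\le n+m-d-r$ — both of which reduce to the elementary observation that $r'\le r$. All the substantive content is already packaged into Lemma~\ref{lem:eq-according-to-parity} and Corollary~\ref{cor:length}.
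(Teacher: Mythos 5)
Your proof is correct and follows essentially the same route as the paper: induction on $r$ for fixed $d$, applying Lemma~\ref{lem:eq-according-to-parity} and composing truncations, with the only cosmetic differences being that you unify the two parity cases via $r'=\lfloor(d+r)/2\rfloor$ and handle the base case with Corollary~\ref{cor:length} rather than Lemma~\ref{lem:eq-according-to-parity}(1). Both variations are valid.
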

\begin{proof}
    We prove this result for any fixed $d$ by induction on $r$. If $r = d$, Lemma~\ref{lem:eq-according-to-parity} indicates
    \[\Frob(R(\ZZZ_{n,m,d})_d) = \{\Frob(R(\ZZZ_{n,m,d})_d\}_{\lambda_1\le n+m-2d}\]
    and hence Lemma~\ref{lem:induction-tool-of-module-str} holds.
    Suppose, for the sake of induction, that there exists an integer $d<k\le\min\{m,n\}$ such that Lemma~\ref{lem:induction-tool-of-module-str} holds for all $r<k$. It remains to show that Lemma~\ref{lem:induction-tool-of-module-str} holds for $r=k$.

        \noindent\textbf{Case (1):} If $d\equiv k\mod{2}$, Lemma~\ref{lem:eq-according-to-parity} indicates
        \begin{align}\label{eq:induction-tool-1}
            \Frob(R(\ZZZ_{n,m,k})_d) = \{\Frob(R(\ZZZ_{n,m,\frac{d+k}{2}})_d)\}_{\lambda_1\le n+m-d-k}.
        \end{align}
        Since $\frac{d+k}{2}<\frac{k+k}{2} = k$, the induction assumption indicates
        \begin{align}\label{eq:indcution-tool-1'}
            \Frob(R(\ZZZ_{n,m,\frac{d+k}{2}})_d) = \{\Frob(R(\ZZZ_{n,m,d})_d) \}_{\lambda_1\le n+m-d-\frac{d+k}{2}}.
        \end{align}
        Equations~\eqref{eq:induction-tool-1} and \eqref{eq:indcution-tool-1'} together indicate
        \begin{align*}
            &\Frob(R(\ZZZ_{n,m,k})_d) = \{\Frob(R(\ZZZ_{n,m,\frac{d+k}{2}})_d)\}_{\lambda_1\le n+m-d-k} \\
            =&\{\{\Frob(R(\ZZZ_{n,m,d})_d) \}_{\lambda_1\le n+m-d-\frac{d+k}{2}}\}_{\lambda_1\le n+m-d-k} \\
            =&\{\Frob(R(\ZZZ_{n,m,d})_d)\}_{\lambda_1\le n+m-d-k}
        \end{align*}
        where the last equal sign arises from the fact that $n+m-d-k<n+m-d-\frac{d+k}{2}$.

        \noindent\textbf{Case (2):} If $d\not\equiv k \mod{2}$, Lemma~\ref{lem:eq-according-to-parity} indicates
        \begin{align}\label{eq:induction-tool-2}
            \Frob(R(\ZZZ_{n,m,k})_d) =\{\Frob(R(\ZZZ_{n,m,\frac{d+k-1}{2}})_d)\}_{\lambda_1\le n+m-d-k}.
        \end{align}
        Since $\frac{d+k-1}{2}<\frac{k+k-1}{2}<k$, the induction assumption indicates
        \begin{align}\label{eq:induction-tooll-2'}
            \Frob(R(\ZZZ_{n,m,\frac{d+k-1}{2}})_d) = \{\Frob(R(\ZZZ_{n,m,d})_d\}_{\lambda_1\le n+m-d-\frac{d+k-1}{2}}.
        \end{align}
        Equations~\eqref{eq:induction-tool-2} and \eqref{eq:induction-tooll-2'} together indicate
        \begin{align*}
            &\Frob(R(\ZZZ_{n,m,k})_d) = \{\Frob(R(\ZZZ_{n,m,\frac{d+k-1}{2}})_d)\}_{\lambda_1\le n+m-d-k} \\
            =&\{\{\Frob(R(\ZZZ_{n,m,d})_d) \}_{\lambda_1\le n+m-d-\frac{d+k-1}{2}}\}_{\lambda_1\le n+m-d-k} \\
            =&\{\Frob(R(\ZZZ_{n,m,d})_d)\}_{\lambda_1\le n+m-d-k}
        \end{align*}
        where the last equal sign arises from the fact that $n+m-d-k<n+m-d-\frac{d+k-1}{2}$.
    
    To sum up, in both cases we have shown that Lemma~\ref{lem:induction-tool-of-module-str} holds for $d=k$, finishing our proof.
\end{proof}
We are ready to prove the graded module structure of $R(\ZZZ_{n,m,r})$.

\begin{theorem}\label{thm:grad-str}
    For $0 \le r \le \min\{m,n\}$, we have
    \[\grFrob(R(\ZZZ_{n,m,r});q) = \sum_{d=0}^r q^d \cdot \{ SF_d - SF_{d-1} \}_{\lambda_1 \le n+m-d-r}\]
    with the convention that $SF_{-1} = 0$.
\end{theorem}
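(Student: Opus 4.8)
The plan is to derive the formula from Lemma~\ref{lem:induction-tool-of-module-str}, which already reduces every graded piece $\Frob(R(\ZZZ_{n,m,r})_d)$ to the ``diagonal'' module $R(\ZZZ_{n,m,d})_d$. Since $\grFrob(R(\ZZZ_{n,m,r});q) = \sum_{d\ge 0} q^d\,\Frob(R(\ZZZ_{n,m,r})_d)$ and $R(\ZZZ_{n,m,r})_d = \{0\}$ for $d>r$ by Corollary~\ref{cor:span}, the whole theorem is equivalent to the single identity
\[\Frob(R(\ZZZ_{n,m,d})_d) = \{SF_d - SF_{d-1}\}_{\lambda_1\le n+m-2d}\qquad(0\le d\le\min\{m,n\}).\]
Indeed, granting this, Lemma~\ref{lem:induction-tool-of-module-str} gives $\Frob(R(\ZZZ_{n,m,r})_d) = \{\{SF_d - SF_{d-1}\}_{\lambda_1\le n+m-2d}\}_{\lambda_1\le n+m-d-r}$ for $0\le d\le r$, and since $d\le r$ forces $n+m-d-r\le n+m-2d$, the two nested truncations collapse to $\{SF_d - SF_{d-1}\}_{\lambda_1\le n+m-d-r}$; multiplying by $q^d$ and summing over $0\le d\le r$ then yields the claimed formula.

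To establish the diagonal identity I would induct on $d$. For the inductive step, assume it holds for all indices below $d$; combining this with Lemma~\ref{lem:induction-tool-of-module-str} (applied with the parameter equal to $d$ and degree $d'<d$) gives
\[\Frob(R(\ZZZ_{n,m,d})_{d'}) = \{\{SF_{d'} - SF_{d'-1}\}_{\lambda_1\le n+m-2d'}\}_{\lambda_1\le n+m-d'-d} = \{SF_{d'} - SF_{d'-1}\}_{\lambda_1\le n+m-d'-d}\]
for each $d'<d$. Now Lemma~\ref{lem:col-sum} with parameter $d$ says $\sum_{d'=0}^{d}\Frob(R(\ZZZ_{n,m,d})_{d'}) = SF_d$, so subtracting off the known terms,
\[\Frob(R(\ZZZ_{n,m,d})_d) = SF_d - \sum_{d'=0}^{d-1}\{SF_{d'} - SF_{d'-1}\}_{\lambda_1\le n+m-d'-d}.\]
But Corollary~\ref{cor:shur-refinement} with $r=d$ is exactly the statement $SF_d = \sum_{d'=0}^{d}\{SF_{d'} - SF_{d'-1}\}_{\lambda_1\le n+m-d'-d}$, so the right-hand side is precisely its last ($d'=d$) summand, namely $\{SF_d - SF_{d-1}\}_{\lambda_1\le n+m-2d}$. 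This closes the induction. The base case $d=0$ is the same computation with an empty sum: $R(\ZZZ_{n,m,0})$ is the one-dimensional trivial module in degree $0$, $\Frob(R(\ZZZ_{n,m,0})_0) = SF_0$ by Lemma~\ref{lem:ungraded-frob}, and Corollary~\ref{cor:shur-refinement} with $r=0$ gives $SF_0 = \{SF_0 - SF_{-1}\}_{\lambda_1\le n+m}$ using the convention $SF_{-1}=0$.

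I expect the genuine difficulty to be entirely upstream of this statement: the symmetric-function manipulations of Subsection~\ref{subsec:module-str-sym-func} and the ``force the inequalities to equalities'' argument behind Lemmas~\ref{lem:eq-according-to-parity} and \ref{lem:induction-tool-of-module-str} are what does the real work, and by the time we reach this theorem the proof is bookkeeping. The one thing requiring care is tracking the truncation bounds: the argument repeatedly uses the immediate identity $\{\{F\}_{\lambda_1\le L}\}_{\lambda_1\le L'} = \{F\}_{\lambda_1\le\min\{L,L'\}}$, and one must check in each instance --- comparing the quantities $n+m-2d$, $n+m-d-r$, and $n+m-d'-d$ --- that the relevant inequality points the right way, since reversing any of these comparisons breaks the collapse.
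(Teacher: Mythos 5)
Your proposal is correct and follows essentially the same route as the paper: the paper also inducts on the degree $d$, uses Corollary~\ref{cor:span}, Lemma~\ref{lem:ungraded-frob} (equivalently Lemma~\ref{lem:col-sum}), and Corollary~\ref{cor:shur-refinement} to pin down the diagonal term $\Frob(R(\ZZZ_{n,m,k})_k)=\{SF_k-SF_{k-1}\}_{\lambda_1\le n+m-2k}$, and then spreads to general $r$ via Lemma~\ref{lem:induction-tool-of-module-str} and the collapse of nested truncations. Your reorganization (isolating the diagonal identity as the single claim to prove) is only cosmetically different, and your bookkeeping of the truncation bounds matches the paper's.
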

\begin{proof}
    Recall that for $d>r$ we have $R(\ZZZ_{n,m,r})_d=\{0\}$ by Corollary~\ref{cor:span}.
    It suffices to show that for all integers $d,r$ such that $0\le d\le r \le\min\{m,n\}$ we have
    \begin{align}\label{eq:grad-mod-str-induction}
        \Frob(R(\ZZZ_{n,m,r})_d) = \{ SF_d - SF_{d-1} \}_{\lambda_1 \le n+m-d-r}.
    \end{align}
    We prove Equation~\eqref{eq:grad-mod-str-induction} by induction on $d$. If $d=0$, we have nothing to show because $R(\ZZZ_{n,m,r})_0=\CC$. Suppose, for the sake of induction, that there exists an integer $0<k\le\min\{m,n\}$ such that Equation~\eqref{eq:grad-mod-str-induction} holds whenever $d<k$. It remains to show that Equation~\eqref{eq:grad-mod-str-induction} holds whenever $d=k$.

    As Corollary~\ref{cor:span} tells us that $R(\ZZZ_{n,m,r})_d=\{0\}$ for $d>r$, we have
    \begin{align*}
        \Frob(R(\ZZZ_{n,m,k})_k) = \Frob(R(\ZZZ_{n,m,k})) - \sum_{d=0}^{k-1} \Frob(R(\ZZZ_{n,m,k})_d)
    \end{align*}
    where we can apply the substitution $\Frob(R(\ZZZ_{n,m,k})_d) = \{SF_d-SF_{d-1}\}_{\lambda_1\le n+m-d-k}$ to the right-hand side according to the induction assumption. We thereby obtain
    \begin{align}\label{eq:grad-induction-left-endpoint}
        &\Frob(R(\ZZZ_{n,m,k})_k) = \Frob(R(\ZZZ_{n,m,k})) - \sum_{d=0}^{k-1} \{SF_d-SF_{d-1}\}_{\lambda_1\le n+m-d-k} \\ \nonumber
        =&SF_k - \sum_{d=0}^{k-1} \{SF_d-SF_{d-1}\}_{\lambda_1\le n+m-d-k} = \{ SF_k - SF_{k-1} \}_{\lambda_1 \le n+m-2k}
    \end{align}
    where the second equal sign arises from Lemma~\ref{lem:ungraded-frob}, and the last equal sign arises from Corollary~\ref{cor:shur-refinement}. Then Lemma~\ref{lem:induction-tool-of-module-str} and Equation~\eqref{eq:grad-induction-left-endpoint} together indicate that for $k\le r\le\min\{m,n\}$ we have
    \begin{align*}
        &\Frob(R(\ZZZ_{n,m,r})_k) = \{\Frob(R(\ZZZ_{n,m,k})_k)\}_{\lambda_1\le n+m-k-r} \\
        =&\{\{ SF_k - SF_{k-1} \}_{\lambda_1 \le n+m-2k}\}_{\lambda_1\le n+m-k-r} = \{ SF_k - SF_{k-1} \}_{\lambda_1\le n+m-k-r}
    \end{align*}
    where the last equal sign uses the fact that $n+m-k-r\le n+m-2k$. Therefore, Equation~\eqref{eq:grad-mod-str-induction} holds whenever $d=k$, completing our proof.
\end{proof}
\begin{remark}\label{rmk:grad-str-looks-like-trancation-intuitively}
    Intuitively, Theorem~\ref{thm:grad-str} means that each $\twoheadrightarrow$ in Figure~\ref{fig:graded-surj} looks like a truncation
    \[\{SF_d - SF_{d-1}\}_{\lambda_1\le n+m-d-r}\twoheadrightarrow\{SF_{d} - SF_{d-1}\}_{\lambda_1\le n+m-d-r-1}\]
    decreasing the upper bound of $\lambda_1$ by $1$.
\end{remark}
\begin{remark}\label{rmk:grad-str-positivity}
    Theorem~\ref{thm:grad-str} implies that $\{SF_d-SF_{d-1}\}_{\lambda_1\le n+m-d-r}$ is Schur-positive for $0\le d\le r\le \min \{ m , n \}$, which is not combinatorially obvious. We will not only combinatorially prove this Schur-positivity but also give a positive combinatorial expression of $\{SF_d-SF_{d-1}\}_{\lambda_1\le n+m-d-r}$ for $0\le d\le r\le \min \{ m , n \}$ in Section~\ref{sec:sign-cancel}.
\end{remark}

\subsection{Applications}\label{subsec:application-of-module-str}

In this subsection, we introduce some applications of results in Subsection~\ref{subsec:graded-char}. Initially, we strengthen the containment in Lemma~\ref{lem:ideal-contain} into an equality, obtaining a concise generating set of the defining ideal $\gr\II(\ZZZ_{n,m,r})$. We need the following strategic result beforehand.
\begin{lemma}\label{lem:inverse-surj-endpoint}
    For $0\le r\le\min\{m,n\}$, we have
    \[\Frob((\CC[\xxx_{n\times m}]/\defideal{n}{m}{r})_r) \le \Frob(R(\ZZZ_{n,m,r})_r).\]
\end{lemma}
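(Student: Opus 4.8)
The plan is to exhibit $\big(\CC[\xxx_{n\times m}]/\defideal{n}{m}{r}\big)_r$ as an $\symm_n\times\symm_m$-module quotient of a cokernel whose Frobenius image is exactly $\Frob(R(\ZZZ_{n,m,r})_r)$, so that the asserted inequality becomes automatic. We may assume $r\ge 1$, the case $r=0$ being trivial. Write $\CC\ZZZ_{n,m,r}$ for the permutation $\symm_n\times\symm_m$-module with basis indexed by $\ZZZ_{n,m,r}$.

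First I would note that, by Lemma~\ref{lem:span-pre}, sending $\RRR\mapsto\mmm(\RRR)$ gives an $\symm_n\times\symm_m$-equivariant surjection $\CC\ZZZ_{n,m,r}\twoheadrightarrow\big(\CC[\xxx_{n\times m}]/\defideal{n}{m}{r}\big)_r$. Next, because $\sum_{i,j}x_{i,j}\in\defideal{n}{m}{r}$ and all products $x_{a,b}x_{a,b'}$, $x_{a,b}x_{a',b}$ of variables in a common row or column lie in $\defideal{n}{m}{r}$, for each $\RRR'\in\ZZZ_{n,m,r-1}$ the product $\big(\sum_{i,j}x_{i,j}\big)\mmm(\RRR')$ lies in $\defideal{n}{m}{r}$, and after discarding every term divisible by a product of two variables sharing a row or a column it reduces to $\sum_{\RRR'\precdot\RRR''}\mmm(\RRR'')$. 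Hence $\sum_{\RRR'\precdot\RRR''}\mmm(\RRR'')=0$ in $\big(\CC[\xxx_{n\times m}]/\defideal{n}{m}{r}\big)_r$, so the surjection above factors through $\mathrm{coker}(\delta)$, where $\delta\colon\CC\ZZZ_{n,m,r-1}\to\CC\ZZZ_{n,m,r}$ is the equivariant up-map $\RRR'\mapsto\sum_{\RRR'\precdot\RRR''}\RRR''$.

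The core step is to identify $\mathrm{coker}(\delta)$. By Lemma~\ref{lem:basis-ungraded}, $\RRR\mapsto\mmm(\RRR)$ is an $\symm_n\times\symm_m$-equivariant isomorphism $\phi\colon\CC\ZZZ_{n,m,r}\xrightarrow{\ \sim\ }\CC[\xxx_{n\times m}]/\II(\ZZZ_{n,m,r})$. By Lemma~\ref{lem:linear-relation-ungraded} with $d=r-1$ (where the scalar $\tfrac1{r-d}$ is $1$), $\phi$ carries $\delta(\RRR')$ to $\mmm(\RRR')\bmod\II(\ZZZ_{n,m,r})$; thus $\phi(\mathrm{im}\,\delta)$ is the span of $\{\mmm(\RRR'):\RRR'\in\ZZZ_{n,m,r-1}\}$, which by Lemma~\ref{lem:spanning-ungraded} is exactly the image of $\CC[\xxx_{n\times m}]_{\le r-1}$ in $\CC[\xxx_{n\times m}]/\II(\ZZZ_{n,m,r})$, i.e.\ $R(\ZZZ_{n,m,r})_{\le r-1}=\bigoplus_{d=0}^{r-1}R(\ZZZ_{n,m,r})_d$. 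Quotienting and invoking Equation~\eqref{eq:orbit-harmonics-graded-isom}, together with Corollary~\ref{cor:span} (which makes $\CC[\xxx_{n\times m}]/\II(\ZZZ_{n,m,r})$ the image of $\CC[\xxx_{n\times m}]_{\le r}$), yields $\mathrm{coker}(\delta)\cong R(\ZZZ_{n,m,r})_r$ as $\symm_n\times\symm_m$-modules, hence $\Frob(\mathrm{coker}(\delta))=\Frob(R(\ZZZ_{n,m,r})_r)$. Since $\big(\CC[\xxx_{n\times m}]/\defideal{n}{m}{r}\big)_r$ is a module quotient of $\mathrm{coker}(\delta)$, taking Frobenius images gives $\Frob\big((\CC[\xxx_{n\times m}]/\defideal{n}{m}{r})_r\big)\le\Frob(R(\ZZZ_{n,m,r})_r)$.

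I expect the delicate points — conceptually shallow but requiring care — to be twofold: verifying that every unwanted term of $\big(\sum_{i,j}x_{i,j}\big)\mmm(\RRR')$ genuinely lies in $\defideal{n}{m}{r}$, so that the induced relation is precisely the up-map relation; and threading Lemmas~\ref{lem:basis-ungraded}, \ref{lem:linear-relation-ungraded} and \ref{lem:spanning-ungraded} together with the filtration $\CC[\xxx_{n\times m}]_{\le\bullet}$ via Equation~\eqref{eq:orbit-harmonics-graded-isom} so as to legitimately identify $\mathrm{im}\,\delta$ with $\bigoplus_{d<r}R(\ZZZ_{n,m,r})_d$.
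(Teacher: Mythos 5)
Your proposal is correct and follows essentially the same route as the paper: the paper's $L_{r,r}/L_{r,r-1}$ is exactly your $\mathrm{coker}(\delta)$ (since $\mathbf{1}_{\tilde{\RRR}}=\sum_{\tilde{\RRR}\precdot\RRR}\mathbf{1}_{\RRR}$ for $\tilde{\RRR}\in\ZZZ_{n,m,r-1}$), the verification that the up-map relations die in $(\CC[\xxx_{n\times m}]/\defideal{n}{m}{r})_r$ is the same computation with $\sum_{i,j}x_{i,j}$ and the row/column products, and the identification of this cokernel with $R(\ZZZ_{n,m,r})_r$ invokes the same chain of Lemmas~\ref{lem:basis-ungraded}, \ref{lem:linear-relation-ungraded}, \ref{lem:spanning-ungraded} and Equation~\eqref{eq:orbit-harmonics-graded-isom}. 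No gaps.
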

\begin{proof}
    For convenience, we write
    \[L_{r,d}\coloneqq\CC\cdot\{\mathbf{1}_\RRR\,:\,\RRR\in\ZZZ_{n,m,d}\}\subseteq \CC[\ZZZ_{n,m,r}]\]
    for $0\le d\le r\le\min\{m,n\}$, where we have $\mathbf{1}_\RRR(\RRR^\prime)\coloneqq\begin{cases}
        1, &\text{if $\RRR\subseteq\RRR^\prime$}\\
        0, &\text{otherwise.}
    \end{cases}$ for all $\RRR^\prime\in\ZZZ_{n,m,r}$.
    It suffices to find an $\symm_n\times\symm_m$-module surjection
    \begin{equation}\label{eq:highest-surj}
        R(\ZZZ_{n,m,r})_r \twoheadrightarrow (\CC[\xxx_{n\times m}]/\defideal{n}{m}{r})_r.
    \end{equation}
    Recall the $\symm_n\times\symm_m$-module isomorphism
    \[R(\ZZZ_{n,m,r})_r \cong \frac{\CC[\xxx_{n\times m}]_{\le r}/(\II(\ZZZ_{n,m,r})\cap\CC[\xxx_{n\times m}]_{\le r})}{\CC[\xxx_{n\times m}]_{\le r-1}/(\II(\ZZZ_{n,m,r})\cap\CC[\xxx_{n\times m}]_{\le r-1})}.\]
    Furthermore, the $\symm_n\times\symm_m$-module identification
    \[\CC[\xxx_{n\times m}]/\II(\ZZZ_{n,m,r}) \cong \CC[\ZZZ_{n,m,r}]\]
    identifies $\CC[\xxx_{n\times m}]_{\le d}/(\II(\ZZZ_{n,m,r})\cap\CC[\xxx_{n\times m}]_{\le d}) \subseteq \CC[\xxx_{n\times m}]/\II(\ZZZ_{n,m,r})$ with $L_{r,d}\subseteq\CC[\ZZZ_{n,m,r}]$ by Lemma~\ref{lem:spanning-ungraded}. Therefore, we have the $\symm_n\times\symm_m$-module isomorphism
    \[R(\ZZZ_{n,m,r})_r \cong L_{r,r}/L_{r,r-1}.\]
    To show the surjection\eqref{eq:highest-surj}, it suffices to construct an $\symm_n\times\symm_m$-surjection
    \[\tau_{r}\,:\,L_{r,r}/L_{r,r-1} \twoheadrightarrow (\CC[\xxx_{n\times m}]/\defideal{n}{m}{r})_r.\]

    Now we start to construct $\tau_r$. Consider the $\symm_n\times\symm_m$-equivariant map
    \begin{align*}
    \xi_r\,:\,L_{r,r} &\longrightarrow \CC[\xxx_{n\times m}]_r \\
    \mathbf{1}_\RRR &\longmapsto \mmm(\RRR)
    \end{align*}
    for all $\RRR\in\ZZZ_{n,m,r}$. For each $\tilde{\RRR}\in\ZZZ_{n,m,r-1}$, we have
    \begin{align*}
        \xi_r(\mathbf{1}_{\tilde{\RRR}}) &= \xi_r\Bigg(\sum_{\substack{\tilde{\RRR}\subseteq\RRR \\ \RRR\in\ZZZ_{n,m,r}}} \mathbf{1}_\RRR \Bigg) = \sum_{\substack{\tilde{\RRR}\subseteq\RRR \\ \RRR\in\ZZZ_{n,m,r}}} \xi_r(\mathbf{1}_\RRR) = \sum_{\substack{\tilde{\RRR}\subseteq\RRR \\ \RRR\in\ZZZ_{n,m,r}}} \mmm(\RRR) \\
        &=\mmm(\tilde{\RRR}) \cdot \sum_{\substack{\{x=i\}\cap\tilde{\RRR} = \varnothing \\ \{y=j\}\cap\tilde{\RRR} = \varnothing}} x_{i,j}
        \equiv \mmm(\tilde{\RRR}) \cdot \sum_{i=1}^n \sum_{j=1}^m x_{i,j}
        \equiv 0 \mod{\defideal{n}{m}{r}}
    \end{align*}
    where the first $\equiv$ arises from the fact that $x_{i,j}\cdot x_{i,j^\prime}, x_{i,j}\cdot x_{i^\prime,j} \in \defideal{n}{m}{r}$ and the second $\equiv$ uses the fact that $\sum_{i=1}^n \sum_{j=1}^m x_{i,j} \in \defideal{n}{m}{r}$. Thereby, we have
    $\xi_r(L_{r,r-1}) \subseteq \defideal{n}{m}{r}$, so $\xi_r$ descends to an $\symm_n\times\symm_m$-module homomorphism
    \[\tau_r\,:\,L_{r,r}/L_{r,r-1} \twoheadrightarrow (\CC[\xxx_{n\times m}]/\defideal{n}{m}{r})_r\]
    which is surjective by Lemma~\ref{lem:span-pre} as we desire.
\end{proof}

We are ready to calculate the defining ideal $\gr\II(\ZZZ_{n,m,r})$.

%ideal generating set
\begin{proposition}\label{prop:ideal-equal}
    For $0\le r\le\min\{m,n\}$, we have $\gr\II(\ZZZ_{n,m,r})=\defideal{n}{m}{r}$.
\end{proposition}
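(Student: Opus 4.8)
The plan is to upgrade the graded surjection supplied by Lemma~\ref{lem:ideal-contain} into an isomorphism by a dimension count in each degree. Write $\pi_r\colon \CC[\xxx_{n\times m}]/\defideal{n}{m}{r}\twoheadrightarrow R(\ZZZ_{n,m,r})$ for that surjection; since $\CC[\xxx_{n\times m}]/\defideal{n}{m}{r}$ is finite-dimensional by Lemma~\ref{lem:span-pre}, it is enough to show
\[\Frob\bigl((\CC[\xxx_{n\times m}]/\defideal{n}{m}{r})_d\bigr)\le\Frob(R(\ZZZ_{n,m,r})_d)\qquad\text{for all }d\ge 0,\]
because together with the reverse inequality coming from $\pi_r$ this forces every graded piece of $\pi_r$ to be a bijection, hence $\pi_r$ an isomorphism and $\defideal{n}{m}{r}=\gr\II(\ZZZ_{n,m,r})$.

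The first step is to produce a counterpart of Corollary~\ref{cor:graded-surj} at the level of the ideals themselves: for $0\le d\le r<\min\{m,n\}$ I would construct an $\symm_n\times\symm_m$-module surjection $(\CC[\xxx_{n\times m}]/\defideal{n}{m}{r})_d\twoheadrightarrow(\CC[\xxx_{n\times m}]/\defideal{n}{m}{r+1})_d$. The key observation is that, although $\defideal{n}{m}{r}\not\subseteq\defideal{n}{m}{r+1}$ in general, one does have
\[\defideal{n}{m}{r}\ \subseteq\ \defideal{n}{m}{r+1}+\langle\,\mmm(\RRR)\,:\,\RRR\in\ZZZ_{n,m,r+1}\,\rangle,\]
which I would check generator by generator against Definition~\ref{def:ideal}: the linear generator and the quadratic row/column generators are common to both ideals; a product of $n-r+1$ distinct row sums factors as one row sum times a product of $n-r$ distinct row sums, the latter a generator of $\defideal{n}{m}{r+1}$ (likewise for columns); and a monomial $\mmm(\RRR)$ with $\lvert\RRR\rvert>r$ is either a generator of $\defideal{n}{m}{r+1}$ (if $\lvert\RRR\rvert>r+1$) or a generator of the added ideal (if $\lvert\RRR\rvert=r+1$). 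Since $\langle\mmm(\RRR):\RRR\in\ZZZ_{n,m,r+1}\rangle$ is concentrated in degrees $\ge r+1$, the right-hand side agrees with $\defideal{n}{m}{r+1}$ in all degrees $\le r$; passing to quotients and taking the degree-$d$ component for $d\le r$ yields the desired equivariant surjection. Composing these for consecutive parameters then gives, for all $0\le d\le r\le\min\{m,n\}$, a surjection $(\CC[\xxx_{n\times m}]/\defideal{n}{m}{d})_d\twoheadrightarrow(\CC[\xxx_{n\times m}]/\defideal{n}{m}{r})_d$.

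With this in place I would finish in two cases. For $d=r$, Lemma~\ref{lem:inverse-surj-endpoint} gives $\Frob((\CC[\xxx_{n\times m}]/\defideal{n}{m}{r})_r)\le\Frob(R(\ZZZ_{n,m,r})_r)$, and $\pi_r$ in degree $r$ gives the reverse, so these Frobenius images coincide for every $r$. For $0\le d<r$, the composite surjection above yields
\[\Frob\bigl((\CC[\xxx_{n\times m}]/\defideal{n}{m}{r})_d\bigr)\ \le\ \Frob\bigl((\CC[\xxx_{n\times m}]/\defideal{n}{m}{d})_d\bigr)\ =\ \Frob(R(\ZZZ_{n,m,d})_d),\]
the equality being the $d=r$ case just established, applied with parameter $d$. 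By Corollary~\ref{cor:length-pre} the left-hand side is unchanged upon applying the truncation $\{-\}_{\lambda_1\le n+m-d-r}$, whereas $\{\Frob(R(\ZZZ_{n,m,d})_d)\}_{\lambda_1\le n+m-d-r}=\Frob(R(\ZZZ_{n,m,r})_d)$ by Lemma~\ref{lem:induction-tool-of-module-str}; truncating the displayed inequality thus gives $\Frob((\CC[\xxx_{n\times m}]/\defideal{n}{m}{r})_d)\le\Frob(R(\ZZZ_{n,m,r})_d)$. Finally both sides vanish for $d>r$ by Lemma~\ref{lem:span-pre} and Corollary~\ref{cor:span}, so the required inequality holds in every degree.

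I expect the main obstacle to be the surjection constructed in the second paragraph — the analogue of Corollary~\ref{cor:graded-surj} for the ideals $\defideal{n}{m}{\bullet}$ — since here, unlike for $R(\ZZZ_{n,m,\bullet})$, there is no explicit linear basis of $\CC[\xxx_{n\times m}]/\defideal{n}{m}{r}$ to work with, and the naive inclusion of ideals fails; the trick is that its failure is confined to degrees $>r$. Alternatively, once these surjections are available one could re-run the $P_r$--$Q_r$ bookkeeping of Lemma~\ref{lem:eq-according-to-parity} verbatim to obtain $\Frob(\CC[\xxx_{n\times m}]/\defideal{n}{m}{r})=SF_r$ directly, but the degree-by-degree argument above is shorter and reuses Lemma~\ref{lem:inverse-surj-endpoint}, which has been isolated precisely for this purpose.
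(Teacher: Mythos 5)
Your proposal is correct and follows essentially the same route as the paper: reduce to the inequality $\Frob((\CC[\xxx_{n\times m}]/\defideal{n}{m}{r})_d)\le\Frob(R(\ZZZ_{n,m,r})_d)$, settle $d>r$ by vanishing, and for $d\le r$ chain together the reduction to parameter $d$, Lemma~\ref{lem:inverse-surj-endpoint}, Corollary~\ref{cor:length-pre}, and Lemma~\ref{lem:induction-tool-of-module-str}. The only difference is cosmetic: where you build one-step surjections via the containment $\defideal{n}{m}{r}\subseteq\defideal{n}{m}{r+1}+\langle\mmm(\RRR):\RRR\in\ZZZ_{n,m,r+1}\rangle$ and compose, the paper simply observes the direct degree-$d$ containment $(\defideal{n}{m}{d})_d\subseteq(\defideal{n}{m}{r})_d$, which yields the same surjection $(\CC[\xxx_{n\times m}]/\defideal{n}{m}{d})_d\twoheadrightarrow(\CC[\xxx_{n\times m}]/\defideal{n}{m}{r})_d$ in one step.
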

\begin{proof}
    Lemma~\ref{lem:ideal-contain} implies the $\symm_n\times\symm_m$-module surjection
    \[\CC[\xxx_{n\times m}]/\defideal{n}{m}{r} \twoheadrightarrow R(\ZZZ_{n,m,r}),\]
    so it suffices to force this surjection to be an isomorphism. That is, we only need to show
    \begin{align}\label{ineq:ideal-inverse}
        \Frob((\CC[\xxx_{n\times m}]/\defideal{n}{m}{r})_d) \le \Frob(R(\ZZZ_{n,m,r})_d)
    \end{align}
    for all $d\ge 0$.

        \noindent\textbf{Case (1):} If $d>r$, Lemma~\ref{lem:span-pre} and Corollary~\ref{cor:span} indicate
        \[\Frob((\CC[\xxx_{n\times m}]/\defideal{n}{m}{r})_d) = \Frob(R(\ZZZ_{n,m,r})_d) = 0\]
        and thus \eqref{ineq:ideal-inverse} holds.
        
        \noindent\textbf{Case (2):} If $0\le d\le r$, Definition~\ref{def:ideal} indicates $(\defideal{n}{m}{d})_d\subseteq(\defideal{n}{m}{r})_d$ and hence
        \begin{align*}
            \Frob((\CC[\xxx_{n\times m}]/\defideal{n}{m}{r})_d)\le\Frob((\CC[\xxx_{n\times m}]/\defideal{n}{m}{d})_d),
        \end{align*}
        which, together with Lemma~\ref{lem:inverse-surj-endpoint}, yields
        \[\Frob((\CC[\xxx_{n\times m}]/\defideal{n}{m}{r})_d)\le\Frob(R(\ZZZ_{n,m,d})_d).\]
        Then we apply Corollary~\ref{cor:length-pre} to the left-hand side of this inequality and thereby obtain
        \begin{align*}
            \Frob((\CC[\xxx_{n\times m}]/\defideal{n}{m}{r})_d)\le\{\Frob(R(\ZZZ_{n,m,d})_d)\}_{\lambda_1 \le n+m-d-r} = \Frob(R(\ZZZ_{n,m,r})_d)
        \end{align*}
        where the equal sign arises from Lemma~\ref{lem:induction-tool-of-module-str}. Therefore, \eqref{ineq:ideal-inverse} holds.
   
    To sum up, the inequality~\eqref{ineq:ideal-inverse} holds in both cases, concluding the proof.
\end{proof}
\begin{remark}
    The strategy to show Proposition~\ref{prop:ideal-equal} also works for the generating set of the defining ideal $\gr\II(\MMM_{n,a})$ in \cite{MR4887799}. See relevant results in Appendix~\ref{sec:appendix}.
\end{remark}

%injection from R(\UZ_{n,m,r})_d to R(\UZ_{n,m,r-1})_{d+1}
% CITE and DEFINE \UZ !!!
%Mention that this result may be helpful to find a basis, because we know the basis of R(\UZ).

Another application is about module embedding. Liu and Zhu \cite{liu2025extensionviennotsshadowrook} considered the \emph{upper rook placement} locus $\UZ_{n,m,r}$ for $0\le r\le\min\{m,n\}$ given by
\[\UZ_{n,m,r}\coloneqq\bigsqcup_{r^\prime = r}^{\min\{m,n\}} \ZZZ_{n,m,r^\prime}\subseteq\Mat_{n\times m}(\CC).\]
They found the standard monomial basis of $R(\UZ_{n,m,r})$. Consequently, the following embedding~\eqref{eq:embedding-ZZZ-UZ} and identification~\eqref{eq:identification-ZZZ-UZ} may help us to find the standard monomial basis of $R(\ZZZ_{n,m,r})$.

\begin{proposition}\label{prop:UZ-surj}
    For $0\le d<r<\min\{m,n\}$, the map
    \begin{align}\label{eq:embedding-ZZZ-UZ}
        \mathrm{mult}_{d,r}\,:\, R(\UZ_{n,m,r+1})_d &\longrightarrow R(\UZ_{n,m,r})_{d+1} \\ \nonumber
        f + \gr\II(\UZ_{n,m,r+1}) &\longmapsto f\cdot\sum_{i=1}^n \sum_{j=1}^m x_{i,j} + \gr\II(\UZ_{n,m,r})
    \end{align}
    is injective. Furthermore, this embedding gives the $\symm_n\times\symm_m$-module identification
    \begin{equation}\label{eq:identification-ZZZ-UZ}
        R(\UZ_{n,m,r})_{d+1}/\mathrm{mult}_{d,r}(R(\UZ_{n,m,r+1})_d) \cong R(\ZZZ_{n,m,r})_{d+1}.
    \end{equation}
\end{proposition}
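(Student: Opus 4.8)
The plan is to exhibit both of the maps in play as the associated graded of a single short exact sequence of filtered $\symm_n\times\symm_m$-modules. Write $\sigma\coloneqq\sum_{i=1}^{n}\sum_{j=1}^{m}x_{i,j}$; evaluated at a rook placement $\RRR$ this returns $|\RRR|$, so inside $\UZ_{n,m,r}$ the sublocus $\ZZZ_{n,m,r}$ is precisely $\{\sigma=r\}$ while $\UZ_{n,m,r+1}$ is $\{\sigma\ge r+1\}$. Hence $\sigma-r$ vanishes on $\ZZZ_{n,m,r}$ and is invertible on $\UZ_{n,m,r+1}$, and a short argument with Lagrange interpolation on the finite set $\UZ_{n,m,r}$ shows that multiplication by $\sigma-r$ gives a short exact sequence of $\symm_n\times\symm_m$-modules
\begin{equation*}
0\longrightarrow \CC[\xxx_{n\times m}]/\II(\UZ_{n,m,r+1})\xrightarrow{\,\cdot(\sigma-r)\,}\CC[\xxx_{n\times m}]/\II(\UZ_{n,m,r})\xrightarrow{\,\rho\,}\CC[\xxx_{n\times m}]/\II(\ZZZ_{n,m,r})\longrightarrow 0,
\end{equation*}
where $\rho$ is the natural surjection (legitimate since $\ZZZ_{n,m,r}\subseteq\UZ_{n,m,r}$ forces $\II(\UZ_{n,m,r})\subseteq\II(\ZZZ_{n,m,r})$). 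Injectivity is immediate from invertibility of $\sigma-r$ on $\UZ_{n,m,r+1}$, and the only nonformal point is the identity $\II(\ZZZ_{n,m,r})=\II(\UZ_{n,m,r})+(\sigma-r)\,\CC[\xxx_{n\times m}]$, which one gets by interpolating $f/(\sigma-r)$ on $\UZ_{n,m,r}$.

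Next I would pass to associated graded modules with respect to the filtration by degree. The map $\rho$ is strict, since on each side the degree-$\le d$ piece is the image of $\CC[\xxx_{n\times m}]_{\le d}$, so $\gr\rho$ is the canonical graded surjection $R(\UZ_{n,m,r})\twoheadrightarrow R(\ZZZ_{n,m,r})$ arising from $\gr\II(\UZ_{n,m,r})\subseteq\gr\II(\ZZZ_{n,m,r})$; and $\cdot(\sigma-r)$ raises filtration degree by at most one, with associated graded map equal to multiplication by its leading term $\sigma$. Reading these off in degree $d+1$ produces exactly the complex
\begin{equation*}
R(\UZ_{n,m,r+1})_d\xrightarrow{\,\mathrm{mult}_{d,r}\,}R(\UZ_{n,m,r})_{d+1}\xrightarrow{\,\pi\,}R(\ZZZ_{n,m,r})_{d+1}\longrightarrow 0,
\end{equation*}
with $\pi$ surjective and $\mathrm{im}(\mathrm{mult}_{d,r})\subseteq\ker\pi$. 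Both assertions of the proposition are therefore equivalent to the statement that this graded complex is short exact, i.e. that the associated graded of the displayed sequence remains exact.

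The tool for that is an elementary lemma: if $0\to A\xrightarrow{f}B\xrightarrow{g}C\to 0$ is a short exact sequence of finite-dimensional filtered $\symm_n\times\symm_m$-modules with $g$ strict of filtration degree $0$ and $f$ raising filtration degree by at most one, then its associated graded is short exact provided $\grFrob(\gr B;q)=q\cdot\grFrob(\gr A;q)+\grFrob(\gr C;q)$. Indeed, strictness of $g$ gives $\Frob(\mathrm{im}\,f\cap F_{\le d}B)=\Frob(F_{\le d}B)-\Frob(F_{\le d}C)$, and the Frobenius identity rewrites the right-hand side as $\Frob(F_{\le d-1}A)=\Frob(f(F_{\le d-1}A))$; since $f(F_{\le d-1}A)\subseteq\mathrm{im}\,f\cap F_{\le d}B$, comparing Frobenius images forces the two submodules to coincide for every $d$, which is exactly strictness of $f$ and hence exactness of the associated graded. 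Applied to our sequence, it remains to verify
\begin{equation*}
\grFrob(R(\UZ_{n,m,r});q)=\grFrob(R(\ZZZ_{n,m,r});q)+q\cdot\grFrob(R(\UZ_{n,m,r+1});q),
\end{equation*}
equivalently $\grFrob(R(\UZ_{n,m,r});q)=\sum_{r'=r}^{\min\{m,n\}}q^{\,r'-r}\,\grFrob(R(\ZZZ_{n,m,r'});q)$, which I would check by feeding Theorem~\ref{thm:grad-str} together with the graded Frobenius of $R(\UZ_{n,m,r})$ coming from the standard monomial basis of \cite{liu2025extensionviennotsshadowrook} into the truncation-operator manipulations of Subsection~\ref{subsec:module-str-sym-func}. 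Granting this, the lemma yields the short exact sequence $0\to R(\UZ_{n,m,r+1})_d\xrightarrow{\mathrm{mult}_{d,r}}R(\UZ_{n,m,r})_{d+1}\xrightarrow{\pi}R(\ZZZ_{n,m,r})_{d+1}\to 0$, which is precisely the injectivity of $\mathrm{mult}_{d,r}$ and the isomorphism $R(\UZ_{n,m,r})_{d+1}/\mathrm{mult}_{d,r}(R(\UZ_{n,m,r+1})_d)\cong R(\ZZZ_{n,m,r})_{d+1}$.

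The main obstacle is that taking associated graded is not exact in general, so the whole argument rests on the symmetric-function identity above; executing it cleanly requires having the graded character of $R(\UZ_{n,m,r})$ on hand from \cite{liu2025extensionviennotsshadowrook} and matching it term by term against Theorem~\ref{thm:grad-str} with the operators $\{-\}_P$ and the functions $SF_d$. A possible alternative for the cokernel half is to prove directly that $\gr\II(\ZZZ_{n,m,r})=(\sigma)+\gr\II(\UZ_{n,m,r})$ — which, by Proposition~\ref{prop:ideal-equal}, reduces to showing that every monomial $\mmm(\RRR)$ with $|\RRR|>r$ lies in $(\sigma)+\gr\II(\UZ_{n,m,r})$ — giving $\ker\pi=\mathrm{im}(\mathrm{mult}_{d,r})$ on the nose; the injectivity of $\mathrm{mult}_{d,r}$ would then still need a dimension count, but a less delicate one.
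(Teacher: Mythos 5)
Your proposal is correct, but it reaches the result by a genuinely different route than the paper. The paper works entirely at the level of ideal generators: well-definedness of $\mathrm{mult}_{d,r}$ comes from an explicit check that $\gr\II(\UZ_{n,m,r+1})\cdot\sum_{i,j}x_{i,j}\subseteq\gr\II(\UZ_{n,m,r})$ using the generating set of $\gr\II(\UZ_{n,m,r})$ from \cite{liu2025extensionviennotsshadowrook}, the cokernel identification~\eqref{eq:identification-ZZZ-UZ} comes from comparing that generating set with the one for $\gr\II(\ZZZ_{n,m,r})$ furnished by Proposition~\ref{prop:ideal-equal} (the extra monomial generators $\mmm(\RRR)$, $\lvert\RRR\rvert>r$, being invisible in degree $d+1\le r$), and only the injectivity is settled by a Frobenius count. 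You instead build the ungraded short exact sequence $0\to\CC[\xxx_{n\times m}]/\II(\UZ_{n,m,r+1})\xrightarrow{\cdot(\sigma-r)}\CC[\xxx_{n\times m}]/\II(\UZ_{n,m,r})\to\CC[\xxx_{n\times m}]/\II(\ZZZ_{n,m,r})\to 0$ via Lagrange interpolation (this is sound: the decomposition $\UZ_{n,m,r}=\ZZZ_{n,m,r}\sqcup\UZ_{n,m,r+1}$ with $\sigma-r$ vanishing on the first piece and nonvanishing on the second gives exactly what you claim), pass to associated graded, and recover both well-definedness and both assertions of the proposition from a single strictness/Frobenius-counting lemma. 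Your approach buys conceptual clarity and avoids Proposition~\ref{prop:ideal-equal} and the generator computations entirely; its cost is that exactness of $\gr$ must be earned, and both proofs end up consuming the same character-theoretic inputs, namely Theorem~\ref{thm:grad-str} and \cite[Theorem 5.4]{liu2025extensionviennotsshadowrook}.

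The one step you defer --- the identity $\grFrob(R(\UZ_{n,m,r});q)=\grFrob(R(\ZZZ_{n,m,r});q)+q\cdot\grFrob(R(\UZ_{n,m,r+1});q)$ --- is not a gap, and you should note that it requires no truncation-operator manipulation at all: in degree $d+1$ it reads
\begin{equation*}
\{SF_{d+1}\}_{\lambda_1\le n+m-(d+1)-r}=\{SF_{d+1}-SF_d\}_{\lambda_1\le n+m-(d+1)-r}+\{SF_d\}_{\lambda_1\le n+m-d-(r+1)},
\end{equation*}
and since $n+m-(d+1)-r=n+m-d-(r+1)$ the three truncation bounds coincide and the identity is a tautology. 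With that observation inserted, your argument is complete.
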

\begin{proof}
    We firstly show that the map~\eqref{eq:embedding-ZZZ-UZ} is well-defined.
    Liu and Zhu figured out \cite[Theorem 4.12]{liu2025extensionviennotsshadowrook} that the defining ideal $\gr\II(\UZ_{n,m,r})\subseteq\CC[\xxx_{n\times m}]$ of the orbit harmonics ring $R(\UZ_{n,m,r})$ is generated by
    \begin{itemize}
        \item any product $x_{i,j}\cdot x_{i,j^\prime} \, (1\le i\le n ; \, 1\le j,j^\prime\le m)$ of variables in the same row,
        \item any product $x_{i,j}\cdot x_{i^\prime,j}\, (1\le i,i'\le n ; \, 1\le j\le m)$ of variables in the same column,
        \item any product $\prod_{k=1}^{n-r+1}\big(\sum_{j=1}^m x_{i_k,j}\big)$ of $n-r+1$ distinct row sums for $1\le i_1<\dots<i_{n-r+1}\le n$, and
        \item any product $\prod_{k=1}^{m-r+1}\big(\sum_{i=1}^n x_{i,j_k}\big)$ of $m-r+1$ distinct column sums for $1\le j_1<\dots<j_{m-r+1}\le m$.
    \end{itemize}
    We have a relevant observation:
    
    \textbf{Claim:} $\gr\II(\UZ_{n,m,r+1})\cdot\sum_{i=1}^n\sum_{j=1}^m x_{i,j} \subseteq \gr\II(\UZ_{n,m,r})$.

    In fact, to show this claim, it suffices to show that
    \[\Bigg(\prod_{k=1}^{n-r}\bigg(\sum_{j=1}^m x_{i_k,j}\bigg)\Bigg)\cdot\sum_{i=1}^n\sum_{j=1}^m x_{i,j} \in \gr\II(\UZ_{n,m,r})\]
    for $1\le i_1<\dots<i_{n-r}\le n$, and that
    \[\Bigg(\prod_{k=1}^{m-r}\bigg(\sum_{i=1}^n x_{i,j_k}\bigg)\Bigg)\cdot\sum_{i=1}^n\sum_{j=1}^m x_{i,j} \in \gr\II(\UZ_{n,m,r})\]
    for $1\le j_1<\dots<j_{m-r}\le m$. As their proofs are essentially the same, we only show the first one. The first one follows from
    \begin{align*}
        &\Bigg(\prod_{k=1}^{n-r}\bigg(\sum_{j=1}^m x_{i_k,j}\bigg)\Bigg)\cdot\sum_{i=1}^n\sum_{j=1}^m x_{i,j} = \sum_{i=1}^n\Bigg(\prod_{k=1}^{n-r}\bigg(\sum_{j=1}^m x_{i_k,j}\bigg)\Bigg)\cdot\sum_{j=1}^m x_{i,j} \\
        \equiv &\sum_{i\in[n]\setminus\{i_1,\dots,i_{n-r}\}}\Bigg(\prod_{k=1}^{n-r}\bigg(\sum_{j=1}^m x_{i_k,j}\bigg)\Bigg)\cdot\sum_{j=1}^m x_{i,j} \equiv 0 \mod{\gr\II(\UZ_{n,m,r})}
    \end{align*}
    where the first $\equiv$ arises from the fact that $x_{i,j}\cdot x_{i,j^\prime}\in\gr\II(\UZ_{n,m,r})$, and the second $\equiv$ arises from the fact that $\big(\prod_{k=1}^{n-r}\big(\sum_{j=1}^m x_{i_k,j}\big)\big)\cdot\sum_{j=1}^m x_{i,j}\in\gr\II(\UZ_{n,m,r})$ whenever $i\in[n]\setminus\{i_1,\dots,i_{n-r}\}$. Now the proof of the claim above is finished.

    According to the claim above, the map
    \begin{align*}
        \CC[\xxx_{n \times m}] &\longrightarrow \CC[\xxx_{n \times m}] \\
        f &\longmapsto f\cdot\sum_{i=1}^n\sum_{j=1}^m x_{i,j}
    \end{align*}
    descends to a well-defined $\symm_n\times\symm_m$-module homomorphism
    \begin{align*}
        (\CC[\xxx_{n \times m}]/\gr\II(\UZ_{n,m,r+1}))_d &\longrightarrow (\CC[\xxx_{n \times m}]/\gr\II(\UZ_{n,m,r}))_{d+1} \\
        f &\longmapsto f\cdot\sum_{i=1}^n\sum_{j=1}^m x_{i,j}
    \end{align*}
    which means the map $\mathrm{mult}_{d,r}$ given by \eqref{eq:embedding-ZZZ-UZ} is well-defined.

    The definition of $\mathrm{mult}_{d,r}$ indicates
    \begin{align*}
        R(\UZ_{n,m,r})_{d+1}/\mathrm{mult}_{d,r}(R(\UZ_{n,m,r+1})_d) &\cong \CC[\xxx_{n\times m}]_{d+1}/\Bigg(\gr\II(\UZ_{n,m,r}) + \bigg\langle \sum_{i=1}^n \sum_{j=1}^m x_{i,j} \bigg\rangle\Bigg)_{d+1} \\
        &= \CC[\xxx_{n\times m}]_{d+1}/(\gr\II(\ZZZ_{n,m,r}))_{d+1} = R(\ZZZ_{n,m,r})_{d+1}
    \end{align*}
    where we derive the equal sign from the generating set above of $\gr\II(\UZ_{n,m,r})$ and Proposition~\ref{prop:ideal-equal}. Therefore, the identification~\eqref{eq:identification-ZZZ-UZ} holds.
    
    It remains to show that $\mathrm{mult}_{d,r}$ is injective.
    It follows from the identification~\eqref{eq:identification-ZZZ-UZ} that
    \begin{align*}
        \Frob(R(\UZ_{n,m,r})_{d+1}/\mathrm{mult}_{d,r}(R(\UZ_{n,m,r+1})_d)) = \Frob(R(\ZZZ_{n,m,r})_{d+1}) \\ = \{SF_{d+1} - SF_d \}_{\lambda_1\le n+m-d-r-1}
    \end{align*}
    where the last equal sign arises from Theorem~\ref{thm:grad-str}. That is,
    \[\Frob(R(\UZ_{n,m,r})_{d+1}) - \Frob(\mathrm{mult}_{d,r}(R(\UZ_{n,m,r+1})_d)) = \{SF_{d+1} - SF_d \}_{\lambda_1\le n+m-d-r-1},\]
    indicating
    \begin{align*}
        &\Frob(\mathrm{mult}_{d,r}(R(\UZ_{n,m,r+1})_d)) \\ =& \Frob(R(\UZ_{n,m,r})_{d+1}) - \{SF_{d+1} - SF_d \}_{\lambda_1\le n+m-d-r-1} \\
        =&\{ SF_{d+1} \}_{\lambda_1\le n+m-d-r-1} - \{SF_{d+1} - SF_d \}_{\lambda_1\le n+m-d-r-1} \\ = &\{ SF_d \}_{\lambda_1\le n+m -d-r-1} = \Frob(R(\UZ_{n,m,r+1})_d)
    \end{align*}
    where \cite[Theorem 5.4]{liu2025extensionviennotsshadowrook} yields the second and fourth equal signs.
    Now we obtain \[\Frob(\mathrm{mult}_{d,r}(R(\UZ_{n,m,r+1})_d)) = \Frob(R(\UZ_{n,m,r+1})_d)\]
    which means that the map $\mathrm{mult}_{d,r}$ is injective, completing the proof.
\end{proof}

\section{Sign cancellation}\label{sec:sign-cancel}

While Theorem~\ref{thm:grad-str} gives a signed graded character formula for $R(\ZZZ_{n,m,r})$, we want to cancel the minus signs to obtain a positive combinatorial formula, which will be given by Proposition~\ref{prop:bad-ver-pos-formula}. We will assign a lattice path to each pair of horizontal strips so that we can describe how to embed the negative parts into the positive parts in Theorem~\ref{thm:grad-str}. Similar techniques appear in \cite{zhu2025positivecombinatorialformulaeinvolution} in which the index set consists of single horizontal strips, but our reasoning is more subtle, as each index in our case is a pair of horizontal strips rather than a single horizontal strip.

\begin{remark}\label{rmk:put-partition-in-grid}
    Throughout this section, we put all partitions in the grid $\ZZ_{>0}\times\ZZ_{>0}$ such that the upper left corners of them (including the grid) coincide. Therefore, whenever we say ``the $i$-th column'' we mean the $i$-th column of the grid $\ZZ_{>0}\times\ZZ_{>0}$. We may be concerned about whether a column meets a partition or a horizontal strip.
\end{remark} 

\subsection{Notations}\label{subsec:notation}

We generalize \cite{zhu2025positivecombinatorialformulaeinvolution} the definition of the lattice path associated with a horizontal strip by allowing horizontal steps in a lattice path, and thereby assign a lattice path to each pair of horizontal strips with the same inner partition.

\begin{definition}\label{def:lattice-path}
    Given partitions $\mu,\lambda^{(1)},\lambda^{(2)}$ such that $\lambda^{(i)}/\mu$ is a horizontal strip ($i=1,2$), we associate the triple $(\mu,\lambda^{(1)},\lambda^{(2)})$ with a lattice path $\lp{\mu,\lambda^{(1)},\lambda^{(2)}}$ sequentially given by
    \begin{itemize}
         \item $\lp{\mu,\lambda^{(1)},\lambda^{(2)}}$ starts at the origin $(0,0)$ and extends rightwards with infinitely many steps.
         \item Put $\mu,\lambda^{(1)},\lambda^{(2)}$ in an $\ZZ_{> 0}\times\ZZ_{> 0}$ grid such that the upper left corners of them (including the grid) coincide.
         \item Append steps (i.e., vectors $(1,1),(1,0),(1,-1)$, which are respectively called an \emph{NE}, \emph{HE}, and \emph{SE} step) to  $\lp{\mu,\lambda^{(1)},\lambda^{(2)}}$ as follows:
         \begin{itemize}
             \item If the $i$-th column meets both $\lambda^{(1)}/\mu$ and $\lambda^{(2)}/\mu$, the $i$-th step of $\lp{\mu,\lambda^{(1)},\lambda^{(2)}}$ is the vector $(1,1)$.
             \item If the $i$-th column meets neither $\lambda^{(1)}/\mu$ nor $\lambda^{(2)}/\mu$, the $i$-th step of $\lp{\mu,\lambda^{(1)},\lambda^{(2)}}$ is the vector $(1,-1)$.
             \item Otherwise, the $i$-th step of $\lp{\mu,\lambda^{(1)},\lambda^{(2)}}$ is the vector $(1,0)$.
         \end{itemize}
    \end{itemize}
\end{definition}

\begin{example}\label{ex:lattice-path}
    Let $\mu=(6,3,1),\lambda^{(1)}=(6,5,2),\lambda^{(2)}=(6,4,3)$. We obtain a pair of horizontal strips $\lambda^{(1)}/\mu,\lambda^{(2)}/\mu$ shown as follows, where both horizontal strips are decorated with $\bullet$.
    \[\lambda^{(1)}/\mu =  \ydiagram[*(white) \bullet]
    {6+0,3+2,1+1}
    *[*(white)]{6,3,1} \quad \lambda^{(2)}/\mu =  \ydiagram[*(white) \bullet]
    {6+0,3+1,1+2}
    *[*(white)]{6,3,1}\]
    Then the lattice path $\lp{\mu,\lambda^{(1)},\lambda^{(2)}}$ is shown in the following picture.

    \def\sequence{-1,1,0,1,0,-1,-1}
    \def\gridscale{1}
    \begin{center}
    \begin{tikzpicture}[scale=\gridscale]
        \tikzmath{
        integer \x,\y,\miny,\maxy;
        \x=0;\y=0;\miny=0;\maxy=0;
        for \s in \sequence {
        {\draw[->,line width=2pt,color = blue] (\x,\y) -- (\x+1,\y+\s);};
        \x=\x+1;\y=\y+\s;
        if \y>\maxy then {\maxy = \y;};
        if \y<\miny then {\miny = \y;};
        };
        {\draw[line width=2pt,color=blue] (\x,\y) -- (\x+0.5,\y-0.5);
        \draw[gray!70, thin] (0,\miny-0.5) grid (\x+1,\maxy+0.5);
        \draw[very thick, -Stealth] (0,0) -- (\x+1,0) node[below] {$x$};
        \draw[thick, -Stealth] (0,\miny-0.5) -- (0,\maxy+0.5) node[left]  {$y$};};
        }
    \end{tikzpicture}
    \end{center}
\end{example}

We provide an immediate result of the lattice path $\lp{\mu,\lambda^{(1)},\lambda^{(2)}}$.
\begin{lemma}\label{lem:height-of-right-end-of-lattice-path}
    Let $L$ be an integer with $L\ge \max\{\lambda_1^{(1)},\lambda_1^{(2)}\}$.
    The $y$-coordinate of $\lp{\mu,\lambda^{(1)},\lambda^{(2)}}$ at $x=L$ is $\lvert\lambda^{(1)}\rvert\ + \lvert\lambda^{(2)}\rvert -2\lvert\mu\rvert - L$.
\end{lemma}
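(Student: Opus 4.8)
The plan is to express the $y$-coordinate of $\lp{\mu,\lambda^{(1)},\lambda^{(2)}}$ at $x=L$ as a sum of the vertical increments of its first $L$ steps, and then compute that sum by a simple counting argument over columns. Since the path starts at the origin and its $i$-th step carries $x$ from $i-1$ to $i$, the $y$-coordinate at $x=L$ is $\sum_{i=1}^{L}\varepsilon_i$, where $\varepsilon_i\in\{1,0,-1\}$ is the vertical component of the $i$-th step (an NE, HE, or SE step) prescribed by Definition~\ref{def:lattice-path}.

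For $k\in\{1,2\}$ and $i\ge 1$, I would set $c_i^{(k)}=1$ if the $i$-th column meets $\lambda^{(k)}/\mu$, and $c_i^{(k)}=0$ otherwise. Definition~\ref{def:lattice-path} says precisely that $\varepsilon_i=1$ iff $c_i^{(1)}=c_i^{(2)}=1$, that $\varepsilon_i=-1$ iff $c_i^{(1)}=c_i^{(2)}=0$, and $\varepsilon_i=0$ otherwise; in every one of these cases $\varepsilon_i=c_i^{(1)}+c_i^{(2)}-1$. Hence the $y$-coordinate at $x=L$ equals
\[
\sum_{i=1}^{L}\bigl(c_i^{(1)}+c_i^{(2)}-1\bigr)=\Bigl(\sum_{i=1}^{L}c_i^{(1)}\Bigr)+\Bigl(\sum_{i=1}^{L}c_i^{(2)}\Bigr)-L.
\]

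To finish, I would use the horizontal-strip hypothesis: since $\lambda^{(k)}/\mu$ has at most one cell in each column, the number of columns it meets equals its size, so $\sum_{i=1}^{\infty}c_i^{(k)}=\lvert\lambda^{(k)}/\mu\rvert=\lvert\lambda^{(k)}\rvert-\lvert\mu\rvert$. Because every cell of $\lambda^{(k)}$ (and therefore every column meeting $\lambda^{(k)}/\mu$) lies in one of the first $\lambda_1^{(k)}\le L$ columns, the tail $\sum_{i>L}c_i^{(k)}$ vanishes, so $\sum_{i=1}^{L}c_i^{(k)}=\lvert\lambda^{(k)}\rvert-\lvert\mu\rvert$. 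Substituting into the displayed expression gives the $y$-coordinate $(\lvert\lambda^{(1)}\rvert-\lvert\mu\rvert)+(\lvert\lambda^{(2)}\rvert-\lvert\mu\rvert)-L=\lvert\lambda^{(1)}\rvert+\lvert\lambda^{(2)}\rvert-2\lvert\mu\rvert-L$, as desired.

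This argument is essentially pure bookkeeping, so there is no serious obstacle; the one point that actually uses a hypothesis is the last step, where $L\ge\max\{\lambda_1^{(1)},\lambda_1^{(2)}\}$ is exactly what is needed to guarantee that no column beyond the $L$-th meets either horizontal strip, so that the partial sums $\sum_{i=1}^{L}c_i^{(k)}$ already equal the full sizes $\lvert\lambda^{(k)}\rvert-\lvert\mu\rvert$.
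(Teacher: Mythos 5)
Your proof is correct and is essentially the same argument as the paper's: the paper counts NE and SE steps among the first $L$ via the sets $A$, $B$ of columns meeting each strip and uses $\lvert A\cap B\rvert+\lvert A\cup B\rvert=\lvert A\rvert+\lvert B\rvert$, which is exactly your pointwise identity $\varepsilon_i=c_i^{(1)}+c_i^{(2)}-1$ summed over $i$. Your explicit justification of why $L\ge\max\{\lambda_1^{(1)},\lambda_1^{(2)}\}$ is needed is a welcome detail the paper leaves implicit, but the route is the same.
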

\begin{proof}
    We write
    \[A = \{i\in[L]\,:\,\text{the $i$-th column meets $\lambda^{(1)}/\mu$}\}\]
    and
    \[B = \{i\in[L]\,:\,\text{the $i$-th column meets $\lambda^{(2)}/\mu$}\}.\]
    Then $\lp{\mu,\lambda^{(1)},\lambda^{(2)}}\mid_{0\le x\le L}$ totally has $\lvert A\cap B\rvert$ NE steps and $L-\lvert A\cup B\rvert$ SE steps.
    Therefore, the $y$-coordinate of $\lp{\mu,\lambda^{(1)},\lambda^{(2)}}$ at $x=L$ equals
    \begin{align*}
        &\lvert A\cap B\rvert - (L-\lvert A\cup B\rvert) = \lvert A\cap B\rvert + \lvert A\cup B\rvert -L 
        = \lvert A \rvert + \lvert B \rvert -L \\ =& \lvert\lambda^{(1)}/\mu\rvert + \lvert\lambda^{(2)}/\mu\rvert - L = \lvert\lambda^{(1)}\rvert\ + \lvert\lambda^{(2)}\rvert -2\lvert\mu\rvert - L.
    \end{align*}
\end{proof}

\begin{definition}\label{def:reflection-pair}
    For a lattice path $\mathcal{L}$ starting at the point $(0,0)$ and consisting of vectors $(1,1)$, $(1,0)$, and $(1,-1)$, a \emph{reflection pair} of $\mathcal{L}$ is a pair of positive integers $(i<j)$ such that the $i$-th (resp. $j$-th) step of $\mathcal{L}$ is an NE (resp. SE) step, and these two steps have the same height (i.e., the starting height of the $i$-th step coincides with the ending height of the $j$-th step) and are horizontally visible from each other without being obstructed by $\mathcal{L}$.
\end{definition}

\begin{example}\label{ex:reflection-pair}
    Consider the lattice path $\lp{\mu,\lambda^{(1)},\lambda^{(2)}}$ in Example~\ref{ex:lattice-path}. All the reflection pairs are $(2,7)$ and $(4,6)$ shown as follows.

    \def\sequence{-1,1,0,1,0,-1,-1}
    \def\gridscale{1}
    \begin{center}
    \begin{tikzpicture}[scale=\gridscale]
        \tikzmath{
        integer \x,\y,\miny,\maxy;
        \x=0;\y=0;\miny=0;\maxy=0;
        for \s in \sequence {
        {\draw[->,line width=2pt,color = blue] (\x,\y) -- (\x+1,\y+\s);};
        \x=\x+1;\y=\y+\s;
        if \y>\maxy then {\maxy = \y;};
        if \y<\miny then {\miny = \y;};
        };
        {\draw[line width=2pt,color=blue] (\x,\y) -- (\x+0.5,\y-0.5);
        \draw[gray!70, thin] (0,\miny-0.5) grid (\x+1,\maxy+0.5);
        \draw[very thick, -Stealth] (0,0) -- (\x+1,0) node[below] {$x$};
        \draw[thick, -Stealth] (0,\miny-0.5) -- (0,\maxy+0.5) node[left]  {$y$};};
        }

        \node[above, red] at (1.5,-0.5) {$2$};
        \node[above, red] at (3.5,0.5) {$4$};
        \node[above, red] at (5.5,0.5) {$6$};
        \node[above, red] at (6.5,-0.5) {$7$};

        \draw[red,dashed,very thick] (1.5,-0.5) -- (6.5,-0.5);
        \draw[red,dashed,very thick] (3.5,0.5) -- (5.5,0.5);
    \end{tikzpicture}
    \end{center}
\end{example}

\begin{definition}\label{def:wid}
    For horizontal strips $\lambda^{(1)}/\mu,\lambda^{(2)}/\mu$ with the same inner partition $\mu$, we write
    \[M = \max\{j\in\ZZ_{>0}\,:\,\text{$(i,j)$ is a reflection pair of $\lp{\mu,\lambda^{(1)},\lambda^{(2)}}$ for some $i\in\ZZ_{>0}$}\}\]
    and define the \emph{width} $\wid{\mu,\lambda^{(1)},\lambda^{(2)}}$ of the triple $(\mu,\lambda^{(1)},\lambda^{(2)})$ by
    \[\wid{\mu,\lambda^{(1)},\lambda^{(2)}}\coloneqq\max\{M,\lambda_1^{(1)},\lambda_1^{(2)}\}.\]
\end{definition}

\begin{example}\label{ex:wid}
    For $\mu,\lambda^{(1)},\lambda^{(2)}$ in Example~\ref{ex:lattice-path}, Example~\ref{ex:reflection-pair} indicates \[\wid{\mu,\lambda^{(1)},\lambda^{(2)}} =\max\{M,\lambda_1^{(1)},\lambda_1^{(2)}\} = \max\{7,6,6\} = 7.\]
\end{example}

\subsection{Bad version: a positive combinatorial formula depending on the graded component}\label{subsec:bad-ver}

In this subsection, our goal is to embed the Schur expansion of $\{SF_{d-1}\}_{\lambda_1\le n+m-d-r}$ into the Schur expansion of $\{SF_{d}\}_{\lambda_1\le n+m-d-r}$ for $0\le d\le r\le\min\{m,n\}$, and thereby yield a positive combinatorial formula for $\Frob(R(\ZZZ_{n,m,r})_d) = \{SF_d - SF_{d-1}\}_{\lambda_1 \le n+m-d-r}$ stated in Theorem~\ref{thm:grad-str}.

In the rest of this subsection, we fix an integer $r$ with $0\le r\le \min\{m,n\}$. Let $0\le d\le r$ be an integer. Let $\lambda^{(1)}\vdash n$ and $\lambda^{(2)}\vdash m$ be two partitions with $\lambda^{(i)}_1\le n+m-d-r$ for $i=1,2$. We define two sets of partitions for convenience:
\begin{align}
\label{eq:def-horizontal-stripe-set}
    \hori{d,\lambda^{(1)},\lambda^{(2)}} & \coloneqq \{\mu\vdash d\,:\, \text{both $\lambda^{(1)}/\mu$ and $\lambda^{(2)}/\mu$ are horizontal strips}
    \} \\ \nonumber \\
\label{eq:def-positive-horizontal-stripe-set}
    \horipositive{d,\lambda^{(1)},\lambda^{(2)}} & \coloneqq \Bigg\{\mu\in\hori{d,\lambda^{(1)},\lambda^{(2)}}\,:\!\!\!\!\! \begin{array}{cc}
         & \text{the first $\max\{\lambda_1^{(1)},\lambda_1^{(2)}\}$ steps of $\lp{\mu,\lambda^{(1)},\lambda^{(2)}}$}  \\
         & \text{are weakly higher than the $x$-axis} 
    \end{array} \Bigg\}.
\end{align}
\begin{remark}\label{rmk:check_horipositive}
    Let $\lambda^{(1)}\vdash n,\lambda^{(2)}\vdash m$ with $\lambda^{(i)}_1\le n+m-d-r$ for $i=1,2$. For $\mu\vdash d$ with $\mu\subseteq\lambda^{(i)}$ for $i=1,2$, verifying $\mu\in\horipositive{d,\lambda^{(1)},\lambda^{(2)}}$ is equivalent to verifying the first $n+m-d-r$ steps of $\lp{\mu,\lambda^{(1)},\lambda^{(2)}}$ are weakly higher than the $x$-axis. There are two reasons as follows. On one hand, the fact that $\max\{\lambda^{(1)}_1,\lambda^{(2)}_1\}\le n+m-d-r$ means that checking the first $n+m-d-r$ steps is enough. On the other hand, Lemma~\ref{lem:height-of-right-end-of-lattice-path} indicates that: if $\mu\in\horipositive{d,\lambda^{(1)},\lambda^{(2)}}$, then the $y$-coordinate of $\lp{\mu,\lambda^{(1)},\lambda^{(2)}}$ at $x=n+m-d-r$ equals $n+m-2d-(n+m-d-r) = r-d \ge 0$; but all the steps on the right of $x=\max\{\lambda^{(1)}_1,\lambda^{(2)}_1\}$ are SE steps, so the first $n+m-d-r$ steps are all weakly higher than the $x$-axis.
\end{remark}

\begin{example}\label{ex:set-of-horizontal-stripes}
    Consider the partitions $\mu,\lambda^{(1)},\lambda^{(2)}$ in Example~\ref{ex:lattice-path}. Note that $\lp{\mu,\lambda^{(1)},\lambda^{(2)}}$ goes below the $x$-axis at the first step, so $\mu\in\hori{10,\lambda^{(1)},\lambda^{(2)}}\setminus\horipositive{10,\lambda^{(1)},\lambda^{(2)}}$.
\end{example}

We further assume that $d>0$, and then construct a map (in fact a bijection by Lemma~\ref{lem:bad-version-bijection})
\begin{align}\label{eq:bad-version-bijection}
    \Phi_{d,\lambda^{(1)},\lambda^{(2)}}\,:\,\hori{d,\lambda^{(1)},\lambda^{(2)}}\setminus\horipositive{d,\lambda^{(1)},\lambda^{(2)}} \longrightarrow \hori{d-1,\lambda^{(1)},\lambda^{(2)}}
\end{align}
That is, for each $\mu\in\hori{d,\lambda^{(1)},\lambda^{(2)}}\setminus\horipositive{d,\lambda^{(1)},\lambda^{(2)}}$, we construct its image $\Phi_{d,\lambda^{(1)},\lambda^{(2)}}(\mu)\in\hori{d-1,\lambda^{(1)},\lambda^{(2)}}$ sequentially as follows:
\begin{itemize}
    \item Choose the minimal $x_0\in\ZZ_{\ge 0}$ such that $\lp{\mu,\lambda^{(1)},\lambda^{(2)}}\mid_{0\le x\le n+m-d-r}$ attains its lowest point at $x=x_0$.
    \item Remove the lowest cell from the $x_0$-th column of $\mu$, yielding a new diagram $\nu$ with $d-1$ cells. (We will see that $\mu_1\ge x_0$ from Lemma~\ref{lem:bad-ver-well-defined}, so this removal is well-defined. We will also show that $\nu\vdash d-1$ is indeed a partition from Lemma~\ref{lem:bad-ver-well-defined}.)
    \item Let $\Phi_{d,\lambda^{(1)},\lambda^{(2)}}(\mu) \coloneqq \nu$.
\end{itemize}

\begin{example}\label{ex:bad-ver-bijection}
    We use a concrete example to illustrate how $\Phi_{d,\lambda^{(1)},\lambda^{(2)}}$ works. Let $d=36$ and $\mu = (13,8,6,5,2,2)\vdash d$. Let $\lambda^{(1)} = (13,12,7,6,2,2,1)$ and $\lambda^{(2)} = (13,13,7,5,2,2,2)$. The horizontal strips $\lambda^{(1)}/\mu,\lambda^{(2)}/\mu$ are shown in the diagram below.
    \begin{align*}\lambda^{(1)}/\mu &=  \ydiagram[*(white) \bullet]
    {13+0,8+4,6+1,5+1,2+0,2+0,0+1}
    *[*(white)]{13,8,6,5,2,2} \\ \lambda^{(2)}/\mu &=  \ydiagram[*(white) \bullet]
    {13+0,8+5,6+1,5+0,2+0,2+0,0+2}
    *[*(white)]{13,8,6,5,2,2}\end{align*}
    The lattice path $\lp{\mu,\lambda^{(1)},\lambda^{(2)}}$ is the dashed lower lattice path in the diagram below, which attains its first lowest point at $x=5$ and hence $x_0=5$. Therefore, we remove the lowest cell in the fifth column of $\mu$ and thereby obtain $\nu = \Phi_{d,\lambda^{(1)},\lambda^{(2)}}(\mu)$. A pictorial interpretation of this removal uses the lattice paths, i.e., replacing the $x_0$-th step of $\lp{\mu,\lambda^{(1)},\lambda^{(2)}}$ with an NE step generates $\lp{\nu,\lambda^{(1)},\lambda^{(2)}}$, the higher lattice path in the diagram below.
    
    \def\gridscale{0.7}
     \def\sequence{1,0,-1,-1,1,0,1,-1,1,1,1,1,0,-1,-1,-1}
     \def\oldsequence{-1,0,1,-1,1,1,1,1,0,-1,-1,-1}
     \begin{center}
     \begin{tikzpicture}[scale=\gridscale]
        \tikzmath{
        integer \x,\y,\miny,\maxy;
        \x=0;\y=0;\miny=0;\maxy=0;
        for \s in \sequence {
        {\draw[->,line width=2pt,color = blue] (\x,\y) -- (\x+1,\y+\s);};
        \x=\x+1;\y=\y+\s;
        if \y>\maxy then {\maxy = \y;};
        if \y<\miny then {\miny = \y;};
        };
        {\draw[line width=2pt,color=blue] (\x,\y) -- (\x+0.5,\y-0.5);};
        \x=4; \y=-1;
        for \s in \oldsequence {
        {\draw[->,dashed,line width=2pt,color = blue] (\x,\y) -- (\x+1,\y+\s);};
        \x=\x+1;\y=\y+\s;
        if \y>\maxy then {\maxy = \y;};
        if \y<\miny then {\miny = \y;};
        };
        {\draw[dashed,line width=2pt,color=blue] (\x,\y) -- (\x+0.5,\y-0.5);
        \draw[gray!70, thin] (0,\miny-0.5) grid (\x+1,\maxy+0.5);
        \draw[red,very thick] (5,\miny) -- (5,\maxy) node[above] {$x=x_0$};
        % \draw[purple,very thick] (17,\miny) -- (17,\maxy) node[above] {$x=\lambda_1$};
        \draw[very thick, -Stealth] (0,0) -- (\x+1,0) node[below] {$x$};
        \draw[thick, -Stealth] (0,\miny-0.5) -- (0,\maxy+0.5) node[left]  {$y$};};
        }

        \node[blue,below] at (11.5,-0.5) {$\lp{\mu,\lambda^{(1)},\lambda^{(2)}}$};
        \node[above,blue] at (8.3,2) {$\lp{\nu,\lambda^{(1)},\lambda^{(2)}}$};
    \end{tikzpicture}
    \end{center}
    The new horizontal strips $\lambda^{(1)}/\nu,\lambda^{(2)}/\nu$ are shown below. We remove the cell decorated with a $\textcolor{red}{\bullet}$ from $\mu$ to get $\nu=\Phi_{d,\lambda^{(1)},\lambda^{(2)}}(\mu)$.
    \begin{align*}\lambda^{(1)}/\mu &=  \ydiagram[*(white) \textcolor{red}{\bullet}]{13+0,12+0,7+0,4+1,2+0,2+0}*[*(white) \bullet]
    {13+0,8+4,6+1,5+1,2+0,2+0,0+1}
    *[*(white)]{13,8,6,5,2,2} \\ \lambda^{(2)}/\mu &=  \ydiagram[*(white) \textcolor{red}{\bullet}]{13+0,12+0,7+0,4+1,2+0,2+0}*[*(white) \bullet]
    {13+0,8+5,6+1,5+0,2+0,2+0,0+2}
    *[*(white)]{13,8,6,5,2,2}\end{align*}
\end{example}

Before studying $\Phi_{d,\lambda^{(1)},\lambda^{(2)}}$, we need to show that it is well-defined using Lemmas~\ref{lem:x_0-range} and \ref{lem:bad-ver-well-defined}.

\begin{lemma}\label{lem:x_0-range}
    The integer $x_0$ above satisfies $0<x_0<n+m-d-r$.
\end{lemma}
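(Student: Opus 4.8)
The plan is to extract both inequalities directly from two facts that are already available: the failure of $\mu$ to lie in $\horipositive{d,\lambda^{(1)},\lambda^{(2)}}$, which controls the low end of the lattice path, and Lemma~\ref{lem:height-of-right-end-of-lattice-path}, which pins down the height of $\lp{\mu,\lambda^{(1)},\lambda^{(2)}}$ at $x=n+m-d-r$. Throughout, write $L\coloneqq n+m-d-r$ and note that $L\ge\max\{\lambda_1^{(1)},\lambda_1^{(2)}\}$, since $\lambda_1^{(i)}\le n+m-d-r$ for $i=1,2$; this is exactly the hypothesis needed to apply Lemma~\ref{lem:height-of-right-end-of-lattice-path} at $x=L$.

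First I would prove $x_0>0$. Because $\mu\notin\horipositive{d,\lambda^{(1)},\lambda^{(2)}}$, Remark~\ref{rmk:check_horipositive} tells us that the first $L$ steps of $\lp{\mu,\lambda^{(1)},\lambda^{(2)}}$ are not all weakly higher than the $x$-axis, so the path attains a strictly negative integer height at some integer $x$ with $0<x\le L$; in particular its minimum over $0\le x\le L$ is $\le -1$. Since the path has height $0$ at $x=0$, the lowest point on $[0,L]$ lies strictly below height $0$ and hence is not attained at $x=0$. Therefore $x_0\ge 1$.

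Next I would prove $x_0<L$. By Lemma~\ref{lem:height-of-right-end-of-lattice-path}, the height of $\lp{\mu,\lambda^{(1)},\lambda^{(2)}}$ at $x=L$ equals $\lvert\lambda^{(1)}\rvert+\lvert\lambda^{(2)}\rvert-2\lvert\mu\rvert-L=(n+m-2d)-(n+m-d-r)=r-d$, which is $\ge 0$ because $d\le r$. Combined with the previous paragraph, the minimum of the path over $[0,L]$ is $\le -1<0\le r-d$, hence is not attained at $x=L$. Thus $x_0\le L-1$, i.e.\ $x_0<n+m-d-r$. Together with $x_0>0$ this yields $0<x_0<n+m-d-r$, as claimed.

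I do not expect a real obstacle here: the argument is just an assembly of Remark~\ref{rmk:check_horipositive} and Lemma~\ref{lem:height-of-right-end-of-lattice-path}. The only point that needs a touch of care is the elementary observation that a lattice path built from unit steps $(1,1),(1,0),(1,-1)$ which ever drops below the $x$-axis must already sit at integer height $\le -1$ at some integer $x$-coordinate, so that ``not weakly higher than the $x$-axis'' genuinely forces the minimum over $[0,L]$ to be strictly negative (not merely $0$); this is what separates the lowest point from the starting point $(0,0)$ and hence gives $x_0>0$ rather than just $x_0\ge 0$.
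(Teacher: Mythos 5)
Your proof is correct and follows essentially the same route as the paper: the failure of $\mu$ to lie in $\horipositive{d,\lambda^{(1)},\lambda^{(2)}}$ forces the minimum of the restricted path strictly below the $x$-axis (hence $x_0>0$), and Lemma~\ref{lem:height-of-right-end-of-lattice-path} gives height $r-d\ge 0$ at $x=n+m-d-r$ (hence $x_0<n+m-d-r$). The extra care you take in justifying that ``not weakly higher'' yields a strictly negative integer height is a harmless elaboration of what the paper asserts directly.
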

\begin{proof}
    Since the fact that $\mu\in\hori{d,\lambda^{(1)},\lambda^{(2)}}\setminus\horipositive{d,\lambda^{(1)},\lambda^{(2)}}$ indicates the lowest point of $\lp{\mu,\lambda^{(1)},\lambda^{(2)}}\mid_{0\le x\le n+m-d-r}$ is strongly lower than the $x$-axis, we have $x_0>0$. It remains to show that $x_0<n+m-d-r$. Lemma~\ref{lem:height-of-right-end-of-lattice-path} indicates that the $y$-coordinate of $\lp{\mu,\lambda^{(1)},\lambda^{(2)}}$ at $n+m-d-r$ equals
    \begin{align*}
        n+m-2d-(n+m-d-r) = r-d \ge 0,
    \end{align*}
    which means that $\lp{\mu,\lambda^{(1)},\lambda^{(2)}}\mid_{0\le x\le n+m-d-r}$ does not attain its lowest point at $x= n+m-d-r$ as its lowest point is strongly lower than the $x$-axis. Therefore, we have $x_0<n+m-d-r$.
\end{proof}

\begin{lemma}\label{lem:bad-ver-well-defined}
    The map $\Phi_{d,\lambda^{(1)},\lambda^{(2)}}$ is well-defined, i.e., $\mu$ has at least one cell in the $x_0$-th column, and the diagram $\Phi_{d,\lambda^{(1)},\lambda^{(2)}}(\mu) = \nu$ defined above is indeed a partition in $\hori{d-1,\lambda^{(1)},\lambda^{(2)}}$.
\end{lemma}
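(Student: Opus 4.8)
The plan is to isolate a single inequality between column lengths of $\mu$ — namely that $\mu$ is strictly longer in column $x_0$ than in column $x_0+1$ — and to observe that this one fact settles all three assertions at once: that $\mu$ has a cell in column $x_0$ (so the removal makes sense), that the resulting diagram $\nu$ is a partition, and, after a one-line column count, that $\lambda^{(i)}/\nu$ is again a horizontal strip for $i=1,2$. Throughout I write $\alpha^{\prime}_i$ for the number of cells of a partition $\alpha$ in its $i$-th column, and $y_k$ for the $y$-coordinate of $\lp{\mu,\lambda^{(1)},\lambda^{(2)}}$ at $x=k$.

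To derive the inequality, I would first invoke Lemma~\ref{lem:x_0-range}: $0<x_0<n+m-d-r$, so both $x_0\ge 1$ and $x_0+1\le n+m-d-r$, meaning the $x_0$-th and $(x_0+1)$-th steps of the path both lie within the initial segment $0\le x\le n+m-d-r$ over which the minimum defining $x_0$ is taken. Minimality of $x_0$ forces $y_{x_0-1}>y_{x_0}$, so the $x_0$-th step decreases $y$ by one, i.e. it is an SE step; by Definition~\ref{def:lattice-path} this means column $x_0$ meets neither $\lambda^{(1)}/\mu$ nor $\lambda^{(2)}/\mu$, equivalently $(\lambda^{(1)})^{\prime}_{x_0}=\mu^{\prime}_{x_0}$ and $(\lambda^{(2)})^{\prime}_{x_0}=\mu^{\prime}_{x_0}$. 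On the other hand, since $y_{x_0}$ is the minimum and $x_0+1$ is still in range, $y_{x_0+1}\ge y_{x_0}$, so the $(x_0+1)$-th step is an NE or an HE step; hence column $x_0+1$ meets at least one strip, say $\lambda^{(i_0)}/\mu$, and since $\lambda^{(i_0)}/\mu$ is a horizontal strip this gives $(\lambda^{(i_0)})^{\prime}_{x_0+1}=\mu^{\prime}_{x_0+1}+1$. Monotonicity of the column lengths of the partition $\lambda^{(i_0)}$ then yields $\mu^{\prime}_{x_0}=(\lambda^{(i_0)})^{\prime}_{x_0}\ge(\lambda^{(i_0)})^{\prime}_{x_0+1}=\mu^{\prime}_{x_0+1}+1$, the desired inequality $\mu^{\prime}_{x_0}\ge\mu^{\prime}_{x_0+1}+1$.

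From this single inequality I would then read off everything. First, $\mu^{\prime}_{x_0}\ge 1$, so $\mu$ has a (bottom) cell in column $x_0$ and the removal is legitimate. Second, the sequence obtained from $\mu^{\prime}$ by decreasing its $x_0$-th entry by one is still weakly decreasing — the only check that is not automatic from $\mu^{\prime}$ being weakly decreasing is $\mu^{\prime}_{x_0}-1\ge\mu^{\prime}_{x_0+1}$, which is exactly the inequality just proved — so $\nu$ is a genuine partition, necessarily of size $d-1$ since one cell was removed from $\mu\vdash d$. Third, $\nu\subseteq\mu\subseteq\lambda^{(i)}$, and for each $i$ one checks $(\lambda^{(i)})^{\prime}_j-\nu^{\prime}_j\le 1$ column by column: for $j\ne x_0$ this difference equals $(\lambda^{(i)})^{\prime}_j-\mu^{\prime}_j\le 1$ because $\lambda^{(i)}/\mu$ is a horizontal strip, while for $j=x_0$ it equals $\mu^{\prime}_{x_0}-(\mu^{\prime}_{x_0}-1)=1$, using that column $x_0$ meets neither strip. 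Hence $\lambda^{(i)}/\nu$ has at most one cell per column, so $\nu\in\hori{d-1,\lambda^{(1)},\lambda^{(2)}}$.

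The main obstacle, such as it is, is purely the bookkeeping of the lattice-path conventions: correctly converting ``the $k$-th step is SE / NE / HE'' into the equalities or inequalities between $(\lambda^{(i)})^{\prime}_k$ and $\mu^{\prime}_k$, and ensuring that $x_0+1$ really lies in the segment $[0,n+m-d-r]$ so that the comparison $y_{x_0+1}\ge y_{x_0}$ is valid. Both points are handled by the immediately preceding Lemma~\ref{lem:x_0-range}, and no deeper idea is needed.
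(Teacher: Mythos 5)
Your proof is correct and follows essentially the same route as the paper: use Lemma~\ref{lem:x_0-range} to place both the $x_0$-th and $(x_0+1)$-th steps in range, deduce that the $x_0$-th step is SE and the $(x_0+1)$-th is NE or HE, and conclude that $\mu$ has a row of length exactly $x_0$ (your inequality $\mu'_{x_0}\ge\mu'_{x_0+1}+1$) so the removal yields a partition whose skew shapes with $\lambda^{(1)},\lambda^{(2)}$ remain horizontal strips. Your column-count bookkeeping just makes explicit what the paper states more tersely.
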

\begin{proof}
    First, we need to show that $\nu$ is a partition. Since $\lp{\mu,\lambda^{(1)},\lambda^{(2)}}\mid_{0\le x\le n+m-d-r}$ attains its lowest point at $x=x_0$ where $0<x_0<n+m-d-r$ by Lemma~\ref{lem:x_0-range}, the $(x_0+1)$-th step of $\lp{\mu,\lambda^{(1)},\lambda^{(2)}}$ must be an NE or HE step. In addition, the minimality of $x_0$ means that the $x_0$-th step is an SE step. In conclusion, $\mu$ must have a row of length $x_0$, so the removal of the lowest cell from the $x_0$-th column of $\mu$ is permissible and yields a partition $\nu\vdash d-1$. We further know that the $x_0$-th column meets neither $\lambda^{(1)}/\mu$ nor $\lambda^{(2)}/\mu$, as we have shown that the $x_0$-th step is an SE step. Therefore, both $\lambda^{(1)}/\nu$ and $\lambda^{(2)}/\nu$ are still horizontal strips, indicating that $\nu\in\hori{d-1,\lambda^{(1)},\lambda^{(2)}}$.
\end{proof}

As $\Phi_{d,\lambda^{(1)},\lambda^{(2)}}$ is shown to be well-defined, we are ready to show that $\Phi_{d,\lambda^{(1)},\lambda^{(2)}}$ is bijective. It would be helpful to understand $\Phi_{d,\lambda^{(1)},\lambda^{(2)}}$ pictorially using lattice paths as stated in Example~\ref{ex:bad-ver-bijection}.

\begin{lemma}\label{lem:bad-version-bijection}
    The map $\Phi_{d,\lambda^{(1)},\lambda^{(2)}}$ is bijective.
\end{lemma}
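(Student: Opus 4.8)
The plan is to construct an explicit inverse map $\Psi_{d,\lambda^{(1)},\lambda^{(2)}} : \hori{d-1,\lambda^{(1)},\lambda^{(2)}} \to \hori{d,\lambda^{(1)},\lambda^{(2)}}\setminus\horipositive{d,\lambda^{(1)},\lambda^{(2)}}$ and then check $\Psi\circ\Phi = \mathrm{id}$ and $\Phi\circ\Psi = \mathrm{id}$. The pictorial description in Example~\ref{ex:bad-ver-bijection} is the guide: $\Phi$ removes the lowest cell of the $x_0$-th column of $\mu$, which on lattice paths replaces the $x_0$-th step of $\lp{\mu,\lambda^{(1)},\lambda^{(2)}}$ (necessarily an SE step, by Lemma~\ref{lem:bad-ver-well-defined}) by an NE step, raising the tail of the path by $2$. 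So the inverse should take $\nu\in\hori{d-1,\lambda^{(1)},\lambda^{(2)}}$, locate the appropriate column, and \emph{add} a cell there — on lattice paths, turn the relevant NE step into an SE step.

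For the inverse, given $\nu\in\hori{d-1,\lambda^{(1)},\lambda^{(2)}}$, I would choose the column index as follows: among the first $n+m-d-r$ steps of $\lp{\nu,\lambda^{(1)},\lambda^{(2)}}$, consider those NE steps whose \emph{starting} height equals the minimum height attained by $\lp{\nu,\lambda^{(1)},\lambda^{(2)}}\mid_{0\le x\le n+m-d-r}$; among these, pick $x_1$ to be the largest such index (dually to the minimality of $x_0$ in $\Phi$). Then add the lowest available cell to the $x_1$-th column of $\nu$ to form a diagram $\mu$ with $d$ cells, set $\Psi(\nu)=\mu$. I expect the proof has the following steps, in order: (1) show this $x_1$ exists and lies strictly between $0$ and $n+m-d-r$ — here one uses that the height of $\lp{\nu,\lambda^{(1)},\lambda^{(2)}}$ at $x=n+m-d-r$ is $n+m-2(d-1)-(n+m-d-r) = r-d+2 > 0$, while by Lemma~\ref{lem:height-of-right-end-of-lattice-path} applied to $\mu$ one wants the image path to dip below the axis; (2) verify $\Psi(\nu)$ is a genuine partition (the relevant step being NE forces $\nu$ to have the right row-length structure, analogous to Lemma~\ref{lem:bad-ver-well-defined}); (3) verify $\lambda^{(i)}/\mu$ are still horizontal strips so $\mu\in\hori{d,\lambda^{(1)},\lambda^{(2)}}$, and that $\mu\notin\horipositive{d,\lambda^{(1)},\lambda^{(2)}}$, i.e., the new path goes strictly below the $x$-axis somewhere in its first $n+m-d-r$ steps — this follows because lowering the tail by $2$ at a minimum-height NE step pushes a previously-on-axis point to height $-2$ or exposes a point below the axis; (4) check that the index chosen by $\Phi$ applied to $\Psi(\nu)$ recovers $x_1$, and the index chosen by $\Psi$ applied to $\Phi(\mu)$ recovers $x_0$, which is where the "minimal $x_0$" versus "maximal $x_1$" conventions have to be matched against the height shift.

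The main obstacle I anticipate is step (4), the compatibility of the two extremal choices under the $\pm 2$ vertical shift of the path's tail. Concretely: after $\Phi$ turns the SE step at position $x_0$ into an NE step, the portion of the path to the right of $x_0$ is raised by $2$, so the set of positions realizing the running minimum changes, and one must argue that $x_0$ is exactly the \emph{largest} position where $\lp{\nu,\lambda^{(1)},\lambda^{(2)}}$ has an NE step starting at its running minimum height — this uses the minimality of $x_0$ (no SE step before $x_0$ reaches a lower height) together with the fact that the new minimum of the shifted path is attained precisely at the old minimum positions with index $\le x_0$, which after the shift become NE-step starting points. The symmetric check for $\Psi$ then $\Phi$ is analogous. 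A careful case analysis of the step immediately before and after the critical column, plus bookkeeping of how the running minimum of the first $n+m-d-r$ steps transforms, should close this; everything else is routine once the inverse is written down correctly.
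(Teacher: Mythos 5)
Your proposal is correct and is essentially the paper's argument in inverse-map packaging: your $x_1$ (the largest-index NE step starting at the running minimum of the first $n+m-d-r$ steps) is exactly $x_0'+1$, where $x_0'$ is the paper's maximal position at which $\lp{\nu,\lambda^{(1)},\lambda^{(2)}}\mid_{0\le x\le n+m-d-r}$ attains its lowest point, and your step (4) is precisely the paper's bookkeeping that the flipped path attains its \emph{last} minimum at $x_0-1$ (giving injectivity) and its \emph{first} minimum at $x_0'+1$ (giving surjectivity). The only nitpick is that $x_1$ need only satisfy $0<x_1\le n+m-d-r$ rather than a strict upper bound, which is harmless.
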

\begin{proof}
    \textbf{Injectivity:} Suppose that for $\mu^{(1)},\mu^{(2)}\in\hori{d,\lambda^{(1)},\lambda^{(2)}}\setminus\horipositive{d,\lambda^{(1)},\lambda^{(2)}}$ we have $\Phi_{d,\lambda^{(1)},\lambda^{(2)}}(\mu^{(1)}) = \Phi_{d,\lambda^{(1)},\lambda^{(2)}}(\mu^{(2)})$. It suffices to show that $\mu^{(1)}=\mu^{(2)}$. Choose the minimal $x_1\ge 0$ (resp. $x_2\ge 0$) such that $\lp{\mu^{(1)},\lambda^{(1)},\lambda^{(2)}}\mid_{0\le x\le n+m-d-r}$ (resp. $\lp{\mu^{(2)},\lambda^{(1)},\lambda^{(2)}}\mid_{0\le x\le n+m-d-r}$) attains its lowest point at $x=x_1$ (resp. $x=x_2$). Then, for $i=1,2$ the new lattice path $\lp{\Phi_{d,\lambda^{(1)},\lambda^{(2)}}(\mu^{(i)}),\lambda^{(1)},\lambda^{(2)}}$ arises from $\lp{\mu^{(i)},\lambda^{(1)},\lambda^{(2)}}$ by replacing the $x_i$-th step (an SE step) with an NE step. As Lemma~\ref{lem:x_0-range} states that $0<x_i<n+m-d-r$ for $i=1,2$, the minimality of $x_i$ indicates that $\lp{\Phi_{d,\lambda^{(1)},\lambda^{(2)}}(\mu^{(i)}),\lambda^{(1)},\lambda^{(2)}}\mid_{0\le x\le n+m-d-r}$ attains its last lowest point at $x=x_i-1$. Therefore, $\Phi_{d,\lambda^{(1)},\lambda^{(2)}}(\mu^{(1)}) = \Phi_{d,\lambda^{(1)},\lambda^{(2)}}(\mu^{(2)})$ implies $x_1-1=x_2-1$ and hence $x_1=x_2$. Note that $\mu^{(i)}$ arises from appending a cell to the $x_i$-th column of $\Phi_{d,\lambda^{(1)},\lambda^{(2)}}(\mu^{(i)})$ for $i=1,2$. We thereby conclude that $\mu^{(1)} = \mu^{(2)}$.

    \textbf{Surjectivity:} Given $\nu\in\hori{d-1,\lambda^{(1)},\lambda^{(2)}}$, it suffices to find a partition $\mu\in\hori{d,\lambda^{(1)},\lambda^{(2)}}\setminus\horipositive{d,\lambda^{(1)},\lambda^{(2)}}$ with $\Phi_{d,\lambda^{(1)},\lambda^{(2)}}(\mu) = \nu$. Choose the maximal $x_0^\prime\ge 0$ such that the restricted lattice path $\lp{\nu,\lambda^{(1)},\lambda^{(2)}}\mid_{0\le x\le n+m-d-r}$ attains its lowest point at $x=x_0^\prime$. We initially give an upper bound to $x_0^\prime$:
    
    \textbf{Claim:} $x_0^\prime< n+m-d-r$.

    In fact, Lemma~\ref{lem:height-of-right-end-of-lattice-path} indicates that the $y$-coordinate of $\lp{\nu,\lambda^{(1)},\lambda^{(2)}}$ at $x=n+m-d-r$ is $n+m-2(d-1)-(n+m-d-r) = r-d+2\ge 2>0$. However, the $y$-coordinate of $\lp{\nu,\lambda^{(1)},\lambda^{(2)}}$ at $x=0$ is $0$. Therefore, $\lp{\nu,\lambda^{(1)},\lambda^{(2)}}\mid_{0\le x\le n+m-d-r}$ does not attain its lowest point at $x=n+m-d-r$, meaning that $x_0^\prime<n+m-d-r$.

    The maximality of $x_0^\prime$ and the claim above force the $(x_0^\prime+1)$-th step of $\lp{\nu,\lambda^{(1)},\lambda^{(2)}}$ to be an NE step. Furthermore, the $x_0^\prime$-th step of $\lp{\nu,\lambda^{(1)},\lambda^{(2)}}$ cannot be an NE step by the definition of $x_0^\prime$. Therefore, $\nu$ has a row of length $x_0^\prime$, so appending a cell in the $(x_0^\prime+1)$-th row of $\nu$ generates a new partition $\mu\vdash d$. We further deduce that $\mu\in\hori{d,\lambda^{(1)},\lambda^{(2)}}$, since we have shown that the $(x_0^\prime+1)$-th step of $\lp{\nu,\lambda^{(1)},\lambda^{(2)}}$ is an NE step.

    Now we show that $\mu\notin\horipositive{d,\lambda^{(1)},\lambda^{(2)}}$. the $y$-coordinate of $\lp{\nu,\lambda^{(1)},\lambda^{(2)}}$ at $x=0$ is $0$, so the $y$-coordinate of $\lp{\nu,\lambda^{(1)},\lambda^{(2)}}$ at $x=x_0^\prime$ is no greater than $0$. Note that $\lp{\mu,\lambda^{(1)},\lambda^{(2)}}$ arises from replacing the $(x_0^\prime+1)$-th step (an NE step) of $\lp{\nu,\lambda^{(1)},\lambda^{(2)}}$ with an SE step. Consequently, the $y$-coordinate of $\lp{\mu,\lambda^{(1)},\lambda^{(2)}}$ at $x=x_0^\prime+1$ is less than $0$, indicating that $\mu\notin\horipositive{d,\lambda^{(1)},\lambda^{(2)}}$ by Remark~\ref{rmk:check_horipositive}.

    To sum up, we have shown that $\mu\in\hori{d,\lambda^{(1)},\lambda^{(2)}}\setminus\horipositive{d,\lambda^{(1)},\lambda^{(2)}}$. It remains to show that $\Phi_{d,\lambda^{(1)},\lambda^{(2)}}(\mu)=\nu$. Recall two facts:
    \begin{itemize}
        \item $\lp{\mu,\lambda^{(1)},\lambda^{(2)}}$ arises from replacing the $(x_0^\prime+1)$-th step (an NE step) of $\lp{\nu,\lambda^{(1)},\lambda^{(2)}}$ with an SE step.
        \item $\lp{\nu,\lambda^{(1)},\lambda^{(2)}}\mid_{0\le n+m-d-r}$ attains its last lowest point at $x=x_0^\prime<n+m-d-r$.
    \end{itemize}
    As a result, $\lp{\mu,\lambda^{(1)},\lambda^{(2)}}\mid_{0\le x\le n+m-d-r}$ attains its first lowest point at $x=x_0^\prime + 1$. Moreover, we note that the removal of the lowest cell in the $(x_0^\prime+1)$-th column yields $\nu$. Therefore, the definition of $\Phi_{d,\lambda^{(1)},\lambda^{(2)}}$ indicates that $\Phi_{d,\lambda^{(1)},\lambda^{(2)}}(\mu) = \nu$.
\end{proof}

The bijections $\Phi_{d,\lambda^{(1)},\lambda^{(2)}}$ for $0<d\le r$ give our first positive combinatorial formula for the graded character of $R(\ZZZ_{n,m,r})$.

\begin{proposition}\label{prop:bad-ver-pos-formula}
    For $0\le d\le r$, we have
    \[\Frob(R(\ZZZ_{n,m,r})_d) = \sum_{\substack{\lambda^{(1)}\vdash n \\ \lambda^{(1)}_1\le n+m-d-r \\ \lambda^{(2)}\vdash m \\ \lambda^{(2)}_1\le n+m-d-r}}\sum_{\mu\in\horipositive{d,\lambda^{(1)},\lambda^{(2)}}} s_{\lambda^{(1)}}\otimes s_{\lambda^{(2)}}.\]
\end{proposition}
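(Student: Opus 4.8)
The plan is to derive this positive formula from Theorem~\ref{thm:grad-str} by expanding $SF_d$ and $SF_{d-1}$ into the Schur basis via Pieri's rule, and then reading off cardinalities through the bijection $\Phi_{d,\lambda^{(1)},\lambda^{(2)}}$ of Lemma~\ref{lem:bad-version-bijection}. First I would Schur-expand $SF_d$: by Pieri's rule $s_\mu\cdot h_{n-d}=\sum_{\lambda^{(1)}}s_{\lambda^{(1)}}$, summed over partitions $\lambda^{(1)}\vdash n$ such that $\lambda^{(1)}/\mu$ is a horizontal strip (the degree constraint $\lambda^{(1)}\vdash n$ is automatic since $\mu\vdash d$ and the strip has size $n-d$), and likewise $s_\mu\cdot h_{m-d}=\sum_{\lambda^{(2)}}s_{\lambda^{(2)}}$ over $\lambda^{(2)}\vdash m$ with $\lambda^{(2)}/\mu$ a horizontal strip. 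Substituting into Definition~\ref{def:specific-sym-func} and interchanging the order of summation gives
\[SF_d=\sum_{\substack{\lambda^{(1)}\vdash n\\ \lambda^{(2)}\vdash m}}\lvert\hori{d,\lambda^{(1)},\lambda^{(2)}}\rvert\cdot s_{\lambda^{(1)}}\otimes s_{\lambda^{(2)}},\]
so that, writing $L\coloneqq n+m-d-r$ and applying the truncation operator,
\[\{SF_d\}_{\lambda_1\le L}=\sum_{\substack{\lambda^{(1)}\vdash n,\ \lambda^{(1)}_1\le L\\ \lambda^{(2)}\vdash m,\ \lambda^{(2)}_1\le L}}\lvert\hori{d,\lambda^{(1)},\lambda^{(2)}}\rvert\cdot s_{\lambda^{(1)}}\otimes s_{\lambda^{(2)}}.\]
The identical computation applied to $SF_{d-1}$ (when $d\ge 1$) yields the same expansion with $\hori{d-1,\lambda^{(1)},\lambda^{(2)}}$ replacing $\hori{d,\lambda^{(1)},\lambda^{(2)}}$, crucially over the \emph{same} index set of pairs $(\lambda^{(1)},\lambda^{(2)})$ subject to $\lambda^{(1)}_1,\lambda^{(2)}_1\le L$, so the subtraction $\{SF_d-SF_{d-1}\}_{\lambda_1\le L}$ is legitimate term by term.

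Next, using Theorem~\ref{thm:grad-str} and comparing coefficients of $s_{\lambda^{(1)}}\otimes s_{\lambda^{(2)}}$, the proposition reduces to the numerical identity, valid for every pair $\lambda^{(1)}\vdash n$, $\lambda^{(2)}\vdash m$ with $\lambda^{(1)}_1,\lambda^{(2)}_1\le L$,
\[\lvert\hori{d,\lambda^{(1)},\lambda^{(2)}}\rvert-\lvert\hori{d-1,\lambda^{(1)},\lambda^{(2)}}\rvert=\lvert\horipositive{d,\lambda^{(1)},\lambda^{(2)}}\rvert\qquad(d\ge 1),\]
together with $\lvert\hori{0,\lambda^{(1)},\lambda^{(2)}}\rvert=\lvert\horipositive{0,\lambda^{(1)},\lambda^{(2)}}\rvert$ in the case $d=0$ (where $SF_{-1}=0$). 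For $d\ge 1$ this is immediate from Lemma~\ref{lem:bad-version-bijection}: $\Phi_{d,\lambda^{(1)},\lambda^{(2)}}$ is a bijection from the finite set $\hori{d,\lambda^{(1)},\lambda^{(2)}}\setminus\horipositive{d,\lambda^{(1)},\lambda^{(2)}}$ onto $\hori{d-1,\lambda^{(1)},\lambda^{(2)}}$, so these two sets are equinumerous, and rearranging gives the displayed identity. For $d=0$ there is nothing to remove: the only $\mu\vdash 0$ is $\mu=\varnothing$, which lies in $\hori{0,\lambda^{(1)},\lambda^{(2)}}$ exactly when $\lambda^{(1)}=(n)$ and $\lambda^{(2)}=(m)$, and in that case the lattice path $\lp{\varnothing,(n),(m)}$ consists of NE steps, then HE steps, then SE steps, so its first $\max\{n,m\}$ steps are weakly above the $x$-axis and $\varnothing\in\horipositive{0,(n),(m)}$; hence $\horipositive{0,\lambda^{(1)},\lambda^{(2)}}=\hori{0,\lambda^{(1)},\lambda^{(2)}}$ in all cases, and the formula collapses to $\{SF_0\}_{\lambda_1\le n+m-r}$, matching Theorem~\ref{thm:grad-str}.

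Essentially all of the genuine difficulty has already been discharged in Lemma~\ref{lem:bad-version-bijection} (and in establishing Theorem~\ref{thm:grad-str}), so the remaining work here is bookkeeping rather than a substantive obstacle. The points that need care are: (i) confirming the Pieri expansion produces precisely the index sets $\hori{d,\lambda^{(1)},\lambda^{(2)}}$, with the partition-degree constraints $\lambda^{(1)}\vdash n$, $\lambda^{(2)}\vdash m$ appearing automatically; (ii) verifying that the truncation condition $\lambda_1\le n+m-d-r$ is imposed on both tensor factors in the same way for $SF_d$ and $SF_{d-1}$, so that the difference in Theorem~\ref{thm:grad-str} can be distributed coefficientwise; and (iii) the degenerate case $d=0$ handled above. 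The ``hard part'', to the extent there is one, is simply making sure all the index sets line up so that the signed formula of Theorem~\ref{thm:grad-str} re-expresses as the manifestly Schur-positive sum with coefficients $\lvert\horipositive{d,\lambda^{(1)},\lambda^{(2)}}\rvert$.
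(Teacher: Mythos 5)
Your proposal is correct and follows essentially the same route as the paper's proof: expand $SF_d$ and $SF_{d-1}$ via Pieri's rule into sums indexed by $\hori{d,\lambda^{(1)},\lambda^{(2)}}$ and $\hori{d-1,\lambda^{(1)},\lambda^{(2)}}$, apply the truncation to both tensor factors, and use the bijection $\Phi_{d,\lambda^{(1)},\lambda^{(2)}}$ of Lemma~\ref{lem:bad-version-bijection} to rewrite the subtracted term as a sum over $\hori{d,\lambda^{(1)},\lambda^{(2)}}\setminus\horipositive{d,\lambda^{(1)},\lambda^{(2)}}$ and cancel. Your explicit treatment of the degenerate case $d=0$ is a small addition the paper leaves implicit, but the argument is otherwise the same.
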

\begin{proof}
    By Theorem~\ref{thm:grad-str}, it suffices to show
    \begin{align}\label{eq:bad-ver-pos-formula-intermediate}
        \{SF_d - SF_{d-1}\}_{\lambda_1 \le n+m-d-r} = \sum_{\substack{\lambda^{(1)}\vdash n \\ \lambda^{(1)}_1\le n+m-d-r \\ \lambda^{(2)}\vdash m \\ \lambda^{(2)}_1\le n+m-d-r}}\sum_{\mu\in\horipositive{d,\lambda^{(1)},\lambda^{(2)}}} s_{\lambda^{(1)}}\otimes s_{\lambda^{(2)}}.
    \end{align}
    We have
    \begin{align*}
        &\{SF_d - SF_{d-1}\}_{\lambda_1 \le n+m-d-r} = \{SF_d\}_{\lambda_1\le n+m-d-r} - \{SF_{d-1}\}_{\lambda_1 \le n+m-d-r} \\
        =&\sum_{\substack{\lambda^{(1)}\vdash n \\ \lambda^{(1)}_1\le n+m-d-r \\ \lambda^{(2)}\vdash m \\ \lambda^{(2)}_1\le n+m-d-r}} \sum_{\mu\in\hori{d,\lambda^{(1)},\lambda^{(2)}}} s_{\lambda^{(1)}}\otimes s_{\lambda^{(2)}} - \sum_{\substack{\lambda^{(1)}\vdash n \\ \lambda^{(1)}_1\le n+m-d-r \\ \lambda^{(2)}\vdash m \\ \lambda^{(2)}_1\le n+m-d-r}} \sum_{\mu\in\hori{d-1,\lambda^{(1)},\lambda^{(2)}}} s_{\lambda^{(1)}}\otimes s_{\lambda^{(2)}}
    \end{align*}
    where the second equal sign arises from Pieri's rule. Thanks to Lemma~\ref{lem:bad-version-bijection}, we can replace the index set $\hori{d-1,\lambda^{(1)},\lambda^{(2)}}$ under the last summation above with $\Phi_{d,\lambda^{(1)},\lambda^{(2)}}\,:\,\hori{d,\lambda^{(1)},\lambda^{(2)}}\setminus\horipositive{d,\lambda^{(1)},\lambda^{(2)}}$, and thus obtain
    \begin{align*}
        &\{SF_d - SF_{d-1}\}_{\lambda_1 \le n+m-d-r} \\
        =&\sum_{\substack{\lambda^{(1)}\vdash n \\ \lambda^{(1)}_1\le n+m-d-r \\ \lambda^{(2)}\vdash m \\ \lambda^{(2)}_1\le n+m-d-r}} \sum_{\mu\in\hori{d,\lambda^{(1)},\lambda^{(2)}}} s_{\lambda^{(1)}}\otimes s_{\lambda^{(2)}} - \sum_{\substack{\lambda^{(1)}\vdash n \\ \lambda^{(1)}_1\le n+m-d-r \\ \lambda^{(2)}\vdash m \\ \lambda^{(2)}_1\le n+m-d-r}} \sum_{\mu\in\hori{d,\lambda^{(1)},\lambda^{(2)}}\setminus\horipositive{d,\lambda^{(1)},\lambda^{(2)}}} s_{\lambda^{(1)}}\otimes s_{\lambda^{(2)}} \\
        =& \sum_{\substack{\lambda^{(1)}\vdash n \\ \lambda^{(1)}_1\le n+m-d-r \\ \lambda^{(2)}\vdash m \\ \lambda^{(2)}_1\le n+m-d-r}} \Bigg(\sum_{\mu\in\hori{d,\lambda^{(1)},\lambda^{(2)}}} s_{\lambda^{(1)}}\otimes s_{\lambda^{(2)}} - \sum_{\mu\in\hori{d,\lambda^{(1)},\lambda^{(2)}}\setminus\horipositive{d,\lambda^{(1)},\lambda^{(2)}}} s_{\lambda^{(1)}}\otimes s_{\lambda^{(2)}}\Bigg) \\
        =&\sum_{\substack{\lambda^{(1)}\vdash n \\ \lambda^{(1)}_1\le n+m-d-r \\ \lambda^{(2)}\vdash m \\ \lambda^{(2)}_1\le n+m-d-r}} \sum_{\mu\in\horipositive{d,\lambda^{(1)},\lambda^{(2)}}} s_{\lambda^{(1)}}\otimes s_{\lambda^{(2)}}.
    \end{align*}
    Consequently, we conclude Equation~\eqref{eq:bad-ver-pos-formula-intermediate}, completing our proof.
\end{proof}

\subsection{Good version: a positive combinatorial formula as a refinement of $\Frob(R(\ZZZ_{n,m,r}))$}\label{subsec:good-ver}
We have found a positive combinatorial character formula (Proposition~\ref{prop:bad-ver-pos-formula}) for the modules $R(\ZZZ_{n,m,r})_d$ in Subsection~\ref{subsec:bad-ver}, which algebraically form a graded refinement of the module $R(\ZZZ_{n,m,r})$. In this subsection, we want to understand this refinement combinatorially. That is, we want to combinatorially embed the sum in Proposition~\ref{prop:bad-ver-pos-formula} into $\Frob(R(\ZZZ_{n,m,r}))$ given by Lemma~\ref{lem:ungraded-frob} and thus obtain a combinatorial refinement of $\Frob(R(\ZZZ_{n,m,r}))$ given by Theorem~\ref{thm:good-ver-positive-formula}.

Throughout the rest of this subsection, we fix an integer $r$ with $0\le r \le \min\{m,n\}$. Let $0\le d\le r$ be an integer and $\lambda^{(1)}\vdash n, \lambda^{(2)}\vdash m$ be two partitions. We define:
\begin{align}\label{eq:whs-set}
    \horiwid{d,\lambda^{(1)},\lambda^{(2)}} \coloneqq \{\mu\in\hori{r,\lambda^{(1)},\lambda^{(2)}}\,:\, \wid{\mu,\lambda^{(1)},\lambda^{(2)}} = n+m-d-r\}.
\end{align}
Our goal is to construct a one-to-one correspondence between the two sets $\horipositive{d,\lambda^{(1)},\lambda^{(2)}}$ and $\horiwid{d,\lambda^{(1)},\lambda^{(2)}}$. First, we define the \emph{left shadow map}
\begin{align}\label{eq:left-shadow-map}
    \leftshadow{d,\lambda^{(1)},\lambda^{(2)}}\,:\,\horipositive{d,\lambda^{(1)},\lambda^{(2)}}\longrightarrow\horiwid{d,\lambda^{(1)},\lambda^{(2)}}
\end{align}
through the following steps sequentially:
\begin{itemize}
    \item Given $\mu\in\horipositive{d,\lambda^{(1)},\lambda^{(2)}}$, we define a set $S\subseteq[n+m-d-r]$ by
    \[S\!\coloneqq\!\{i\in[n+m-d-r]\,:\,\text{$(i,j)$ is a reflection pair of $\lp{\mu,\lambda^{(1)},\lambda^{(2)}}$ for some $j\in[n+m-d-r]$}\}.\]
    \item Append one cell to the $i$-th column of $\mu$ for all $i\in [n+m-d-r]\setminus S$ and obtain a new diagram $\nu$.
    \item Define $\leftshadow{d,\lambda^{(1)},\lambda^{(2)}}(\mu) \coloneqq \nu$.
\end{itemize}

\begin{remark}\label{rmk:left-shadow}
    Let $\mu,\nu,S$ be what we use to define $\leftshadow{d,\lambda^{(1)},\lambda^{(2)}}$ above.
    It is helpful to pictorially understand $\leftshadow{d,\lambda^{(1)},\lambda^{(2)}}$ using lattice paths. We put a light source on the right of the restricted lattice path $\lp{\mu,\lambda^{(1)},\lambda^{(2)}}\mid_{0\le x\le n+m-d-r}$ which is weakly higher than the $x$-axis by Remark~\ref{rmk:check_horipositive}. Let the light source emit horizontal light rays leftwards. See Figure~\ref{fig:left-ray} for example. All the NE steps touched by these light rays are marked in red.
    \def\sequence{1,-1,1,1,1,-1,1,-1,-1,-1,1,1,1,0,0,-1,1,1,1,1,1,-1,0,0,-1,-1}
        \def\gridscale{0.5}
        \begin{figure}[H]
            \centering
            \begin{tikzpicture}[scale=\gridscale]
            \tikzmath{
            integer \x,\y,\miny,\maxy;
            \x=0;\y=0;\miny=0;\maxy=0;
            for \s in \sequence {
            {\draw[->,line width=2pt,color = blue] (\x,\y) -- (\x+1,\y+\s);};
            \x=\x+1;\y=\y+\s;
            if \y>\maxy then {\maxy = \y;};
            if \y<\miny then {\miny = \y;};
            };
            {
            \draw[gray!70, thin] (0,\miny-0.5) grid (\x+1,\maxy+0.5);
            \draw[very thick, -Stealth] (0,0) -- (\x+1,0) node[below] {$x$};
            \draw[thick, -Stealth] (0,\miny-0.5) -- (0,\maxy+0.5) node[left]  {$y$};
            \draw[->,very thick,red] (27,0.5) -- (11,0.5);
            \draw[line width=2pt,red] (10,0) -- (12,2);
            \draw[->,very thick,red] (27,1.5) -- (12,1.5);
            \draw[->,very thick,red] (27,2.5) -- (17,2.5);
            \draw[line width=2pt,red] (16,2) -- (18,4);
            \draw[->,very thick,red] (27,3.5) -- (18,3.5);
            \draw[purple,very thick] (26,\miny) -- (26,\maxy+0.2) node[above] {\small $x=n+m-d-r$};
            };
            }
        \end{tikzpicture}
            \caption{$\lp{\mu,\lambda^{(1)},\lambda^{(2)}}\mid_{0\le x\le n+m-d-r}$}
            \label{fig:left-ray}
        \end{figure}
        
    Note that all the NE steps not touched by these light rays perfectly match all the SE steps of $\lp{\mu,\lambda^{(1)},\lambda^{(2)}}\mid_{0\le x\le n+m-d-r}$, forming reflection pairs. This is because $\mu\in\horipositive{d,\lambda^{(1)},\lambda^{(2)}}$, meaning that $\lp{\mu,\lambda^{(1)},\lambda^{(2)}}\mid_{0\le x\le n+m-d-r}$ is weakly higher than the $x$-axis. Therefore, all the NE steps touched by light rays are exactly indexed by $[n+m-d-r]\setminus S$. Consequently, what $\leftshadow{d,\lambda^{(1)},\lambda^{(2)}}$ does is append a cell to each column of $\mu$ with indices of these NE steps touched by light rays, or, equivalently, remove a cell from from each column with indices of these NE steps touched by light rays of both $\lambda^{(1)}/\mu$ and $\lambda^{(2)}/\mu$. This means that $\lp{\nu,\lambda^{(1)},\lambda^{(2)}}\mid_{0\le x\le n+m-d-r}$ arises from $\lp{\mu,\lambda^{(1)},\lambda^{(2)}}\mid_{0\le x\le n+m-d-r}$ by replacing all the NE steps touched by light rays with SE steps, shown in Figure~\ref{fig:after-left-ray}. Aside from these new SE steps, other steps of $\lp{\nu,\lambda^{(1)},\lambda^{(2)}}\mid_{0\le x\le n+m-d-r}$ still perfectly match each other as in $\lp{\mu,\lambda^{(1)},\lambda^{(2)}}\mid_{0\le x\le n+m-d-r}$, and form all the reflection pairs of $\lp{\nu,\lambda^{(1)},\lambda^{(2)}}$. 
    \begin{figure}[H]
        \centering
        \def\sequence{1,-1,1,1,1,-1,1,-1,-1,-1,-1,-1,1,0,0,-1,-1,-1,1,1,1,-1,0,0,-1,-1}
        \def\gridscale{0.5}
        \begin{tikzpicture}[scale=\gridscale]
            \tikzmath{
            integer \x,\y,\miny,\maxy;
            \x=0;\y=0;\miny=0;\maxy=0;
            for \s in \sequence {
            {\draw[->,line width=2pt,color = blue] (\x,\y) -- (\x+1,\y+\s);};
            \x=\x+1;\y=\y+\s;
            if \y>\maxy then {\maxy = \y;};
            if \y<\miny then {\miny = \y;};
            };
            {
            \draw[gray!70, thin] (0,\miny-0.5) grid (\x+1,\maxy+0.5);
            \draw[very thick, -Stealth] (0,0) -- (\x+1,0) node[below] {$x$};
            \draw[thick, -Stealth] (0,\miny-0.5) -- (0,\maxy+0.5) node[left]  {$y$};
            \draw[line width=2pt,red] (10,0) -- (12,-2);
            \draw[line width=2pt,red] (16,-2) -- (18,-4);
            \draw[purple,very thick] (26,\miny) -- (26,\maxy+0.2) node[above] {\small $x=n+m-d-r$};
            \draw[red,dashed,very thick] (0.5,0.5) -- (1.5,0.5);
            \draw[red,dashed,very thick] (2.5,0.5) -- (9.5,0.5);
            \draw[red,dashed,very thick] (3.5,1.5) -- (8.5,1.5);
            \draw[red,dashed,very thick] (4.5,2.5) -- (5.5,2.5);
            \draw[red,dashed,very thick] (6.5,2.5) -- (7.5,2.5);
            \draw[red,dashed,very thick] (12.5,-1.5) -- (15.5,-1.5);
            %\draw[red,dashed,very thick] (13.5,-0.5) -- (14.5,-0.5);
            \draw[red,dashed,very thick] (18.5,-3.5) -- (25.5,-3.5);
            \draw[red,dashed,very thick] (19.5,-2.5) -- (24.5,-2.5);
            \draw[red,dashed,very thick] (20.5,-1.5) -- (21.5,-1.5);
            %\draw[red,dashed,very thick] (22.5,-1.5) -- (23.5,-1.5);
            };
            }
        \end{tikzpicture}
        \caption{$\lp{\nu,\lambda^{(1)},\lambda^{(2)}}\mid_{\lambda_1\le n+m-d-r}$}
        \label{fig:after-left-ray}
    \end{figure}
    All the red steps in Figure~\ref{fig:left-ray} and all the red steps in Figure~\ref{fig:after-left-ray} are symmetric with respect to the $x$-axis.
\end{remark}

We need to show that $\leftshadow{d,\lambda^{(1)},\lambda^{(2)}}$ is well-defined.
\begin{lemma}\label{lem:left-shadow-map-well-defined}
    The map $\leftshadow{d,\lambda^{(1)},\lambda^{(2)}}$ is well-defined, i.e., the diagram $\nu$ above is indeed a partition in $\horiwid{d,\lambda^{(1)},\lambda^{(2)}}$.
\end{lemma}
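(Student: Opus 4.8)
The plan is to verify three properties of the diagram $\nu$ produced by $\leftshadow{d,\lambda^{(1)},\lambda^{(2)}}$: (a) $\nu$ is a genuine partition; (b) $\lvert\nu\rvert = r$ and both $\lambda^{(1)}/\nu$ and $\lambda^{(2)}/\nu$ are horizontal strips, so $\nu\in\hori{r,\lambda^{(1)},\lambda^{(2)}}$; and (c) $\wid{\nu,\lambda^{(1)},\lambda^{(2)}} = n+m-d-r$. Together these say exactly that $\nu\in\horiwid{d,\lambda^{(1)},\lambda^{(2)}}$.

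The heart of the matter is (a), which I would package through the lattice-path picture of Remark~\ref{rmk:left-shadow}. Since $\mu\in\horipositive{d,\lambda^{(1)},\lambda^{(2)}}$, Remark~\ref{rmk:check_horipositive} tells us that $\lp{\mu,\lambda^{(1)},\lambda^{(2)}}\mid_{0\le x\le n+m-d-r}$ is weakly above the $x$-axis; hence its SE steps are matched by earlier NE steps into reflection pairs in the usual ballot fashion, and the columns receiving a new cell are exactly the columns of the remaining (unmatched) NE steps --- the ones ``touched by light rays''. An unmatched NE step starting at height $h$ is, by this weak positivity, one after which the path never returns to height $h$ within $[0,n+m-d-r]$. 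Now let $i$ be a column we append to. If step $i-1$ is also an unmatched NE step, then column $i-1$ is appended to as well, and since $\mu$ is a partition the $i-1$-st column of $\nu$ is at least as long as the $i$-th; otherwise step $i-1$ is an HE or SE step, so column $i-1$ of $\mu$ misses at least one of $\lambda^{(1)}/\mu,\lambda^{(2)}/\mu$, say $\lambda^{(k)}/\mu$, whence column $i-1$ of $\mu$ already has the same length as column $i-1$ of $\lambda^{(k)}$, while column $i$ of $\nu$ has the same length as column $i$ of $\lambda^{(k)}$, so the partition $\lambda^{(k)}$ again gives the inequality. For columns not appended to, the inequality is immediate from $\mu$ being a partition. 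Thus $\nu$ is a partition; equivalently, replacing each relevant NE step of $\lp{\mu,\lambda^{(1)},\lambda^{(2)}}$ by an SE step yields a path of the form $\lp{\nu,\lambda^{(1)},\lambda^{(2)}}$ with $\nu\subseteq\lambda^{(1)}$ and $\nu\subseteq\lambda^{(2)}$.

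Part (b) is then short: each appended column ceases to meet $\lambda^{(k)}/\nu$ for $k=1,2$ and all other columns are unchanged, so $\lambda^{(1)}/\nu$ and $\lambda^{(2)}/\nu$ remain horizontal strips; and the number of appended cells equals (number of NE steps) minus (number of SE steps) of $\lp{\mu,\lambda^{(1)},\lambda^{(2)}}\mid_{0\le x\le n+m-d-r}$, which by Lemma~\ref{lem:height-of-right-end-of-lattice-path} is the terminal $y$-coordinate $n+m-2d-(n+m-d-r)=r-d$, so $\lvert\nu\rvert = d+(r-d)=r$. For (c), since $\lambda_1^{(1)},\lambda_1^{(2)}\le n+m-d-r$ it suffices to show that the maximal second coordinate $M$ of a reflection pair of $\lp{\nu,\lambda^{(1)},\lambda^{(2)}}$ equals $n+m-d-r$, so that $\max\{M,\lambda_1^{(1)},\lambda_1^{(2)}\}=n+m-d-r$. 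Here I would use the mirror-symmetry recorded at the end of Remark~\ref{rmk:left-shadow} (see Figure~\ref{fig:after-left-ray}): the new SE steps of $\lp{\nu,\lambda^{(1)},\lambda^{(2)}}$ are the reflections across the $x$-axis of the unmatched NE steps of $\lp{\mu,\lambda^{(1)},\lambda^{(2)}}$, the path $\lp{\nu,\lambda^{(1)},\lambda^{(2)}}$ has terminal $y$-coordinate $d-r\le 0$ at $x=n+m-d-r$, and all its steps to the right of $\max\{\lambda_1^{(1)},\lambda_1^{(2)}\}$ are SE steps; combining these pins down the extreme reflection pair at $x=n+m-d-r$ and rules out any later one.

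The step I expect to be the main obstacle is (a): appending cells to a prescribed family of columns of a partition need not in general yield a partition, and the argument genuinely needs both the ballot/reflection-pair structure of $\lp{\mu,\lambda^{(1)},\lambda^{(2)}}$ and the horizontal-strip hypotheses on $\lambda^{(1)}/\mu$ and $\lambda^{(2)}/\mu$ simultaneously. The bookkeeping in (c) is the secondary difficulty, but it should reduce cleanly to the light-ray picture of Remark~\ref{rmk:left-shadow}.
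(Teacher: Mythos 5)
Your overall strategy — the three checkpoints (partition, membership in $\hori{r,\lambda^{(1)},\lambda^{(2)}}$, width equal to $n+m-d-r$), all run through the light-ray/lattice-path picture — is the same as the paper's, and your argument for part (a) is a pleasant variant: the paper shows that the appended columns form intervals whose left neighbour is a row-end of $\mu$, whereas you compare column lengths directly via $\nu'_{i-1}=\mu'_{i-1}=(\lambda^{(k)})'_{i-1}\ge(\lambda^{(k)})'_i=\nu'_i$, which also delivers $\nu\subseteq\lambda^{(k)}$ for free. (Do make explicit the fact your dichotomy silently uses: if step $i$ is an \emph{unmatched} NE step then step $i-1$ cannot be a \emph{matched} NE step, since an SE return to the starting height of step $i-1$ would force an earlier SE return to the starting height of step $i$.) Parts (a) and (b) are otherwise correct and essentially the paper's computation for $\lvert\nu\rvert=r$.

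The genuine gap is in part (c). You reduce to showing that the maximal second coordinate $M$ of a reflection pair of $\lp{\nu,\lambda^{(1)},\lambda^{(2)}}$ \emph{equals} $n+m-d-r$, but that equality is false in general: it fails exactly when $\max\{\lambda^{(1)}_1,\lambda^{(2)}_1\}=n+m-d-r$ and the $(n+m-d-r)$-th step of $\lp{\nu,\lambda^{(1)},\lambda^{(2)}}$ is not an SE step, in which case no reflection pair can end at $x=n+m-d-r$. A concrete instance: $n=m=2$, $r=d=1$, $\lambda^{(1)}=(2)$, $\lambda^{(2)}=(1,1)$, $\mu=(1)$; here $\lp{\mu,\lambda^{(1)},\lambda^{(2)}}$ consists of two HE steps followed by SE steps, $\mu\in\horipositive{1,\lambda^{(1)},\lambda^{(2)}}$, $\nu=\mu$, and $\lp{\nu,\lambda^{(1)},\lambda^{(2)}}$ has no NE step at all, hence no reflection pair, yet $\wid{\nu,\lambda^{(1)},\lambda^{(2)}}=\lambda^{(1)}_1=2=n+m-d-r$ as required. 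So "it suffices to show $M=n+m-d-r$" sets up a target you cannot always hit. The paper repairs this with an explicit case split: if $n+m-d-r$ equals $\lambda^{(1)}_1$ or $\lambda^{(2)}_1$ one only needs the upper bound $M\le n+m-d-r$, and only in the complementary case $\max\{\lambda^{(1)}_1,\lambda^{(2)}_1\}<n+m-d-r$ does one produce a reflection pair $(i,n+m-d-r)$ (using that the $(n+m-d-r)$-th step of $\lp{\mu,\lambda^{(1)},\lambda^{(2)}}$ is then a matched SE step and that this match survives the passage to $\nu$). Your "rules out any later one" half is fine once you record that the minimum height of $\lp{\nu,\lambda^{(1)},\lambda^{(2)}}$ on $[0,n+m-d-r]$ is $d-r$, so no NE step starts below the ending height of any SE step to the right of $x=n+m-d-r$.
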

\begin{proof}
    First, we need to show that $\nu$ is a partition. As in Remark~\ref{rmk:left-shadow}, we put a light source on the right of $\lp{\mu,\lambda^{(1)},\lambda^{(2)}}\mid_{0\le x\le n+m-d-r}$ which emits horizontal light rays leftwards. Then the index set $[n+m-d-r]\setminus S$ coincides with
    \[\Bigg\{i\in[n+m-d-r]\,:\,\begin{array}{cc}
         & \text{the $i$-th step of $\lp{\mu,\lambda^{(1)},\lambda^{(2)}}\mid_{0\le x\le n+m-d-r}$} \\
         & \text{is an NE step touched by light rays}
    \end{array} \Bigg\}\]
    marked in red in Figure~\ref{fig:left-ray}. Therefore, all the NE steps of $\lp{\mu,\lambda^{(1)},\lambda^{(2)}}\mid_{0\le x\le n+m-d-r}$ with indices in $[n+m-d-r]\setminus S$ are partitioned into connected components, where the left-adjacent step of each connected component is an SE or HE step. In other words, $[n+m-d-r]\setminus S$ is partitioned into intervals, where $\mu$ has a row of length $i-1$ for the minimal element $i$ in each interval. Therefore, appending a cell to each column of $\mu$ with indices in $[n+m-d-r]\setminus S$ still results in a partition, i.e., $\nu$ is a partition.

    Additionally, we need to show that $\lambda^{(i)}/\nu$ is a horizontal strip for $i=1,2$. We have shown that all the NE steps of $\lp{\mu,\lambda^{(1)},\lambda^{(2)}}\mid_{0\le x\le n+m-d-r}$ with indices in $[n+m-d-r]\setminus S$ are partitioned into connected components, where the left-adjacent step of each connected component is an SE or HE step. Thereby, for $i=1,2$ we can construct $\lambda^{(i)}/\nu$ by removing the leftmost several cells from each row of the horizontal strip $\lambda^{(i)}/\mu$, which means that $\lambda^{(i)}/\nu$ is still a horizontal strip.

    Furthermore, we want to show that $\lvert\nu\rvert = r$. By Lemma~\ref{lem:height-of-right-end-of-lattice-path}, we only need to calculate the $y$-coordinate of $\lp{\nu,\lambda^{(1)},\lambda^{(2)}}$ at $x=n+m-d-r$. Lemma~\ref{lem:height-of-right-end-of-lattice-path} indicates that the $y$-coordinate of $\lp{\mu,\lambda^{(1)},\lambda^{(2)}}$ at $x=n+m-d-r$ is $n+m-2d-(n+m-d-r) = r-d$. That is, the number of the NE steps of $\lp{\mu,\lambda^{(1)},\lambda^{(2)}}\mid_{0\le x\le n+m-d-r}$ touched by light rays (marked in red in Figure~\ref{fig:left-ray}) is $r-d$. Remark~\ref{rmk:left-shadow} indicates that replacing these NE steps with SE steps yields $\lp{\nu,\lambda^{(1)},\lambda^{(2)}}\mid_{0\le x\le n+m-d-r}$, so the $y$-coordinate of $\lp{\nu,\lambda^{(1)},\lambda^{(2)}}$ at $x=n+m-d-r$ is $-(r-d) =d-r$. Then, the implementation of Lemma~\ref{lem:height-of-right-end-of-lattice-path} to $(\nu,\lambda^{(1)},\lambda^{(2)})$ gives
    \[d-r = n+m-2\lvert\nu\rvert-(n+m-d-r),\]
    yielding $\lvert\nu\rvert = r$.

    To sum up, we have shown that $\nu\in\hori{r,\lambda^{(1)},\lambda^{(2)}}$. It remains to show $\wid{\nu,\lambda^{(1)},\lambda^{(2)}} = n+m-d-r$. As mentioned in Remark~\ref{rmk:left-shadow}, all the reflection pairs $(i,j)$ of $\lp{\nu,\lambda^{(1)},\lambda^{(2)}}$ satisfies $j\le n+m-d-r$ (see Figure~\ref{fig:after-left-ray} for example), so we have
    \[n+m-d-r\ge M\coloneqq \max\{j\in\ZZ_{>0}\,:\,\text{$(i,j)$ is a reflection pair of $\lp{\nu,\lambda^{(1)},\lambda^{(2)}}$ for some $i\in\ZZ_{>0}$}\}.\]
    Recalling that $\lambda^{(1)}_1\le n+m-d-r$ and $\lambda^{(2)}_1\le n+m-d-r$, we further obtain
    \begin{equation}\label{eq:wid-ineq}
        n+m-d-r\ge\max\{M,\lambda^{(1)}_1,\lambda^{(2)}_1\}.
    \end{equation}
    It suffices to force this inequality to be an equality.
    
        \noindent\textbf{Case (1):} If $n+m-d-r = \lambda^{(1)}_1$ or $n+m-d-r=\lambda_1^{(2)}$, the inequality of \eqref{eq:wid-ineq} is forced to be an equality.
        
        \noindent \textbf{Case (2):} Otherwise, we have $\max\{\lambda^{(1)}_1,\lambda_1^{(2)}\}<n+m-d-r$. In this case, the $(n+m-d-r)$-th step of $\lp{\mu,\lambda^{(1)},\lambda^{(2)}}$ is an SE step, and thereby matches some NE step of $\lp{\mu,\lambda^{(1)},\lambda^{(2)}}$ to form a reflection pair $(i,n+m-d-r)$. This is because the fact that $\mu\in\horipositive{d,\lambda^{(1)},\lambda^{(2)}}$ and Remark~\ref{rmk:check_horipositive} together indicate that $\lp{\mu,\lambda^{(1)},\lambda^{(2)}}\mid_{\lambda_1\le n+m-d-r}$ is weakly higher than the $x$-axis. As mentioned in Remark~\ref{rmk:left-shadow}, $(i,n+m-d-r)$ is also a reflection pair of $\lp{\nu,\lambda^{(1)},\lambda^{(2)}}$, which means that $n+m-d-r\le M$, forcing the inequality~\eqref{eq:wid-ineq} to be an equality.
\end{proof}

As we have shown that $\leftshadow{d,\lambda^{(1)},\lambda^{(2)}}$ is well-defined, we are ready to show the bijectivity of $\leftshadow{d,\lambda^{(1)},\lambda^{(2)}}$. Intuitively, the bijectivity arises from the relationship between Figures~\ref{fig:left-ray} and \ref{fig:after-left-ray}: it is easy to construct one from the other. 

\begin{lemma}\label{lem:left-shadow-map-bijectivity}
    The map $\leftshadow{d,\lambda^{(1)},\lambda^{(2)}}$ is bijective.
\end{lemma}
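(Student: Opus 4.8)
The plan is to exhibit an explicit two-sided inverse $\rightshadow{d,\lambda^{(1)},\lambda^{(2)}}\colon\horiwid{d,\lambda^{(1)},\lambda^{(2)}}\longrightarrow\horipositive{d,\lambda^{(1)},\lambda^{(2)}}$, which is the ``light from the right, from below'' counterpart of $\leftshadow{d,\lambda^{(1)},\lambda^{(2)}}$, i.e.\ the operation passing from Figure~\ref{fig:after-left-ray} back to Figure~\ref{fig:left-ray}. Concretely, given $\nu\in\horiwid{d,\lambda^{(1)},\lambda^{(2)}}$ I consider the restricted path $\lp{\nu,\lambda^{(1)},\lambda^{(2)}}\mid_{0\le x\le n+m-d-r}$, which ends at height $d-r\le 0$ by Lemma~\ref{lem:height-of-right-end-of-lattice-path} together with $\lvert\nu\rvert=r$. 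I match its steps by the non-crossing rule---scanning from the left, each SE step ending at height $h$ is paired with the most recent as-yet-unmatched NE step starting at height $h$ (if one exists)---so that the matched pairs recover exactly the reflection pairs of the path, and precisely $r-d$ SE steps remain unmatched. Flipping those $r-d$ unmatched SE steps to NE steps, equivalently removing one cell from the corresponding column of $\nu$, yields a diagram $\mu$, and I set $\rightshadow{d,\lambda^{(1)},\lambda^{(2)}}(\nu)\coloneqq\mu$.

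The first block of work, parallel to Lemma~\ref{lem:left-shadow-map-well-defined}, is to show that $\rightshadow{d,\lambda^{(1)},\lambda^{(2)}}$ is well-defined: each unmatched SE step is immediately preceded by an HE or NE step at a strictly lower level, so the $r-d$ column removals are legal and produce a genuine partition $\mu$ with $\lambda^{(1)}/\mu$ and $\lambda^{(2)}/\mu$ still horizontal strips; the height formula of Lemma~\ref{lem:height-of-right-end-of-lattice-path} then forces $\lvert\mu\rvert=d$; and, the decisive point, the flipped path stays weakly above the $x$-axis on $[0,n+m-d-r]$, so that $\mu\in\horipositive{d,\lambda^{(1)},\lambda^{(2)}}$ by Remark~\ref{rmk:check_horipositive}. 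This last claim is precisely where the width hypothesis $\wid{\nu,\lambda^{(1)},\lambda^{(2)}}=n+m-d-r$ is consumed: it guarantees that every SE step of $\lp{\nu,\lambda^{(1)},\lambda^{(2)}}$ lying strictly to the right of column $\max\{\lambda_1^{(1)},\lambda_1^{(2)}\}$ is either already the second coordinate of a reflection pair inside the window or one of the flipped steps, which is what rules out a descent below the $x$-axis after the flips.

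It then remains to verify that the composites $\rightshadow{d,\lambda^{(1)},\lambda^{(2)}}\circ\leftshadow{d,\lambda^{(1)},\lambda^{(2)}}$ and $\leftshadow{d,\lambda^{(1)},\lambda^{(2)}}\circ\rightshadow{d,\lambda^{(1)},\lambda^{(2)}}$ are the identity. The substance here is the observation already sketched in Remark~\ref{rmk:left-shadow}: the NE steps of $\lp{\mu,\lambda^{(1)},\lambda^{(2)}}$ that $\leftshadow{d,\lambda^{(1)},\lambda^{(2)}}$ flips are exactly the NE steps left unmatched by the non-crossing rule (these are the indices in $[n+m-d-r]\setminus S$), and symmetrically $\rightshadow{d,\lambda^{(1)},\lambda^{(2)}}$ flips exactly the SE steps left unmatched by the non-crossing rule; since flipping an unmatched NE step to an SE step and rerunning the matching leaves that step unmatched, while leaving the matched pairs---that is, the reflection pairs---among the untouched steps unchanged, the two operations undo one another, and the red portions of Figures~\ref{fig:left-ray} and \ref{fig:after-left-ray} are honest reflections of each other across the $x$-axis. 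I expect this last piece of bookkeeping---proving rigorously that flipping the ``red'' steps does not disturb the matching of the remaining steps, so that the reflection pairs, the set $S$, and the two light-ray procedures all transform compatibly---to be the main obstacle; it should be dispatched by a short induction on the number of flipped steps (or on $n+m-d-r$), with no new idea required.
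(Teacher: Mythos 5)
Your proposal is correct and is essentially the paper's own argument in different packaging: your explicit inverse $\rightshadow{d,\lambda^{(1)},\lambda^{(2)}}$ (flip the SE steps left unmatched by the non-crossing rule) is exactly the construction the paper uses to prove surjectivity, where the unmatched steps are described as those ``touched by rightward light rays,'' and your verification that the composites are the identity is the paper's injectivity argument that the set of flipped steps is recoverable from the image. The point you flag as the remaining obstacle --- that flipping the unmatched steps preserves the matching of the others, and that the width hypothesis forces exactly $r-d$ unmatched SE steps in the window --- is handled in the paper at the same level of detail via Remark~\ref{rmk:left-shadow} and Lemma~\ref{lem:height-of-right-end-of-lattice-path}, so no new idea is missing.
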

\begin{proof}
    \textbf{Injectivity:} We suppose that $\leftshadow{d,\lambda^{(1)},\lambda^{(2)}}(\mu^{(1)}) = \leftshadow{d,\lambda^{(1)},\lambda^{(2)}}(\mu^{(2)})$ for some partitions $\mu^{(1)},\mu^{(2)}\in\horipositive{d,\lambda^{(1)},\lambda^{(2)}}$. It suffices to show $\mu^{(1)} =\mu^{(2)}$. Recall that what $\leftshadow{d,\lambda^{(1)},\lambda^{(2)}}$ does is append a cell to some columns of $\mu^{(i)}$, and we can figure out the indices of these columns using leftwards light rays as in Remark~\ref{rmk:left-shadow}. That is, we put a light source on the right of $\lp{\mu,\lambda^{(1)},\lambda^{(2)}}\mid_{0\le x\le n+m-d-r}$ which emits horizontal light rays leftwards, then the indices of all the NE steps touched by light rays are exactly what we desire. 
    
    It suffices to show that the set of all these indices is uniquely determined by $\leftshadow{d,\lambda^{(1)},\lambda^{(2)}}(\mu^{(i)})$. Remark~\ref{rmk:left-shadow} indicates that the lattice path $\lp{\leftshadow{d,\lambda^{(1)},\lambda^{(2)}}(\mu^{(i)}),\lambda^{(1)},\lambda^{(2)}}$ arises from converting all the NE steps of $\lp{\mu^{(i)},\lambda^{(1)},\lambda^{(2)}}$ with indices above (marked in red in Figure~\ref{fig:left-ray}) into SE steps (and we only need to show that $\leftshadow{d,\lambda^{(1)},\lambda^{(2)}}(\mu^{(i)})$ uniquely determines the indices of these new SE steps). These new SE steps (marked in red in Figure~\ref{fig:after-left-ray}) can be identified as follows:
    \begin{itemize}
        \item Put a light source on the left of $\lp{\leftshadow{d,\lambda^{(1)},\lambda^{(2)}}(\mu^{(i)}),\lambda^{(1)},\lambda^{(2)}}$, which emits horizontal light rays rightwards.
        \item All the SE steps of $\lp{\leftshadow{d,\lambda^{(1)},\lambda^{(2)}}(\mu^{(i)}),\lambda^{(1)},\lambda^{(2)}}\mid_{0\le x\le n+m-d-r}$ are exactly what we desire.
    \end{itemize}
    To sum up, only using $\lp{\leftshadow{d,\lambda^{(1)},\lambda^{(2)}}(\mu^{(i)}),\lambda^{(1)},\lambda^{(2)}}$, we can identify all the columns of $\mu^{(i)}$ where we append a cell when constructing $\leftshadow{d,\lambda^{(1)},\lambda^{(2)}}(\mu^{(i)})$. This fact yields the injectivity of $\leftshadow{d,\lambda^{(1)},\lambda^{(2)}}$.

    \textbf{Surjectivity:} Given $\nu\in\horiwid{d,\lambda^{(1)},\lambda^{(2)}}$, it suffices to find a partition $\mu\in\horipositive{d,\lambda^{(1)},\lambda^{(2)}}$ such that $\leftshadow{d,\lambda^{(1)},\lambda^{(2)}}(\mu) = \nu$. We construct an index set $T\subseteq[n+m-d-r]$ as follows:
    \begin{itemize}
        \item Put a light source on the left of $\lp{\nu,\lambda^{(1)},\lambda^{(2)}}$, which emits horizontal light rays rightwards.
        \item Let $T\coloneqq\Bigg\{i\in[n+m-d-r]\,:\,\begin{array}{cc}
             & \text{the $i$-th step of $\lp{\nu,\lambda^{(1)},\lambda^{(2)}}$ is} \\
             & \text{an SE step touched by light rays}
        \end{array}\Bigg\}$.
    \end{itemize}  
    Note that $T$ is a disjoint union of intervals, and the right-adjacent integer of each interval indexes an NE or HE step of $\lp{\nu,\lambda^{(1)},\lambda^{(2)}}$. Therefore, for the maximal integer $j$ of each interval, $\nu$ has a row of length $j$. Consequently, for all $i\in T$ we can remove the lowest cell from the $i$-th column of $\nu$, yielding a new partition $\mu$. In addition, both $\lambda^{(1)}/\mu$ and $\lambda^{(2)}/\mu$ are horizontal strips, because all the steps of $\lp{\nu,\lambda^{(1)},\lambda^{(2)}}$ with indices in $T$ are SE steps, meaning that for all $i\in T$ the $i$-th columns of both $\lambda^{(1)}/\nu$ and $\lambda^{(2)}/\nu$ are empty.

    To guarantee that $\mu\in\horipositive{d,\lambda^{(1)},\lambda^{(2)}}$, it remains to show $\lvert\mu\rvert=d$ and that the restricted lattice path $\lp{\mu,\lambda^{(1)},\lambda^{(2)}}\mid_{0\le x\le n+m-d-r}$ is weakly higher than the $x$-axis as mentioned in Remark~\ref{rmk:check_horipositive}. Before proving these two results, we introduce a fact immediate from the construction of $\mu$. \\
    \textbf{Fact:} Converting the all the SE steps of $\lp{\nu,\lambda^{(1)},\lambda^{(2)}}$ with indices in $T$ into NE steps yields $\lp{\mu,\lambda^{(1)},\lambda^{(2)}}$. Pictorially, this procedure converts the lattice path in Figure~\ref{fig:after-left-ray} into the lattice path in Figure~\ref{fig:left-ray}.

    Initially, let us show $\lvert\mu\rvert=d$. Lemma~\ref{lem:height-of-right-end-of-lattice-path} indicates that the $y$-coordinate of $\lp{\nu,\lambda^{(1)},\lambda^{(2)}}$ at $x=n+m-d-r$ equals $n+m-2r-(n+m-d-r) = d-r$. Then, the fact above indicates that the $y$-coordinate of $\lp{\mu,\lambda^{(1)},\lambda^{(2)}}$ at $x=n+m-d-r$ equals $-(d-r) = r-d$, because $\wid{\nu,\lambda^{(1)},\lambda^{(2)}} = n+m-d-r$ means that $\lp{\nu,\lambda^{(1)},\lambda^{(2)}}\mid_{0\le x\le n+m-d-r}$ attains its lowest point at $x=n+m-d-r$.
    Therefore, Lemma~\ref{lem:height-of-right-end-of-lattice-path} implies $r-d = n+m - 2\lvert\mu\rvert -(n+m-d-r)$ and hence $\lvert\mu\rvert = d$.

    It remains to show that the restricted lattice path $\lp{\mu,\lambda^{(1)},\lambda^{(2)}}\mid_{0\le x\le n+m-d-r}$ is weakly higher than the $x$-axis. This is immediate from the fact above.
\end{proof}

The bijections $\leftshadow{d,\lambda^{(1)},\lambda^{(2)}}$ for $0\le d\le r$ enable us to obtain a conciser positive combinatorial formula for $\grFrob(R(\ZZZ_{n,m,r});q)$ than Proposition~\ref{prop:bad-ver-pos-formula}. Surprisingly, the sum index set of this new formula coincides with the sum index set of the Schur expansion of $\Frob(R(\ZZZ_{n,m,r}))$ given by Lemma~\ref{lem:ungraded-frob} and Pieri's rule together. This observation means that the following new formula is a combinatorial refinement of $\Frob(R(\ZZZ_{n,m,r}))$.
\begin{theorem}\label{thm:good-ver-positive-formula}
    \[\grFrob(R(\ZZZ_{n,m,r});q) = \sum_{\substack{\lambda^{(1)}\vdash n \\ \lambda^{(2)}\vdash m}} \sum_{\mu\in\hori{r,\lambda^{(1)},\lambda^{(2)}}} q^{n+m-r-\wid{\mu,\lambda^{(1)},\lambda^{(2)}}}\cdot s_{\lambda^{(1)}}\otimes s_{\lambda^{(2)}}\]
\end{theorem}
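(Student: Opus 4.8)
The plan is to combine the graded positive formula of Proposition~\ref{prop:bad-ver-pos-formula} for each piece $\Frob(R(\ZZZ_{n,m,r})_d)$ with the left-shadow bijections $\leftshadow{d,\lambda^{(1)},\lambda^{(2)}}$ of Lemma~\ref{lem:left-shadow-map-bijectivity}, and then re-sum over the diagonal index $d$. Concretely, I would first write $\grFrob(R(\ZZZ_{n,m,r});q)=\sum_{d=0}^{r}q^{d}\Frob(R(\ZZZ_{n,m,r})_d)$, using Corollary~\ref{cor:span} to cut off the sum at $d=r$, and substitute the formula of Proposition~\ref{prop:bad-ver-pos-formula}. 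For each triple $(d,\lambda^{(1)},\lambda^{(2)})$ the inner summand $s_{\lambda^{(1)}}\otimes s_{\lambda^{(2)}}$ is independent of $\mu$, so only the cardinality of the index set matters, and the bijection $\leftshadow{d,\lambda^{(1)},\lambda^{(2)}}\colon\horipositive{d,\lambda^{(1)},\lambda^{(2)}}\to\horiwid{d,\lambda^{(1)},\lambda^{(2)}}$ lets me replace $\horipositive{d,\lambda^{(1)},\lambda^{(2)}}$ by $\horiwid{d,\lambda^{(1)},\lambda^{(2)}}$. The constraint $\lambda_1^{(i)}\le n+m-d-r$ appearing in Proposition~\ref{prop:bad-ver-pos-formula} may then be dropped, because if $\lambda_1^{(i)}>n+m-d-r$ for some $i$, then any $\mu\in\horiwid{d,\lambda^{(1)},\lambda^{(2)}}$ would satisfy $n+m-d-r=\wid{\mu,\lambda^{(1)},\lambda^{(2)}}\ge\lambda_1^{(i)}>n+m-d-r$, so that set is empty. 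After interchanging the order of summation and using that $\mu\in\horiwid{d,\lambda^{(1)},\lambda^{(2)}}$ forces $d=n+m-r-\wid{\mu,\lambda^{(1)},\lambda^{(2)}}$, the exponent of $q$ takes the desired shape.

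What remains — and this is the heart of the argument — is to verify that for fixed $\lambda^{(1)}\vdash n$ and $\lambda^{(2)}\vdash m$ the sets $\horiwid{d,\lambda^{(1)},\lambda^{(2)}}$ for $0\le d\le r$ form a partition of $\hori{r,\lambda^{(1)},\lambda^{(2)}}$. Disjointness is immediate, since membership in $\horiwid{d,\lambda^{(1)},\lambda^{(2)}}$ pins down $d=n+m-r-\wid{\mu,\lambda^{(1)},\lambda^{(2)}}$. Exhaustiveness amounts to the two-sided bound $n+m-2r\le\wid{\mu,\lambda^{(1)},\lambda^{(2)}}\le n+m-r$ for every $\mu\in\hori{r,\lambda^{(1)},\lambda^{(2)}}$, which I would establish from Lemma~\ref{lem:height-of-right-end-of-lattice-path} together with a count of the NE, HE and SE steps of $\lp{\mu,\lambda^{(1)},\lambda^{(2)}}$. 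Writing $L_0=\max\{\lambda_1^{(1)},\lambda_1^{(2)}\}$, all NE and HE steps lie in columns $\le L_0\le\max\{n,m\}\le n+m-r$, and the height of the path at a column $x\ge L_0$ equals $n+m-2r-x$. For the upper bound, $\lambda_1^{(1)},\lambda_1^{(2)}\le n+m-r$ is clear; and for a reflection pair $(i,j)$ one shows its common height $h$ satisfies $h\ge -r$ by comparing the number of SE steps occurring in columns $1,\dots,i-1$ with the total number of SE steps among columns $1,\dots,L_0$ (which is $L_0-|A\cup B|$ with $|A|=n-r$, $|B|=m-r$), and this forces $j\le n+m-r$, hence $M\le n+m-r$. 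For the lower bound, if $n+m-2r\le L_0$ then already $\wid{\mu,\lambda^{(1)},\lambda^{(2)}}\ge L_0\ge n+m-2r$; otherwise the height $n+m-2r-L_0$ at column $L_0$ is strictly positive, so the path, which starts at height $0$, must somewhere take an NE step from height $0$ to height $1$, and that NE step pairs with the SE step at column $n+m-2r$ (ending at height $0$), giving $M\ge n+m-2r$.

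I expect these two width bounds to be the main obstacle; the rest of the proof is a mechanical rearrangement of already-established formulas and bijections. The bounds require careful bookkeeping of where NE, HE and SE steps occur relative to $\max\{\lambda_1^{(1)},\lambda_1^{(2)}\}$ and of the heights they can attain — in the same spirit as, but somewhat more delicate than, the well-definedness argument for $\leftshadow{d,\lambda^{(1)},\lambda^{(2)}}$ in Lemma~\ref{lem:left-shadow-map-well-defined}. A clean write-up would probably isolate ``$n+m-2r\le\wid{\mu,\lambda^{(1)},\lambda^{(2)}}\le n+m-r$ for all $\mu\in\hori{r,\lambda^{(1)},\lambda^{(2)}}$'' as a standalone lemma, proved immediately before the theorem, so that the proof of Theorem~\ref{thm:good-ver-positive-formula} itself reduces to the chain of equalities sketched in the first paragraph.
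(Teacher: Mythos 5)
Your proposal is correct and follows essentially the same route as the paper: expand $\grFrob(R(\ZZZ_{n,m,r});q)$ gradewise via Corollary~\ref{cor:span}, substitute Proposition~\ref{prop:bad-ver-pos-formula}, transport the index set through the bijection of Lemma~\ref{lem:left-shadow-map-bijectivity}, and drop the constraints $\lambda_1^{(i)}\le n+m-d-r$ because a nonempty $\horiwid{d,\lambda^{(1)},\lambda^{(2)}}$ forces them. The one place you go beyond the paper is in explicitly isolating and proving the exhaustiveness bound $n+m-2r\le\wid{\mu,\lambda^{(1)},\lambda^{(2)}}\le n+m-r$ for all $\mu\in\hori{r,\lambda^{(1)},\lambda^{(2)}}$ (which the paper uses implicitly when reassembling the graded pieces), and your sketch of those bounds via the step counts of $\lp{\mu,\lambda^{(1)},\lambda^{(2)}}$ is sound.
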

\begin{proof}
    It suffices to show
    \[\Frob(R(\ZZZ_{n,m,r})_d) = \sum_{\substack{\lambda^{(1)}\vdash n \\ \lambda^{(2)}\vdash m}} \sum_{\substack{\mu\in\hori{r,\lambda^{(1)},\lambda^{(2)}}\\ \wid{\mu,\lambda^{(1)},\lambda^{(2)}}=n+m-d-r}}s_{\lambda^{(1)}}\otimes s_{\lambda^{(2)}}\]
    for $0\le d\le r$.
    Note that $\wid{\mu,\lambda^{(1)},\lambda^{(2)}}=n+m-d-r$ implies $\lambda^{(1)}_1\le n+m-d-r$ and $\lambda^{(2)}_1\le n+m-d-r$. As a result, it suffices to show
    \[\Frob(R(\ZZZ_{n,m,r})_d) = \sum_{\substack{\lambda^{(1)}\vdash n \\ \lambda^{(1)}_1\le n+m-d-r \\ \lambda^{(2)}\vdash m \\ \lambda^{(2)}_1\le n+m-d-r}} \sum_{\substack{\mu\in\hori{r,\lambda^{(1)},\lambda^{(2)}}\\ \wid{\mu,\lambda^{(1)},\lambda^{(2)}}=n+m-d-r}}s_{\lambda^{(1)}}\otimes s_{\lambda^{(2)}},\]
    which, by Proposition~\ref{prop:bad-ver-pos-formula}, is equivalent to
    \[\sum_{\substack{\lambda^{(1)}\vdash n \\ \lambda^{(1)}_1\le n+m-d-r \\ \lambda^{(2)}\vdash m \\ \lambda^{(2)}_1\le n+m-d-r}}\sum_{\mu\in\horipositive{d,\lambda^{(1)},\lambda^{(2)}}} s_{\lambda^{(1)}}\otimes s_{\lambda^{(2)}} = \sum_{\substack{\lambda^{(1)}\vdash n \\ \lambda^{(1)}_1\le n+m-d-r \\ \lambda^{(2)}\vdash m \\ \lambda^{(2)}_1\le n+m-d-r}} \sum_{\substack{\mu\in\hori{r,\lambda^{(1)},\lambda^{(2)}}\\ \wid{\mu,\lambda^{(1)},\lambda^{(2)}}=n+m-d-r}}s_{\lambda^{(1)}}\otimes s_{\lambda^{(2)}}.\]
    This equality holds since Lemma~\ref{lem:left-shadow-map-bijectivity} gives a bijection between the index set under the summations of both sides, so our proof is complete.
\end{proof}

\subsection{Application}\label{subsec:application-of-sign-free-formula}
%Remember to mention the isomorphism phenomenon.

Interestingly, Proposition~\ref{prop:bad-ver-pos-formula} indicates that lots of surjections $\twoheadrightarrow$ in Figure~\ref{fig:graded-surj} are actually isomorphisms. To see this fact, we need a technical result, which removes $r$ from the right-hand side of Proposition~\ref{prop:bad-ver-pos-formula} in some special cases.
\begin{lemma}\label{lem:surj-to-isom}
    For $0\le d\le r\le\min\{m,n\}$ such that $d\le\min\{m,n\}-r$, we have
    \[\Frob(R(\ZZZ_{n,m,r})_d) = \sum_{\substack{\lambda^{(1)}\vdash n \\  \lambda^{(2)}\vdash m }}\sum_{\mu\in\horipositive{d,\lambda^{(1)},\lambda^{(2)}}} s_{\lambda^{(1)}}\otimes s_{\lambda^{(2)}}.\]
\end{lemma}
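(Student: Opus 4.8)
The plan is to derive this immediately from Proposition~\ref{prop:bad-ver-pos-formula} by showing that, under the additional hypothesis $d\le\min\{m,n\}-r$, the two first-part constraints $\lambda^{(1)}_1\le n+m-d-r$ and $\lambda^{(2)}_1\le n+m-d-r$ appearing in the index set of that proposition are automatically satisfied by \emph{every} partition $\lambda^{(1)}\vdash n$ and \emph{every} partition $\lambda^{(2)}\vdash m$. Once this is verified, the restricted double sum in Proposition~\ref{prop:bad-ver-pos-formula} coincides verbatim with the unrestricted double sum in the statement of Lemma~\ref{lem:surj-to-isom}, and there is nothing left to prove.

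To see that the constraints are vacuous in this range, I would rewrite the hypothesis $d\le\min\{m,n\}-r$ as $d+r\le\min\{m,n\}$, so that
\[n+m-d-r \;\ge\; n+m-\min\{m,n\} \;=\; \max\{m,n\}.\]
Then for any $\lambda^{(1)}\vdash n$ we have $\lambda^{(1)}_1\le n\le\max\{m,n\}\le n+m-d-r$, and for any $\lambda^{(2)}\vdash m$ we have $\lambda^{(2)}_1\le m\le\max\{m,n\}\le n+m-d-r$. Hence both first-part conditions hold for every pair of partitions of the required sizes, so the index set in Proposition~\ref{prop:bad-ver-pos-formula} is precisely $\{(\lambda^{(1)},\lambda^{(2)})\,:\,\lambda^{(1)}\vdash n,\ \lambda^{(2)}\vdash m\}$, which finishes the argument.

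I expect no genuine obstacle here: the only points requiring care are tracking the direction of the inequality $d\le\min\{m,n\}-r$ and using the elementary fact that the first part of a partition is bounded by its size. Conceptually, the lemma simply records that in the ``stable range'' $d+r\le\min\{m,n\}$ the truncation $\{-\}_{\lambda_1\le n+m-d-r}$ acts as the identity on $SF_d$ and $SF_{d-1}$ (equivalently on $\Frob(R(\ZZZ_{n,m,r})_d)$, by Theorem~\ref{thm:grad-str}), so that the combinatorial formula of Proposition~\ref{prop:bad-ver-pos-formula} loses its dependence on $r$; this is exactly what will be exploited in Proposition~\ref{prop:surj-to-isom} to upgrade the corresponding surjections in Figure~\ref{fig:graded-surj} to isomorphisms.
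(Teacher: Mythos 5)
Your proposal is correct and matches the paper's proof essentially verbatim: the paper likewise observes that $d\le\min\{m,n\}-r$ gives $n+m-d-r\ge\max\{m,n\}$, so the constraints $\lambda^{(i)}_1\le n+m-d-r$ in Proposition~\ref{prop:bad-ver-pos-formula} are automatically satisfied and can be dropped. Nothing further is needed.
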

\begin{proof}
    The assumption $d\le\min\{m,n\}-r$ indicates $n+m-d-r\ge\max\{m,n\}$, so for $\lambda^{(1)}\vdash n$ and $\lambda^{(2)}\vdash m$ we automatically have $\lambda_1^{(1)}\le n+m-d-r$ and $\lambda_1^{(2)}\le n+m-d-r$. In this case, we change nothing after throwing away these two inequalities under the summation in Proposition~\ref {prop:bad-ver-pos-formula}.
\end{proof}

Therefore, the sign-free formula in Proposition~\ref{prop:bad-ver-pos-formula} helps us obtain the following isomorphisms, which are unclear from the signed formula in Theorem~\ref{thm:grad-str}.
\begin{proposition}\label{prop:surj-to-isom}
    For $0\le d\le r<\min\{m,n\}$ such that $d\le\min\{m,n\}-r-1$, we have the $\symm_n\times\symm_m$-module isomorphism
    \[R(\ZZZ_{n,m,r})_d\cong R(\ZZZ_{n,m,r+1})_d.\]
\end{proposition}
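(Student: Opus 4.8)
The plan is to promote the canonical surjection of Corollary~\ref{cor:graded-surj} to an isomorphism by checking that its source and target have the same Frobenius image in degree $d$ — exactly the ``force a surjection to be an isomorphism'' maneuver used repeatedly in the paper. Concretely, since $0\le d\le r<\min\{m,n\}$, Corollary~\ref{cor:graded-surj} supplies an $\symm_n\times\symm_m$-equivariant surjection $R(\ZZZ_{n,m,r})_d\twoheadrightarrow R(\ZZZ_{n,m,r+1})_d$, and both sides are finite-dimensional, so it suffices to prove the equality $\Frob(R(\ZZZ_{n,m,r})_d)=\Frob(R(\ZZZ_{n,m,r+1})_d)$.

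The key point is that the extra hypothesis $d\le\min\{m,n\}-r-1$ is exactly what is needed to invoke Lemma~\ref{lem:surj-to-isom} for the parameters $r$ \emph{and} $r+1$ at once. Indeed, $d\le\min\{m,n\}-r-1\le\min\{m,n\}-r$ allows Lemma~\ref{lem:surj-to-isom} to be applied with parameter $r$, while $d\le\min\{m,n\}-(r+1)$ (together with $r+1\le\min\{m,n\}$, which follows from $r<\min\{m,n\}$) allows it to be applied with parameter $r+1$. In both cases the lemma evaluates the Frobenius image to
\[\sum_{\substack{\lambda^{(1)}\vdash n\\ \lambda^{(2)}\vdash m}}\ \sum_{\mu\in\horipositive{d,\lambda^{(1)},\lambda^{(2)}}} s_{\lambda^{(1)}}\otimes s_{\lambda^{(2)}},\]
and this expression makes no reference to $r$ whatsoever. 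Hence $\Frob(R(\ZZZ_{n,m,r})_d)=\Frob(R(\ZZZ_{n,m,r+1})_d)$.

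With the Frobenius images matched, the surjection $R(\ZZZ_{n,m,r})_d\twoheadrightarrow R(\ZZZ_{n,m,r+1})_d$ is a surjection between finite-dimensional $\symm_n\times\symm_m$-modules of equal dimension, hence injective, hence an isomorphism of $\symm_n\times\symm_m$-modules. \textbf{There is no substantive obstacle here}: the real content is already contained in Corollary~\ref{cor:graded-surj} and Lemma~\ref{lem:surj-to-isom}, and the only care required is the bookkeeping of the inequalities $d\le r$, $r<\min\{m,n\}$, and $d\le\min\{m,n\}-r-1$ so that the hypotheses of both cited results are simultaneously verified — in particular confirming that the $r$-independence of the target of Lemma~\ref{lem:surj-to-isom} is genuinely available at both $r$ and $r+1$ under the standing assumption.
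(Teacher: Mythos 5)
Your proposal is correct and follows essentially the same route as the paper: apply Lemma~\ref{lem:surj-to-isom} at both parameters $r$ and $r+1$ (the inequality bookkeeping you perform is exactly what makes both applications legitimate) and observe that the resulting Frobenius image is independent of $r$. Your additional step of realizing the isomorphism via the canonical surjection of Corollary~\ref{cor:graded-surj} is a harmless, slightly stronger refinement — the paper's proof stops at equality of Frobenius images, which already determines the $\symm_n\times\symm_m$-module up to isomorphism.
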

\begin{proof}
    Note that the right-hand side of Lemma~\ref{lem:surj-to-isom} does not depend on $r$. Consequently, the application of Lemma~\ref{lem:surj-to-isom} to $R(\ZZZ_{n,m,r})_d$ and $R(\ZZZ_{n,m,r+1})_d$ yields what we desire.
\end{proof}
\begin{remark}\label{rmk:isom-ideal}
    Proposition~\ref{prop:surj-to-isom} is also immediate from Proposition~\ref{prop:ideal-equal}, since the degrees of the last three types of generators mentioned in Definition~\ref{def:ideal} are greater than $d$.
\end{remark}
\begin{remark}\label{rmk:surj-to-isom}
    Proposition~\ref{prop:surj-to-isom} indicates that all the surjections $\twoheadrightarrow$ under the straight line $d=\min\{m,n\}-r$ in Figure~\ref{fig:graded-surj} can be replaced with isomorphisms $\cong$. Take $\min\{m,n\}=6$ for example. Represent each of the modules in Figure~\ref{fig:graded-surj} using either $\circ$ or $\star$. Then all the stars in the same row of the diagram below can be connected by isomorphisms.
    \begin{center}
        \begin{scriptsize}
            \begin{tikzpicture}[scale = 0.4]
                \node at (20,0) {$\star$};

                \node at (22,0) {$\star$};
                \node at (22,2) {$\star$};

                \node at (24,0) {$\star$};
                \node at (24,2) {$\star$};
                \node at (24,4) {$\star$};

                \node at (26,0) {$\star$};
                \node at (26,2) {$\star$};
                \node at (26,4) {$\star$};
                \node at (26,6) {$\star$};

                \node at (28,0) {$\star$};
                \node at (28,2) {$\star$};
                \node at (28,4) {$\star$};
                \node at (28,6) {$\circ$};
                \node at (28,8) {$\circ$};

                \node at (30,0) {$\star$};
                \node at (30,2) {$\star$};
                \node at (30,4) {$\circ$};
                \node at (30,6) {$\circ$};
                \node at (30,8) {$\circ$};
                \node at (30,10) {$\circ$};
                
                \node at (32,0) {$\star$};
                \node at (32,2) {$\circ$};
                \node at (32,4) {$\circ$};
                \node at (32,6) {$\circ$};
                \node at (32,8) {$\circ$};
                \node at (32,10) {$\circ$};
                \node at (32,12) {$\circ$};

                \node at (19,-1) {$r$};

                \node at (20,-1) {$0$};
                \node at (22,-1) {$1$};
                \node at (24,-1) {$2$};
                \node at (26,-1) {$3$};
                \node at (28,-1) {$4$};
                \node at (30,-1) {$5$};
                \node at (32,-1) {$6$};

                \node at (33,13) {$d$};
                \node at (33,12) {$6$};
                \node at (33,10) {$5$};
                \node at (33,8) {$4$};
                \node at (33,6) {$3$};
                \node at (33,4) {$2$};
                \node at (33,2) {$1$};
                \node at (33,0) {$0$};
                
                \draw[red] (32,0) -- (25,7);
                \node at (24,7.5) {$d=\min\{m,n\}-r$};
            \end{tikzpicture}
        \end{scriptsize}
    \end{center}
\end{remark}

Since the direct sum of all the modules in the $r$-th column of the diagram above exactly equals $R(\ZZZ_{n,m,r})\cong\CC[\ZZZ_{n,m,r}]$, we immediately deduce the module injections below.
\begin{corollary}\label{cor:C[Z]-injection}
    For $1\le r\le\frac{\min\{m,n\}}{2}$, there exists an $\symm_n\times\symm_m$-module embedding
    \[\CC[\ZZZ_{n,m,r-1}]\hookrightarrow\CC[\ZZZ_{n,m,r}].\]
\end{corollary}
\begin{remark}\label{rmk:C[Z]-injection}
    Though Corollary~\ref{cor:C[Z]-injection} only states the existence of such a module embedding, we can construct a concrete $\symm_n\times\symm_m$-equivariant map to give such a module embedding. Identifying $\CC[\ZZZ_{n,m,r^\prime}]$ with the space of all the formal sums $\sum_{\RRR\in\ZZZ_{n,m,r^\prime}}c_\RRR\cdot\RRR$ with complex coefficients $c_\RRR\in\CC$, we claim that
    \begin{align*}
        \CC[\ZZZ_{n,m,r-1}]&\longrightarrow\CC[\ZZZ_{n,m,r}] \\
        \tilde{\RRR}&\longmapsto \sum_{\tilde{\RRR}\subseteq\RRR\in\ZZZ_{n,m,r}}\RRR
    \end{align*}
    is injective. This is because the identification $\CC[\ZZZ_{n,m,r^\prime}]\cong\CC[\xxx_{n\times m}]/\II(\ZZZ_{n,m,r^\prime})$ identifies this map above with \eqref{eq:ungraded-map}, and \eqref{eq:ungraded-map} descends to a family of module surjections $R(\ZZZ_{n,m,r-1})_d\twoheadrightarrow R(\ZZZ_{n,m,r})_d$ ($0\le d\le r-1$) according to Corollary~\ref{cor:graded-surj}. However, these surjections $R(\ZZZ_{n,m,r-1})_d\twoheadrightarrow R(\ZZZ_{n,m,r})_d$ are in fact isomorphisms because of Proposition~\ref{prop:surj-to-isom}.
\end{remark}

\section{Conclusion}\label{sec:conclusion}
%log-concave
We raise an open problem concerning log-concavity. We say that a sequence $(a_1,a_2,\dots,a_N)$ of positive real numbers is \emph{log-concave} if $a_i^2\ge a_{i-1}\cdot a_{i+1}$ for all $1<i<N$. Chen conjectured \cite{chen2008logconcavityqlogconvexityconjectureslongest} that the sequence $(a_{n,1},a_{n,2},\dots,a_{n,n})$ is log-concave, where $a_{n,k}$ counts the permutations in $\symm_n$ with longest increasing subsequence of length $k$.

One way to generalize log-concavity is to incorporate $G$-equivariance with respect to the action of some group $G$. Let $(V_1,\dots,V_N)$ be a sequence of $G$-modules. It is \emph{$G$-log-concave} if we have $G$-module embeddings $V_{i-1}\otimes V_{i+1}\hookrightarrow V_{i}\otimes V_{i}$ for all $1<i<N$. For a graded $G$-module $V=\bigoplus_{d=0}^M V_d$,  we say that $V$ is \emph{$G$-log-concave} if the sequence $(V_0,\dots,V_M)$ is $G$-log-concave. Rhoades \cite{MR4821538} conjectured that the orbit harmonics ring $R(\symm_n)$ of the permutation matrix locus $\symm_n\subseteq\Mat_{n\times n}(\CC)$ is $\symm_n\times\symm_n$-log-concave, generalizing Chen's conjecture.

Interestingly, we have the locus identity $\ZZZ_{n,m,r} = \symm_n$ if $r=m=n$. Therefore, we further generalize Rhoades's conjecture as follows.
\begin{conjecture}\label{conj:log-concavity}
    For $0\le r\le\min\{m,n\}$, the orbit harmonics ring $R(\ZZZ_{n,m,r})$ is $\symm_n\times\symm_m$-log-concave.
\end{conjecture}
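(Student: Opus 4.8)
\textbf{Proof proposal for Conjecture~\ref{conj:log-concavity}.}
The plan is to leverage the positive combinatorial formula of Theorem~\ref{thm:good-ver-positive-formula} and construct the required $\symm_n\times\symm_m$-module embeddings $R(\ZZZ_{n,m,r})_{d-1}\otimes R(\ZZZ_{n,m,r})_{d+1}\hookrightarrow R(\ZZZ_{n,m,r})_d\otimes R(\ZZZ_{n,m,r})_d$ at the level of Frobenius images, i.e.\ by exhibiting Schur-positivity of
\[\grFrob(R(\ZZZ_{n,m,r})_d)^{\otimes 2} - \grFrob(R(\ZZZ_{n,m,r})_{d-1})\cdot\grFrob(R(\ZZZ_{n,m,r})_{d+1})\]
inside $(\Lambda\otimes\Lambda)\otimes(\Lambda\otimes\Lambda)$. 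Since a sequence of $G$-modules is $G$-log-concave as soon as the relevant differences of tensor-square characters are genuine (effective) characters, and for $\symm_N$ effectivity of a virtual character is equivalent to Schur-positivity of its Frobenius image, it suffices to prove this symmetric-function inequality. First I would set up the ambient combinatorial model: by Theorem~\ref{thm:good-ver-positive-formula}, $\Frob(R(\ZZZ_{n,m,r})_d)$ is indexed by triples $(\mu,\lambda^{(1)},\lambda^{(2)})$ with $\mu\in\hori{r,\lambda^{(1)},\lambda^{(2)}}$ and $\wid{\mu,\lambda^{(1)},\lambda^{(2)}}=n+m-d-r$, so the whole problem becomes a statement about pairs of such triples and their widths.

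The key steps, in order, are as follows. (1) Reduce the equivariant log-concavity to the single-factor question by the standard observation (used by Rhoades \cite{MR4821538} and in \cite{MR4887799,liu2025extensionviennotsshadowrook}) that the $\symm_n\times\symm_m$-log-concavity of $R(\ZZZ_{n,m,r})$ follows from, and is essentially equivalent to, a ``width is log-concave'' statement for the combinatorial multiplicities; more precisely, one wants an injection of the indexing data for degrees $d-1$ and $d+1$ into that for two copies of degree $d$, compatible with the Littlewood--Richardson structure. (2) Build such an injection using the lattice-path picture of Section~\ref{sec:sign-cancel}: a triple contributing in degree $d$ corresponds (via $\leftshadow{d,\lambda^{(1)},\lambda^{(2)}}$ and Lemma~\ref{lem:left-shadow-map-bijectivity}) to a horizontal-strip pair whose lattice path reaches its lowest point exactly at $x=n+m-d-r$; the degree is then read off as the ``depth'' of the path. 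Given two such paths $P,Q$ with depths $e,f$, one should produce two paths $P',Q'$ with depths summing to $e+f$ and with the same pair of Schur indices, by a path-surgery that exchanges the final descents of $P$ and $Q$ below a common level — an analogue of the Lindström--Gessel--Viennot / shadow-line reflection already implicit in \cite{liu2025extensionviennotsshadowrook}. This surgery must be checked to preserve the horizontal-strip conditions (it does, since it only swaps tails of already-valid paths) and to be injective. (3) Upgrade the combinatorial injection to an honest $\symm_n\times\symm_m$-module embedding by invoking that Schur-positivity of the difference of graded Frobenius images implies the existence of the embedding for the underlying permutation-type modules, exactly as in the cited prior work; alternatively, use the surjections of Corollary~\ref{cor:graded-surj} together with Proposition~\ref{prop:UZ-surj} to transport a known equivariant-log-concavity-type embedding for $R(\UZ_{n,m,r})$ to $R(\ZZZ_{n,m,r})$.

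The main obstacle I expect is step (2): unlike the situation of \cite{zhu2025positivecombinatorialformulaeinvolution}, where each index is a \emph{single} horizontal strip and the path surgery is a clean reflection, here each index is a \emph{pair} of horizontal strips sharing an inner partition $\mu$, and the lattice path records NE/HE/SE steps encoding which of the two strips (or both, or neither) a given column meets. Performing the tail-swap surgery while simultaneously keeping $\lambda^{(1)}/\mu'$ and $\lambda^{(2)}/\mu'$ both horizontal strips is genuinely more delicate — an HE step carries ambiguous information about which strip is active in that column, so the reflection at a crossing level must be defined carefully on HE segments, and one must verify that the resulting $\mu'$ is still a partition contained in both $\lambda^{(i)}$. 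Controlling this is the heart of the argument; I would first carry it out in the special sub-range $d\le\min\{m,n\}-r-1$ where Proposition~\ref{prop:surj-to-isom} already gives isomorphisms $R(\ZZZ_{n,m,r})_d\cong R(\ZZZ_{n,m,r+1})_d$ (so the problem collapses onto the ``upper'' locus and one can borrow from \cite{liu2025extensionviennotsshadowrook}), and then attempt the general case by induction along the chains of surjections in Figure~\ref{fig:graded-surj}, using Theorem~\ref{thm:grad-str} to track how the truncation bound $\lambda_1\le n+m-d-r$ interacts with the surgery. A secondary, more routine obstacle is bookkeeping the Littlewood--Richardson coefficients when passing from the $\Frob$-level inequality back to module embeddings, but this is handled by the same formalism as in \cite{MR4821538,MR4887799}.
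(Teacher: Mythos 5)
The statement you are addressing is stated in the paper as a \emph{conjecture}: the paper offers no proof, only computational verification for $n\le 8$, $m\le 10$, and explicitly frames it as an open generalization of Rhoades's equivariant log-concavity conjecture for $R(\symm_n)$ (itself a strengthening of Chen's open conjecture). Your proposal is a research plan rather than a proof, and its central steps do not close the gap. The most serious problem is step (1): $\symm_n\times\symm_m$-log-concavity requires an embedding $V_{d-1}\otimes V_{d+1}\hookrightarrow V_d\otimes V_d$ for the \emph{internal} (diagonal) tensor product, so the relevant multiplicities are governed by Kronecker coefficients of $\symm_n\times\symm_m$. Theorem~\ref{thm:good-ver-positive-formula} gives the multiplicity of each $V^{\lambda^{(1)}}\otimes V^{\lambda^{(2)}}$ in each graded piece, but Schur-positivity of the difference of the resulting Kronecker products is not reducible to any ``width is log-concave'' statement about the indexing triples; no such ``standard observation'' exists in \cite{MR4821538}, \cite{MR4887799}, or \cite{liu2025extensionviennotsshadowrook} — in all of those papers the analogous log-concavity statements remain conjectural precisely because of this Kronecker obstruction.

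Step (2), which you correctly identify as the heart of the argument, is not carried out: you describe a hoped-for tail-swap surgery on pairs of lattice paths but do not define it on HE steps, do not verify that it preserves the two horizontal-strip conditions simultaneously, and do not show injectivity — and even granting all of that, a bijection on indexing triples would only control the graded multiplicities of single factors, not the Kronecker multiplicities in the tensor squares. Step (3) is circular: the equivariant log-concavity of $R(\UZ_{n,m,r})$ that you propose to ``transport'' via Proposition~\ref{prop:UZ-surj} is itself only \cite[Conjecture 6.1]{liu2025extensionviennotsshadowrook}, so there is nothing established to transport. In short, the proposal identifies reasonable-sounding tools from the paper (Theorem~\ref{thm:grad-str}, Theorem~\ref{thm:good-ver-positive-formula}, Figure~\ref{fig:graded-surj}, Proposition~\ref{prop:UZ-surj}) but does not supply the missing idea that would make the conjecture a theorem.
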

Conjecture~\ref{conj:log-concavity} has been verified by coding for $n\le 8,m\le 10$. Note that Conjecture~\ref{conj:log-concavity} has more flexibility than Rhoades's conjecture, as rook placement loci $\ZZZ_{n,m,r}$ live in $\Mat_{n\times m}(\CC)$ rather than $\Mat_{n\times n}(\CC)$. This flexibility may allow more potential induction strategies, making the log-concavity easier to prove. Furthermore, $R(\UZ_{n,m,r})$ is also conjectured to be $\symm_n\times\symm_m$-log-concave by \cite{liu2025extensionviennotsshadowrook}, and Proposition~\ref{prop:UZ-surj} connects $R(\ZZZ_{n,m,r})$ with $R(\UZ_{n,m,r})$. This connection may help us prove Conjecture~\ref{conj:log-concavity}.

%find basis
Another open problem regards the standard monomial basis. Although we have found several graded character formulae for the graded $\symm_n\times\symm_m$-module $R(\ZZZ_{n,m,r})$, its standard monomial basis remains mysterious. Consequently, we ask for its standard monomial basis with respect to an appropriate monomial order of $\CC[\xxx_{n\times m}]$.
\begin{problem}\label{problem:basis}
    Find the standard monomial basis of $R(\ZZZ_{n,m,r})$ with respect to an appropriate monomial order.
\end{problem}
As Theorem~\ref{thm:good-ver-positive-formula} tells us the Hilbert series $\Hilb(R(\ZZZ_{n,m,r});q)$ of $R(\ZZZ_{n,m,r})$, it may give us some hint about Problem~\ref{problem:basis}. Moreover, the standard monomial basis of $R(\UZ_{n,m,r})$ is given by \cite{liu2025extensionviennotsshadowrook}, and Proposition~\ref{prop:UZ-surj} reveals the connection between $R(\ZZZ_{n,m,r})$ and $R(\UZ_{n,m,r})$, which may be helpful to the solution of Problem~\ref{problem:basis}.

One more interesting direction is to generalize our results to colored rook placements. Write the wreath product $\symm_{N,k}\coloneqq(\ZZ/r\ZZ)\wr\symm_N$ for the \emph{$k$-colored permutation group}. We embed $\symm_{N,k}$ into $\Mat_{N\times N}(\CC)$ by
\[\symm_{N,k} = \Big\{X\in\Mat_{N\times N}(\CC)\,:\,\begin{array}{cc}
     & \text{$X$ has exactly one nonzero entry in each row and column,} \\
     & \text{and nonzero entries of $X$ are $k$-th roots-of-unity}
\end{array}\Big\}.\]
Let $\ZZZ_{n,m,r}^{(k)}\subseteq\Mat_{n\times m}(\CC)$ be the \emph{$k$-colored rook placement locus} given by
\[\ZZZ_{n,m,r}^{(k)}\coloneqq\Bigg\{X\in\Mat_{n\times m}(\CC)\,:\,\begin{array}{cc}
     & \text{$X$ has at most one nonzero entry in each row and column,} \\
     & \text{nonzero entries of $X$ are $k$-th roots-of-unity,} \\
     & \text{and $X$ possesses exactly $r$ nonzero entries}
\end{array}\Bigg\}.\]
We ask for the graded module structure of $R(\ZZZ_{n,m,r}^{(k)})$.
\begin{problem}\label{problem:grad-str}
    Find the graded $\symm_{n,k}\times\symm_{m,k}$-module structure of $R(\ZZZ_{n,m,r}^{(k)})$.
\end{problem}

\section{Acknowledgements}\label{sec:acknowledgements}
We thank Brendon Rhoades for constructive suggestions about the content and structure of this paper.

\appendix
\section{The defining ideals for orbit harmonics on involution matrix loci}\label{sec:appendix}
We apply the same technique in the proof of Proposition~\ref{prop:ideal-equal} to another type of matrix loci studied by \cite{MR4887799}, namely the involution matrix loci $\MMM_{n,a}\subseteq\Mat_{n\times m}(\CC)$ given by
\[\MMM_{n,a}\coloneqq\{w\in\symm_n\,:\,\text{$w^2=1$ and $w$ has exactly $a$ fixed points}\}\]
where we identify each permutation in $\symm_n$ with its permutation matrix in $\Mat_{n\times n}(\CC)$. We will prove a concise generating set of the defining ideal $\gr\II(\MMM_{n,a})\subseteq\CC[\xxx_{n\times n}]$ in Proposition~\ref{prop:involution-ideal}.

We first state some necessary results in \cite{MR4887799}. Note that $\MMM_{n,a}$ is closed under the conjugation action of $\symm_n$ given by $g(w)=gwg^{-1}$ for $g\in\symm_n,w\in\MMM_{n,a}$. Therefore, $R(\MMM_{n,a})=\CC[\xxx_{n\times n}]/\gr\II(\MMM_{n,a})$ is also a graded $\symm_n$-module with the $\symm_n$-action generated by $g(x_{i,j}) = x_{g(i),g(j)}$. The graded module structure of $R(\MMM_{n,a})$ is given by \cite[Theorem 5.21]{MR4887799} as follows:
\begin{theorem}\label{thm:involution-grad-str}
    Suppose $a \equiv n \mod 2$. The graded Frobenius image of $R(\MMM_{n,a})$ is given by
    \[
        \grFrob(R(\MMM_{n,a});q) = \sum_{d \, = \, 0}^{(n-a)/2} \{
            h_d[h_2] \cdot h_{n-2d} - h_{d-1}[h_2] \cdot h_{n-2d+2}
        \}_{\lambda_1 \leq n-2d+a} \cdot q^d
    \]  
    where we interpret $h_{-1} := 0$.
\end{theorem}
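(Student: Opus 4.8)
The plan is to repeat, almost verbatim, the strategy of Section~\ref{sec:module-str}. Set $\ell\coloneqq(n-a)/2$ (the number of $2$-cycles of an element of $\MMM_{n,a}$) and $GF_d\coloneqq h_d[h_2]\cdot h_{n-2d}\in\Lambda_n$ for $0\le d\le\lfloor n/2\rfloor$; this is the involution analogue of $SF_d$, and like $SF_d$ it does not depend on $a$. One first assembles a triangular diagram of graded $\symm_n$-surjections among the $R(\MMM_{n,a})_d$ for varying $a$, reads off a family of Schur-positivity inequalities, then forces all of them to be equalities by summing over the whole triangle and matching the total against a symmetric-function identity; a final induction on $d$ extracts the stated graded Frobenius image.

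For the ungraded input (the analogue of Lemma~\ref{lem:ungraded-frob}): since $\MMM_{n,a}$ is a single conjugacy class, $\CC[\MMM_{n,a}]\cong\ind_{Z}^{\symm_n}\mathbf{1}$ with $Z=(\ZZ/2\ZZ\wr\symm_\ell)\times\symm_a$ the centralizer of an element of cycle type $(2^\ell,1^a)$, so the classical plethystic formula for the Frobenius image of the permutation module on matchings gives $\Frob(R(\MMM_{n,a}))=\Frob(\CC[\MMM_{n,a}])=h_\ell[h_2]\cdot h_a=GF_\ell$. For the symmetric-function identities replacing Corollaries~\ref{cor:shur-refinement} and~\ref{cor:schur-sum}, the bookkeeping is lighter than in the body because we stay inside $\Lambda$ rather than $\Lambda\otimes_{\CC(q)}\Lambda$: for $\lambda\vdash n$, Pieri's rule together with $h_d[h_2]=\sum_{\mu\vdash d}s_{2\mu}$ shows $\langle s_\lambda\rangle GF_d=\langle q^d\rangle f_\lambda$, where $f_\lambda=\prod_{i\ge1}\bigl(q^{\lceil\lambda_{i+1}/2\rceil}+\cdots+q^{\lfloor\lambda_i/2\rfloor}\bigr)$ is a product of palindromic intervals and hence satisfies $\langle q^j\rangle f_\lambda=\langle q^{N-j}\rangle f_\lambda$ with $N=n-\lceil\lambda_1/2\rceil$. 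Using $n\equiv a\pmod2$ one checks $\lfloor(n+a-\lambda_1)/2\rfloor=N-\ell$, so a short telescoping argument yields the refinement identity $GF_\ell=\sum_{d=0}^\ell\{GF_d-GF_{d-1}\}_{\lambda_1\le n-2d+a}$ (with $GF_{-1}=0$) and, by the same manipulation, the companion ``summed'' identity over the admissible range $0\le\ell\le\lfloor n/2\rfloor$ that plays the role of Corollary~\ref{cor:schur-sum}.

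For the module scaffolding I would use the explicit ideal $I_n^{(a)}\subseteq\CC[\xxx_{n\times n}]$ of Appendix~\ref{sec:appendix}, but only the easy containment $I_n^{(a)}\subseteq\gr\II(\MMM_{n,a})$ (proved as in Lemma~\ref{lem:ideal-contain}, and independent of Proposition~\ref{prop:involution-ideal}, whose proof in turn relies on the present theorem). That containment provides a monomial spanning set of $R(\MMM_{n,a})$ indexed by partial involutions with at most $\ell$ transpositions, whence $R(\MMM_{n,a})_d=0$ for $d>\ell$ (the analogue of Corollary~\ref{cor:span}); via the symmetrizer-type generators of $I_n^{(a)}$ and a technical induction on $n$ patterned on Lemma~\ref{lem:ideal-induction}, it also shows $\mmm(\RRR)$ with $|\RRR|=d$ is annihilated by the Young symmetrizer $\eta_p$ whenever $p>n-2d+a$, so by Lemma~\ref{lem:ann-length} every $V^\lambda$ occurring in $R(\MMM_{n,a})_d$ has $\lambda_1\le n-2d+a$ (the analogue of Corollary~\ref{cor:length}). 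Finally, the indicator-function description of $\CC[\MMM_{n,a}]$ gives the relation $\mmm(\RRR)\equiv\frac{1}{\ell-|\RRR|}\sum_{\RRR\precdot\RRR'}\mmm(\RRR')\pmod{\II(\MMM_{n,a})}$ exactly as in Lemma~\ref{lem:linear-relation-ungraded}, and the argument of Lemma~\ref{lem:ungraded-surj} and Corollary~\ref{cor:graded-surj} then produces graded $\symm_n$-surjections $R(\MMM_{n,a+2})_d\twoheadrightarrow R(\MMM_{n,a})_d$ for $0\le d\le\ell-1$, i.e.\ a triangle of surjections of the same shape as Figure~\ref{fig:graded-surj} with columns indexed by $\ell$.

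Assembling the endgame follows Subsection~\ref{subsec:graded-char}: combining the surjections with the $\lambda_1$-bound yields $\Frob(R(\MMM_{n,a})_d)\le\{\Frob(R(\MMM_{n,a'})_d)\}_{\lambda_1\le n-2d+a}$ for the northwest-boundary module $R(\MMM_{n,a'})_d$ on the appropriate anti-diagonal (the analogue of Lemma~\ref{lem:ineq-according-to-parity}); summing over the whole triangle and invoking the column decomposition $\sum_a\Frob(R(\MMM_{n,a}))=\sum_\ell GF_\ell$ on one side and the summed identity on the other forces every inequality to be an equality (the analogue of Lemma~\ref{lem:eq-according-to-parity}); iterating that and then inducting on $d$ via the refinement identity gives $\Frob(R(\MMM_{n,a})_d)=\{GF_d-GF_{d-1}\}_{\lambda_1\le n-2d+a}$ for all $d$, which is the claim. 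I expect the symmetric-function step to be the main obstacle: getting all the parity bookkeeping exactly right — the even partitions $2\mu$, the ceilings and floors in the interval factors of $f_\lambda$, and the arithmetic identity $\lfloor(n+a-\lambda_1)/2\rfloor=N-\ell$ — so that the palindromicity argument and its reorganization into the summed identity go through, while dealing correctly with the constraint $a\equiv n\pmod2$ throughout. A secondary difficulty is the induction on $n$ for the $\eta_p$-annihilation, where one must track how the generators of $I_n^{(a)}$ restrict to the sub-board obtained by deleting a matched pair of rows and columns, just as in the proof of Lemma~\ref{lem:ideal-induction}.
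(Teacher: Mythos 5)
Your proposal is essentially the proof this statement already has: the paper does not prove Theorem~\ref{thm:involution-grad-str} itself but imports it from \cite[Theorem 5.21]{MR4887799}, and that proof is precisely the triangular-diagram-plus-forcing strategy of Section~\ref{sec:module-str} that you transplant — ungraded Frobenius $h_\ell[h_2]\cdot h_a$ from the conjugacy-class/centralizer description, a refinement identity and a summed identity obtained from palindromicity, the $\eta_p$-annihilation bound $\lambda_1\le n-2d+a$, and row surjections $R(\MMM_{n,a+2})_d\twoheadrightarrow R(\MMM_{n,a})_d$ assembled into the triangle and forced to equalities, then an induction on $d$. Your key computations check out — $\langle s_\lambda\rangle\bigl(h_d[h_2]\cdot h_{n-2d}\bigr)=\langle q^d\rangle\prod_{i\ge1}\bigl(q^{\lceil\lambda_{i+1}/2\rceil}+\cdots+q^{\lfloor\lambda_i/2\rfloor}\bigr)$, palindromic center $n-\lceil\lambda_1/2\rceil$, and $\lfloor(n+a-\lambda_1)/2\rfloor=n-\lceil\lambda_1/2\rceil-\ell$ — so the outline is sound, with the annihilation induction being the cited technical input (\cite[Lemma 5.7]{MR4887799}) rather than a new obstacle.
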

Here $h_d[h_2]$ is a plethysm of symmetric functions (see \cite{MR3443860} for more details on plethysm). Fortunately, it has a pretty simple Schur expansion
\[h_d[h_2] = \sum_{\substack{\lambda\vdash 2d \\ \text{$\lambda$ is even}}} s_\lambda.\]

We need the monomials $\mmm(w)\in\CC[\xxx_{n\times n}]$ given by
\[\mmm(w) = \prod_{\substack{i\in[n]\\ i<w(i)}}x_{i,w(i)}\]
for all involutions $w\in\symm_n$ (i.e., $w^2=1$). They will play a crucial role in constructing spanning sets and generating sets.

We introduce an ideal $I_n^{(a)}\subseteq\CC[\xxx_{n\times n}]$ generated by
\begin{itemize}
    \item all sums $x_{i,1} + \dots + x_{i,n}$ of variables in a single row,
    \item all sums $x_{1,j} + \dots + x_{n,j}$ of variables in a single column,
    \item all products $x_{i,j} \cdot x_{i,j'}$ for $1\le i\le n,1\le j,j^\prime\le n$ of variables in a single row,
    \item all products $x_{i,j} \cdot x_{i',j}$ for $1\le i,i^\prime\le n,1\le j\le n$ of variables in a single column,
    \item all diagonally symmetric differences $x_{i,j} - x_{j,i}$ of variables,
    \item the diagonal sum $\sum_{i=1}^n x_{i,i}$,
    \item all products $\prod_{i\in S} x_{i,i}$ for $S\subseteq [n]$ with $\lvert S\rvert =a+1$, and
    \item all monomials $\mmm(w)$ for $w\in\bigsqcup_{a^\prime <a}\MMM_{n,a^\prime}$.
\end{itemize}
Our main goal is to show that $\gr\II(\MMM_{n,a}) = I_n^{(a)}$, but we can only know the containment immediately from \cite[Lemmas 5.5 and 5.9]{MR4887799} at first:
\begin{lemma}\label{lem:involution-containment}
    We have $I_n^{(a)}\subseteq \gr\II(\MMM_{n,a})$.
\end{lemma}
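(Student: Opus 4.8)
The plan is to verify that each of the eight listed families of generators of $I_n^{(a)}$ already belongs to $\gr\II(\MMM_{n,a})$; since $\gr\II(\MMM_{n,a})$ is an ideal, that suffices. The tool throughout is the standard observation that if $g\in\II(\MMM_{n,a})$ then its top-degree homogeneous component $\tau(g)$ lies in $\gr\II(\MMM_{n,a})$; in particular, any homogeneous polynomial vanishing identically on $\MMM_{n,a}$, being equal to its own $\tau$, lies in $\gr\II(\MMM_{n,a})$. This mirrors the strategy used for the rook-placement loci in Lemma~\ref{lem:ideal-contain}.

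First I would dispose of the generators that only record that a permutation matrix sits inside $\Mat_{n\times n}(\CC)$. Each $w\in\MMM_{n,a}$, regarded as a $0/1$ matrix, has exactly one $1$ in every row and column, so $\sum_{j=1}^n x_{i,j}-1$ and $\sum_{i=1}^n x_{i,j}-1$ lie in $\II(\MMM_{n,a})$, handling the row and column sums; for $j\neq j'$ the product $x_{i,j}x_{i,j'}$ vanishes on $\MMM_{n,a}$ and is homogeneous, and likewise $x_{i,j}x_{i',j}$ for $i\neq i'$; and $x_{i,j}^2-x_{i,j}\in\II(\MMM_{n,a})$ since the entries are idempotent, whence $x_{i,j}^2\in\gr\II(\MMM_{n,a})$. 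Because an involution matrix is symmetric, $w(i)=j\iff w(j)=i$, we get $x_{i,j}-x_{j,i}\in\II(\MMM_{n,a})$, so the diagonally symmetric differences are covered.

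Next I would handle the three families that actually involve the parameter $a$. Every $w\in\MMM_{n,a}$ has exactly $a$ fixed points, so $\sum_{i=1}^n x_{i,i}-a\in\II(\MMM_{n,a})$, giving the diagonal sum. For $S\subseteq[n]$ with $|S|=a+1$, no involution with exactly $a$ fixed points can fix all of $S$, so $\prod_{i\in S}x_{i,i}$ vanishes on $\MMM_{n,a}$ and, being homogeneous, lies in $\gr\II(\MMM_{n,a})$. Finally, let $w'\in\MMM_{n,a'}$ with $a'<a$ and $2$-cycles $(a_1 b_1),\dots,(a_k b_k)$ where $a_\ell<b_\ell$, so $\mmm(w')=\prod_{\ell=1}^k x_{a_\ell,b_\ell}$. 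Evaluating at any $w\in\MMM_{n,a}$ gives $\prod_{\ell=1}^k[w(a_\ell)=b_\ell]$, which equals $1$ only when every $2$-cycle of $w'$ is also a $2$-cycle of $w$; but then $w$ has at most $a'$ fixed points, contradicting $a'<a$. Hence $\mmm(w')\in\II(\MMM_{n,a})$, so $\mmm(w')\in\gr\II(\MMM_{n,a})$.

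None of these steps presents a genuine obstacle — each generator is dispatched by a single evaluation on $\MMM_{n,a}$ — and the only point needing a moment's care is the last one, where one must translate ``$\mmm(w')$ does not vanish at $w$'' into the containment of the $2$-cycle set of $w'$ in that of $w$ and then compare fixed-point counts. (The converse containment $\gr\II(\MMM_{n,a})\subseteq I_n^{(a)}$ is the substantive direction and is not asserted here; it will follow in Proposition~\ref{prop:involution-ideal} from a dimension count against Theorem~\ref{thm:involution-grad-str}.)
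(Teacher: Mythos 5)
Your proof is correct: each generator of $I_n^{(a)}$ is obtained as the top-degree component of an explicit element of $\II(\MMM_{n,a})$ (or is itself homogeneous and vanishes on the locus), which is exactly the generator-by-generator strategy the paper uses for the analogous Lemma~\ref{lem:ideal-contain}. The paper itself does not write out this verification and instead cites \cite[Lemmas 5.5 and 5.9]{MR4887799}, so your argument is simply a self-contained version of the same routine check; the one point deserving care, the vanishing of $\mmm(w')$ for $w'$ with fewer than $a$ fixed points via the fixed-point count, is handled correctly.
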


Now we give two results regarding spanning sets. The first spanning-set result is immediate from \cite[Theorem 3.3, Proposition 3.4]{MR4887799} and the fact that $\mmm(w)\in I_n^{(a)}$ for all $w\in\bigsqcup_{b<a}\MMM_{n,b}$.
\begin{lemma}\label{lem:involution-grad-span}
    The family of monomials
    \[\bigsqcup_{b\ge a}\{\mmm(w)\,:\,w\in\MMM_{n,b}\}\]
    descends to a spanning set of $\CC[\xxx_{n\times n}]/I_n^{(a)}$. In particular, we have $(\CC[\xxx_{n\times n}]/I_n^{(a)})_d =\{0\}$ for $d>\frac{n-a}{2}$.
\end{lemma}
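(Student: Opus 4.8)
The plan is to obtain this from the analogous spanning statement for the full involution locus proved in \cite{MR4887799}, in the same way Lemma~\ref{lem:span-pre} was obtained for the rook placement loci. Write $I_n'\subseteq I_n^{(a)}$ for the subideal generated by the six ``generic'' families of generators of $I_n^{(a)}$ — the single-row and single-column sums, the single-row and single-column products $x_{i,j}x_{i,j'}$ and $x_{i,j}x_{i',j}$, the diagonally symmetric differences $x_{i,j}-x_{j,i}$, and the diagonal sum $\sum_{i=1}^n x_{i,i}$ — that is, all generators of $I_n^{(a)}$ except the products $\prod_{i\in S}x_{i,i}$ with $\lvert S\rvert=a+1$ and the monomials $\mmm(w)$ with $w$ having fewer than $a$ fixed points. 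By \cite[Theorem 3.3 and Proposition 3.4]{MR4887799}, the quotient $\CC[\xxx_{n\times n}]/I_n'$ is spanned by the monomials $\mmm(w)$ as $w$ ranges over all involutions of $[n]$, i.e.\ over $\bigsqcup_{b\ge 0}\MMM_{n,b}$; heuristically, the symmetry differences let one keep only variables $x_{i,j}$ with $i\le j$, the row and column products force at most one such variable in each row and each column, and the row sums express every diagonal variable $x_{i,i}$ as a combination of off-diagonal ones, so that every monomial reduces to some $\mmm(w)$.

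Since $I_n'\subseteq I_n^{(a)}$, the graded surjection $\CC[\xxx_{n\times n}]/I_n'\twoheadrightarrow\CC[\xxx_{n\times n}]/I_n^{(a)}$ shows that $\CC[\xxx_{n\times n}]/I_n^{(a)}$ is spanned by the images of $\bigsqcup_{b\ge 0}\{\mmm(w)\,:\,w\in\MMM_{n,b}\}$. The remaining generators of $I_n^{(a)}$ include, by construction, every monomial $\mmm(w)$ with $w\in\bigsqcup_{b<a}\MMM_{n,b}$, so all of those die in the quotient. Hence $\bigsqcup_{b\ge a}\{\mmm(w)\,:\,w\in\MMM_{n,b}\}$ already descends to a spanning set of $\CC[\xxx_{n\times n}]/I_n^{(a)}$, which is the first assertion.

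For the vanishing of the high graded pieces, recall that an involution $w\in\MMM_{n,b}$ has exactly $(n-b)/2$ two-cycles, so $\deg\mmm(w)=(n-b)/2$. When $b\ge a$ this is at most $(n-a)/2$, so every monomial in the spanning set just produced has degree $\le(n-a)/2$; therefore $(\CC[\xxx_{n\times n}]/I_n^{(a)})_d$ is spanned by the empty set, hence equals $\{0\}$, whenever $d>\frac{n-a}{2}$.

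I expect no genuine difficulty here: the substantive input is \cite[Theorem 3.3 and Proposition 3.4]{MR4887799}, and what remains is a routine quotient argument mirroring the proof of Lemma~\ref{lem:span-pre}. The only point deserving care is to line up the six generic generator families of $I_n^{(a)}$ with the defining data used in \cite{MR4887799} — in particular, to confirm that \cite[Proposition 3.4]{MR4887799} really yields the full family indexed by all involutions of $[n]$ (with $\mmm(\mathrm{id})=1$ among them, so the constants are covered) — after which the reduction to $\bigsqcup_{b\ge a}\MMM_{n,b}$ and the degree count are immediate.
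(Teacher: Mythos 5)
Your proposal is correct and follows essentially the same route as the paper, which likewise deduces the spanning set directly from \cite[Theorem 3.3, Proposition 3.4]{MR4887799} together with the observation that $\mmm(w)\in I_n^{(a)}$ for all $w\in\bigsqcup_{b<a}\MMM_{n,b}$, and then reads off the vanishing of $(\CC[\xxx_{n\times n}]/I_n^{(a)})_d$ for $d>\frac{n-a}{2}$ from the degree count $\deg\mmm(w)=(n-b)/2$.
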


The second spanning-set result is \cite[Lemma 5.10]{MR4887799}:
\begin{lemma}\label{lem:involution-ungrad-span}
    For any $0\le k\le d\le\lfloor n/2\rfloor$, the vector space $\CC[\xxx_{n\times n}]_{\le k}/(\II(\MMM_{n,n-2d})\cap\CC[\xxx_{n\times n}]_{\le k})$ is spanned by $\{\mmm(w)\,:\,w\in\MMM_{n,n-2k}\}$.
\end{lemma}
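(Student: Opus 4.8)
The plan is to transcribe the argument of Lemmas~\ref{lem:linear-relation-ungraded} and \ref{lem:spanning-ungraded} to the involution setting. First I would fix $d$ with $k\le d\le\lfloor n/2\rfloor$ and identify $\CC[\xxx_{n\times n}]/\II(\MMM_{n,n-2d})$ with $\CC[\MMM_{n,n-2d}]$ via Lagrange interpolation. Under this identification the class of $\mmm(w)$ becomes the function $\mathbf 1_w$ sending an involution $w'$ to $1$ exactly when every transposition of $w$ is also a transposition of $w'$; write $w\preceq w'$ for this containment of transposition-sets and let $\precdot$ be the associated covering relation (adjoining one transposition by pairing up two fixed points). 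Since no element of $\MMM_{n,n-2d}$ properly dominates $u\in\MMM_{n,n-2d}$, the function $\mathbf 1_u$ is the point indicator $\delta_u$; hence for every involution $w$ with at most $d$ transpositions,
\[
\mmm(w)+\II(\MMM_{n,n-2d}) \;=\; \mathbf 1_w \;=\; \sum_{\substack{w\preceq u\\ u\in\MMM_{n,n-2d}}}\mmm(u)+\II(\MMM_{n,n-2d}).
\]

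Next I would establish the linear relation: for $w\in\MMM_{n,n-2k'}$ with $k'<d$,
\[
\mmm(w)\;\equiv\;\frac{1}{d-k'}\sum_{w\precdot w'}\mmm(w')\pmod{\II(\MMM_{n,n-2d})}.
\]
This is precisely the computation of Lemma~\ref{lem:linear-relation-ungraded}: expanding each $\mmm(w')$ on the right by the displayed identity and interchanging the sums, the coefficient of $\mmm(u)$ for a given $u\in\MMM_{n,n-2d}$ with $w\preceq u$ is the number of $w'$ with $w\precdot w'\preceq u$, i.e. the number $d-k'$ of transpositions of $u$ not already present in $w$; all other $\mmm(u)$ do not occur.

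With this relation in hand the spanning statement runs in two stages, mirroring Lemma~\ref{lem:spanning-ungraded}. Since $\II(\MMM_{n,n-2d})$ contains every $x_{i,j}x_{i,j'}$, every $x_{i,j}x_{i',j}$, every symmetry difference $x_{i,j}-x_{j,i}$, and every row sum $\sum_{j}x_{i,j}-1$, any polynomial of degree $\le k$ is congruent modulo $\II(\MMM_{n,n-2d})$ to a linear combination of monomials $\mmm(w)$ with $w$ an involution having at most $k$ transpositions: use the row-sum relations to substitute out every diagonal variable $x_{i,i}$ (the square relations force $x_{i,i}^2\equiv x_{i,i}$, so this substitution terminates and does not raise degree), the symmetry relations to orient each remaining off-diagonal factor as $x_{i,j}$ with $i<j$, and the square relations to annihilate any monomial whose oriented factors do not form a matching. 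Then, starting from the smallest number $k'<k$ of transpositions occurring, repeatedly apply the linear relation to rewrite each $\mmm(w)$ with $w\in\MMM_{n,n-2k'}$ as a combination of $\mmm(w')$ with $w'$ having $k'+1\le k$ transpositions; iterating up to $k$ transpositions, and observing that every monomial produced has degree $\le k$ and so lies in $\CC[\xxx_{n\times n}]_{\le k}$, we conclude that $\{\mmm(w)\,:\,w\in\MMM_{n,n-2k}\}$ spans $\CC[\xxx_{n\times n}]_{\le k}/(\II(\MMM_{n,n-2d})\cap\CC[\xxx_{n\times n}]_{\le k})$.

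The main obstacle is pinning down the combinatorial coefficient $d-k'$ in the linear relation, which rests on reading $\mmm(w)$ correctly as a transposition-containment indicator under Lagrange interpolation and on the M\"obius-type double count identical in spirit to Lemma~\ref{lem:linear-relation-ungraded}. The only genuinely new wrinkle compared with the rook-placement case is the presence of the diagonal variables $x_{i,i}$, which have no analogue for $\ZZZ_{n,m,r}$ and must be eliminated via the row-sum relations while keeping the degree under control.
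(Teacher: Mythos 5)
Your proof is correct, but it is worth noting that the paper does not prove this lemma at all: it is quoted verbatim as \cite[Lemma 5.10]{MR4887799}, so there is no in-paper argument to compare against. What you have done is supply a self-contained proof by transplanting the rook-placement machinery of Lemmas~\ref{lem:linear-relation-ungraded} and \ref{lem:spanning-ungraded} to the involution poset, and the transplant works: the identification of $\mmm(w)+\II(\MMM_{n,n-2d})$ with the order-ideal indicator $\mathbf 1_w$, the M\"obius-type double count giving the coefficient $d-k'$ (which is positive since $k'<k\le d$), and the two-stage reduction (first to matchings of size $\le k$, then up to size exactly $k$) are all sound, and every rewriting step stays inside $\CC[\xxx_{n\times n}]_{\le k}$ as required for a congruence modulo $\II(\MMM_{n,n-2d})\cap\CC[\xxx_{n\times n}]_{\le k}$. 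Two small imprecisions, neither a gap: the vanishing ideal contains $x_{i,j}x_{i,j'}$ only for $j\neq j'$ (the diagonal case is the idempotent relation $x_{i,j}^2-x_{i,j}$, which you do invoke separately), and ``annihilate any monomial whose oriented factors do not form a matching'' should also cover the case of a repeated factor $x_{i,j}^2$, which is not annihilated but reduced in degree and then re-processed; your iteration handles this. You correctly identify the one genuinely new feature relative to $\ZZZ_{n,m,r}$, namely the elimination of the diagonal variables via the row-sum relations without raising degree.
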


We can assign upper bounds to $\lambda_1$ for $V^\lambda$ appearing in $\CC[\xxx_{n\times n}]/I_n^{(a)}$ using \cite[Lemma 5.7]{MR4887799} and Lemma~\ref{lem:ann-length}, which immediately yields:
\begin{lemma}\label{lem:involution-length}
    For $0\le d\le\frac{n-a}{2}$, each $\symm_n$-irreducible $V^\lambda$ appearing in $(\CC[\xxx_{n\times n}]/I_n^{(a)})_d$ satisfies $\lambda_1\le n-2d+a$.
\end{lemma}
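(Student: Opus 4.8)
The plan is to reduce everything to an application of Lemma~\ref{lem:ann-length}. For a fixed $d$ with $0\le d\le\frac{n-a}{2}$, set $L\coloneqq n-2d+a$; recall that $(\CC[\xxx_{n\times n}]/I_n^{(a)})_d$ is a graded piece of the $\symm_n$-module $\CC[\xxx_{n\times n}]/I_n^{(a)}$. To say that every $\symm_n$-irreducible $V^\lambda$ occurring in it has $\lambda_1\le L$ is, by Lemma~\ref{lem:ann-length} applied to each irreducible constituent, the same as saying that the operator $\eta_{L+1}=\sum_{w\in\symm_{L+1}}w$ (with $\symm_{L+1}$ the subgroup of $\symm_n$ permuting $\{1,\dots,L+1\}$) annihilates $(\CC[\xxx_{n\times n}]/I_n^{(a)})_d$. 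So the goal becomes: show $\eta_{L+1}$ kills this space.

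First I would dispose of the degenerate range. If $L+1>n$, i.e.\ $a\ge 2d$, then $\eta_{L+1}$ is not even defined inside $\symm_n$, but in that case every $\lambda\vdash n$ automatically satisfies $\lambda_1\le n\le L$, so there is nothing to prove. Hence I may assume $L+1\le n$. Next, by Lemma~\ref{lem:involution-grad-span} the space $(\CC[\xxx_{n\times n}]/I_n^{(a)})_d$ is spanned by the images of the monomials $\mmm(w)$ with $w\in\MMM_{n,b}$, $b\ge a$; since $\deg\mmm(w)$ equals the number of transpositions of $w$, only the involutions $w\in\MMM_{n,n-2d}$ contribute in degree $d$. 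As $\eta_{L+1}$ is a linear operator, it suffices to check $\eta_{L+1}\cdot\mmm(w)\in I_n^{(a)}$ for each $w\in\MMM_{n,n-2d}$, and this is exactly the content of \cite[Lemma 5.7]{MR4887799} (which, as applied there, controls the action of such symmetrizers on $\mmm(w)$ in terms of the number $n-2d$ of fixed points of $w$ and the parameter $a$).

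I do not expect a genuine obstacle here: the argument is a packaging of \cite[Lemma 5.7]{MR4887799} and Lemma~\ref{lem:ann-length}. The one point requiring care is index bookkeeping — matching the bound $L=n-2d+a$ appearing in \cite[Lemma 5.7]{MR4887799} (phrased there in terms of the number of fixed points of $w$) with the statement here, and separately handling the trivial range $a\ge 2d$ where $L\ge n$. For completeness one should also note that the spanning set $\{\mmm(w):w\in\MMM_{n,n-2d}\}$, while not literally $\symm_n$-stable on the nose, becomes so modulo $I_n^{(a)}$ via the diagonal symmetry relations $x_{i,j}-x_{j,i}\in I_n^{(a)}$; but this is irrelevant to the present argument, since killing a spanning set of a vector space by a linear operator kills the whole space.
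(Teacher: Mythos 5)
Your proposal is correct and follows essentially the same route as the paper: the paper likewise obtains the bound by combining Lemma~\ref{lem:ann-length} with the symmetrizer computation \cite[Lemma 5.7]{MR4887799} applied to the spanning monomials $\mmm(w)$, $w\in\MMM_{n,n-2d}$. Your extra bookkeeping (the degenerate range $a\ge 2d$ and the remark on spanning sets versus stability) is sound but not needed beyond what the paper's one-line argument already asserts.
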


For convenience, we define a partial order $\prec$ on the set $\MMM_n\coloneqq\{w\in\symm_n\,:\,w^2=1\}$ of involutions in $\symm_n$. For $u,v\in\MMM_n$, we write $u\prec v$ if all the $2$-cycles in the cycle decomposition of $u$ also appear in the cycle decomposition of $v$. Therefore, $\prec$ is generated by the covering $\precdot$ on $\MMM_n$ where $u\precdot v$ if and only if the cycle decomposition of $v$ arises from adding a $2$-cycle to the cycle decomposition of $u$. We need the last technique result to show the ideal structure in Proposition~\ref{prop:involution-ideal}.
\begin{lemma}\label{lem:involution-ideal-pre}
    For $0\le d\le \lfloor n/2\rfloor$, we have
    \[\Frob((\CC[\xxx_{n\times n}]/I_n^{(n-2d)})_{d}) \le \Frob(R(\MMM_{n,n-2d})_d).\]
\end{lemma}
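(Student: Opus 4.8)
The plan is to mimic the strategy used for the rook placement loci in Subsection~\ref{subsec:application-of-module-str}, in particular the argument of Lemma~\ref{lem:inverse-surj-endpoint}. The goal is to construct an explicit $\symm_n$-module surjection
\[
    R(\MMM_{n,n-2d})_d \twoheadrightarrow (\CC[\xxx_{n\times n}]/I_n^{(n-2d)})_d,
\]
which is enough since a surjection of finite-dimensional modules forces $\Frob$ of the target to be $\le\Frob$ of the source. By the orbit harmonics graded isomorphism~\eqref{eq:orbit-harmonics-graded-isom} together with the identification $\CC[\xxx_{n\times n}]/\II(\MMM_{n,n-2d})\cong\CC[\MMM_{n,n-2d}]$, we have
\[
    R(\MMM_{n,n-2d})_d \cong L_{d,d}/L_{d,d-1},
\]
where I write $L_{d,k}\coloneqq\CC\cdot\{\mathbf{1}_w\,:\,w\in\MMM_{n,n-2k}\}\subseteq\CC[\MMM_{n,n-2d}]$ for $0\le k\le d$, with $\mathbf{1}_w(w')=1$ if $w\prec w'$ and $0$ otherwise; the identification of $\CC[\xxx_{n\times n}]_{\le k}/(\II(\MMM_{n,n-2d})\cap\CC[\xxx_{n\times n}]_{\le k})$ with $L_{d,k}$ uses the spanning-set Lemma~\ref{lem:involution-ungrad-span}. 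The $\symm_n$-equivariance of everything is automatic since the partial order $\prec$ and the monomials $\mmm(w)$ are permuted compatibly by conjugation.

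The core of the argument is then to build an $\symm_n$-equivariant map
\[
    \xi_d\,:\,L_{d,d}\longrightarrow\CC[\xxx_{n\times n}]_d,\qquad \mathbf{1}_w\longmapsto\mmm(w)\quad(w\in\MMM_{n,n-2d}),
\]
and to check that $\xi_d(L_{d,d-1})\subseteq I_n^{(n-2d)}$, so that $\xi_d$ descends to a map $\tau_d\,:\,L_{d,d}/L_{d,d-1}\to(\CC[\xxx_{n\times n}]/I_n^{(n-2d)})_d$; surjectivity of $\tau_d$ is then immediate from the spanning-set Lemma~\ref{lem:involution-grad-span}. To verify the containment, take $\tilde w\in\MMM_{n,n-2d+2}$ (an involution with $n-2(d-1)$ fixed points), expand $\mathbf{1}_{\tilde w}=\sum_{\tilde w\prec w,\ w\in\MMM_{n,n-2d}}\mathbf{1}_w$, and compute
\[
    \xi_d(\mathbf{1}_{\tilde w})=\sum_{\tilde w\prec w}\mmm(w)=\mmm(\tilde w)\cdot\!\!\sum_{\{i<j\}\subseteq F(\tilde w)}\!\! x_{i,j},
\]
where $F(\tilde w)$ is the set of fixed points of $\tilde w$ and we use that the $2$-cycles of $w$ extending $\tilde w$ correspond to pairs of fixed points of $\tilde w$. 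Now I would argue modulo $I_n^{(n-2d)}$: using the relations $x_{i,j}-x_{j,i}\in I_n^{(n-2d)}$, the row-product and column-product relations $x_{i,j}x_{i,j'},x_{i,j}x_{i',j}\in I_n^{(n-2d)}$, and the observation that $\mmm(\tilde w)$ already ``uses up'' the non-fixed rows and columns of $\tilde w$, one rewrites $\mmm(\tilde w)\cdot\sum_{\{i<j\}\subseteq F(\tilde w)}x_{i,j}$ as (a scalar multiple of) $\mmm(\tilde w)$ times the full row sum or column sum over the remaining indices, hence $\equiv 0$ because all row and column sums lie in $I_n^{(n-2d)}$; the diagonal terms $x_{i,i}$ that appear when symmetrizing are absorbed using the diagonal-sum relation $\sum_i x_{i,i}\in I_n^{(n-2d)}$ and the relations $x_{i,i}x_{i,j}\in I_n^{(n-2d)}$.

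The step I expect to be the main obstacle is precisely this last modular computation: unlike the rook case, where the monomial $\mmm(\RRR)$ simply multiplied by the sum of \emph{all} variables, here $\xi_d(\mathbf{1}_{\tilde w})$ is $\mmm(\tilde w)$ times a sum over pairs of \emph{fixed points only}, and one must carefully convert this partial sum into a full row/column sum (or diagonal sum) modulo $I_n^{(n-2d)}$ while keeping track of the symmetry relation $x_{i,j}=x_{j,i}$ and the diagonal relations. Concretely, the identity $\mmm(\tilde w)\cdot\sum_{i<j,\ i,j\in F(\tilde w)}x_{i,j}\equiv 0$ should follow by noting that for each fixed point $i\in F(\tilde w)$, multiplying $\mmm(\tilde w)$ by the $i$-th row sum $\sum_{k}x_{i,k}$ kills all terms $x_{i,k}$ with $k\notin F(\tilde w)$ (they share a row or column with some factor of $\mmm(\tilde w)$, or they duplicate, up to the symmetry relation) and all diagonal terms $x_{i,i}$ (via $x_{i,i}\cdot x_{i,i}$-type or diagonal-sum reductions), leaving only $\sum_{j\in F(\tilde w),\,j\ne i}x_{i,j}$; summing over $i\in F(\tilde w)$, dividing by $2$, and using $x_{i,j}\equiv x_{j,i}$ gives the claim. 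Once this computation is in hand, combining $\tau_d$ with the isomorphism $R(\MMM_{n,n-2d})_d\cong L_{d,d}/L_{d,d-1}$ and the spanning-set surjectivity finishes the proof.
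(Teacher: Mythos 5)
Your proposal is correct and follows essentially the same route as the paper: identify $R(\MMM_{n,n-2d})_d$ with $L_{d,d}/L_{d,d-1}$ via the spanning set of Lemma~\ref{lem:involution-ungrad-span}, send $\mathbf{1}_w\mapsto\mmm(w)$, and verify that the image of $L_{d,d-1}$ lands in $I_n^{(n-2d)}$ by converting $\mmm(\tilde w)\cdot\sum_{\{i<j\}\subseteq F(\tilde w)}x_{i,j}$ into a multiple of $\mmm(\tilde w)$ times the full sum $\sum_{i,j}x_{i,j}$ using the row/column product, symmetry, and diagonal-sum relations. The modular computation you flag as the main obstacle is exactly the chain of congruences carried out in the paper, so no gap remains.
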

\begin{proof}
    For $w\in\MMM_n$, we define a function $\mathbf{1}_w\in\CC[\MMM_{n,n-2d}]$ by
    \[\mathbf{1}_w(u)\coloneqq\begin{cases}
        1, &\text{if $w\prec u$} \\
        0, &\text{otherwise}
    \end{cases}\]
    for all $u\in\MMM_{n,n-2d}$. For $0\le k\le d$, we write
    \[W_{d,k}\coloneqq \CC\cdot\{\mathbf{1}_w\,:\,w\in\MMM_{n,n-2k}\}\subseteq\CC[\MMM_{n,n-2d}].\]
    Consider the identification of $\symm_n$-modules
    \[\CC[\xxx_{n\times n}]/\II(\MMM_{n,n-2d})\cong \CC[\MMM_{n,n-2d}].\]
    According to Lemma~\ref{lem:involution-ungrad-span}, this identification identifies the submodule $\CC[\xxx_{n\times n}]_{\le k}/(\II(\MMM_{n,n-2d})\cap\CC[\xxx_{n\times n}]_{\le k})\subseteq\CC[\xxx_{n\times n}]/\II(\MMM_{n,n-2d})$ with the submodule
    $W_{d,k}\subseteq\CC[\MMM_{n,n-2d}]$ for $0\le k\le d$. Therefore, we have the module identification
    \[R(\MMM_{n,n-2d})_d\cong\frac{\CC[\xxx_{n\times n}]_{\le d}/(\II(\MMM_{n,n-2d})\cap\CC[\xxx_{n\times n}]_{\le d})}{\CC[\xxx_{n\times n}]_{\le d-1}/(\II(\MMM_{n,n-2d})\cap\CC[\xxx_{n\times n}]_{\le d-1})}\cong W_{d,d}/W_{d,d-1}.\]
    To show Lemma~\ref{lem:involution-ideal-pre}, it suffices to construct a module surjection
    \begin{align}\label{eq:involution-surj}
        \sigma_{d}\,:\, W_{d,d}/W_{d,d-1} \twoheadrightarrow (\CC[\xxx_{n\times n}]/I_n^{(n-2d)})_d.
    \end{align}

    We begin by construct a module homomorphism $W_{d,d}\rightarrow \CC[\xxx_{n\times n}]_d$. As $W_{d,d}=\CC[\MMM_{n,n-2d}]$ has a basis $\{\mathbf{1}_w\,:\,w\in\MMM_{n,n-2d}\}$ closed under the $\symm_n$-action, we define the $\symm_n$-module homomorphism
    \begin{align*}
        \zeta_d\,:\,W_{d,d}&\longrightarrow\CC[\xxx_{n\times n}]_d \\
        \mathbf{1}_w&\longmapsto\mmm(w).
    \end{align*}
    Note that for all $u\in\MMM_{n,n-2d+2}$ we have $\mathbf{1}_u = \sum_{u\precdot v}\mathbf{1}_v$ and thus
    \begin{align*}
        &\zeta_d(\mathbf{1}_u) = \sum_{u\precdot v}\zeta(\mathbf{1}_v) = \sum_{u\precdot v}\mmm(v)=\sum_{\substack{i<j \\ u(i)=i \\ u(j)=j}}\mmm(u)\cdot x_{i,j} =\mmm(u)\cdot\sum_{\substack{i<j \\ u(i)=i \\ u(j)=j}}x_{i,j}
        \equiv  \mmm(u)\cdot\sum_{i<j}x_{i,j} \\ \equiv & \frac{1}{2}\cdot\mmm(u)\cdot\sum_{\substack{i,j=1 \\ i\neq j}}^n x_{i,j} \equiv \frac{1}{2}\cdot\mmm(u)\cdot\sum_{i,j=1}^n x_{i,j} \equiv 0 \mod{I_n^{(n-2d)}}
    \end{align*}
    where the first $\equiv$ arises from the fact that $x_{i,j}\cdot x_{i^\prime,j^\prime}\in I_n^{(n-2d)}$ whenever $\{i,j\}\cap\{i^\prime,j^\prime\}\neq\varnothing$, the second $\equiv$ uses the fact that $x_{i,j}-x_{j,i}\in I_n^{(n-2d)}$, the third $\equiv$ is derived from the fact that $\sum_{i=1}^n x_{i,i}\in I_n^{(n-2d)}$, and the fourth $\equiv$ is deduced from the fact that all the row sums and column sums belong to $I_n^{(n-2d)}$. As a result, we know $\zeta_d(W_{d,d-1})\subseteq I_n^{(n-2d)}$, so $\zeta_d$ descends to the $\symm_n$-module homomorphism
    \[\sigma_{d}\,:\, W_{d,d}/W_{d,d-1} \twoheadrightarrow (\CC[\xxx_{n\times n}]/I_n^{(n-2d)})_d\]
    which is surjective by Lemma~\ref{lem:involution-grad-span}. Now we have finished the surjection~\eqref{eq:involution-surj}, completing the proof.
\end{proof}

Finally, we are ready to give a concise generating set of $\gr\II(\MMM_{n,a})$.
\begin{proposition}\label{prop:involution-ideal}
    We have $\gr\II(\MMM_{n,a}) = I_n^{(a)}$.
\end{proposition}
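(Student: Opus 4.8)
## Proof proposal for Proposition~\ref{prop:involution-ideal}

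The strategy mirrors the proof of Proposition~\ref{prop:ideal-equal} almost verbatim, with the rook-placement results replaced by their involution-matrix counterparts recalled above. By Lemma~\ref{lem:involution-containment} we already have $I_n^{(a)} \subseteq \gr\II(\MMM_{n,a})$, so there is a graded $\symm_n$-module surjection
\[
    \CC[\xxx_{n\times n}]/I_n^{(a)} \twoheadrightarrow R(\MMM_{n,a}).
\]
It therefore suffices to show the reverse inequality of Frobenius images in each degree, i.e.
\begin{align}\label{eq:involution-goal}
    \Frob\big((\CC[\xxx_{n\times n}]/I_n^{(a)})_d\big) \le \Frob\big(R(\MMM_{n,a})_d\big)
\end{align}
for all $d \ge 0$, since a surjection of finite-dimensional modules whose source and target have equal Frobenius image must be an isomorphism (degree by degree). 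Write $a = n - 2D$ where $D = (n-a)/2$, so that the relevant degrees are $0 \le d \le D$ by Lemma~\ref{lem:involution-grad-span} (and both sides of \eqref{eq:involution-goal} vanish for $d > D$, settling that case immediately).

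For $0 \le d \le D$ the plan is to run the same two-step squeeze as in Proposition~\ref{prop:ideal-equal}. First, comparing the generating sets one sees from the definition of $I_n^{(a)}$ that $(I_n^{(n-2d)})_d \subseteq (I_n^{(a)})_d$ — indeed the only generators that change with the parameter are the monomials $\prod_{i\in S}x_{i,i}$ (with $|S| = a+1$ versus $|S| = n-2d+1 \le a+1$, the larger family corresponding to the smaller number of fixed points) and the monomials $\mmm(w)$ for $w$ with too few fixed points. Hence
\[
    \Frob\big((\CC[\xxx_{n\times n}]/I_n^{(a)})_d\big) \le \Frob\big((\CC[\xxx_{n\times n}]/I_n^{(n-2d)})_d\big) \le \Frob\big(R(\MMM_{n,n-2d})_d\big),
\]
the second inequality being exactly Lemma~\ref{lem:involution-ideal-pre}. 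Second, apply Lemma~\ref{lem:involution-length} to the left-hand module: every $\symm_n$-irreducible $V^\lambda$ occurring in $(\CC[\xxx_{n\times n}]/I_n^{(a)})_d$ has $\lambda_1 \le n - 2d + a$, so
\[
    \Frob\big((\CC[\xxx_{n\times n}]/I_n^{(a)})_d\big) \le \big\{\Frob\big(R(\MMM_{n,n-2d})_d\big)\big\}_{\lambda_1 \le n-2d+a}.
\]
To close the argument I need the identity
\begin{align}\label{eq:involution-degree-d-char}
    \big\{\Frob\big(R(\MMM_{n,n-2d})_d\big)\big\}_{\lambda_1 \le n-2d+a} = \Frob\big(R(\MMM_{n,a})_d\big),
\end{align}
which is the exact analogue of Lemma~\ref{lem:induction-tool-of-module-str} in the involution setting. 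This is the one ingredient not literally stated in the excerpt, and it is where I expect the real work to lie: either it follows by reading off Theorem~\ref{thm:involution-grad-str} directly — the degree-$d$ part of $\grFrob(R(\MMM_{n,a});q)$ is $\{h_d[h_2]\,h_{n-2d} - h_{d-1}[h_2]\,h_{n-2d+2}\}_{\lambda_1 \le n-2d+a}$, and truncating the $a = n-2d$ case's degree-$d$ part (which has the bound $\lambda_1 \le n-4d+2d = n-2d$, wait — one must check $n-2d+a \le n-2d+(n-2d) $ so the outer truncation is the binding one) — or, if one wishes to avoid invoking Theorem~\ref{thm:involution-grad-str}, one reproves it by the same $P_r/Q_r$ diagonal-summation trick used for Lemma~\ref{lem:eq-according-to-parity}, using the chain of surjections $R(\MMM_{n,n-2d})_k \twoheadrightarrow R(\MMM_{n,n-2d+2})_k \twoheadrightarrow \cdots$ coming from multiplication by row/column sums and a Pieri-type identity for $h_d[h_2]\cdot h_{n-2d}$. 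I would take the first route: Theorem~\ref{thm:involution-grad-str} is quoted in the excerpt, and \eqref{eq:involution-degree-d-char} then reduces to the numerical check that $n - 2d + a \le n - 2d + (n-2d)$, i.e. $a \le n - 2d$, which holds since $d \le D = (n-a)/2$, together with the observation that nested truncations $\{\{-\}_{\lambda_1 \le A}\}_{\lambda_1 \le B} = \{-\}_{\lambda_1 \le \min(A,B)}$. Combining the displayed inequalities with \eqref{eq:involution-degree-d-char} gives \eqref{eq:involution-goal}, forcing the surjection to be an isomorphism and hence $\gr\II(\MMM_{n,a}) = I_n^{(a)}$.

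The main obstacle, then, is purely bookkeeping: confirming that the degree-$d$ truncated character of $R(\MMM_{n,n-2d})$ at bound $n-2d+a$ genuinely coincides with the degree-$d$ character of $R(\MMM_{n,a})$ as given by Theorem~\ref{thm:involution-grad-str}, paying attention to which of the two $\lambda_1$-bounds is active. I do not anticipate any difficulty transporting Lemmas~\ref{lem:involution-containment}–\ref{lem:involution-ideal-pre} into this argument, since they are stated in precisely the form needed; the structure is identical to the rook case and no new idea beyond the "force inequalities to be equalities" principle is required.
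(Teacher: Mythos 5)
Your proposal is correct and follows essentially the same route as the paper's proof: the containment from Lemma~\ref{lem:involution-containment} gives the surjection, the chain $(I_n^{(n-2d)})_d\subseteq(I_n^{(a)})_d$ together with Lemma~\ref{lem:involution-ideal-pre} and Lemma~\ref{lem:involution-length} gives the reverse Frobenius inequality, and your identity \eqref{eq:involution-degree-d-char} is exactly the paper's final step of reading off Theorem~\ref{thm:involution-grad-str} after checking that $a\le n-2d$ makes the outer truncation the binding one. No gap.
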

\begin{proof}
    By Lemma~\ref{lem:involution-containment}, we have an $\symm_n$-module surjection
    \[\CC[\xxx_{n\times n}]/I_n^{(a)}\twoheadrightarrow R(\MMM_{n,a}),\]
    so it suffices to force this surjection to be an isomorphism. Consequently, we only need to show
    \begin{align}\label{ineq:involution-frob}
        \Frob((\CC[\xxx_{n\times n}]/I_n^{(a)})_d) \le \Frob(R(\MMM_{n,a})_d)
    \end{align}
    for all $d\ge 0$.
    
    \noindent\textbf{Case (1):} If $d>\frac{n-a}{2}$, Theorem~\ref{thm:involution-grad-str} indicates that $\Frob(R(\MMM_{n,a})_d) =0$, and Lemma~\ref{lem:involution-grad-span} indicates that $\Frob((\CC[\xxx_{n\times n}]/I_n^{(a)})_d)=0$. Therefore, the inequality~\eqref{ineq:involution-frob} holds.
    
    \noindent\textbf{Case (2):} If $0\le d\le\frac{n-a}{2}$, Lemma~\ref{lem:involution-ideal-pre} and Theorem~\ref{thm:involution-grad-str} together yield
    \begin{align}\label{ineq:involution-frob'}
        \Frob((\CC[\xxx_{n\times n}]/I_n^{(n-2d)})_d) \le \{h_d[h_2]\cdot h_{n-2d} - h_{d-1}[h_2]\cdot h_{n-2d+2}\}_{\lambda_1\le 2(n-2d)}.
    \end{align}
    The inequality $n-2d\ge a$ yields the containment $(I_n^{(n-2d)})_d\subseteq (I_n^{(a)})_d$, implying the $\symm_n$-module surjection
    \[(\CC[\xxx_{n\times n}]/I_n^{(n-2d)})_d\twoheadrightarrow(\CC[\xxx_{n\times n}]/I_n^{(a)})_d\]
    and thereby
    \[\Frob((\CC[\xxx_{n\times n}]/I_n^{(a)})_d)\le\Frob((\CC[\xxx_{n\times n}]/I_n^{(n-2d)})_d).\]
    Appending \eqref{ineq:involution-frob'} to this inequality, we obtain
    \[\Frob((\CC[\xxx_{n\times n}]/I_n^{(a)})_d)\le\{h_d[h_2]\cdot h_{n-2d} - h_{d-1}[h_2]\cdot h_{n-2d+2}\}_{\lambda_1\le 2(n-2d)}\]
    which is strengthened by Lemma~\ref{lem:involution-length} to
    \[\Frob((\CC[\xxx_{n\times n}]/I_n^{(a)})_d)\le\{h_d[h_2]\cdot h_{n-2d} - h_{d-1}[h_2]\cdot h_{n-2d+2}\}_{\lambda_1\le n-2d+a},\]
    so the inequality~\eqref{ineq:involution-frob} still holds.

    To summarize, the inequality~\eqref{ineq:involution-frob} holds in both cases, finishing the proof.
\end{proof}

\printbibliography

\end{document}